\renewcommand{\eprint}[1]{\href{https://arxiv.org/abs/#1}{#1}}
\DeclareMathOperator{\GL}{\mathrm{GL}}
\DeclareMathOperator{\SL}{\mathrm{SL}}
\DeclareMathOperator{\Hom}{Hom}
\DeclareMathOperator{\Lie}{Lie}
\newcommand{\hcL}{\hat{\cL}}
\newtheorem{Thm}{Theorem}[section]
\newtheorem{Lem}[Thm]{Lemma}
\newtheorem{Prop}[Thm]{Proposition}
\newtheorem{Cor}[Thm]{Corollary}
\theoremstyle{definition}
\newtheorem{Def}[Thm]{Definition}
\theoremstyle{remark}
\newtheorem{Rem}[Thm]{Remark}
\newtheoremstyle{named}{}{}{\itshape}{}{\bfseries}{.}{.5em}{#1 #3}
\theoremstyle{named}
\def\C{\mathbb{C}}
\def\Z{\mathbb{Z}}
\def\P{\mathbb{P}}
\def\fb{\mathfrak{b}}
\def\g{\mathfrak{g}}
\def\Frenkel:2013uda{\mathfrak{h}}
\def\cD{\mathcal{D}}
\def\cF{\mathcal{F}}
\def\cL{\mathcal{L}}
\def\cM{\mathcal{M}}
\def\cN{\mathcal{N}}
\def\cO{\mathcal{O}}
\def\cQ{\mathcal{Q}}
\def\cV{\mathcal{V}}
\def\cW{\mathcal{W}}
\def\bo{\textbf{o}}
\def\=>{\Longrightarrow}
\def\to{\longrightarrow}
\def\o+{\oplus}
\def\bo+{\bigoplus}
\def\<{\langle}
\def\>{\rangle}
\def\({\left(}
\def\){\right)}
\def\^{\wedge}
\def\+{\dagger}
\def\dd[#1,#2]{\frac{d#1}{d#2}}
\def\del[#1,#2]{\frac{\partial #1}{\partial #2}}
\def\over[#1]{\overline{#1}}
\def\vec[#1]{\overrightarrow{#1}}
\def\mr@ignsp#1 {\ifx\:#1\@empty\else #1\expandafter\mr@ignsp\fi}%
\newcommand{\multiref}[1]{\begingroup%\let\protect\string%
\xdef\mr@no@sparg{\expandafter\mr@ignsp#1 \: }%
\def\mr@comma{}%
\@for\mr@refs:=\mr@no@sparg\do{\mr@comma\def\mr@comma{,}\ref{\mr@refs}}%
\endgroup}
\newcommand{\hypref}[2]{\ifx\href\asklFrenkel:2013udaas #2\else\href{#1}{#2}\fi}
\tikzset{->-/.style={decoration={
  markings,
  mark=at position .5 with {\arrow{latex}}},postaction={decorate}}}
\tikzset{
    %Define standard arrow tip
    >=latex
    }
\newcommand{\wt}{\widetilde}
\newcommand{\mc}{\mathcal}
\newcommand{\nc}{\newcommand}
\nc{\on}{\operatorname}
\nc{\la}{\lambda}
\nc{\wh}{\widehat}
\nc{\ghat}{\wh\g}
\nc{\mb}{\mathbf}
\begin{document}
\title{Toroidal $q$-Opers}

\author[P. Koroteev]{Peter Koroteev}
\address{
Department of Mathematics,
University of California,
Berkeley, CA 94720, USA}

\author[A.M. Zeitlin]{Anton M. Zeitlin}
\address{
          Department of Mathematics, 
          Louisiana State University, 
          Baton Rouge, LA 70803, USA; 
          IPME RAS, St. Petersburg, Russia}

\date{\today}

\numberwithin{equation}{section}

\begin{abstract}
We define and study the space of $q$-opers associated with Bethe equations for integrable models of XXZ type with quantum toroidal algebra symmetry. Our construction is suggested by the study of the enumerative geometry of cyclic quiver varieties, in particular, the ADHM moduli spaces. We define $(\overline{GL}(\infty),q)$-opers with regular singularities and then, by imposing various analytic conditions on singularities, arrive at the desired Bethe equations for toroidal $q$-opers.
\end{abstract}

\maketitle

\setcounter{tocdepth}{1}
\tableofcontents

%==============================================================================
\section{Introduction: Geometric facets of Bethe equations}
\subsection{Integrable Models and Bethe Ansatz}
The study of one-dimensional quantum integrable models fueled modern mathematics with a variety of interesting ideas, in particular, in discovery of the quantum groups and related structures. 
A particularly useful tool in the study of integrable models is the so-called algebraic Bethe ansatz method (see, e.g., \cite{Korepin_1993,Reshetikhin:2010si}) having its roots in the original papers of Hans Bethe from the 1930s. 

Let us briefly describe here the modern mathematical perspective on how algebraic Bethe ansatz works for integrable models of specific type, namely the spin chains. Let $\mathfrak{g}$ be a simple Lie algebra and 
$\hat{\mathfrak{g}}_{k=0}=\mathfrak{g}[t^{\pm 1}]$ be the corresponding loop algebra (affine algebra with vanishing central charge $k=0$). The finite-dimensional modules $\{V_i\}$ of $\mathfrak{g}$ give rise to the so-called evaluation modules $\{V_i(a_i)\}$,  where $a_i\in\mathbb{C}^\times$ stand for the value of the loop parameter $t$. These modules generate a tensor category, namely every finite-dimensional representation of $\hat{\mathfrak{g}}$ can be written as a tensor product of evaluation modules.  
Passing from $\mathfrak{g}[t^{\pm 1}]$ to the corresponding quantum affine algebra 
$U_{\hbar}(\hat{\mathfrak{g}})$ or the Yangian $Y_{\hbar}({\mathfrak{g}})$ 
one obtains a deformation of such tensor category, known as braided tensor category \cite{Chari:1994pz}. This object features a new  intertwining operator (not invertible in general):
$$R_{V_i(a_i),V_j(a_j)}:V_i(a_i)\otimes V_j(a_j)\to V_j(a_j)\otimes V_i(a_i),$$ 
satisfying famous Yang-Baxter equation.  We note, that in the deformed case the analogue of evaluation map exists only in the type $A$ and the $\{V_i(a_i)\}$ stand here for appropriately ``twisted" by $a_i$ finite-dimensional representations of $U_{\hbar}(\hat{\mathfrak{g}})$.

To describe the integrable model, we choose a specific object in such braided tensor category $$\mathcal{H}=V_{i_1}(a_{i_1})\otimes \dots \otimes V_{i_n}(a_{i_n}),$$ which we refer to as {\it physical space}, the vectors in this space are called {\it states}. 
For a given module $W(u)$ called {\it auxiliary module} with parameter $u$  known as {\it spectral parameter}, we define the {\it transfer matrix} 
$$T_{W(u)}={\rm Tr}_{W(u)}\Big[ (Z\otimes 1)P{R}_{W(u),\mathcal{H}}\Big].$$
Here the {\it twist Z} is given by $Z=\prod^r_{i=1}z_i^{\check{\alpha}_i}\in e^{\mathfrak{h}}$, where $\mathfrak{h}$ is the Cartan subalgebra in $\mathfrak{g}$, $\{\check{\alpha}_i\}_{i=1,\dots, r}$ are the simple coroots of $\mathfrak{g}$, and  $P$ is a permutation operator. The {\it monodromy matrix} $M^Z_{W(u)}=(Z\otimes 1)P{R}_{W(u),\mathcal{H}}$ is an operator in  $W(u)\otimes \mathcal{H}$. Notice, that the transfer matrix $T_{W(u)}$ is an operator acting on the physical space $\mathcal {H}$.  
The Yang-Baxter equation implies that transfer matrices, corresponding to various choices of $W(u)$ form a commutative algebra, known as {\it Bethe algebra}. The commutativity of Bethe algebra implies {\it integrability} and the expansion coefficients of the transfer matrix yield (nonlocal) {\it Hamiltonians} of the XXX or XXZ spin chain depending on whether we deal with the Yangian or the quantum affine algebra. From now on, we will primarily focus on quantum affine algebra and the XXZ model, although most of the construction below applies to the Yangian and the XXX models as well. 

The classic example of the XXZ Heisenberg magnet corresponds to the quantum algebra $U_{\hbar}(\hat{\mathfrak{sl}}(2))$ in which the physical space $\mathcal{H}$ is constructed from $V_i(a_i)=\mathbb{C}^2(a_i)$ -- the standard two-dimensional evaluation modules of $U_{\hbar}(\hat{\mathfrak{sl}}(2))$.

The solution of the integrable model implies finding the eigenvalues and eigenvectors of simultaneously diagonalized Hamiltonians, i.e. elements of the Bethe algebra. 
One way to accomplish the task is to follow the old-fashioned procedure from the 1980s known as {\it algebraic Bethe ansatz}.  
It implies that the eigenvalues of the transfer-matrices (upon rescaling) are symmetric functions of the roots of
the system of algebraic equations, known as Bethe ansatz equations. Although this approach is straightforward and effective, we will explore other modern techniques, which provide insights into representation-theoretic aspects of the problem.

\subsection{Modern Approach to Bethe Ansatz}
\subsubsection{Quantum Knizhnik-Zamolodchikov equations}
The intertwining operators for the quantum affine algebra $U_{\hbar}(\hat{\mathfrak{g}})$ and thus the matrix elements of their products, known as {\it conformal blocks} satisfy certain difference equations known as quantum Kniznik-Zamolodchikov (qKZ) equations (also known as Frenkel-Reshetikhin equations) \cite{FR1998}. 

Explicitly, qKZ equations can be written as follows: 
difference equations 
\begin{eqnarray}\label{qkz}
\Psi(a_{i_1}, \dots, q a_{i_k}, \dots , a_{i_n}, \{z_i\})=H^{(q)}_{i_k}\Psi(a_{i_1}, \dots,, a_{i_n}, \{z_i\}),
\end{eqnarray}
where the solutions $\Psi$ take values in $\mathcal{H}$ and operators $H^{(q)}_i$ are expressed in terms of products of R-matrices. The analytic properties of the solutions of qKZ equations will be discussed later in subsection 1.3.2. There is also a commuting system of equations in 
 $\{z_i\}$-variables for $\Psi$, known as {\it dynamical equations} 
 see e.g. \cite{Tarasov_2002,Tarasov_2005}.
 
 The solution to the qKZ equation is given by an integral expression, so that the integrand has the following asymptotic behavior in the limit $q\rightarrow 1$ (or $\eta=\log(q)\rightarrow 0$): 
\begin{eqnarray} 
 e^{\frac{Y(\{a_i\}, \{z_i\}, \{x_i\})}{\eta}}\Big[\phi_0(\{a_i\}, \{z_i\}, \{x_i\})+O(\eta)\Big],
 \end{eqnarray}
where $\{x_i\}$ are the variables of integration.  In the limit $q\rightarrow 1$ the stationary phase approximation gives 
 $\Psi=e^{\frac{S}{\eta}}(\Psi_0+O(\eta))$, where $S=Y|_{\sigma_i},$  
where $\sigma_i$ are the solutions of the equations $\partial_{x_i}Y=0$ which need to be solved with respect to the variables $\{x_i\}$.  These equations coincide with the Bethe equations, and $\Psi_0$ is the eigenvector for operators $H^{(1)}_i$, known as the nonlocal Hamiltonians of the corresponding XXZ model: they emerge as coefficients from the expansion of the transfer matrices with respect to the spectral parameter, viz. $H^{(1)}_i\Psi_0=e^{p_i}\Psi_0,~{\rm where} ~p_i={a_i}\partial_{a_i}S.$

\subsubsection{$QQ$-systems and Baxter operators}

When we earlier discussed the transfer matrices $T_{W(u)}$ we considered $W(u)$ to be a finite-dimensional module of $U_{\hbar}(\hat{\mathfrak{g}})$. We notice that the {\it universal R-matrix}, which produces particular braiding operators $R_{V_i(a_i), V_j(a_j)}$ belongs to the completion of the tensor product $U_{\hbar}(\hat{\mathfrak{b}}_{+})\otimes U_{\hbar}(\hat{\mathfrak{b}}_{-})$, where $U_{\hbar}(\hat{\mathfrak{b}}_{\pm})$ are the Borel subalgebras of $U_{\hbar}(\hat{\mathfrak{g}})$. Therefore there is no obstruction in taking auxiliary representations $W(u)$ to be representations of $U_{\hbar}(\hat{\mathfrak{b}}_{+})$.

The purpose of that is as follows. 
There exist  prefundamental representations of $U_{\hbar}(\hat{\mathfrak{b}}_{+})$ which are infinite-dimensional. If one extends the braided tensor category of finite-dimensional modules by such representations, the Grothendieck ring of the resulting category is generated by those modules. 

The corresponding transfer matrices turn out to be well-defined and 
moreover, the eigenvalues of the transfer matrices are polynomials of the spectral parameter, generating elementary symmetric functions of the solutions of Bethe equations. Such transfer matrices were originally introduced by Baxter and thus are known as {\it Baxter operators} \textit{ad hoc} via their eigenvalues. Their representation-theoretic meaning was realized much later, in the papers of Frenkel and Hernandez \cite{Frenkel:2013uda}, \cite{Frenkel:2016}, following earlier ideas of Bazhanov, Lukyanov and Zamolodchikov \cite{Bazhanov:1998dq} and Hernandez and Jimbo \cite{HJ}.

There are two series of prefundamental representations $\{V^i_+(u)\}_{i=1,\dots, r}$, $\{V^i_-(u)\}_{i=1,\dots, r}$ and the associated Baxter operators $\{Q^i_{\pm}(u)\}_{i=1,\dots, r}$.  They obey the following key relation \cite{Frenkel:2016}:
\begin{eqnarray}
{{\widetilde\xi_i}}Q^i_{-}({u})Q^i_{+}({\hbar}  {u})-{\xi_i}Q^i_{-}({\hbar}  {u})Q^i_{+}({u}) &=&\Lambda_i({u})\prod_{j\neq i}\Bigg[\prod^{-a_{ij}}_{k=1} Q^j_{+}({\hbar} ^{b_{ij}^k}{u})\Bigg]\nonumber\\ 
i&=&1,\dots,r, \quad b_{ij}^k\in \mathbb{Z}\nonumber 
\end{eqnarray}
Here polynomials $\Lambda_i({u})$ are known as Drinfeld polynomials, characterizing the representation $\mathcal{H}$ of $U_{{\hbar}}(\hat{\mathfrak{g}})$ and 
${\xi_i}$, ${{\widetilde\xi_i}}$ are some monomials of $\{z_i\}$.

This system of equations, known as the {\it $QQ$-system}, considered as equations on  $\{Q^i_{\pm}(u)\}_{i=1,\dots, r}$ and subject to some nondegeneracy conditions, are equivalent to the Bethe ansatz equations.

We note, that similar construction and the analogue of the $QQ$-system should also exist for Yangians with some progress being made in \cite{BFLMS}.

We mention that the $QQ$-systems emerged recently in a seemingly different context, the so-called ODE/IM correspondence (see \cite{Dorey1999,Bazhanov2001}). The statement of the correspondence can be roughly formulated as follows. 
The vacuum eigenvalues of the Baxter operators quantum KdV model associated with affine Lie algebra $\widehat{\mathfrak{g}}$ appear as spectral determinants of certain singular differential operators associated with the so-called affine opers associated with 
${^L}\widehat{\mathfrak{g}}$. In a particular case of standard quantum KdV, these operators are just singular Sturm-Liouville operators. As it was shown in \cite{MRV1,MRV2} they turn out to be the solution of the $QQ$-system with different analyticity conditions on entire Q-functions (which are generally nonpolynomial in this case).

\subsection{Geometric Interpretations}
\subsubsection{Quantum K-theory of Nakajima Varieties}

The relation between enumerative algebraic geometry and integrability was known for some time. Starting from the pioneering works of Witten and Dubrovin, it flourished in the works of A. Givental and his school in the 90s. Recently, the progress in the understanding of supersymmetric gauge theory merged with the developments in the geometric representation theory. In particular, the study of the so-called {\it symplectic resolutions} from the representation-theoretic point of view gave a new life to this fruitful relationship in works of A. Okounkov and his collaborators \cite{Braverman:2010ei},\cite{Okounkov:2015aa}, \cite{2012arXiv1211.1287M}. It was observed that some integrable systems based on quantum groups, specifically XXX and XXZ models, naturally emerge from enumerative geometry for a large class of algebraic varieties, known as {\it Nakajima quiver varieties} \cite{Nakajima:1999hilb},\cite{Ginzburg:}.

Let us recall this connection in the simplest nontrivial examples of such varieties, namely the cotangent bundles over Grassmannians $T^*\textbf{Gr}_{k,n}$. The standard objects in the enumerative geometry are the appropriate deformations of the cup product and the tensor product in the equivariant cohomology and K-theory correspondingly, where the deformation is characterized by the series in {\it K\"ahler parameters}, with coefficients being produced by curve counting. 

The physics results of Nekrasov and Shatashvili \cite{Nekrasov:2009uh} lead to the following conjecture about the equivariant quantum K-theory $K_T(T^*\textbf{Gr}_{k,n})$: the  quantum multiplication by the generating function for the exterior algebra of tautological bundle coincides with the Baxter $Q$-operator for the Heisenberg XXZ-spin chain. 
Also, since tautological bundles generate the entire quantum K-theory, one can describe the equivariant quantum K-theory ring as the ring of symmetric functions of Bethe roots.

The proof of that conjecture was given in \cite{Pushkar:2016qvw}. It uses the theory of quasimaps to Nakajima varieties as the `curve counting' which is different from the older approach to quantum K-theory due to A. Givental. To relate the quantum equivariant $K$-theory with spin chains, it is not enough to consider the operators of quantum multiplication by classical $K$-theory classes: in fact, both the multiplication in the equivariant $K$-theory and the  tautological classes  should be deformed simultaneously: in our case of $T^*\textbf{Gr}_{k,n}$, by just one K\"ahler parameter $z$. One can define elements $\hat{\mathcal{V}}^{\tau} \in K_{T} (T^{*}\textbf{Gr}_{k,n})[[z]]$ which we call quantum tautological bundles.  In the classical limit $z\rightarrow 0$, these elements coincide with the corresponding classical bundles $V^{\tau}$, which is a certain tensorial polynomial of standard tautological bundles,  corresponding  to the symmetric polynomial $\tau$ in $k$ variables in the standard $K$-theory.   
The localized equivariant quantum K-theory $K^{loc}_T(T^*\textbf{Gr}_{k,n})$ can be identified with appropriate weight subspace in the space $\mathcal{H}$ of the XXZ Heisenberg magnet, so that considering the union of such spaces for all $k$, one obtains the entire space of states $\mathcal{H}$.
 
To prove this conjecture one needs to define and compute {\it vertex functions}, which are quasimap analogues of Givental's {\it I-functions}. These are certain Euler characteristics, which count quasimaps and determine the quantum K-theory classes.  Such vertex functions satisfy the quantum difference equations (QDE) which coincide with qKZ and dynamical equations \cite{Okounkov:2016sya}, which were discussed in subsection 1.2.1. 
To understand the action of the operators of quantum multiplication by the quantum tautological bundles, one has to study the $q\rightarrow 1$ asymptotics of such solutions of qKZ.  That allows us to identify quantum tautological classes with the elements of the Bethe algebra, thereby leading to the proof of the conjecture of Nekrasov and Shatashvili.

Later these results have been proven for larger classes of Nakajima varieties, e.g., partial flag varieties, see, e.g., \cite{Koroteev:2017aa,Koroteev:2018a}.

Notice that this approach gives geometric interpretation to qKZ and dynamical equations associated to $\mathfrak{g}$ of simply-laced type for the specific physical spaces $\mathcal{H}$ emerging from quiver varieties. Moreover, each of the Q-operators on its own has a geometric meaning. The  $Q^i_+$-operators correspond to the exterior powers of tautological bundles. Their $Q^i_-$ counterparts correspond to the exterior powers of tautological bundles of Nakajima varieties with a different choice of stability parameters. In the case of flag varieties, such a change in the stability parameters is provided by the action of Weyl reflection. 

However, the $QQ$-system relations themselves do not arise naturally, since in particular, $Q^i_{\pm}$ operators do not act in the same space. In the next section, we will discuss another geometric viewpoint on Bethe ansatz, specifically related to the geometric interpretation of the $QQ$-system.

\subsubsection{Quantum q-Langlands Correspondence}

To oversee the geometric interpretation of $QQ$-systems, we take several steps back in time and deformation-wise. 
Earlier in the introduction we described the construction of the XXZ spin chain. There is a certain scaling limit of the XXZ model which is called the Gaudin model. This limit can be understood quasiclassically as $\hbar\rightarrow 0$ of (\ref{qkz}), so that $q=\hbar^{k+h^{\vee}}$, where $h^{\vee}$ is a dual Coxeter element.  
Then the qKZ equation turns into the differential equation, known as Knizhnik-Zamolodchikov (KZ) equation:
\begin{eqnarray}\label{kz}
(k+h^{\vee})\partial_{a_i}\Psi=H_i\Psi,
\end{eqnarray}
where $H_i$ emerge as coefficients of $\log{\hbar}$ in $H^{(q)}$. The solution $\Psi$ belongs to the classical limit of $\mathcal{H}$, viz. the tensor product of some evaluation representations of Lie algebra $\hat{\mathfrak{g}}$ with evaluation parameters $a_i$: $\mathcal{H}^{cl}=V_1(a_1)\otimes \dots \otimes V_n(a_n)$. The mutually commuting Gaudin Hamiltonians $H_i$,  have easy to read expressions 
$$H_i=\sum^n _{j=1,i\neq j}\frac{t^{\alpha}_i\otimes t^{\alpha}_j}{a_i-a_j}+{\mathcal{Z}}_i\,,$$ 
where ${t^{\alpha}}$ form an orthonormal basis in $\mathfrak{g}$ with respect to the Killing form, $\mathcal{Z}$ belongs to Cartan subalgebra of $\mathfrak{g}$ and indices $i,j$ indicate on which of the 
representations $V_i$ these elements act.

One can see that in the limit $k\rightarrow -\hbar^{\vee}$, known as {\it critical level} limit, this equation turns into an eigenvalue problem for Gaudin Hamiltonians. 
It is possible to interpret the solutions of the KZ equations in a particular analyticity region in evaluation parameters $|a_1|>|a_2|>\dots >|a_n|$, as the equations for the intertwiners of $\hat{\mathfrak{g}}$ with central charge $k$. Moreover, the Gaudin Hamiltonians were shown in \cite{Feigin:1994in} to be part of a bigger
structure, namely the center $Z(U(\hat{\mathfrak{g}}))$ of $U(\hat{\mathfrak{g}})$  at the critical level ${k=-h^{\vee}}$. 

There is a natural Poisson structure on $Z(U(\hat{\mathfrak{g}}))$, arising from standard commutators away from the critical level. The famous theorem of Feigin and Frenkel \cite{FEIGIN_1992} provides isomorphism of this Poisson algebra and the classical limit of {\it $W$-algebra} $W(^L \mathfrak{g})$, associated to the Langlands dual Lie algebra $^L \mathfrak{g}$, also known as {\it Gelfand-Dickey algebra} of pseudodifferential operators in the case $ \mathfrak{g}=\mathfrak{sl}_n$. 

Later this statement was reformulated \cite{FrenkelWak} in terms of special connections for principal $^L\tilde{G}$-bundles, known as {\it oper connections}, on the punctured disk, where $^L\tilde{G}$ is an adjoint Lie group associated to $^L \mathfrak{g}$. The reformulated Feigin-Frenkel theorem implies that there is an isomorphism between $Z(U(\hat{\mathfrak{g}}))$ and the space of functions on $^L\tilde{G}$-oper connections on a punctured disk. The path from such connections to Gelfand-Dickey pseudo-differential operators is given by a well-known construction, known as {\it  Drinfeld-Sokolov reduction} \cite{Drinfeld:1985}. 

Let us return to the eigenvalue problem for Gaudin Hamiltonians, arising from critical level limit of (\ref{kz}). E. Frenkel's theorem \cite{Frenkel:2003qx} gives a geometric description of the spectrum in terms of opers. Explicitly, Frenkel's theorem states that there is a one-to-one correspondence between the space of {\it Miura oper} connections with regular singularities with trivial monodromies around them on $\mathbb{P}^1$ in case when $\mathcal{Z}=0$. 
The word `Miura' there means that there is an extra condition on such oper connections: they have to preserve the reduction of $^L\tilde{G}$-bundle to Borel subgroup. Later, this theorem was generalized for $\mathcal{Z}\neq 0$ by adding irregular singularity at $\infty\in\mathbb{P}^1$ \cite{Feigin:2006xs}.  
 
The constraints on such connections could be expressed in terms of Wronskian-type relations,  which are particularly manifest in the case of $SL(N)$. That suggests, that the $QQ$-system, which is a deformation of the  Wronskian relation, should arise from an appropriate $\hbar$-deformation of Miura opers. Below we shall provide further motivation and hints in this direction.
 
The pseudodifferential operators, corresponding to such Miura oper connections with regular singularities through Drinfeld-Sokolov reduction, describe the constraints on the conformal blocks/intertwiners of $W(^L \mathfrak{g})$-algebra in the limit when central charge $c\rightarrow \infty$. The most famous such constraint is known as the Belavin-Polyakov-Zamolodchikov (BPZ) equation (essentially the Sturm-Liouville problem with singular potential) for conformal blocks of Virasoro algebra, which is the case when $\mathfrak{g}=\mathfrak{sl}(2)$.

Naturally, that led to the quantum Langlands correspondence linking conformal blocks of $W(^L \mathfrak{g})$-algebras and $\hat{\mathfrak{g}}$-conformal blocks away from the critical level. Recently, a q-deformation of this correspondence was proposed in  \cite{Aganagic:2017smx}. The proof of the latter, given in \cite{Aganagic:2017smx}, provided in the case of simply-laced $\mathfrak{g}$ is based on the enumerative geometry approach which we touched briefly in 1.3.1. The key to that is to further deform this correspondence, namely, identify conformal blocks for the quantum affine algebra $U_{\hbar}(\hat{\mathfrak{g}})$ and the deformed W-algebra $W_{q,t}(^L \mathfrak{g})$, which is  the 2-parametric deformation of Gelfand-Dickey algebra \cite{FR1998}.  

The conformal blocks for $U_{\hbar}(\hat{\mathfrak{g}})$, as we discussed in subsection 1.2.1, satisfy the qKZ equation. We remind, that as in the classical case, they correspond to the solution of qKZ, analytic in the region $|a_1|>|a_2|>\dots >|a_n|$. However, the solutions of qKZ which are provided by enumerative geometry, i.e.  vertex functions, are analytic in $\{z_i\}$-variables. Also, it turns out that they are the ones, producing the conformal blocks of $W_{q,t}(^L \mathfrak{g})$-algebra. The transition between two families of solutions is crucial for establishing the exact correspondence between such conformal blocks. We refer to \cite{Aganagic:2017smx} for the details.

\subsubsection{Miura $\hbar$-opers}

A natural question is to understand the difference analogues of BPZ-type equations which serve as constraints for the conformal blocks of $W_{q,t}(^L \mathfrak{g})$. As we have discussed, the differential BPZ equations on the critical level correspond to the classical objects, 
namely $^L\tilde{G}$-oper connections with regular singularities on $\mathbb{P}^1$. 
Let $^L G$ be  the simply-conected group with Lie algebra $^L \mathfrak{g}$. There is a natural classical object, the $\hbar$-difference connection, locally a meromorphic $^L G$-valued function $A(z)$ on Zariski open set of $\mathbb{P}^1$, which transform upon trivialization change $A(z)\to g(\hbar z)A(z)g^{-1}(z)$.

In \cite{Frenkel:2020}, following the constructions in \cite{KSZ} done for $SL(N)$,  
we developed the $\hbar$-difference analogue of opers as such $\hbar$-difference connections for any simply connected semisimple Lie group $^L G$ with a fixed Borel subgroup $^L B_-$.  
Locally, these $\hbar$-connections have the form $A(z)=n'(z)\prod^r_{i=1}s_i\phi_i^{\check{\alpha}_i}(z)n(z)$. Here $n(z), n'(z)\in G(z)$, $\phi_i(z)\in \mathbb{C}(z)$, $s_i$ are the lifts of the fundamental Weyl reflections to $^L G$. In other words  $A(z)\in B_-(z)cB_-(z)$, where $c=\prod^r_{i=1}s_i$ is a Coxeter element.

Moreover, we defined such $(^L G, \hbar)$-opers and their Miura versions with regular singularities, which amounts to the connections of this type which preserve the opposite Borel subgroup of $B_+$ and taking $\phi_i(z)=\Lambda_i(z)\in \mathbb{C}[z]$.  We proved several structural theorems about them.

One of the major statements we make in \cite{Frenkel:2020} is devoted to the explicit relation of these objects to the $QQ$-systems and Bethe ansatz \footnote{ We note here, that the QQ-system we obtained in \cite{Frenkel:2020} differs from the ones we discussed in the subsection 1.2.2 in the non-simply laced case (the difference is in the coefficients $\{b_{ij}\}$). We mention the upcoming paper \cite{FHRnew}, where the integrable model corresponding to such QQ-system is discussed. In this paper we will be working mostly with the QQ-system associated with $\mathfrak{sl}(n)$ and we refer the reader to \cite{Frenkel:2020} for more details.}. To do that, we work with two versions of what we call {\it Z-twisted condition} for Miura opers. The simplest $Z$-twisted condition implies that the $(^L G,\hbar)$-oper connection can be $\hbar$-gauge equivalent to semisimple element $Z\in H\subset ^L G$,  where $H$ is the Cartan subgroup. That means  $A(z)=g(\hbar z)Zg^{-1}(z)$. This condition is a difference version of zero monodromy condition and double pole irregular singularity at $\infty$ point of $\mathbb{P}^1$.

The relaxed version of this $Z$-twisted condition is as follows. Given the principal $^L G$-bundle, one can construct an associated bundle for any fundamental representation  $V_{\omega_i}$ for the fundamental weight $\omega_i$.  
It turns out, one can associate a $(GL(2),\hbar)$-oper to any such pair  $(^L G,\hbar)$-oper and $V_{\omega_i}$: this is done by restricting the Miura $(^L G,\hbar)$-oper to the two-dimensional subspace, spanned by two top weights in $V_{\omega_i}$. This is possible, since Miura $(^L G,\hbar)$-oper preserves the reduction to positive Borel subgroup $^L B_+\subset ^L\!\! G$.  

We say that the resulting Miura oper is {\it $Z$-twisted Miura-Pl\"ucker}  $(^L G,\hbar)$-oper if for every such $(GL(2),\hbar)$ oper is $\hbar$-gauge equivalent to the restriction of $Z$ to the corresponding two-dimensional space. 

In \cite{Frenkel:2020} we showed that {\it $Z$-twisted Miura-Pl\"ucker}  $(^L G,\hbar)$-opers with mild non-de\-ge\-ne\-racy conditions are in one-to-one correspondence with certain solutions of $QQ$-systems and that does not depend on the order in the Coxeter element. In simply-laced case such $QQ$-systems are equivalent to standard Bethe ansatz equations. The non-simply laced case is more involved (see the discussion in \cite{Frenkel:2020} and the upcoming paper \cite{FHRnew}.

While it immediately follows that any $Z$-twisted Miura oper is indeed  $Z$-twisted Miura-Pl\"ucker one, the opposite statement, however, is highly nontrivial. In \cite{Frenkel:2020} we introduce a chain of $\hbar$-gauge  transformations, which we refer to as $\hbar$-B\"acklund transformations, which on the level of $QQ$-systems amounts to the $Q^i_+(z)\rightarrow Q^i_-(z)$, $Z\rightarrow s_i(Z)$, where $s_i$ is elementary Weyl reflection. However, at every step, in order to progress further, we have to impose the nondegeneracy condition on the $QQ$-system and the associated Miura oper. We have shown that if one can proceed with this transformations to $Z$-twisted Miura-Pl\"ucker oper, corresponding to the $w_0(Z)$, where $w_0$ is the longest Weyl group element, then such $Z$-twisted Miura-Pl\"ucker Miura oper is $Z$-twisted. We call such Miura-Pl\"ucker opers and the associated $QQ$-system $w_0$-generic. 

We also discuss the explicit version of $\hbar$-version of Drinfeld-Sokolov reduction, following the ideas of  \cite{1998CMaPh.192..631S}.  The scalar difference equations emerging this way from Z-twisted Miura $(^L G,\hbar)$-opers and the correspondence with the difference equations which the conformal blocks for $W_{q,t}(^L \mathfrak{g})$-algebras remains an interesting open problem.

\subsection{Our goals in this paper} 

\subsubsection{Two approaches to $(SL(r+1),\hbar)$-opers} 

In this paper, we are investigating several problems. The first one is devoted to the correspondence between the results of \cite{KSZ}, where we work with $SL(r+1)$ case only, and a more general approach of \cite{Frenkel:2020}. In \cite{KSZ} we used a definition of (Miura) $\hbar$-oper which is very specific to 
$SL(r+1)$. It can be deduced from the `universal' definition of $(SL(r+1),\hbar)$-oper as  an $\hbar$-connection for the principal $SL(r+1)$-bundle, which we discussed in the previous section, with the standard order of reflections in the corresponding Coxeter element (following the order in the Dynkin diagram), so that in the defining representation it is represented as the matrix with zeroes above its superdiagonal. 

Considering the associated bundle, corresponding to the defining representation, one can reinterpret the oper condition in the following way. Namely, it is the condition on the oper action in the complete flag of subbundles of this associated bundle, which reflects its matrix structure described above. In particular, that implies that on a Zariski dense subset in $\mathbb{P}^1$ the total space of the flag can be recreated by the consecutive action of the $(SL(r+1),\hbar)$-oper connection on the section of the line bundle. The Miura condition can be reformulated as the constraint, that the connection preserves a different complete flag of subbundles.

Such definition lead to another approach to the derivation of the $QQ$-systems from $Z$-twisted Miura $(SL(r+1),\hbar)$-opers with regular singularities. This is done using  $\hbar$-deformed Wronskian matrices. Their matrix elements are components of the nontrivial section of the line bundle in the trivialization when oper connection is represented by the regular semisimple twist element $Z$ and describes the relative position of two flags of subbundles. It turns out, that the points where these flags are in a non-generic position correspond to Bethe roots and $QQ$-systems, as we have demonstrated in \cite{KSZ}. Here we show that the extension of the $QQ$-system by $\hbar$-B\"acklund transformations is provided by various minors in this $\hbar$-Wronskian matrix
\footnote{We note here, that such extensions of the $QQ$-systems were introduced in various circumstances e.g. 
\cite{BFLMS,MRV1,MRV2,Frenkel:2016} and studied systematically in the case $Z=1$ by Mukhin and Varchenko (see \cite{Mukhin_2005}}. 

More importantly, we explicitly construct the element $g(z)$, such that the connection takes form $A(z)=g(\hbar z)Zg^{-1}(z)$. This element can be represented both in abstract Lie-theoretic form as well explicitly in the matrix notation, which uses polynomials of the extended $QQ$-system. As a consequence, we obtain that the $w_0$-generic condition, which was needed in general for {\it $Z$-twisted Miura-Pl\"ucker}  $(^L G,\hbar)$-oper to be just $Z$-twisted, is not needed for $^L G=SL(N)$.

\subsubsection{Completion to $(\overline{GL}(\infty),\hbar)$-opers}

Following the  calculations of $(SL(r+1),\hbar)$-opers it is not hard to extend this construction to $SL(\infty)$ -- the group of infinite-dimensional matrices with unit determinant with a finite amount of nonzero off-diagonal entries and finite amount of non-unit elements on the diagonal. However, for any Miura $(SL(\infty), \hbar)$-oper, the corresponding $QQ$-system will always be finite. Let us explain how to construct a Miura oper, which corresponds to the `complete' $QQ$-system associated with the Dynkin diagram of $A_{\infty}$.     
We note, that $SL(\infty)$  has a well-defined set of fundamental representations based on semi-infinite wedge spaces, which has an interpretation in terms of $Dirac$ $sea$, and the generators of Lie algebra $\mathfrak{sl}(\infty)$ are represented via quadratic expressions of the fermionic operators of exterior and interior multiplication, thereby generating Clifford algebra.  

One can complete the corresponding Lie algebra $\mathfrak{sl}(\infty)$ by allowing infinite sums of generators. The resulting Lie algebra, endowed by central extension equal to 1, has fundamental representations realized in the same spaces as $\mathfrak{sl}(\infty)$. This is an important construction, that plays a central role in the celebrated {\it boson-fermion} correspondence \cite{FRENKEL1981259,doi:10.1142/8882} 
 
To address related Miura opers, we take a certain completion of $SL(\infty)$, which will be sufficient to put an infinite number of terms in the $QQ$-system. Namely, we construct the group corresponding to the completion of the upper Borel subgroup in the Bruhat decomposition of $SL(\infty)$. 
The resulting object, denoted by $\overline{GL}(\infty)$, is the group of the infinite matrices with an infinite number of elements above the diagonal and an infinite number of nonunital elements on the diagonal, while the number of elements below the diagonal remains finite. It has the same set of fundamental representations realized in the same set Dirac sea spaces as described above.

The resulting Miura $(\overline{GL}(\infty),\hbar)$-opers satisfy similar properties as the $SL(r+1)$ ones. One can define $Z$-twisted and $Z$-twisted Miura-P\"ucker opers and explicitly construct the operator from completed upper Borel subalgebra, 
diagonalizing the corresponding connection matrix.   
As before, it is constructed from the elements of the extended $QQ$-system. 

As an application of this construction, we can built the main novel objects of the current paper, namely {\it toroidal opers}.

\subsubsection{Toroidal opers and the q-Langlands correspondence for toroidal algebras}

There is a natural family of automorphisms of $sl(\infty)$ algebra, corresponding to the Dynkin diagram translations through $n$ vertices. On the group-theoretic level, such transformations are realized via the $n$-th power of the `completed' Coxeter element $c$ (infinite matrix with the only nonzero elements being units on the superdiagonal). Imposing the condition $c^nA(z)c^{-n}=A(pz)$ for $Z$-twisted $(\overline{GL}(\infty),\hbar)$, where $p$ is a new parameter, we obtain that the resulting constrained infinite $QQ$-system generate Bethe equations for the toroidal algebras $\widehat{\widehat{\mathfrak{gl}}}(n)$.

While the corresponding $QQ$-system for toroidal algebras \cite{Frenkel:2016} has yet to emerge from the perspective of prefundamental representations and Grothendieck ring, the Bethe equations for toroidal algebra $\widehat{\widehat{\mathfrak{gl}}}(n)$ in representation-theoretic setting emerged 
through the shortcut, namely  Baxter $TQ$-relation  \cite{Feigin_2017,Hernandez:2016}, the relation between the Q-operator and the transfer-matrix.

However, a more natural approach to generate Bethe equations for toroidal algebras emanates from enumerative geometry. In section 1.3.1, we discussed elementary examples of quiver varieties, namely $T^*\textbf{Gr}_{k,n}$ and, in general, cotangent bundles to partial flag varieties. The corresponding quantum K-theory ring reproduces the Bethe algebra for the XXZ model related to $\widehat{\mathfrak{sl}}(n)$. Another set of varieties, which have been extensively studied, are the framed cyclic quiver varieties, which are related to $\widehat{\widehat{\mathfrak{gl}}}(n)$ toroidal algebras, where $n$ is the number of vertices. In the simplest situation of one vertex, such variety is identified with the space of ADHM instantons \cite{Schiffmann_2013}. One can find more details on algebraic properties of quantum toroidal algebras and their geometric realization in the recent reviews \cite{Negut:thesis}, \cite{negu2020rmatrix}.

 According to general construction, the $q\rightarrow 1$ asymptotic of $z$-analytic solutions of the resulting qKZ equations reproduce the Bethe equations, which serve as constraints for the quantum K-theory ring. These are exactly the equations we reproduce from toroidal opers.
 Given that $t_1,t_2$ are the standard deformation parameters of $U_{t_1,t_2}\big(\widehat{\widehat{\mathfrak{gl}}}(n)\big)$ we obtain the following exchange of parameters:
\begin{equation}\label{eq:parametersmatch}
(\hbar, p)\leftrightarrow ((t_1t_2)^{-1}, t_1),
\end{equation}
which serve as the first example of the analogue q-Langlands correspondence for toroidal algebras.

\subsubsection{String Theory Motivation}
In string theory literature it is common to study limits when the number of objects, like branes becomes infinite. The most relevant example to this paper is topological holography program which was initiated by Gopakumar and Vafa \cite{Gopakumar_1999}. According to \textit{loc. cit.} a topological phase transition can be regarded as an interpolation between two desingularizations of the conifold geometry -- deformed conifold $T^*S^3$ and resolved conifold $\mathscr{O}(-1)_{\mathbb{P}^1}^{\oplus 2}$. 

The M-theory description of the former phase, in the presence of certain defects and flux through one of the complimentary complex directions, after dimensional reduction, leads to a three-dimensional quiver gauge theory on $S^1\times\mathbb{C}_q$. Massive spectrum of such 3d theories is described by the equivariant quantum K-theory of the corresponding quiver varieties which we discussed earlier. Parameter $\hbar$ from above plays the role of a $\cN=4$ R-symmetry equivariant parameter. 

The latter, resolved phase yields five dimensional gauge theory. The moduli space of instantons in this 5d theory is given by the ADHM quiver which will later in this paper be discussed in connection with toroidal $q$-opers. 

The topological phase transition from the deformed phase to the resolved phase occurs when the number of branes which wrap $S^3$ cycle in the deformed geometry, and determines the number of gauge groups in the 3d theory becomes infinite. In addition, a certain quantization condition between the Omega-background parameters of the 3d gauge theory and other mass parameters of the problem must be satisfied. Namely, if $s_i$ and $s_{i+1}$ are complexified gauge field vacuum expectation values of vector superfields of the $i$th and $(i+1)$st gauge groups respectively, then the condition reads $\frac{s_{i+1}}{s_i}=p^n$ where $n$ is an integer. On the resolved side of the transition parameter $p$ becomes equivariant parameter of the K-theory of the ADHM moduli space. Same parameters already appeared in \eqref{eq:parametersmatch}.

Representation-theoretic aspects of the Gopakumar-Vafa transition in connection with quantum geometry of quiver varieties of A-type were discussed in \cite{Koroteev:2018}. This paper provides an alternative description of the same physics in terms of \textit{bona fide} classical objects -- $q$-opers. By combining our results with those of the first author in \cite{Koroteev:2018} we can establish the \textit{quantum/classical} duality between quantum XXZ spin chain of $\widehat{A}_0$-type, whose Bethe equations coincide with relations in quantum equivariant K-theory of the ADHM quiver variety, and the so-called 1-toroidal $q$-opers.
This correspondence can be regarded as the large rank limit of the quantum/classical duality which was discussed in both physics \cite{Gaiotto:2013bwa,Koroteev:2015dja} and mathematics \cite{KSZ} literature.

\vskip.14in

We also note that  oper-related structures in type-A as well as their super analogues appeared in recent physics literature on integrability in the AdS/CFT correspondence \cite{Kazakov_2016,Chernyak:2020} as well as some earlier work \cite{Krichever_1997}.

\subsubsection{Structure of the paper} 
In Section \ref{Sec:SLrOpers} we give two equivalent definitions of $(SL(r+1),q)$-opers and their Miura versions as $q$-connections, which were introduced in \cite{Frenkel:2020} and \cite{KSZ} correspondingly. The first definition uses the Lie-theoretic approach and the second one is using complete flags of subbundles. 

In Sections \ref{Sec:ZtwistedMiura},\ref{Sec:MiuraPlucker}, and \ref{Sec:MiuraSL} we elaborate on the Lie-theoretic definition and remind basic constructions of \cite{Frenkel:2020}. Section \ref{Sec:ZtwistedMiura} is devoted to $Z$-twisted $(SL(r+1),q)$-opers, which are q-gauge equivalent to a diagonal matrix. In Section \ref{Sec:MiuraPlucker} a more mild version of $Z$-twisted condition is introduced, which is related to associated bundles leading to the notion of $Z$-twisted Miura-Pl\"ucker $(SL(r+1),q)$-opers. We also discuss nondegeneracy conditions for these objects. Section \ref{Sec:MiuraSL} addresses the one-to-one correspondence between $Z$-twisted Miura-Pl\"ucker opers and the nondegenerate solutions of the QQ-systems (and thus Bethe ansatz equations)  as well as their extension.  We also prove that  $Z$-twisted Miura-Pl\"ucker $(SL(r+1),q)$-opers are $Z$-twisted and relate extended QQ-system to quantum B\"acklund transformations, introduced in \cite{Frenkel:2020}. 

In Section \ref{Sec:QWronskians} we use the second definition of Miura $(SL(r+1),q)$-opers and show how $Z$-twisted condition as well as quantum B\"acklund transformations can be reformulated in terms of q-Wronskian matrices, extending the results of \cite{KSZ}. 

In Section \ref{Sec:FockSpace} we describe the fermionic realization of $Z$-twisted Miura $(SL(r+1),q)$-opers using the realization of the fundamental representations in the fermionic Fock space. We then use it as a motivation to write an infinite rank formula. To do that, in Section \ref{Sec:FockSpace} we introduce the group 
$\overline{GL}(\infty)$ and its representations in the fermionic Dirac sea, i.e. semi-infinite wedge space and then in Sections \ref{Sec:GLinfOpers} and \ref{Sec:MiuraPluckerInf} we extend the 
finite-dimensional notions of $(SL(r+1),q)$-oper theory from earlier
Sections to the case of 
$\overline{GL}(\infty)$. 
In particular, we show the relation between the corresponding infinite generalization of the $QQ$-system and $Z$-twisted Miura $(\overline{GL}(\infty),q)$-opers. 

Finally, Section \ref{Sec:QuantumToroidal} is devoted to the main target of the paper, the toroidal opers. These are nondegenerate $Z$-twisted Miura opers with certain periodicity conditions. 
The main goal of Section \ref{Sec:QuantumToroidal} is to show that they are in one-to-one correspondence with the nondegenerate solutions of the $QQ$-system for toroidal algebras.  We also discuss the relation to the enumerative geometry of ADHM spaces and generalizations to framed cyclic quiver varieties.

\vspace*{2mm}

\noindent{\bf Acknowledgments.} We thank E. Frenkel for his advices. P.K. is partially supported by AMS Simons grant. A.M.Z. is partially
supported by Simons Collaboration Grant, Award ID: 578501. 

\section{$(SL(r+1),q)$-opers}\label{Sec:SLrOpers}
\subsection{Group-theoretic data and notations.}
Consider $SL(r+1)$ be the simple algebraic group of invertible $(r+1)\times (r+1)$
matrices over $\mathbb{C}$.  We fix a Borel subgroup $B_-$ with unipotent
radical $N_-=[B_-,B_-]$ of lower triangular matrices and strictly lower triangular matrices correspondingly. The maximal torus is the corresponding set of diagonal matrices $H\subset B_-$.  Let $B_+$ be the opposite Borel subgroup containing $H$.  
Let $\{
\alpha_1,\dots,\alpha_{r} \}$ be the set of positive simple roots for
the pair $H\subset B_+$.  Let $\{ \check\alpha_1,\dots,\check\alpha_{r}
\}$ be the corresponding coroots. Then the elements of the Cartan
matrix of the Lie algebra $\mathfrak{sl}(r+1)$ of $G$ are given by $a_{ij}=\langle
\alpha_j,\check{\alpha}_i\rangle$. The Lie algebra $\mathfrak{sl}(r+1)$ has Chevalley
generators $\{e_i, f_i, \check{\alpha}_i\}_{i=1, \dots, r}$, so
that $\fb_-=\Lie(B_-)$ is generated by the $f_i$'s and the
$\check{\alpha}_i$'s and $\fb_+=\Lie(B_+)$ is generated by the $e_i$'s 
and the $\check{\alpha}_i$'s. 
In the defining representation $\check{\alpha}_i\equiv E_{ii}-E_{i+1,i+1}$, $e_i\equiv E_{i,i+1}$, $f_i\equiv E_{i-1,i}$, where $E_{ij}$ stand for the matrix  with the only nonzero element 1 at ij-th place. 
The fundamental weights $\omega_1,\dots\omega_r$ are defined by the condition $\langle \omega_i,
\check{\alpha}_j\rangle=\delta_{ij}$. 

Let $W_{SL(r+1)}=N(H)/H\equiv S_{r+1}$ be the Weyl group of $SL(r+1)$. Let $w_i\in W_{SL(r+1)}$, $(i=1,
\dots, r)$ denote the simple reflection corresponding to
$\alpha_i$. We also denote by $w_0$ be the longest element of $W$, so
that $B_+=w_0(B_-)$.  Recall that a Coxeter element of $W$ is a
product of all simple reflections in a particular order. It is known
that the set of all Coxeter elements forms a single conjugacy class in
$W_G$. We will fix once and for all (unless specified otherwise) a
particular ordering of the simple
roots, according to the natural ordering provide by Dynkin diagram. Let $c=w_{r}w_{r-1}\dots w_{1}$ be the Coxeter element associated 
to this ordering. In what follows (unless specified otherwise) all
products over $i \in \{ 1, \dots, r \}$ will be taken in this order;
thus, for example, we write $c=\prod_i w_i$.  We also fix
representatives $s_i\in N(H)$ of $w_i$. In particular, $s=\prod_i s_i$
will be a representative of $c$ in $N(H)$.

In the following we will denote the deformation parameter $q$ instead of $\hbar$ for convenience purposes.\footnote{The notation mix-up is due to the fact that in the context of the $q$-deformed geometric Langlands corresondence variable $\hbar$ is used for 
the deformation parameter of the quantum algebra, while $q$ is the parameter in the qKZ equation (see Introduction). Since we do not use qKZ equation anywhere in the paper, we renamed $\hbar$ as $q$.}

\subsection{$(SL(r+1),q)$-opers: two definitions}

Let's consider the automorphism $M_q: \P^1 \to \P^1$ sending $z \mapsto qz$, where
$q\in\C^\times$ is {\em not} a root of unity.

Given a principal $SL(r+1)$-bundle $\cF_{SL(r+1)}$ over $\P^1$ (in Zariski
topology), let $\cF_{SL(r+1)}^q$ denote its pullback under the map $M_q: \P^1
\to \P^1$ sending $z\mapsto qz$. A meromorphic $(SL(r+1),q)$-{\em
  connection} on a principal $SL(r+1)$-bundle $\cF_{SL(r+1)}$ on $\P^1$ is a section
$A$ of $\Hom_{\cO_{U}}(\cF_{SL(r+1)},\cF_{SL(r+1)}^q)$, where $U$ is a Zariski open
dense subset of $\P^1$. We can always choose $U$ so that the
restriction $\cF_{SL(r+1)}|_U$ of $\cF_{SL(r+1)}$ to $U$ is isomorphic to the trivial
$SL(r+1)$-bundle. Choosing such an isomorphism, i.e. a trivialization of
$\cF_{SL(r+1)}|_U$, we also obtain a trivialization of
$\cF_{SL(r+1)}|_{M_q^{-1}(U)}$. Using these trivializations, the restriction
of $A$ to the Zariski open dense subset $U \cap M_q^{-1}(U)$ can be
written as section of the trivial $SL(r+1)$-bundle on $U \cap M_q^{-1}(U)$,
and hence as an element $A(z)$ of $SL(r+1)(z)$, where  we set $K(z)=K(\C(z))$.
Changing the trivialization of $\cF_{SL(r+1)}|_U$ via $g(z) \in SL(r+1)(z)$ changes
$A(z)$ by the following $q$-{\em gauge transformation}:
\begin{equation}    \label{gauge tr}
A(z)\mapsto g(qz)A(z)g(z)^{-1}.
\end{equation}
This shows that the set of equivalence classes of pairs $(\cF_{SL(r+1)},A)$ as
above is in bijection with the quotient of $SL(r+1)(z)$ by the $q$-gauge
transformations \eqref{gauge tr}.
Equivalently, one could consider the associated to $\cF_{SL(r+1)}$ the vector bundle $E$ of rank $r+1$ over $\P^1$ and define $(SL(r+1),q)$-connection as a section of $\Hom_{\cO_{U}}(E,E^q)$, which is invertible and has determinant 1.

Following \cite{Frenkel:2020} we define a $(SL(r+1),q)$-oper as follows.

\begin{Def}    \label{qop}
  A meromorphic $(SL(r+1),q)$-{\em oper} (or simply a $q$-{\em oper}) on
  $\mathbb{P}^1$ is a triple $(\cF_{SL(r+1)},A,\cF_{B_-})$, where $A$ is a
  meromorphic $(SL(r+1),q)$-connection on a $SL(r+1)$-bundle $\cF_{SL(r+1)}$ on
  $\mathbb{P}^1$ and $\mathcal{F}_{B_-}$ is the reduction of $\cF_{SL(r+1)}$
  to $B_-$ satisfying the following condition: there exists a Zariski
  open dense subset $U \subset \P^1$ together with a trivialization
  $\imath_{B_-}$ of $\mathcal{F}_{B_-}$, such that the restriction of
  the connection $A: \cF_{SL(r+1)} \to \cF_{SL(r+1)}^q$ to $U \cap M_q^{-1}(U)$,
  written as an element of $SL(r+1)(z)$ using the trivializations of
  ${\mathcal F}_{SL(r+1)}$ and $\cF_{SL(r+1)}^q$ on $U \cap M_q^{-1}(U)$ induced by
  $\imath_{B_-}$ takes values in the Bruhat cell 
  $$
  B_-(\C[U \cap
  M_q^{-1}(U)]) \,c\, B_-(\C[U \cap M_q^{-1}(U)])\,.
  $$
\end{Def}

Thus locally, any $q$-oper connection
$A$ can be written (using a particular trivialization $\imath_{B_-}$)
in the form
\begin{eqnarray}    \label{qop1}
A(z)=n'(z)\prod_i (\phi_i(z)^{\check{\alpha}_i} \, s_i )n(z)
\end{eqnarray}
where $\phi_i(z) \in\C(z)$ and  $n(z), n'(z)\in N_-(z)$ are such that
their zeros and poles are outside the subset $U \cap M_q^{-1}(U)$ of
$\P^1$.

However, we used another definition in \cite{KSZ}, namely:

\begin{Def}    \label{qopflag}
  A meromorphic $(GL(r+1),q)$-{\em oper}  on
  $\mathbb{P}^1$ is a triple $(\mathcal{A},E, \mathcal{L}_{\bullet})$, where $E$ is  a vector bundle of rank $r+1$ and $\mathcal{L}_{\bullet}$ is the corresponding complete flag of the vector bundles, 
  $$\mathcal{L}_{r+1}\subset ...\subset \mathcal{L}_{i+1}\subset\mathcal{L}_i\subset\mathcal{L}_{i-1}\subset...\subset \mathcal{L}_1=E,$$ 
  where $\mathcal{L}_{r+1}$ is a line bundle, so that $q$-connection 
  $\mathcal{A}\in \Hom_{\cO_{U}}(E,E^q)$ 
  satisfies the following conditions:\\ 
i) $\mathcal{A}\cdot \mathcal{L}_i\subset \mathcal{L}_{i-1} $,\\
ii)  There exists a Zariski open dense subset $U \subset \P^1$, such that the restriction of $\mathcal{A}\in Hom(\mathcal{L}_{\bullet}, \mathcal{L}^q_{\bullet})$ to $U \cap M_q^{-1}(U)$ is invertible and satisfies the condition that the induced maps
  $\bar{\mathcal{A}}_i:\mathcal{L}_{i}/\mathcal{L}_{i+1}\to \mathcal{L}_{i-1}/\mathcal{L}_{i}$ are isomorphisms on $U \cap M_q^{-1}(U)$. \\
  An $(SL(r+1),q)$-$oper$ is a $(GL(r+1),q)$-oper with the condition that $det(\mathcal{A})=1$ on $U \cap M_q^{-1}(U)$.
\end{Def}

The equivalence of Definitions \ref{qop} and \ref{qopflag} can be proven along the same lines as the equivalence of the analogous definitions in the case of classical opers. 
One can derive the second definition from the first by considering the associated bundle 
$E=(\cF_{SL(r+1)}\times V_{\omega_1})/SL(r+1)$, where $V_{\omega_1}$ in the defining representation of $G$. That immediately provides a flag of subbundles in $E$, preserved by $B_-$. From the chosen order in the Coxeter element we obtain that the induced   $q$-connection on $E$ locally has the form of the matrix with coefficients in $\mathbb{C}(z)$ so that it has zeroes above the superdiagonal. That immediately leads to conditions i), ii) of Definition \ref{qopflag}. 
This way q-connection $A$ induces q-connection $\mathcal{A}$ in the Definition \ref{qopflag}. 
Notice that  the second definition implies local formula (\ref{qop1}) in the defining representation and thus by faithfulness the first definition follows from the second. In the following we will use the same notation for $A$ and $\mathcal{A}$: it will be clear from the context which q-connection is used.

\subsection{Miura $(SL(r+1),q)$-opers}\label{Miurafinsec}
Miura  condition for the  the $q$-opers
corresponds to the introduction of an additional datum: reduction of the underlying
$SL(r+1)$-bundle to the Borel subgroup $B_+$ (opposite to $B_-$) that is
preserved by the oper $q$-connection.

\begin{Def}    \label{Miura}
  A {\em Miura $(SL(r+1),q)$-oper} on $\mathbb{P}^1$ is a quadruple
  $(\cF_{SL(r+1)},A,\cF_{B_-},\cF_{B_+})$, where $(\cF_{SL(r+1)},A,\cF_{B_-})$ is a
  meromorphic $(SL(r+1),q)$-oper on $\P^1$ and $\cF_{B_+}$ is a reduction of
  the $SL(r+1)$-bundle $\cF_{SL(r+1)}$ to $B_+$ that is preserved by the
  $q$-connection $A$.
\end{Def}

An equivalent definition using flag of subbundles can be obtained by using the explicit  identification of $G/B_+$ with the flag variety.

\begin{Def}    \label{Miuraflag}
  A {\em Miura $(SL(r+1),q)$-oper} on $\mathbb{P}^1$ is a quadruple
  $(E, A, \mathcal{L}_{\bullet}, \hat{\mathcal{L}}_{\bullet})$, where $(E, A, \mathcal{L}_{\bullet})$ is a
  meromorphic $(SL(r+1),q)$-oper on $\P^1$ and $\hat{\mathcal{L}}_{\bullet}=\{\mathcal{L}_i\}$  is another full flag 
  of subbundles in $E$ that is preserved by the
  $q$-connection $A$.
\end{Def}

Forgetting $\cF_{B_+}$, we associate a $(SL(r+1),q)$-oper to a given Miura
$(SL(r+1),q)$-oper. We will refer to it as the $(SL(r+1),q)$-oper underlying this
Miura $(SL(r+1),q)$-oper.

From a point of view of local consideration, let $U$ be a Zariski open dense subset on $\P^1$ as in Definition
  \ref{qop}. Choosing a trivialization $\imath_{B_-}$ of $\cF_{SL(r+1)}$ on $U
  \cap M_q^{-1}(U)$, we can write the $q$-connection $A$ in the form
  \eqref{qop1}. On the other hand, using the $B_+$-reduction
  $\cF_{B_+}$, we can choose another trivialization of $\cF_{SL(r+1)}$ on $U
  \cap M_q^{-1}(U)$ such that the $q$-connection $A$ acquires the form
  $\wt{A}(z) \in B_+(z)$. Hence there exists $g(z) \in SL(r+1)(z)$ such that
\begin{equation}    \label{connecting}
g(zq) n'(z)\prod_i (\phi_i(z)^{\check{\alpha}_i} \, s_i )n(z)
g(z)^{-1} = \wt{A}(z) \in B_+(z).
\end{equation}

Suppose we are given a principal $SL(r+1)$-bundle $\cF_{SL(r+1)}$ on any smooth
complex manifold $X$ equipped with reductions $\cF_{B_-}$ and
$\cF_{B_+}$ to $B_-$ and $B_+$, respectively. Then we assign to any
point $x \in X$ an element of the Weyl group $S_{r+1}$. Namely, the fiber
$\cF_{SL(r+1),x}$ of $\cF_{SL(r+1)}$ at $x$ is a $G$-torsor with reductions
$\cF_{B_-,x}$ and $\cF_{B_+,x}$ to $B_-$ and $B_+$,
respectively. Choose any trivialization of $\cF_{SL(r+1),x}$, i.e. an
isomorphism of $SL(r+1)$-torsors $\cF_{SL(r+1),x} \simeq SL(r+1)$. Under this
isomorphism, $\cF_{B_-,x}$ gets identified with $aB_- \subset SL(r+1)$ and
$\cF_{B_+,x}$ with $bB_+$. Then $a^{-1}b$ is a well-defined element of
the double quotient $B_-\backslash SL(r+1)/B_+$, which is in bijection with
$W_{SL(r+1)}$. Hence we obtain a well-defined element of $W_{SL(r+1)}=S_{r+1}$.

We will say that $\cF_{B_-}$ and $\cF_{B_+}$ have a {\em generic
  relative position} at $x \in X$ if the element of $W_G$ assigned to
them at $x$ is equal to $1$ (this means that the corresponding element
$a^{-1}b$ belongs to the open dense Bruhat cell $B_- \cdot B_+ \subset
SL(r+1)$).

Using Bruhat decomposition:   $SL(r+1)(z) = \bigsqcup_{w \in W_{SL(r+1)}} B_+(z) w N_-(z)$, we claim that $g(z)$ from (\ref{connecting}) lies in the $w=1$ cell, namely: $
g(z) \in B_+(z) N_-(z).$

Using the notion of relative position, we can reformulate this local statement as the following theorem, 
which was proven in \cite{Frenkel:2020}: 

\begin{Thm}    \label{gen rel pos}
  For any Miura $(SL(r+1),q)$-oper on $\mathbb{P}^1$, there exists an open
  dense subset $V \subset \P^1$ such that the reductions $\cF_{B_-}$
  and $\cF_{B_+}$ are in generic relative position for all $x \in V$.
\end{Thm}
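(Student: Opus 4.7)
The theorem is the geometric reformulation of the local Bruhat-cell claim, already isolated in the excerpt just before the statement, that the gauge element $g(z)$ of \eqref{connecting} lies in $B_+(\C(z))\cdot N_-(\C(z))$. If this claim holds, the preimage under $g$ of the open Bruhat cell is open in $U\cap M_q^{-1}(U)$ and dense in $\P^1$, because $g(z)$ lies in that cell as a rational point of $SL(r+1)(\C(z))$. So the whole content of the theorem is this local cell claim about $g(z)$.

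I would argue it by contradiction. Decompose $SL(r+1)(\C(z))=\bigsqcup_{w\in W_{SL(r+1)}} B_+(\C(z))\,\dot w\, N_-(\C(z))$ (the mixed Bruhat decomposition), and suppose $g(z)\in B_+(\C(z))\,\dot w\, N_-(\C(z))$ for some $w\ne 1$. Factor $g(z)=u(z)\,\dot w\, n(z)$ with $u\in B_+(\C(z))$ and $n\in N_-(\C(z))$. Substituting into \eqref{connecting} and absorbing the $u$-factors (which already lie in $B_+$) gives the condition
\[\dot w\,\bigl(n(qz)\,A(z)\,n(z)^{-1}\bigr)\,\dot w^{-1}\in B_+(z).\]
Left and right multiplication by elements of $N_-(z)\subset B_-(z)$ preserves the Bruhat cell $B_-(z)\,c\,B_-(z)$, so the bracketed element $Y(z):=n(qz)A(z)n(z)^{-1}$ still lies there. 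Thus $Y(z)\in B_-(z)\,c\,B_-(z)\cap \dot w^{-1}B_+(z)\dot w$, and the proof reduces to showing that this intersection is empty for every $w\ne 1$.

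The main difficulty is this Bruhat-cell bookkeeping, since the intersection compares two different Borel decompositions (with respect to $B_-$ and with respect to $\dot w^{-1}B_+\dot w$). A more geometric route I would also pursue is to use the flag description of Definition~\ref{Miuraflag} and prove directly that the two full flags satisfy the transversality relations $\cL_i\cap\hat{\cL}_{r+3-i}=0$ generically for every $i$. If this failed on an open subset for some $i$, then the $A$-invariance of $\hat{\cL}_\bullet$, combined with $A\cdot \cL_i\subset \cL_{i-1}^q$ and the isomorphism condition on the graded maps $\bar{\cA}_j$ of Definition~\ref{qopflag}, would propagate the failure of transversality up the flag, eventually contradicting $\cL_1=E$. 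I expect this inductive propagation, which encodes the interplay between the Coxeter form of $A$ and the $A$-invariance of $\hat{\cL}_\bullet$, to be the technical heart of the argument.
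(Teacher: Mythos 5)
Your reduction of the theorem to the local statement $g(z)\in B_+(z)N_-(z)$ is exactly right, and it matches the only proof content this paper actually supplies: Theorem \ref{gen rel pos} is imported from \cite{Frenkel:2020} without proof, and what the paper records is precisely the equivalence between generic relative position on a dense subset and $g(z)$ from \eqref{connecting} lying in the $w=1$ cell of $SL(r+1)(z)=\bigsqcup_{w} B_+(z)wN_-(z)$. Your contradiction setup --- writing $g=u\,\dot w\,n$, absorbing the $B_+$-factors, and concluding that $Y(z)=n(qz)A(z)n(z)^{-1}$ lies in $B_-(z)\,c\,B_-(z)\cap \dot w^{-1}B_+(z)\dot w$ --- is also correct and is the route taken in the cited reference.

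The gap is that you stop at the one step that carries the entire content of the theorem: the emptiness of $B_-(z)\,c\,B_-(z)\cap\dot w^{-1}B_+(z)\dot w$ for $w\neq 1$. You flag it as ``the main difficulty'' and leave it unproved; everything before it is formal rearrangement, so as written the proposal establishes nothing. In type $A$ with the Coxeter element ordered as in the paper this step is in fact easy, and you should close it as follows. In the defining representation every element of $B_-\,c\,B_-$ has vanishing entries above the superdiagonal and \emph{nonzero} superdiagonal entries: the latter is exactly the requirement of Definition \ref{qopflag} that the induced maps $\bar{\mathcal{A}}_i:\cL_i/\cL_{i+1}\to\cL_{i-1}/\cL_i$ be isomorphisms, and concretely $(L_1\dot c L_2)_{i,i+1}=(L_1)_{ii}(\dot c)_{i,i+1}(L_2)_{i+1,i+1}\neq 0$ for $L_1,L_2\in B_-(z)$. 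On the other hand, for a monomial lift $\dot w$ one has $(\dot w^{-1}U\dot w)_{ij}=\pm\, U_{w(i),w(j)}$ for $U\in B_+(z)$, so every element of $\dot w^{-1}B_+(z)\dot w$ has vanishing $(i,i+1)$ entry whenever $w(i)>w(i+1)$. Since every $w\neq 1$ in $S_{r+1}$ has a descent, the two sets are disjoint, which completes the contradiction; the argument is valid over $\C(z)$, and your passage from the field-valued statement to density of the good locus in $\P^1$ is fine. Your alternative flag-transversality route is plausible but equally unfinished, and the inductive ``propagation'' you allude to would in any case reduce to the same interplay between a descent of $w$ and the invertibility of the graded maps $\bar{\mathcal{A}}_i$.
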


Returning back to the local expression (\ref{connecting}) we now wish to characterize the explicit representatives for $\tilde{A}(z)$.

\begin{Thm}    \label{gen elt}
 Every element of the set
$N_-(z)\prod_i\phi_i(z)^{\check{\alpha}_i}s_iN_- (z)\; \cap \; B_+(z)$ can be written
in the form
\begin{equation}    \label{gicheck}
\prod_i g_i^{\check{\alpha}_i}e^{\frac{ t_i(z)\phi_i(z)}{g_i}e_i}, \qquad
g_i \in \mathbb{C}^{\times}(z),
\end{equation}
where each $t_i(z) \in \mathbb{C}(z)$ is determined by the lifting $s_i$.
\end{Thm}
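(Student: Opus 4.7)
The plan is to verify the factorization by explicit computation in the defining representation of $SL(r+1)$, which is faithful and thus reduces the group-theoretic identity to a matrix identity. The key observation is that in this representation the matrix $M(z) := \prod_i \phi_i(z)^{\check\alpha_i} s_i$ is extremely sparse: up to signs fixed by the lifts $s_i$, its only nonzero entries occur on the superdiagonal at positions $(i,i+1)$ (each scaled by $\phi_i(z)$) and at the cyclic corner $(r+1,1)$ (carrying $\prod_i \phi_i(z)^{-1}$). A general element $X(z) = n'(z) M(z) n(z)$ with $n, n' \in N_-(z)$ is therefore a concrete matrix polynomial in the strictly-subdiagonal entries of $n$, $n'$ and the $\phi_i$'s.

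The argument then proceeds in three steps. First, a short bookkeeping using lower-triangularity of $n, n'$ and sparsity of $M$ shows that entries of $X(z)$ strictly above the superdiagonal vanish automatically: for $j > i+1$, the sum $X_{ij} = \sum_{k,\ell} (n')_{ik} M_{k\ell} n_{\ell j}$ forces $k \le i$, $\ell \ge j$, and $(k,\ell) \in \{(k,k+1)\} \cup \{(r+1,1)\}$, an incompatible system. Second, imposing $X(z) \in B_+(z)$ forces every subdiagonal entry to vanish; these equations cascade in triangular fashion, determining the lower entries of $n'$ in terms of those of $n$ together with the $\phi_i$'s and the (not yet fixed) diagonal values, while the cyclic-corner contribution produces one closing consistency relation that holds automatically because $\det X = 1$. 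Third, read off the diagonal entries as $g_i(z) \in \mathbb{C}^\times(z)$ and the superdiagonal entries as $-t_i \phi_i(z)$, where $t_i$ is a nonzero rational function determined by the lift $s_i$. Comparing with the direct matrix expression of $\prod_i g_i^{\check\alpha_i} e^{(t_i\phi_i/g_i) e_i}$, which has diagonal $g_i$, superdiagonal $g_i \cdot (t_i\phi_i/g_i) = t_i\phi_i$, and zero elsewhere, yields exact agreement; by faithfulness of the defining representation the claimed group-theoretic factorization follows.

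The main technical hurdle is the cascade of subdiagonal equations: one must track carefully the signs inherited from the lifts $s_i$ (which pin down the $t_i$) and verify that the single cyclic-corner equation imposes no extra constraint beyond the triangularity of $X$. A more abstract alternative would iterate the rank-one identity $e^{a f_i} s_i = s_i e^{-a e_i}$ inside each root subgroup $SL(2)_{\alpha_i}$, combined with the adjoint actions of $H$ and of Weyl conjugates on non-simple root vectors, to reduce the element to the claimed form inductively on the rank; this avoids matrix entries but requires delicate management of conjugated root vectors across distinct simple reflections, and in practice is no shorter than the direct matrix computation.
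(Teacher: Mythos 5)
Your proof is correct and is essentially the paper's own route: the paper defers the general statement to \cite{Frenkel:2020} and notes that for $SL(r+1)$, with the fixed order of the $s_i$, the claim ``follows directly from the matrix realization'' --- which is exactly the sparsity computation you carry out (the monomial matrix $\prod_i\phi_i^{\check\alpha_i}s_i$ is supported on the superdiagonal, with entries $t_i\phi_i$, plus the corner $(r+1,1)$, so conjugating by lower-unitriangular factors cannot create entries above the superdiagonal, and $B_+$-membership does the rest). One small remark: your middle step (solving the cascade of subdiagonal equations for $n'$ and checking the corner consistency relation via $\det X=1$) is not needed for the statement as given, since an element of the intersection is already assumed to lie in $B_+(z)$; the superdiagonal entries of $n'Mn$ equal $M_{i,i+1}=t_i\phi_i$ unconditionally, and the form \eqref{gicheck} is then read off from the bidiagonal shape.
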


This fact was proven in higher generality in \cite{Frenkel:2020}. 
Note that in the case of $SL(r+1)$ for a given order of $s_i$ this follows directly from the matrix realization. 

From now on we consider the liftings $s_i$ of simple reflections $w_i\in W$  in such a way that $t_i=1$ for $(i=1,\dots,r)$.

\section{Z-twisted Miura $(SL(r+1),q)$-opers }\label{Sec:ZtwistedMiura}
\subsection{Z-twisted (Miura) opers }
In this paper we consider a class of (Miura) $q$-opers that are gauge
equivalent to a constant element of $SL(r+1)$ (as $(SL(r+1),q)$-connections). Moreover,  we assume that such element 
$Z$ be the regular element of the maximal torus $H$. One can express it as follows 
\begin{equation}    \label{Z}
Z = \prod_{i=1}^r \zeta_i^{\check\alpha_i}, \qquad \zeta_i \in
\C^\times.
\end{equation}

\begin{Def}    \label{Ztwoper}
  A {\em $Z$-twisted $(SL(r+1),q)$-oper} on $\mathbb{P}^1$ is a $(SL(r+1),q)$-oper
  that is equivalent to the constant element $Z \in H \subset H(z)$
  under the $q$-gauge action of $SL(r+1)(z)$, i.e. if $A(z)$ is the
  meromorphic oper $q$-connection (with respect to a particular
  trivialization of the underlying bundle), there exists $g(z) \in
  G(z)$ such that
\begin{eqnarray}    \label{Ag}
A(z)=g(qz)Z g(z)^{-1}.
\end{eqnarray}
A {\em $Z$-twisted Miura $(SL(r+1),q)$-oper} is a Miura $(SL(r+1),q)$-oper on
$\mathbb{P}^1$ that is equivalent to the constant element $Z \in H
\subset H(z)$ under the $q$-gauge action of $B_+(z)$, i.e.
\begin{eqnarray}    \label{gaugeA}
A(z)=v(qz)Z v(z)^{-1}, \qquad v(z) \in B_+(z).
\end{eqnarray}
\end{Def}

It follows from Definition \ref{Ztwoper} that any $Z$-twisted
$(SL(r+1),q)$-oper is also $Z'$-twisted for any $Z'$ in the $S_{r+1}$-orbit of
$Z$. But if we endow it with the structure of a $Z$-twisted Miura
$(SL(r+1),q)$-oper (by adding a $B_+$-reduction $\cF_{B_+}$ preserved by the
oper $q$-connection), then we fix a specific element in this
$S_{r+1}$-orbit.

Thus we have the following Proposition, which allows to characterize $Z$-twisted Miura q-opers 
associated to $Z$-twisted q-opers.

\begin{Prop}    \label{Z prime}
  Let $Z \in H$ be regular. For any $Z$-twisted $(SL(r+1),q)$-oper $(\cF_{SL(r+1)},A,\cF_{B_-})$
  and any choice of $B_+$-reduction $\cF_{B_+}$ of $\cF_{SL(r+1)}$ preserved
  by the oper $q$-connection $A$, the resulting Miura $(SL(r+1),q)$-oper is
  $Z'$-twisted for a particular $Z' \in S_{r+1} \cdot Z$. The set of 
  $A$-invariant $B_+$-reductions $\cF_{B_+}$ on the 
  $(SL(r+1),q)$-oper is in one-to-one correspondence with the elements of $W$.
\end{Prop}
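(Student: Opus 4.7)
My plan is to reduce the statement to the classical description of $Z$-fixed points on the flag variety $SL(r+1)/B_+$ together with a rigidity property for $q$-equivariant sections. By the $Z$-twisted hypothesis there exists $g(z)\in SL(r+1)(z)$ with $A(z)=g(qz)Zg(z)^{-1}$; I would use $g(z)^{-1}$ to change trivialization so that the $q$-connection becomes the constant $Z$. In this trivialization the associated flag-variety bundle is trivial, and specifying an $A$-invariant $B_+$-reduction $\cF_{B_+}$ is the same data as specifying a section $\sigma\colon \P^1\to SL(r+1)/B_+$ satisfying the $q$-equivariance $\sigma(qz)=Z\cdot\sigma(z)$.

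The core step is to show that every such $\sigma$ must be constant. Evaluating at the $M_q$-fixed points $z=0$ and $z=\infty$ immediately forces $\sigma(0),\sigma(\infty)$ to lie in the $Z$-fixed locus of $SL(r+1)/B_+$, which by regularity of $Z$ is the finite set $\{\dot w B_+/B_+\colon w\in W\}$, canonically in bijection with $W=S_{r+1}$ (each such flag is $Z$-fixed because $\dot w^{-1}Z\dot w\in H\subset B_+$). To rule out non-constant $\sigma$, I would analyze the one-dimensional $\langle Z\rangle$-orbit closures on the flag variety: each such closure is a $\P^1$ on which $Z$ acts by scaling with weight $\alpha(Z)^{\pm 1}$ for a root $\alpha$, so a non-constant $q$-equivariant $\sigma$ landing in one of them would force a resonance $\alpha(Z)=q^n$ for some $n\in\mathbb{Z}$, which is excluded by the combination of $q$ not being a root of unity and sufficient regularity of $Z$. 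This produces the desired bijection between $A$-invariant $B_+$-reductions and the elements of $W$.

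For the reduction indexed by $w\in W$, I then apply the further constant gauge $\dot w^{-1}$. This puts the $B_+$-reduction into standard position and converts the constant connection $Z$ into $Z':=\dot w^{-1}Z\dot w=w^{-1}(Z)\in S_{r+1}\cdot Z$. Any two trivializations in which the $B_+$-reduction is in standard position are related by an element of $B_+(z)$, so comparing the trivialization compatible with $\cF_{B_+}$ to the one just constructed produces $v(z)\in B_+(z)$ with $A(z)=v(qz)Z'v(z)^{-1}$; this is exactly the $Z'$-twisted Miura condition of Definition \ref{Ztwoper}. Regularity of $Z$ ensures that distinct $w$ yield distinct Weyl conjugates $Z'$, so the assignment $w\mapsto(\cF_{B_+},Z')$ is injective, and combined with the bijection established in the previous paragraph this completes the description of the set of $A$-invariant $B_+$-reductions.

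The main obstacle is the constancy of $\sigma$ in the second paragraph: the assertion that any algebraic $q$-equivariant section must reduce to a $Z$-fixed constant. Everything else is formal reshuffling of gauge transformations together with the familiar combinatorics of $T$-fixed Borel subgroups, and in particular the identification of the specific Weyl conjugate $Z'=w^{-1}(Z)$ associated to each reduction follows automatically once the bijection is in place.
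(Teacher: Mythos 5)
Your overall strategy --- gauge the connection to the constant $Z$, identify $A$-invariant $B_+$-reductions with sections $\sigma:\P^1\to SL(r+1)/B_+$ satisfying $\sigma(qz)=Z\sigma(z)$, and use the fact that the $Z$-fixed points of the flag variety for regular $Z$ are the $|W|$ standard Borels --- is the same mechanism the paper has in mind (its remark before the Proposition about $g(z)$ lying in the $w=1$ Bruhat cell is the Bruhat-decomposition packaging of exactly this). The identification $Z'=\dot w^{-1}Z\dot w$ and the passage to $v(z)\in B_+(z)$ are also correct. However, there is a genuine gap in your central step, the constancy of $\sigma$, and it occurs in two places. First, you assert that a non-constant equivariant $\sigma$ must land in a one-dimensional $\langle Z\rangle$-orbit closure; the image of $\sigma$ is only an irreducible $Z$-stable curve, not a priori a single orbit closure, so this needs an argument. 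The clean replacement is local: near $z=0$ write $\sigma(z)=\exp\bigl(\sum_\alpha x_\alpha(z)f_\alpha\bigr)\cdot\dot wB_+$ in unipotent coordinates around the fixed point $\sigma(0)=\dot wB_+$; equivariance gives $x_\alpha(qz)=\alpha(Z)^{\pm1}x_\alpha(z)$, and a rational function satisfying $x(qz)=cx(z)$ with $q$ not a root of unity is a monomial $az^{k}$ with $q^{k}=c$.

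Second, and more seriously, that computation shows that \emph{regularity of $Z$ is not enough} to kill the monomial solutions: regularity only says $\alpha(Z)\neq 1$ (i.e.\ excludes $k=0$), not $\alpha(Z)\notin q^{\Z\setminus\{0\}}$. In the resonant case the bijection with $W$ genuinely fails: already for $SL(2)$ with $\zeta^{2}=q^{k}$, $k\neq 0$, the maps $\sigma_a(z)=[az^{k}:1]$, $a\in\C^\times$, form a one-parameter family of $Z$-invariant $B_+$-reductions beyond the two fixed points. So your appeal to ``sufficient regularity'' is hiding a hypothesis of the type \eqref{assume} (extended to all roots), which the paper only imposes later. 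Note also that the first assertion of the Proposition --- that the resulting Miura oper is $Z'$-twisted with $Z'=w(Z)$ --- does \emph{not} require constancy of $\sigma$ at all: decomposing the trivializing element as $v(z)\dot w\,n(z)$ with $v\in B_+(z)$, $n\in (N_-\cap \dot w^{-1}N_-\dot w)(z)$, the condition $A(z)\in B_+(z)$ forces $n(qz)=Zn(z)Z^{-1}$ and hence the $n$-factor cancels, leaving $A(z)=v(qz)\,\dot wZ\dot w^{-1}\,v(z)^{-1}$ whether or not $n\equiv 1$. Your proof makes the first claim contingent on the non-resonance condition when only the counting claim actually needs it.
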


\subsection{(Miura) q-opers with regular singularities}\label{Miurareg}

Let $\{ \Lambda_i(z) \}_{i=1,\ldots,N-1}$ be a collection of
non-constant polynomials.

\begin{Def}    \label{d:regsing}
  A $(SL(r+1),q)$-{\em oper with regular singularities determined by $\{
    \Lambda_i(z) \}_{i=1,\ldots,r}$} is a $q$-oper on $\P^1$ whose
  $q$-connection \eqref{qop1} may be written in the form
\begin{equation}    \label{Lambda}
A(z)= n'(z)\prod_i(\Lambda_i(z)^{\check{\alpha}_i} \, s_i)n(z), \qquad
n(z), n'(z)\in N_-(z).
\end{equation}

  {\em A Miura $(SL(r+1),q)$-oper with regular singularities determined by
polynomials $\{ \Lambda_i(z) \}_{i=1,\ldots,r}$} is a Miura
  $(SL(r+1),q)$-oper such that the underlying $q$-oper has
regular singularities determined by $\{ \Lambda_i(z)
\}_{i=1,\ldots,r}$.
\end{Def}

The following theorem follows from Theorem \ref{gen elt} and gives an explicit parameterization of generic elements from the space of Miura opers.

\begin{Thm}    \label{Miura form}
For every Miura $(SL(r+1),q)$-oper with regular singularities determined by
the polynomials $\{ \Lambda_i(z) \}_{i=1,\ldots,r}$, the underlying
$q$-connection can be written in the form 
\begin{equation}    \label{form of A}
A(z)=\prod_i
g_i(z)^{\check{\alpha}_i} \; e^{\frac{\Lambda_i(z)}{g_i(z)}e_i}, \qquad
g_i(z) \in \C(z)^\times.
\end{equation}
\end{Thm}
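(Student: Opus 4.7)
The plan is to combine the two natural descriptions of the $q$-connection $A$---its oper form coming from the $\cF_{B_-}$-reduction and its $B_+(z)$-form coming from the Miura condition---to produce a single trivialization in which $A(z)$ lies simultaneously in the double coset $N_-(z)\prod_i \Lambda_i(z)^{\check{\alpha}_i} s_i N_-(z)$ and in $B_+(z)$. Once such a trivialization is produced, Theorem \ref{gen elt}, together with our choice of lifts $s_i$ which forces $t_i(z)=1$, will immediately deliver the form \eqref{form of A}.

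Concretely, from Definition \ref{d:regsing} I first choose a $\cF_{B_-}$-compatible trivialization on a Zariski open dense subset of $\P^1$ in which $A(z)=n'(z)\prod_i \Lambda_i(z)^{\check{\alpha}_i} s_i\, n(z)$ with $n(z),n'(z)\in N_-(z)$. The Miura datum $\cF_{B_+}$, being $A$-invariant, furnishes a second trivialization in which $A$ is represented by some $\widetilde A(z)\in B_+(z)$. Let $g(z)\in SL(r+1)(z)$ be the change-of-trivialization element, so that $\widetilde A(z)=g(qz)A(z)g(z)^{-1}$.

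By Theorem \ref{gen rel pos}, the reductions $\cF_{B_-}$ and $\cF_{B_+}$ are in generic relative position on an open dense subset of $\P^1$, which is precisely the statement that $g(z)$ lies in the big Bruhat cell $B_+(z)N_-(z)$ there. I factor $g(z)=b_+(z)n_-(z)$ with $b_+(z)\in B_+(z)$, $n_-(z)\in N_-(z)$, and then perform an auxiliary $q$-gauge transformation by $n_-(z)$; since $n_-(z)\in N_-(z)\subset B_-(z)$, this is a bona fide refinement of the $\cF_{B_-}$-trivialization, and the new representative $\widehat A(z):=n_-(qz)A(z)n_-(z)^{-1}$ still lies in $N_-(z)\prod_i \Lambda_i(z)^{\check{\alpha}_i}s_i N_-(z)$ (this double coset is stable under left and right multiplication by $N_-(z)$, and the polynomials $\Lambda_i(z)$ are unchanged because the $N_-$-factors are simply absorbed into $n'(z)$ and $n(z)$). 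On the other hand $\widehat A(z)=b_+(qz)^{-1}\widetilde A(z)b_+(z)\in B_+(z)$. Thus $\widehat A(z)$ belongs to the intersection targeted by Theorem \ref{gen elt}, and that theorem---with $t_i(z)=1$---yields $\widehat A(z)=\prod_i g_i(z)^{\check{\alpha}_i}e^{\frac{\Lambda_i(z)}{g_i(z)}e_i}$, matching \eqref{form of A}.

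The main technical point to verify is that the Bruhat factorization $g(z)=b_+(z)n_-(z)$ can be carried out meromorphically in $z$; this holds because on the big Bruhat cell the projections onto $B_+$ and $N_-$ are rational functions of the matrix entries of $g$, so the pointwise statement supplied by Theorem \ref{gen rel pos} upgrades to a meromorphic factorization on the relevant open dense subset. With that in place, the remainder of the argument is a direct application of the structure theorems from the previous subsections.
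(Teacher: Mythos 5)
Your argument is correct and is essentially the paper's own route: the paper disposes of this theorem by remarking that it follows from Theorem \ref{gen elt}, relying implicitly on exactly the chain you spell out --- the local expression \eqref{connecting} relating the $\cF_{B_-}$- and $\cF_{B_+}$-trivializations, the generic relative position of Theorem \ref{gen rel pos} giving $g(z)\in B_+(z)N_-(z)$, the gauge by the $N_-(z)$-factor (which preserves the double coset and the $\Lambda_i$), and then Theorem \ref{gen elt} with $t_i=1$. Your added remark on the rationality of the Gauss factorization is a legitimate detail the paper leaves tacit, not a deviation in method.
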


\subsection{Cartan connections} Consider a Miura $(SL(r+1),q)$-oper. By Corollary
\ref{Miura form}, the underlying $(SL(r+1),q)$-connection can be written in the
form \eqref{form of A}. Since it preserves the $B_+$-bundle
$\cF_{B_+}$ underlying this Miura $(SL(r+1),q)$-oper (see Definition
\ref{Miura}), it may be viewed as a meromorphic $(B_+,q)$-connection
on $\P^1$. Taking the quotient of $\cF_{B_+}$ by $N_+ = [B_+,B_+]$ and
using the fact that $B/N_+ \simeq H$, we obtain an $H$-bundle
$\cF_{B_+}/N_+$ and the corresponding $(H,q)$-connection, which we
denote by $A^H(z)$. According to formula \eqref{form of A}, it is
given by the formula
\begin{equation}    \label{AH}
A^H(z)=\prod_ig_i(z)^{\check{\alpha}_i}.
\end{equation}
We call $A^H(z)$ the \emph{associated Cartan $q$--connection} of the
Miura $q$-oper $A(z)$.

Now, if our Miura $q$-oper is $Z$-twisted (see Definition
\ref{Ztwoper}), then we also have $A(z)=v(qz)Z v(z)^{-1}$, where
$v(z)\in B_+(z)$.  Since $v(z)$ can be written as
\begin{equation}    \label{vz}
v(z)=
\prod_i y_i(z)^{\check{\alpha}_i} n(z), \qquad n(z)\in N_+(z), \quad
y_i(z) \in \C(z)^\times,
\end{equation}
the Cartan $q$-connection $A^H(z)$ has the form
\begin{equation}    \label{AH1}
A^H(z)=\prod_i
y_i(qz)^{\check{\alpha}_i} \; Z \; \prod_i y_i(z)^{-\check{\alpha}_i}
\end{equation}
and hence we will refer to $A^H(z)$ as $Z$-{\em twisted Cartan
  $q$-connection}. This formula shows that $A^H(z)$ is completely
determined by $Z$ and the rational functions $y_i(z)$. Indeed,
comparing this equation with \eqref{AH} gives
\begin{equation}    \label{giyi}
g_i(z)=\zeta_i\frac{y_i(qz)}{y_i(z)}\,.
\end{equation}

We note that $A^H(z)$ determines the $y_i(z)$'s uniquely
up to scalar.  Indeed, if there is another choice $\tilde{y}_i(z)$, we obtain that $h_i(z)=y_i(z)\tilde{y}^{-1}_i(z)$ satisfies the equation $h_i(qz)=h_i(z)$. Given the condition that $q$ is not a root of unity, $h_i(z)$ has to be constant.

\section{Miura-Pl\"ucker $(SL(r+1),q)$-opers }  \label{Sec:MiuraPlucker}
In this section, we will talk about the notion of nondegeneracy and we will relax the Z-twisted condition slightly (we refer to \cite{Frenkel:2020} for details). We will associate to the given $(SL(r+1),q)$-Miura oper a collection of $(GL(2),q)$-opers and require that all of them are Z-twisted with some nondegeneracy conditions.  This will lead to the notion of $Z$-twisted Miura-Pl\"ucker q-opers. It turns out, as we will find out in the Section 5, that these objects, supplied by nondegeneracy condition are in one-to-one correspondence with solutions of certain equations called QQ-system, which are in turn related to Bethe Ansatz equations.  Also, in the next section we will show that for $SL(r+1)$ this relaxed $Z$-twisted Miura-Pl\"ucker condition is equivalent to the original $Z$-twisted condition.

\subsection{The associated Miura $(GL(2),q)$-opers and Miura-Pl\"ucker condition} \label{gl2op}
Let $V_i$ be the irreducible representation of $SL(r+1)$ with the highest weight $\omega_i$. 
Notice, that the one-dimensional and two-dimensional subspaces $L_i$ and $W_i$ of
$V_i$ spanned by the weight vectors $\nu_{\omega_i}$ (the highest weight vector),  and $\nu_{\omega_i}$, $f_i \cdot\nu_{\omega_i}$ are a $B_+$-invariant subspaces of $V_i$.

Now let $(\cF_{SL(r+1)},A,\cF_{B_-},\cF_{B_+})$ be a Miura $(SL(r+1),q)$-oper
with regular singularities determined by 
polynomials $\{ \Lambda_i(z)
\}_{i=1,\ldots,r}$ (see Definition \ref{d:regsing}). Recall that
$\cF_{B_+}$ is a $B_+$-reduction of a $G$-bundle $\cF_{SL(r+1)}$ on $\P^1$
preserved by the $(SL(r+1),q)$-connection $A$. Therefore for each
$i=1,\ldots,r$, the vector bundle
$$
\cV_i = \cF_{B_+} \underset{B_+}\times V_i = \cF_{SL(r+1)} \underset{SL(r+1)}\times
V_i
$$
associated to $V_i$ contains a rank two
subbundle
$$
\cW_i = \cF_{B_+} \underset{B_+}\times W_i
$$
associated to $W_i \subset V_i$, and $\cW_i$ in
turn contains a line subbundle
$$
{\mathscr L}_i = \cF_{B_+} \underset{B_+}\times L_i
$$
associated to $L_i \subset W_i$.

Denote by $\phi_i(A)$ the $q$-connection on the vector bundle $\cV_i$
corresponding to the
above Miura $q$-oper connection $A$. Since, by definition $A$ preserves $\cF_{B_+}$, we obtain that $\phi_i(A)$ preserves the
subbundles $\mathscr{L}_i$ and $\cW_i$ of $\cV_i$ and thus produces $(GL(2),q)$-oper on $\cW_i$. 
Let us denote such q-oper by $A_i$.

If we trivialize $\cF_{B_+}$ on a Zariski open subset of $\P^1$ so
that $A(z)$ has the form \eqref{form of A} with respect to this
trivialization (see Corollary \ref{Miura form}). This trivializes the
bundles $\cV_i$, $\cW_i$, and $\mathscr L_i$, so that the $q$-connection
$A_i(z)$ becomes a $2 \times 2$ matrix whose entries are in $\C(z)$. Moreover, $\cW_i$ decomposes into direct sum of two subbundles, $\hat{\mathscr{L}}_i$, preserved by $B_+$ and $\mathscr{L}_i$ with respect to which it satisfies the (GL(2),q)-oper condition. We can unify all that in the following Proposition.

\begin{Prop} \cite{Frenkel:2020}   \label{2flagthm}
The quadruple $(A_i, \cW_i, \mathscr{L}_i, \hat{\mathscr{L}}_i)$ forms a $(GL(2),q)$ Miura oper, so that explicitly: 
\begin{equation}    \label{2flagformula}
A_i(z)=\begin{pmatrix}
  g_i(z) &  &\Lambda_i(z) \prod_{j>i} g_j(z)^{-a_{ji}}\\
&&\\  
  0 & &g^{-1}_i(z) \prod_{j\neq i} g_j(z)^{-a_{ji}}
 \end{pmatrix},
\end{equation}
where we use the ordering of the simple roots determined by the
Coxeter element $c$.
\end{Prop}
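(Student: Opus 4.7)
My plan is to deduce the statement from a direct local computation using the explicit form of the Miura connection from Theorem \ref{Miura form}, namely $A(z)=\prod_i g_i(z)^{\check\alpha_i}e^{\Lambda_i(z)e_i/g_i(z)}$. The idea is to read off the action of each factor on the two-dimensional subspace $W_i=\mathrm{span}\{\nu_{\omega_i},\,f_i\cdot\nu_{\omega_i}\}\subset V_i$, multiply the resulting $2\times 2$ matrices in the Coxeter order, and then interpret the result as a $(GL(2),q)$-Miura oper on $\cW_i$.

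I would first verify that $A(z)$ preserves $\cW_i$, so that it restricts to a $q$-connection $A_i$ on $\cW_i$. The Cartan element $g_j^{\check\alpha_j}$ preserves weight spaces. For the exponentials, when $j\neq i$ the Chevalley relations give $e_j\cdot\nu_{\omega_i}=0$ (highest weight) and $e_j\cdot f_i\nu_{\omega_i}=f_i\cdot e_j\nu_{\omega_i}=0$, so $e_j$ acts as zero on $W_i$; when $j=i$, the operator $e_i$ is nilpotent of square zero on $W_i$ since $e_i\cdot\nu_{\omega_i}=0$ and $e_i\cdot f_i\nu_{\omega_i}=\nu_{\omega_i}$. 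Using $\langle\omega_i,\check\alpha_j\rangle=\delta_{ij}$ and $\langle\omega_i-\alpha_i,\check\alpha_j\rangle=\delta_{ij}-a_{ji}$, the matrix of the $j$-th factor on $W_i$ in the ordered basis $\{\nu_{\omega_i},\,f_i\cdot\nu_{\omega_i}\}$ becomes $\mathrm{diag}(1,g_j^{-a_{ji}})$ for $j\neq i$, and the upper-triangular matrix with diagonal entries $(g_i,g_i^{-1})$ and upper-right entry $\Lambda_i$ for $j=i$.

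Next I would multiply these upper-triangular matrices in the Coxeter-induced order. The diagonal entries compose straightforwardly: the $(1,1)$-entry of the product is $g_i$, and the $(2,2)$-entry is $g_i^{-1}\prod_{j\neq i}g_j^{-a_{ji}}$. Since only the $i$-th factor has a nonzero off-diagonal entry, the upper-right entry of the product equals $\Lambda_i$ times the product of the $(2,2)$-entries of the factors appearing on one specific side of the $i$-th factor in the Coxeter ordering, producing $\Lambda_i\prod_{j>i}g_j^{-a_{ji}}$ as in \eqref{2flagformula}. To upgrade this matrix form to the full $(GL(2),q)$-Miura oper structure, I would note that upper-triangularity implies the line bundle $\mathscr{L}_i$, corresponding to the $B_+$-invariant highest weight line of $W_i$, is preserved by $A_i$; the oper condition reduces to the non-vanishing of the upper-right entry $\Lambda_i(z)\prod_{j>i}g_j(z)^{-a_{ji}}$ on a Zariski open dense subset, which holds because $\Lambda_i$ is a nonzero polynomial and the $g_j(z)$ are nonzero rational functions. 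The existence of the complementary flag $\hat{\mathscr{L}}_i$ completing the Miura data follows by applying Theorem \ref{gen rel pos} to the resulting $(GL(2),q)$-oper, or equivalently by tracking the second $B$-reduction inherited from the global Miura structure on $\cF_{SL(r+1)}$.

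The principal technical obstacle is the bookkeeping in the Coxeter-ordered product: since only the $i$-th factor contributes a nontrivial off-diagonal entry, its position in the Coxeter sequence determines exactly which index set $\{j>i\}$ or $\{j<i\}$ appears in the final formula. Handling this carefully, and cleanly distinguishing the global $SL(r+1)$-theoretic data (the Cartan matrix entries $a_{ji}$ and the reductions $\cF_{B_\pm}$) from the $GL(2)$-theoretic data of the associated Miura oper on $\cW_i$, is what makes the argument non-routine.
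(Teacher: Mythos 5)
Your computation is correct and follows essentially the same route as the paper (and the cited source \cite{Frenkel:2020}): trivialize $\cF_{B_+}$ so that $A(z)$ takes the canonical form \eqref{form of A}, check via the Chevalley relations and the weights $\omega_i$, $\omega_i-\alpha_i$ that each factor with $j\neq i$ acts on $W_i$ as $\mathrm{diag}(1,g_j^{-a_{ji}})$ while the $i$-th factor contributes the upper-triangular block, and multiply. The one point you flag but do not resolve — whether $\prod_{j>i}$ or $\prod_{j<i}$ lands in the $(1,2)$-entry — is settled by noting that only the factors standing to the \emph{right} of the $i$-th one rescale the second column; with the product in \eqref{form of A} read in increasing order of $i$ (the convention that formulas \eqref{ri} and Lemma \ref{Th:BCHAConnection} implicitly confirm), those are the $j>i$ factors, giving exactly \eqref{2flagformula}.
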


Now we impose the condition (\ref{AH1}) on the corresponding $A^H$ connection, namely
$$g_i=\zeta_i\frac{y_i(qz)}{y_i(z)}.$$

Let $G_i\cong \SL(2)$ be the subgroup of $SL(r+1)$ corresponding to the
 $\mathfrak{sl}(2)$-triple spanned by $\{e_i, f_i,
 \check{\alpha}_i\}$, which preserves $W_i$. 
Performing the gauge transformation via diagonal matrix for \ref{2flagformula}, we can represent the resulting connection as follows:
\begin{align}    \notag
\wt{A}_i(z) = u(qz)A_i(z)u^{-1}(z)&= \begin{pmatrix} 1 & 0 \\ 0 & \prod_{j\ne i}
  \zeta_j^{-a_{ji}}
\end{pmatrix} {\mc A}_i(z) \\    \label{Ai}
&= \begin{pmatrix} 1 & 0 \\ 0 & \prod_{j\ne i}
  \zeta_j^{-a_{ji}}
\end{pmatrix} g_i^{\check \alpha_i}(z) \;
e^{\frac{\rho_i(z)}{g_i(z)}e_i}.
\end{align}
where, where
 \begin{equation}\label{ri}
 \rho_i(z)=\Lambda_i(z)\prod_{j> i} (\zeta_j
 y_j(qz))^{-a_{ji}}\prod_{j<i}y_j(z)^{-a_{ji}}.
 \end{equation}
 
Thus, under the assumption \eqref{AH1}, our Miura $(SL(r+1),q)$-oper $A(z)$
gives rise to a collection of meromorphic Miura $(\SL(2), q)$-opers
${\mc A}_i(z)$ for $i=1,\ldots,N-1$. It should be noted that it has
regular singularities in the sense of Definition~\ref{d:regsing} if
and only if $\rho_i(z)$ is a polynomial. For example, this holds for
all $i$ if all $y_j(z), j=1,\dots, N-1$, are polynomials, an observation
we will use below.

Now we are ready to relax $Z$-twisted condition as follows.

\begin{Def}\cite{Frenkel:2020}    \label{ZtwMP}
  A $Z$-{\em twisted Miura-Pl\"ucker $(SL(r+1),q)$-oper} is a meromorphic
  Miura $(SL(r+1),q)$-oper on $\P^1$ with the underlying $q$-connection
  $A(z)$, such that there exists $v(z) \in B_+(z)$ such that for all
  $i=1,\ldots,r$, the Miura $(\GL(2),q)$-opers $A_i(z)$ associated to
  $A(z)$ by formula \eqref{2flagformula} can be written in the form:
\begin{equation}    \label{gaugeA3}
A_i(z) = v(zq) Z v(z)^{-1}|_{W_i} = v_i(zq)Z_iv_i(z)^{-1}
\end{equation}
where $v_i(z) = v(z)|_{W_i}$ and $Z_i = Z|_{W_i}$.
\end{Def}

Note, that it follows from the above definition that the
$(H,q)$-connection $A^H(z)$ associated to a $Z$-twisted
Miura-Pl\"ucker $(SL(r+1),q)$-oper can be written in the same form
\eqref{AH1} as the $(H,q)$-connection associated to a $Z$-twisted
Miura $(SL(r+1),q)$-oper.

However, while it is true that every $Z$-twisted Miura $(SL(r+1),q)$-oper is automatically a
$Z$-twisted Miura-Pl\"ucker $(SL(r+1),q)$-oper, but the converse is not
necessarily true if $r \neq 1$.

\subsection{Nondegeneracy conditions}\label{nondegsl}
In what follows, we will say that $v, w \in \mathbb{C}^\times$ are
\emph{$q$-distinct} if $q^\Z v\cap q^\Z w=\varnothing$.

In this subsection  we introduce two nondegeneracy conditions for $Z$-twisted
Miura-Pl\"ucker $q$-opers. The first of them, called the
$H$-nondegeneracy condition, is applicable to arbitrary Miura
$q$-opers with regular singularities. Recall from Corollary \ref{Miura
  form} that the underlying $q$-connection can be represented in the
form \eqref{form of A}.

\begin{Def}\cite{Frenkel:2020}    \label{nondeg Cartan}
  A Miura $(SL(r+1),q)$-oper $A(z)$ of the form \eqref{form of A} is called
  $H$-\emph{nondegene\-rate} if the corresponding $(H,q)$-connection
  $A^H(z)$ can be written in the form \eqref{AH1}, where for all
  $i,j,k$ with $i\ne j$ and $a_{ik} \neq 0, a_{jk} \neq
    0$, the zeros and poles of $y_i(z)$ and $y_j(z)$ are
  $q$-distinct from each other and from the zeros of
  $\Lambda_k(z)$.
\end{Def}

Next, we define the second nondegeneracy condition. This condition
applies to $Z$-twisted Miura-Pl\"ucker $(SL(r+1),q)$-opers.  Firstly, we start from $(SL(2),q)$-opers.

Consider a Miura $(\SL(2),q)$-oper given by formula \eqref{form of A},
which reads in this case:
$$
A(z)=g(z)^{\check{\alpha}} \; \on{exp}\left( \frac{\Lambda(z)}{g(z)}e
\right) 
$$
so that the corresponding Cartan
$q$-connection $A^H(z)$ is 
$
A^H(z)=g(z)^{\check{\alpha}} 
$, 
where $y(z)$ is a rational function. Let us assume that $A(z)$ is
$H$-nondegenerate (see Definition \ref{nondeg Cartan}). This means
that the zeros of $\Lambda(z)$ are $q$-distinct from the zeros and
poles of $y(z)$.

If we apply to $A(z)$ a $q$-gauge transformation by an element of
$h(z)^{\check\alpha} \in H[z]$, we obtain a new $q$-oper connection
\begin{equation}    \label{wtA}
\wt{A}(z) = \wt{g}(z)^{\check{\alpha}} \; \on{exp}\left(
  \frac{\wt\Lambda(z)}{\wt{g}(z)}e \right),
\end{equation}
where
$\wt{g}(z) = g(z) h(zq) h(z)^{-1}$,  $\wt\Lambda(z) =
\Lambda(z) h(zq) h(z)$. 
It also has regular singularities, but for a
different polynomial $\wt{\Lambda}(z)$, and $\wt{A}(z)$ may no longer
be $H$-nondegenerate.  However, it turns out there is an essentially
unique gauge transformation from $H[z]$ for which the resulting
$\wt{A}(z)$ is $H$-nondegenerate $\wt{A}^H(z)$ and $\wt{y}(z)$
is a polynomial.  This choice allows us to fix the polynomial
$\Lambda(z)$ determining the regular singularities of our
$(SL(2),q)$-oper.

\begin{Lem}  \cite{Frenkel:2020}  \label{nondegsl2}
\begin{enumerate}
\item There is an $(\SL(2),q)$-oper $\wt{A}(z)$ in the $H[z]$-gauge
  class of $A(z)$ for which $\wt{A}^H(z)=\wt{g}(z)^{\check{\alpha}}$ is
  nondegenerate and the rational function $\wt{y}(z)$ is a
  polynomial. This oper is unique up to a scalar $a \in \C^\times$ that
  leaves $\wt{g}(z)$ unchanged but multiplies $\wt{y}(z)$ and
  $\wt{\Lambda}(z)$ by $a$ and $a^2$, respectively.
\item This $(\SL(2),q)$-oper $\wt{A}(z)$ may also be characterized by
  the property that $\wt{\Lambda}(z)$ has maximal degree subject to
  the constraint that it is $H$-nondegenerate.
\end{enumerate}
\end{Lem}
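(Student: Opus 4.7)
The plan is to analyze the effect of an $H[z]$-gauge transformation $h(z)^{\check\alpha}$ on the Miura pair $(y(z),\Lambda(z))$ and pin down an essentially unique choice making $\wt y$ polynomial and $\wt A^H$ nondegenerate. Starting from the formulas above \eqref{wtA}, the transformation reads
\[
\wt y(z) = h(z)\,y(z), \qquad \wt\Lambda(z) = \Lambda(z)\,h(z)\,h(qz).
\]
First I would write $y(z) = p(z)/q(z)$ with $p,q \in \C[z]$ coprime. Then $\wt y$ is polynomial if and only if $q \mid h$, so write $h = q\cdot k$ with $k \in \C[z]$. Substituting gives $\wt y = p\,k$ and $\wt\Lambda = \Lambda(z)\,q(z)q(qz)\,k(z)k(qz)$.

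For the uniqueness in part (1), the nondegeneracy condition—interpreted as $q$-distinctness between the zeros of $\wt y$ and the zeros of $\wt\Lambda$, as in Definition \ref{nondeg Cartan}—forces $k$ to be a nonzero constant. Indeed, any root $\alpha$ of $k$ is simultaneously a root of $\wt y = pk$ (via the $k$ factor) and of $\wt\Lambda$ (via the $k(z)$ factor), so taking $n=0$ contradicts $q$-distinctness. Hence $h = c\,q$ for some $c \in \C^\times$, yielding $\wt y = c\,p$ and $\wt\Lambda = c^2\,\Lambda\,q(z)\,q(qz)$. The residual parameter $c$ is exactly the scalar $a$ of the statement: rescaling $c \mapsto a\cdot c$ leaves $\wt g = \zeta\,\wt y(qz)/\wt y(z)$ invariant, multiplies $\wt y$ by $a$, and multiplies $\wt\Lambda$ by $a^2$.

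For existence, I would then take $h = q$, producing $\wt y = p$ and $\wt\Lambda = \Lambda \cdot q(z)q(qz)$. The zeros of $p$ are $q$-distinct from the zeros of $\Lambda$ by the hypothesis that $A$ is $H$-nondegenerate, and they are $q$-distinct from the zeros of $q(z)q(qz)$ once we assume $y = p/q$ is written in $q$-reduced form (that is, no zero and pole of $y$ lie in a common $q^{\Z}$-orbit); $q$-distinctness is invariant under the shift $z \mapsto qz$, so controlling $q(z)$ alone controls both factors. Part (2) is then a direct consequence: $\deg \wt\Lambda = \deg\Lambda + 2\deg h$ is monotone in $\deg h$, and the uniqueness argument shows that any $h$ yielding an $H$-nondegenerate $\wt A$ with polynomial $\wt y$ must be $c\,q$; hence $\deg\wt\Lambda$ is maximized precisely at $\deg\Lambda + 2\deg q$, achieved uniquely up to the scalar $a$.

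The main obstacle is the verification of nondegeneracy after the gauge, specifically establishing $q$-distinctness between the zeros of $p$ and those of $q$. This is not automatic from coprimality of $p$ and $q$, since they can share $q^{\Z}$-orbits, and such a coincidence cannot be removed by any polynomial gauge $h$. Resolving this requires either a preliminary reduction of $y$ to $q$-reduced form (absorbing $q^{\Z}$-orbit cancellations into the original choice of trivialization) or sharpening the $H$-nondegeneracy hypothesis so that this configuration is excluded from the start.
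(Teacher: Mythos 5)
This lemma is imported from \cite{Frenkel:2020} and not reproved in the paper, so there is no internal proof to compare against; your argument is the standard one and is essentially correct where it is complete. The transformation law $\wt{y}=hy$, $\wt{\Lambda}(z)=\Lambda(z)h(z)h(qz)$, the reduction to $h=q\cdot k$ via coprimality of $p$ and $q$, and the observation that any root of $k$ is a common root of $\wt{y}=pk$ and $\wt{\Lambda}$ (killing $q$-distinctness at $n=0$) give exactly the uniqueness claim, including the correct action of the residual scalar on $(\wt{g},\wt{y},\wt{\Lambda})$.

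The obstacle you flag at the end is not a genuine gap in the lemma but a symptom of reading the $H$-nondegeneracy hypothesis too weakly. Definition \ref{nondeg Cartan} asks that the zeros \emph{and poles} of the $y_i$ be $q$-distinct ``from each other'' as well as from the zeros of the $\Lambda_k$, and in the rank-one specialization this is meant to include that the zeros and the poles of $y$ itself lie in distinct $q^{\Z}$-orbits; the one-line paraphrase printed just above the lemma (mentioning only the zeros of $\Lambda$) understates the condition. With that reading, writing $y=p/q$ in lowest terms, the zeros of $p$ are automatically $q$-distinct from those of $q(z)$, hence also from those of $q(qz)$, and your existence argument with $h=q$ closes. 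Your instinct is vindicated by the fact that without this the statement is actually false --- e.g.\ $y(z)=(z-1)/(z-q)$ admits no admissible gauge, since every candidate $\wt{\Lambda}$ acquires the factor $q(qz)$ vanishing at $z=1$, a zero of $\wt{y}$ --- so the $q$-reduced form you propose is not an optional normalization but is already built into the hypothesis. One smaller point: part (2) quantifies over \emph{all} $H$-nondegenerate members of the $H[z]$-gauge class, not only those with polynomial $\wt{y}$, so your monotonicity argument needs one more step. Observe that $H$-nondegeneracy of $\wt{A}$ forces every root of $h$ to be a pole of $y$ with multiplicity in $h$ equal to the pole order (otherwise $\wt{\Lambda}$ and $\wt{y}$ share a zero, or a zero of $\wt{\Lambda}$ meets a pole of $\wt{y}$, both forbidden); hence $h$ divides $q$ up to a scalar, $\deg\wt{\Lambda}=\deg\Lambda+2\deg h\le\deg\Lambda+2\deg q$, and the maximum is attained exactly at $h=c\,q$, which is the oper of part (1).
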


This motivates the following definition.

\begin{Def} \cite{Frenkel:2020}   \label{ngsl2}
  A $Z$-twisted Miura $(\SL(2),q)$-oper is called \emph{nondegenerate}
  if it is $H$-nondegenerate and the rational function $y(z)$
  appearing in formula \eqref{AH1} is a polynomial.
\end{Def}

We now turn to the general case. Recall Definition \ref{ZtwMP} of
$Z$-twisted Miura-Pl\"ucker $(SL(r+1),q)$-opers.

\begin{Def} \cite{Frenkel:2020}    \label{nondeg Miura}
  Suppose that $r>1$. A $Z$-twisted
  Miura-Pl\"ucker $(SL(r+1),q)$-oper $A(z)$ is called \emph{nondegenerate} if
  its associated Cartan $q$-connection $A^H(z)$ is nondegenerate and
  each associated $Z_i$-twisted Miura $(\SL(2),q)$-oper $\mathcal{A}_i(z)$ is
  nondegenerate.
\end{Def}

It turns out that this simply means that in addition to $A^H(z)$ being
nondegenerate, each $y_i(z)$ from formula \eqref{AH1} is a polynomial. Here we provide the complete proof, since we will need it for infinite-dimensional case.

\begin{Prop} \cite{Frenkel:2020}    \label{nondeg1}
  Suppose that  $r>1$, and let $A(z)$ be a
  $Z$-twisted Miura-Pl\"ucker $(SL(r+1),q)$-oper.
  The following statements are equivalent:
\begin{enumerate}
\item\label{nondegen1} $A(z)$ is nondegenerate.
  \item\label{nondegen2} The Cartan $q$-connection $A^H(z)$ is
    nondegenerate, and each
    $A_i(z)$ has regular singularities, i.e. $\rho_i(z)$ given
    by formula \eqref{ri} is in $\C[z]$.
    \item\label{nondegen3} Each $y_i(z)$ from formula \eqref{AH1} is a
      polynomial, and for all $i,j,k$ with $i\ne
      j$ and $a_{ik} \neq 0, a_{jk} \neq
    0$, the zeros of $y_i(z)$ and $y_j(z)$ are
  $q$-distinct from each other and from the zeros of
  $\Lambda_k(z)$.
\end{enumerate}
\end{Prop}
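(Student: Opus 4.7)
My plan is to establish the three-way equivalence via the chain $(1) \Leftrightarrow (3) \Leftrightarrow (2)$. The implication $(1) \Leftrightarrow (3)$ and the direction $(3) \Rightarrow (2)$ both reduce to unfolding the relevant definitions together with a short inspection of formula \eqref{ri}, while the substantive implication is $(2) \Rightarrow (3)$, which I would attack by a $q$-distinctness argument ruling out cancellations of an unwanted pole.

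For $(1) \Leftrightarrow (3)$, Definition \ref{nondeg Miura} unpacks nondegeneracy of $A(z)$ as $H$-nondegeneracy of $A^H(z)$ together with nondegeneracy of every $\mathcal{A}_i(z)$ as a $Z_i$-twisted Miura $(SL(2),q)$-oper. The first half is, by Definition \ref{nondeg Cartan}, exactly the $q$-distinctness assertion of (3). For the second half, observe that the $H$-nondegeneracy clause of Definition \ref{nondeg Cartan} becomes vacuous in rank one, because no pair of distinct Cartan indices exists, so Definition \ref{ngsl2} reduces simply to the requirement that the rational function $y_i(z)$ appearing in $g_i(z) = \zeta_i y_i(qz)/y_i(z)$ (the Cartan part of $\mathcal{A}_i(z)$) is a polynomial. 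Combining the two halves gives (3). For $(3) \Rightarrow (2)$, the $q$-distinctness half of (3) is literally $H$-nondegeneracy of $A^H(z)$, and in \eqref{ri} every exponent $-a_{ji}$ with $j \ne i$ is a non-negative integer (it is $0$ or $1$ in the simply-laced $A_r$ case), so polynomiality of every $y_j(z)$ forces $\rho_i(z) \in \C[z]$ directly.

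The main obstacle is $(2) \Rightarrow (3)$. I would argue by contradiction: assume $y_i(z)$ has a pole at some $\alpha \in \C^\times$. Since $r > 1$, the node $i$ has at least one Dynkin neighbour $j$, so $-a_{ij} = 1$. Inspecting \eqref{ri} for $\rho_j(z)$, the factor contributed by $y_i$ is $y_i(z)$ if $i < j$ and $\zeta_i y_i(qz)$ if $i > j$, so $\rho_j(z)$ inherits a pole at some $\beta \in q^\Z \alpha$. For $\rho_j(z)$ to lie in $\C[z]$, this pole must be cancelled by a zero of $\Lambda_j(z)$, or by one of the remaining factors $y_k(z)^{-a_{kj}}$ (for $k<j$) or $(\zeta_k y_k(qz))^{-a_{kj}}$ (for $k>j$) with $k \ne i$; any such zero would sit in $q^\Z \beta = q^\Z \alpha$. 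But $H$-nondegeneracy, applied with the index $k=j$ and the pair of neighbours $(i,k)$ of $j$, asserts precisely the $q$-distinctness of the zeros and poles of $y_i$ from those of $y_k$ and from the zeros of $\Lambda_j$, forbidding every such cancellation. The contradiction shows $y_i(z)$ has no poles and is therefore a polynomial, completing (3). The subtle point is that the $q$-shifted factor $y_k(qz)$ is handled by the same argument because $q^\Z \alpha$ is stable under multiplication by $q$; and the hypothesis $r > 1$ is essential, as it is what guarantees the existence of a neighbour $j$ so that $y_i$ actually appears in some $\rho_j$.
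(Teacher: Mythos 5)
Your substantive step, the implication $(2)\Rightarrow(3)$, is the same pole-cancellation argument the paper uses: assume $y_i$ has a pole, feed it through \eqref{ri} into $\rho_j$ for a Dynkin neighbour $j$, and invoke $H$-nondegeneracy to rule out any cancellation against $\Lambda_j$ or the other $y_k$'s. The paper closes the three-way equivalence as $(2)\Rightarrow(3)\Rightarrow(1)\Rightarrow(2)$ rather than your $(1)\Leftrightarrow(3)\Leftrightarrow(2)$, but that difference is cosmetic, and your $(3)\Rightarrow(2)$ matches the paper's observation that the off-diagonal Cartan entries are nonpositive.

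The one genuine issue is your claim that the $H$-nondegeneracy clause of Definition \ref{ngsl2} is vacuous in rank one, so that nondegeneracy of the associated $(\SL(2),q)$-oper $\mathcal{A}_i(z)$ reduces to polynomiality of $y_i(z)$ alone. That is a literal reading of the quantifier ``$i\ne j$'' in Definition \ref{nondeg Cartan} which the paper does not intend: in the rank-one discussion preceding Lemma \ref{nondegsl2} the paper states explicitly that $H$-nondegeneracy of an $(\SL(2),q)$-oper means the zeros of its $\Lambda(z)$ --- which for $\mathcal{A}_i$ is $\rho_i(z)$ --- are $q$-distinct from the zeros and poles of its $y(z)$. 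Consequently condition $(1)$ contains a requirement not literally present in $(3)$, and your direction $(3)\Rightarrow(1)$ is missing the verification that the zeros of $\rho_i(z)$ are $q$-distinct from those of $y_i(z)$. The fix is one line --- $\rho_i$ is a product of $\Lambda_i$ and ($q$-shifted) $y_j$'s with $j$ adjacent to $i$, all of whose roots are $q$-distinct from the roots of $y_i$ by $(3)$ --- and the paper's proof includes exactly this sentence. Your direction $(1)\Rightarrow(3)$ is unaffected, since it only uses that nondegeneracy of $\mathcal{A}_i$ forces $y_i$ to be a polynomial.
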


\begin{proof} To prove that \eqref{nondegen2} implies
  \eqref{nondegen3}, we need only show that if each $\rho_i(z)$ given
  by formula \eqref{ri} is in $\C[z]$, then the $y_i(z)$'s are
  polynomials.  Suppose $y_i(z)$ is not a polynomial, and choose $j\ne
  i$ such that $a_{ij} \neq 0$.  Then $-a_{ij}>0$ and so the
  denominator of $y_i(z)$ or $y_i(qz)$ appears in the denominator of
  $\rho_j(z)$.  Moreover, since the poles of $y_i(z)$ are $q$-distinct
  from the zeros of $\Lambda_j(z)$ and the other $y_k(z)$'s, the poles
  of $y_i(z)$ or $y_i(qz)$ would give rise to poles of
  $\rho_j(z)$. But then $A_j(z)$ would not have regular singularities.

  Next, assume \eqref{nondegen3}. Then $A^H(z)$ is nondegenerate by
  Definition \ref{nondeg Cartan}.  Since all the $y_i(z)$'s are
  polynomials, the same if true for the $\rho_i(z)$'s.  (Here we
  are using the fact that the off-diagonal elements of the Cartan
  matrix, $a_{ij}$ with $i\neq j$, are less than or equal to 0.)
  Since $\rho_i(z)$ is a product of polynomials whose roots are
  $q$-distinct from the roots of $y_i(z)$, we see that the Cartan
  $q$-connection associated to $A_i(z)$ is nondegenerate.

Finally, \eqref{nondegen2} is a trivial consequence of
\eqref{nondegen1}.
\end{proof}

If we apply a $q$-gauge transformation by an element $h(z)\in H[z]$ to
$A(z)$, we get a new $Z$-twisted Miura-Pl\"ucker $(SL(r+1),q)$-oper.
However, the following proposition shows that it is only nondegenerate
if $h(z)\in H$ is constant with respect to $z$.  As a consequence, the $\Lambda_k$'s of a
nondegenerate $q$-oper are determined up to scalar multiples.

\begin{Prop} \cite{Frenkel:2020} If $A(z)$ is a nondegenerate $Z$-twisted Miura-Pl\"ucker \\
$(SL(r+1),q)$-oper and $h(z)\in H[z]$, then $h(qz)A(z)h(z)^{-1}$ is
  nondegenerate if and only if $h(z)$ is a constant element of $H$.
\end{Prop}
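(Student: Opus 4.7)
The plan is to reduce both implications to the uniqueness statement of Lemma~\ref{nondegsl2} applied to each associated Miura $(\SL(2),q)$-oper ${\mc A}_i(z)=g_i(z)^{\check\alpha_i}e^{\rho_i(z)/g_i(z)\,e_i}$ of Subsection~\ref{gl2op}.

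For the easy direction $(\Leftarrow)$, I would verify directly that conjugation by a constant $h=\prod_i h_i^{\check\alpha_i}\in H$ commutes with every Cartan factor in \eqref{form of A} and only rescales each $\Lambda_i(z)$ by the nonzero constant $\prod_j h_j^{a_{ji}}$. Using \eqref{giyi} and \eqref{ri}, this yields $\tilde g_i(z)=g_i(z)$, $\tilde y_i(z)=h_i y_i(z)$, and $\tilde\rho_i(z)=h_i^2\rho_i(z)$. Since the extra factors are nonzero constants, every zero set in $\C^\times$ is preserved and the $q$-distinctness hypotheses of Proposition~\ref{nondeg1} carry over unchanged, so $\tilde A$ remains nondegenerate.

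For the converse $(\Rightarrow)$, write $h(z)=\prod_i h_i(z)^{\check\alpha_i}$. A direct Cartan computation of $h(qz)A(z)h(z)^{-1}$ should give
\begin{equation*}
\tilde g_i(z)=g_i(z)\,\frac{h_i(qz)}{h_i(z)},\qquad \tilde y_i(z)=h_i(z)y_i(z),
\end{equation*}
and, after cancellation of all factors $h_j$ with $j\neq i$ when one substitutes $\tilde y_j=h_j y_j$ and the transformed $\tilde\Lambda_i$ into \eqref{ri}, also $\tilde\rho_i(z)=h_i(z)h_i(qz)\rho_i(z)$. This cancellation is exactly what shows that the $SL(r+1)$-gauge by $h$ descends on each two-dimensional subbundle $W_i$ to a genuine $H[z]$-gauge of $\SL(2)$ acting on ${\mc A}_i(z)$ by the element $h_i(z)^{\check\alpha_i}$, producing $\tilde{\mc A}_i(z)$.

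By Definition~\ref{nondeg Miura}, the nondegeneracy of $A$ and $\tilde A$ implies that ${\mc A}_i$ and $\tilde{\mc A}_i$ are both nondegenerate Miura $(\SL(2),q)$-opers in the same $H[z]$-gauge orbit. Lemma~\ref{nondegsl2}(1) then forces them to differ only by a constant scalar, which does not alter the Cartan factor; hence $\tilde g_i(z)=g_i(z)$, i.e.\ $h_i(qz)=h_i(z)$. Because $q$ is not a root of unity, this functional equation forces each $h_i$ to be constant (any nonempty finite set of zeros or poles in $\C^\times$ would have to be invariant under the free infinite cyclic action $z\mapsto qz$). Thus $h(z)\in H$. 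The main delicate point of the plan is the verification of the cancellation in the formula for $\tilde\rho_i$ --- without it, the induced gauge on ${\mc A}_i$ would involve all $h_j$'s and Lemma~\ref{nondegsl2} could not be applied directly.
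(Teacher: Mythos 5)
Your proposal is correct, and it follows the route that the cited source \cite{Frenkel:2020} takes (the present paper states this Proposition without proof, deferring to that reference): reduce to the associated Miura $(\SL(2),q)$-opers ${\mc A}_i(z)$ and invoke the uniqueness part of Lemma~\ref{nondegsl2} to force $h_i(qz)=h_i(z)$. The one step you flagged as delicate does check out: with the conventions of \eqref{2flagformula}, \eqref{Ai} and \eqref{ri} one finds $\wt\Lambda_i(z)=\Lambda_i(z)\,h_i(qz)h_i(z)\,h_{i-1}(z)^{-1}h_{i+1}(qz)^{-1}$, and these $h_{i\pm1}$ factors cancel exactly against those coming from $\wt y_{i+1}(qz)=h_{i+1}(qz)y_{i+1}(qz)$ and $\wt y_{i-1}(z)=h_{i-1}(z)y_{i-1}(z)$, yielding $\wt\rho_i(z)=h_i(z)h_i(qz)\rho_i(z)$ as claimed.
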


\section{Z-twisted Miura $(SL(r+1),q)$-opers and $QQ$-systems}\label{Sec:MiuraSL}

\subsection{$QQ$-systems and Miura-Pl\"ucker  $(SL(r+1),q)$-opers}

One of the main  results of previous section was the explicit structure  of  the  non-degenerate  Miura-Pl\"ucker  $(SL(r+1),q)$-oper   
with regular singularities defined by $\{\Lambda_i(z)\}_{i=(1,\dots, r)}$ and associated with regular element $Z=\prod_i\zeta_i^{\check{\alpha}_i}$. Following Proposition \ref{nondeg1} the local expression, namely $A(z)$ can be expressed as follows:
\begin{equation}\label{form of A1}
A(z)=\prod_i
g_i(z)^{\check{\alpha}_i} \; e^{\frac{\Lambda_i(z)}{g_i(z)}e_i}, \qquad
g_i(z)=\zeta_i\frac{Q_i^+(qz)}{Q_i^+(z)}\,.
\end{equation}
where $Q_i^{+}(z)$ are monic polynomials (here we changed the notation $y_i(z)\equiv Q_i^{+}(z)$). 
From now on, we will assume that $Z$ satisfies the following property:
\begin{equation}    \label{assume}
\prod_{i=1}^r \zeta_i^{a_{ij}}=\frac{\zeta_j^2}{\zeta_{j+1}\zeta_{j-1}} \notin q^\Z, \qquad
\forall j=1,\ldots,r\,,
\end{equation}
where $a_{ij}$ are matrix elements of the Cartan matrix for $\mathfrak{sl}_{r+1}$.
Since $\prod_{i=1}^r \zeta_i^{a_{ij}}\ne 1$ is a special case of
\eqref{assume}, this implies that $Z$ is {\em regular semisimple}.

\subsection{The $SL(r+1)$ $QQ$-system}
In \cite{Frenkel:2020} the following statement was proven (we specialize that result to the case of $SL(r+1)$):
\begin{Thm}    \label{inj}
  There is a one-to-one correspondence between the set of
  nondegenerate $Z$-twisted Miura-Pl\"ucker $(SL(r+1),q)$-opers and the set
  of nondegenerate polynomial solutions of the $QQ$-system 
\begin{equation}\label{eq:QQAtype}
\xi_{i} Q^+_i(q z) Q^-_i(z) - \xi_{i+1} Q^+_i(z) Q^-_i(qz) = \Lambda_i (z) Q^+_{i-1}(z)Q^+_{i+1}(qz)\,, \qquad i = 1,\dots, r
\end{equation}
subject to the boundary conditions $Q^\pm_{0}(z)=Q^\pm_{r+1}(z)=1$ and $\xi_0=\xi_{r+2}=1$ so that  
$$
\xi_1=\zeta_1,\quad \xi_2= \frac{\zeta_2}{\zeta_1},\quad \dots \quad \xi_r=\frac{\zeta_{r}}{\zeta_{r-1}},\quad \xi_{r+1}= \frac{1}{\zeta_r}\,.
$$
\end{Thm}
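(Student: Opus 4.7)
The plan is to reduce the theorem to the $SL(2)$ case by exploiting the associated Miura $(GL(2),q)$-opers $A_i(z)$ from Proposition \ref{2flagthm}. By definition of a $Z$-twisted Miura-Pl\"ucker oper, each $A_i(z)$ is $Z_i$-twisted on the two-dimensional subbundle $\cW_i$, so the core of the argument is a $2\times 2$ matrix identification performed separately on each $W_i$. A single global $v(z)\in B_+(z)$ suffices because, on $W_i$, every $e_j$ with $j\neq i$ annihilates both $\nu_{\omega_i}$ and $f_i\nu_{\omega_i}$ (since $e_j\nu_{\omega_i}=0$ and $[e_j,f_i]\nu_{\omega_i}=\delta_{ij}\nu_{\omega_i}$), so the restriction $v(z)|_{W_i}$ depends only on one scalar component of the $N_+$-part of $v(z)$; in particular, the order in any product $\prod_j \exp(u_j(z)e_j)$ is immaterial after restriction to $W_i$.

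For the forward direction, begin with a nondegenerate $Z$-twisted Miura-Pl\"ucker oper. Proposition \ref{nondeg1} supplies polynomials $Q_i^+(z)=y_i(z)$ with $g_i(z)=\zeta_i Q_i^+(qz)/Q_i^+(z)$. Writing $v(z)=\prod_i y_i(z)^{\check\alpha_i}\,n(z)$, the restriction $v(z)|_{W_i}$ takes the form $\begin{pmatrix} y_i & y_i c_i \\ 0 & y_{i-1}y_{i+1}/y_i\end{pmatrix}$ for a single scalar $c_i(z)$. Computing $v(qz)Zv(z)^{-1}|_{W_i}$ and matching it against $A_i(z)$ from \eqref{2flagformula}, the diagonal entries agree automatically (using the type-$A$ identities $-a_{ji}=\delta_{|i-j|,1}$). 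The off-diagonal entry, after substituting $c_i(z) = -(\zeta_{i+1}/\zeta_{i-1})\,Q_i^-(z)/Q_i^+(z)$, rearranges into exactly
\begin{equation*}
\xi_i Q_i^+(qz)Q_i^-(z) - \xi_{i+1}Q_i^+(z)Q_i^-(qz) = \Lambda_i(z)Q_{i-1}^+(z)Q_{i+1}^+(qz).
\end{equation*}
Polynomiality of $Q_i^-$ is forced by the $H$-nondegeneracy in Proposition \ref{nondeg1}: $c_i Q_i^+$ can a priori only have poles at zeros of $Q_i^+$, but the right-hand side above is regular there because the roots of $\Lambda_i$ and $Q_{i\pm 1}^+$ are $q$-distinct from those of $Q_i^+$, so $c_i Q_i^+$ must be a polynomial.

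For the reverse direction, given a nondegenerate polynomial solution $\{Q_i^\pm\}$ of the $QQ$-system, define $g_i(z)$ as above, assemble $A(z)$ via \eqref{form of A1}, and take $v(z) = \prod_i Q_i^+(z)^{\check\alpha_i} \cdot \prod_i \exp\!\bigl(-(\zeta_{i+1}/\zeta_{i-1})\,Q_i^-(z)/Q_i^+(z)\cdot e_i\bigr)$ in any fixed order. Reading the forward computation in reverse identifies $v(qz)Zv(z)^{-1}|_{W_i}$ with $A_i(z)$ for every $i$, establishing the Miura-Pl\"ucker condition, while the nondegeneracy equivalences of Proposition \ref{nondeg1} translate nondegeneracy of the solution into nondegeneracy of the oper. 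The two maps are manifestly inverse modulo the common rescaling $(Q_i^+,Q_i^-)\mapsto(aQ_i^+,aQ_i^-)$, which is absorbed by the normalization of $Q_i^+$. The main technical obstacle is the careful bookkeeping of multiplicative constants among $\zeta_k$, $\xi_k$, and the eigenvalues of $Z_i$ on $W_i$ needed to arrive at the asymmetric $QQ$-equation with the precise coefficients $\xi_i$ and $\xi_{i+1}$; the polynomiality argument via nondegeneracy is the main conceptual step.
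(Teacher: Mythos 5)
Your reduction to the rank-two subbundles $\mathcal{W}_i$ and the entrywise matching of $v(qz)Zv(z)^{-1}|_{W_i}$ against $A_i(z)$ from Proposition \ref{2flagthm} is exactly the strategy of the proof in \cite{Frenkel:2020}, which is where the present paper takes Theorem \ref{inj} from (the paper only cites the result rather than reproving it). Your justification for why a single global $v(z)\in B_+(z)$ suffices --- only the $e_i$-component of the $N_+$-part of $v$ survives restriction to $W_i$, since $e_j\nu_{\omega_i}=0$ and $e_jf_i\nu_{\omega_i}=\delta_{ij}\nu_{\omega_i}$ --- is correct, and your constant bookkeeping with $c_i=-(\zeta_{i+1}/\zeta_{i-1})\,Q_i^-/Q_i^+$ does land on \eqref{eq:QQAtype} with the advertised coefficients $\xi_i=\zeta_i/\zeta_{i-1}$ and $\xi_{i+1}=\zeta_{i+1}/\zeta_i$.

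The one step that does not stand as written is the polynomiality of $Q_i^-$. The assertion that ``$c_iQ_i^+$ can a priori only have poles at zeros of $Q_i^+$'' has no justification: $c_i$ is a component of an arbitrary element of $N_+(z)$, so a priori it is an arbitrary rational function with poles anywhere. What actually forces polynomiality is the difference equation together with the nondegeneracy hypotheses. One first notes that the rational solution $\phi_i$ (equal to $c_i$ up to a nonzero constant) of $\xi_i\phi_i(z)-\xi_{i+1}\phi_i(qz)=\rho_i(z)$ is unique, because the homogeneous equation $\xi_i\psi(z)=\xi_{i+1}\psi(qz)$ has no nonzero rational solution under assumption \eqref{assume}; one then examines an extremal pole of $\phi_i$ within each $q^{\mathbb{Z}}$-orbit and compares with the poles of $\rho_i$, which by $H$-nondegeneracy sit exactly at the zeros of $Q_i^+(z)Q_i^+(qz)$ (the roots of $\Lambda_i$ and $Q_{i\pm1}^+$ being $q$-distinct from those of $Q_i^+$), to pin the poles of $\phi_i$ to the zeros of $Q_i^+$ and conclude that $Q_i^-=\phi_iQ_i^+$ is a polynomial. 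This extremal-pole/partial-fraction argument is the genuine analytic content of the rank-one case in \cite{Frenkel:2020}; without it the forward direction of your bijection is incomplete. The remainder of your sketch, including the converse construction and the uniqueness-up-to-rescaling discussion, is sound.
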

Note, that $\xi_i$ is the $i$th element on the diagonal of $Z$ from \eqref{Z}.

We will say that a polynomial solution $\{ Q_i^+(z),Q_i^-(z)
\}_{i=1,\ldots,r}$ of \eqref{eq:QQAtype} is {\em nondegenerate} if the following conditions are satisfied: relation \eqref{assume} holds;
for $i\neq j$ the zeros of $Q^+_i(z)$ and $Q^-_{j}(z)$ are
$q$-distinct from each other and from the zeros of $\Lambda_{k}(z)$ for $|i-k|=1,\,|j-k|=1$.

For the convenience we will rewrite \eqref{eq:QQAtype} as follows:
\begin{equation}\label{eq:QQgamma}
\xi_{i} \phi_i(z)- \xi_{i+1} \phi_i(qz) = \rho_i(z)\,,
\end{equation}
where 
\begin{equation}
\phi_i(z)=  \frac{Q^-_i(z)}{Q^+_i(z)} \,,\qquad \rho_i(z)= \Lambda_i (z)\frac{Q^+_{i-1}(qz)Q^+_{i+1}(z)}{Q^+_i(z)Q^+_i(qz)} \,.
\end{equation}

\subsection{Extended $QQ$-system and $Z$-twisted $(SL(r+1),q)$-opers}
As it was demonstrated in \cite{Frenkel:2020} for a simply-connected simple Lie group $G$ the set of nondegenerate $Z$-twisted Miura-Pl\"ucker q-opers includes as a subset the set of $Z$-twisted Miura $(G,q)$-opers.
The opposite inclusion was possible provided that $Z$-twisted Miura-Pl\"ucker q-opers are in addition $w_0$-generic (see Theorem 7.10). We will discuss this notion in detail later, in subsection 5.6. 

In this section we shall demonstrate that when $G$ is a special linear group then we do not need this extra condition and that the corresponding  $Z$-twisted Miura-Pl\"ucker $(SL(r+1),q)$-oper will be $Z$-twisted Miura q-oper, namely there exists $v(z) \in B_+(z)$, such that the q-connection $A(z)$ reduces to an element of the form \eqref{Z}, or, equivalently
\begin{equation}\label{eq:qGaugeTrSpecial}
v(qz)^{-1}A(z)= Zv(z)^{-1}\,.
\end{equation}
Moreover, we will construct explicit expression for $v(z)$.

The following statement is a generalization of the result of \cite{Mukhin_2005} to $Z$-twisted q-opers.

\begin{Thm}\label{th:SLNqMiura}
Let $A(z)$ be as in \eqref{form of A1} and $Z$ as in \eqref{Z}. 
Suppose $Q^-_{i,i+1,\dots, j}(z)$ ( $i,j\in\mathbb{Z}$, $i<j$) are polynomials, satisfying equations:
\begin{align}\label{eq:QQAll}
\xi_{i} \,\phi_i(z)-\xi_{i+1}\,  \phi_i(qz)&= \rho_i(z)\,,\qquad \qquad &i=1,\dots,r\notag\\
\xi_{i} \,\phi_{i,i+1}(z)-\xi_{i+2}\,  \phi_{i, i+1}(qz)&=\rho_{i+1}(z)\phi_i(qz)\,,\qquad \qquad &i=1,\dots,r-1\notag\\
\dots&\dots\\
\xi_{i} \,\phi_{i,\dots,r-2+i}(z)-\xi_{r+i-1}\,  \phi_{i,\dots,r-2+i}(qz)&=\rho_{r-1}(z)\phi_{i,\dots,r-3+i}(qz)\,,\qquad \qquad &i=1,2\notag\\
\xi_{1} \phi_{1,\dots,r}(z)-\xi_{r+1}  \phi_{1,\dots,r}(qz)&= \rho_r(z)\phi_{1,\dots,r-1}(qz)\,,\qquad \qquad &\notag
\end{align}
where for all $j>i$
\begin{equation}\label{eq:phimdef}
\phi_{i,\dots,j}(z)=\frac{Q^-_{i,\dots,j}(z)}{Q^+_j(z)}.
\end{equation}
Then there exist $v(z)\in B_+(z)$ such that \eqref{eq:qGaugeTrSpecial} holds and is given by
\begin{equation}\label{eq:qGaugeGen}
v(z)= \prod\limits_{i=1}^r Q^+_i(z)^{\check{\alpha}_i} \cdot \prod\limits_{i=1}^r V_i(z)\,,
\end{equation}
where 
\begin{equation}
V_i(z)= \prod\limits_{j=i}^r \exp\left(-\phi_{i,\dots,j}(z)\, e_{i,\dots,j}\right)\,, \quad e_{i,\dots,j}=[\dots[[e_i,e_{i+1}],e_{i+2}]\dots e_j]\,.
\end{equation}
\end{Thm}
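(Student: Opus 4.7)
The plan is to verify $A(z)=v(qz)Zv(z)^{-1}$ by splitting $v(z)$ into its Cartan and unipotent parts and then reducing the identity, layer by layer, to the extended $QQ$-system \eqref{eq:QQAll}. First, I would write $v(z)=h(z)V(z)$ with $h(z)=\prod_{i=1}^r Q_i^+(z)^{\check\alpha_i}$ Cartan-valued and $V(z)=\prod_{i=1}^r V_i(z)\in N_+(z)$ unipotent. A direct computation using $Z=\prod_i\zeta_i^{\check\alpha_i}$ and $g_i(z)=\zeta_i Q_i^+(qz)/Q_i^+(z)$ gives $h(qz)Zh(z)^{-1}=\prod_i g_i(z)^{\check\alpha_i}$, which reproduces exactly the Cartan factor of $A(z)$ in \eqref{form of A1}. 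The theorem is thereby reduced to the purely unipotent identity $Z^{-1}V(qz)Z\cdot V(z)^{-1}=h(z)^{-1}U(z)h(z)$, where $U(z)=\prod_i e^{(\Lambda_i(z)/g_i(z))e_i}$ is the unipotent part of $A(z)$ after the Cartan has been moved out.

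Second, I would expand both sides along the positive root spaces of $\mathfrak{sl}(r+1)$. The positive roots are $\alpha_i+\alpha_{i+1}+\cdots+\alpha_j$ with root vectors the nested commutators $e_{i,\ldots,j}$ appearing in the definition of $V_i(z)$, and conjugation by $Z$ acts diagonally: $Z^{-1}e_{i,\ldots,j}Z=(\xi_{j+1}/\xi_i)e_{i,\ldots,j}$, as a short computation using $\sum_{m=i}^j a_{lm}$ in the Cartan matrix of $A_r$ shows. The factorization $V_i(z)=\prod_{j=i}^r e^{-\phi_{i,\ldots,j}(z)e_{i,\ldots,j}}$ is arranged so that the depth $k=j-i+1$ of each nested commutator indexes the layered structure of \eqref{eq:QQAll}, whose $k$-th block of equations governs the polynomials $Q^-_{i,\ldots,i+k-1}$ that define the $\phi_{i,\ldots,i+k-1}$'s.

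Third, I would prove the unipotent identity by induction on the depth $k$. The base case $k=1$ amounts to extracting the simple-root coefficients of both sides; after clearing denominators by $Q_i^+(z)Q_i^+(qz)$ it coincides with the first line of \eqref{eq:QQAll}, which was already essentially used in Theorem \ref{inj}. For $k>1$, both sides receive depth-$k$ contributions through Baker--Campbell--Hausdorff-type commutators: on the left from commutators among the factors inside $V(qz)$ and $V(z)^{-1}$, and on the right from the non-commutativity of the simple-root exponentials comprising $h(z)^{-1}U(z)h(z)$. Using the inductive hypothesis to handle all contributions at depths less than $k$, the residual identity in the coefficient of $e_{i,\ldots,i+k-1}$ matches exactly the $k$-th line of the extended $QQ$-system, with the prefactor $(\xi_{j+1}/\xi_i)$ from the $Z$-conjugation accounting for the appearance of $\xi_i$ and $\xi_{i+k-1+1}$ in \eqref{eq:QQAll}.

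The main obstacle will be the combinatorial bookkeeping of commutators at each inductive step, since $V(qz)ZV(z)^{-1}$ and $h(z)^{-1}U(z)h(z)$ both generate a proliferation of cross terms that must be carefully matched in each root space. A cleaner route, well adapted to the $SL(r+1)$ setting and in the spirit of \cite{KSZ}, is to carry out the verification in the defining representation: each exponential $e^{-\phi_{i,\ldots,j}e_{i,\ldots,j}}$ reduces to the unipotent matrix $I-\phi_{i,\ldots,j}E_{i,j+1}$, so that $v(z)$ becomes an explicit upper triangular matrix whose entries are alternating sums of products of the $Q^-_{i,\ldots,j}$'s (essentially the minors of a $q$-Wronskian to be introduced in Section~\ref{Sec:QWronskians}), and the identity $A(z)=v(qz)Zv(z)^{-1}$ can then be verified entry by entry, with each equality reducing to the corresponding line of \eqref{eq:QQAll}.
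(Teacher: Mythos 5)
Your proposal is correct and follows essentially the same route as the paper's own proof, which is carried out for Theorem \ref{th:GLinfqMiuraInf} in Section \ref{Sec:MiuraPluckerInf} and specialized back to the finite-rank case: both arguments reduce $A(z)=v(qz)Zv(z)^{-1}$ to a comparison of coefficients of the root vectors $e_{i,\dots,j}$ after the Cartan parts cancel, with the adjoint action of $Z$ supplying the factors $\xi_i,\xi_{j+1}$ and the commutators of the simple-root exponentials (only $e_{j}$ against $e_{i,\dots,j-1}$ surviving, by the Serre relations) supplying the terms $\rho_j(z)\phi_{i,\dots,j-1}(qz)$, which is precisely the extended $QQ$-system \eqref{eq:QQAll}. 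Your closing suggestion to verify the identity entry-by-entry in the defining representation is also consonant with the paper, whose explicit matrix \eqref{eq:vinverse} for $v(z)^{-1}$ encodes exactly that check.
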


We shall prove a more general statement in Section \ref{Sec:MiuraPluckerInf} about $(\overline{GL}(\infty),q)$ opers which will contain Theorem \ref{th:SLNqMiura} as a corollary. Here, to illustrate how the theorem works, we will regard some low rank examples.

Notice that although the expression for $v(z)$ in \eqref{eq:qGaugeGen} is rather complicated, the inverse $v(z)^{-1}$ can be succinctly presented as
\begin{equation}\label{eq:vinverse}
v(z)^{-1}=\displaystyle\begin{pmatrix}
\frac{1}{Q_1^+(z)} & \frac{Q^-_1(z)}{Q_2^+(z)} & \frac{Q^-_{12}(z)}{Q_3^+(z)} & \dots  & \frac{Q^-_{1,\dots,r-1}(z)}{Q_r^+(z)}  & Q^-_{1,\dots,r}(z) \\
0 & \frac{Q^+_1(z)}{Q_2^+(z)} & \frac{Q^-_2(z)}{Q_3^+(z)} & \dots & \frac{Q^-_{2,\dots,r-1}(z)}{Q_r^+(z)}  & Q^-_{2,\dots,r}(z)\\
0 & 0 & \frac{Q^+_2(z)}{Q_3^+(z)}  & \dots & \frac{Q^-_{3,\dots,r-1}(z)}{Q_r^+(z)} & Q^-_{3,\dots,r}(z)\\
\vdots & \vdots & \vdots & \ddots & \vdots  & \vdots\\
0 & \dots & \dots& \dots &  \frac{Q^+_{r-1}(z)}{Q_r^+(z)} & Q^-_r(z) \\
0 & \dots & \dots& \dots &  0 & Q^+_r(z)
\end{pmatrix}\,.
\end{equation}

Before we continue the following statement will be needed.
\begin{Lem}\label{Th:BCH}
The following relations hold for any $u,v\in\mathbb{C}$ and $i,j=1,\dots, r$
\begin{align}
u^{\check{\alpha}_i} e^{v e_j} &=
\exp\left({u^{a_{ji}}v\, e_i} \right)u^{\check{\alpha}_i}\,.
\end{align}
In general, if $[X,Y]=sY$ we have
\begin{equation}
u^X e^{v Y} = \exp(u^s v Y)u^X\,.
\end{equation}
\label{Th:LemmaMV05}
\end{Lem}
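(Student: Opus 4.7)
The plan is to derive the second (general) identity first, since the first one is just the special case $X=\check{\alpha}_i$, $Y=e_j$. The whole lemma is a standard consequence of functional calculus on the one-parameter subgroup generated by an ad-eigenvector, so no deep machinery is needed and the main task is simply to lay out the computation cleanly.

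First I would unwind the meaning of $u^X$. By convention $u^X := \exp\bigl((\log u) X\bigr)$, which exists as a formal exponential whenever it appears in our expressions (after localizing on an open subset of $\mathbb{P}^1$ where the relevant scalars are nonzero). Under the assumption $[X,Y]=sY$, an easy induction gives $\mathrm{ad}_X^n(Y)=s^n Y$, so
\begin{equation*}
u^X\, Y\, u^{-X} \;=\; \sum_{n\ge 0}\frac{(\log u)^n}{n!}\,\mathrm{ad}_X^n(Y) \;=\; e^{s\log u}\, Y \;=\; u^s\, Y.
\end{equation*}
This is the core computation. Conjugation by $u^X$ therefore acts as the scalar $u^s$ on $Y$, and hence on any polynomial or formal power series in $Y$.

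Next I would apply this to $e^{vY}$ termwise. Since $u^X Y^n u^{-X} = (u^X Y u^{-X})^n = u^{sn} Y^n$, one obtains
\begin{equation*}
u^X\, e^{vY}\, u^{-X} \;=\; \sum_{n\ge 0}\frac{v^n}{n!}\,u^{sn}\,Y^n \;=\; \exp\bigl(u^s v\, Y\bigr),
\end{equation*}
and multiplying on the right by $u^X$ yields the second identity $u^X e^{vY}=\exp(u^s v Y)\,u^X$.

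Finally, for the first identity I would just specialize: take $X=\check{\alpha}_i$ and $Y=e_j$. The defining relation of the Chevalley generators gives $[\check{\alpha}_i,e_j]=\langle\alpha_j,\check{\alpha}_i\rangle\,e_j$, so in the paper's convention the scalar $s$ equals the corresponding Cartan-matrix entry, and the formula then follows directly from the general version already established. There is no real obstacle here; the only subtle point is keeping track of which index of the Cartan matrix appears, which is fixed by the commutation relation $[\check{\alpha}_i,e_j]=a_{ij}e_j$ coming from the convention $a_{ij}=\langle\alpha_j,\check{\alpha}_i\rangle$ fixed in Section~\ref{Sec:SLrOpers}.
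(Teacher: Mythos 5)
Your proof is correct, and it is the standard argument; the paper states this lemma without proof (it is the elementary $\mathrm{ad}$-eigenvector computation $u^X Y u^{-X}=u^s Y$, exactly as you present it), so there is nothing to compare against. One remark worth making: your derivation correctly yields $u^{\check{\alpha}_i} e^{v e_j} = \exp\bigl(u^{a_{ij}} v\, e_j\bigr)\,u^{\check{\alpha}_i}$, with $e_j$ (not $e_i$) surviving in the exponent, since conjugation by a Cartan element preserves each root space; the $e_i$ appearing on the right-hand side of the printed statement is a typo, as one can confirm from how the lemma is used in the $SL(2)$ and $SL(3)$ examples, and the index order $a_{ji}$ versus $a_{ij}$ is immaterial here because the Cartan matrix of $\mathfrak{sl}(r+1)$ is symmetric.
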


Using this Lemma we can rewrite the q-connection \eqref{form of A1} such that the roots of $SL(r+1)$ are placed in the decreasing order. 

\begin{Lem}\label{Th:BCHAConnection}
Let
\begin{equation}
\rho_i(z)=\Lambda_i(z)\frac{Q_{i-1}(qz)Q_{i+1}(z)}{Q_{i}(qz)Q_{i}(z)}\,.
\end{equation}
Then the $(SL(r+1),q)$-oper reads 
\begin{equation}\label{new form of A1}
A(z)=\prod_{i=r}^1 Q^+_i(qz)^{\check{\alpha}_i} \cdot \prod\limits_{i=r}^1 e^{\frac{\zeta_{i}}{\zeta_{i+1}}\rho_i(z)e_i} \cdot \prod_{i=r}^1 \zeta_i^{\check{\alpha}}Q^+_i(z)^{-\check{\alpha}_i}\,,
\end{equation}
or as a matrix
\begin{equation}\label{eq:MiuraqConnection}
A(z)=
\begin{pmatrix}
g_1(z) & \Lambda_1(z) & 0 & 0 & \dots &0& 0\\
0& \frac{g_2(z)}{g_1(z)} & \Lambda_2(z)  & 0 & \dots & 0&0\\
0& 0&  \frac{g_3(z)}{g_2(z)} & \Lambda_3(z)  & \dots & 0& 0\\
\vdots& \vdots & \cdots & \ddots & \ddots & \vdots & \vdots \\
\vdots& \vdots & \cdots & \dots & \ddots & \Lambda_{r-1}(z) & 0 \\
0& 0& 0& \dots &\dots &\frac{g_r(z)}{g_{r-1}(z)} &\Lambda_r(z)\\
0& 0& 0& \dots &\dots&0 &\frac{1}{g_r(z)}
\end{pmatrix}
\end{equation}
\end{Lem}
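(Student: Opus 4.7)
The plan is to start from the form \eqref{form of A1},
\[
A(z)=\prod_{i=r}^{1} g_i(z)^{\check\alpha_i}\,e^{(\Lambda_i(z)/g_i(z))e_i},
\]
and transport every Cartan factor leftwards through the nilpotent exponentials by repeated application of Lemma \ref{Th:BCH}. The feature that makes this manageable for $SL(r+1)$ is that the Cartan matrix satisfies $a_{ji}=0$ for $|j-i|>1$, so when $g_{i-1}^{\check\alpha_{i-1}}$ is moved past the string $e^{u_{i}e_{i}}\cdots e^{u_{r}e_{r}}$ only the \emph{adjacent} factor $e^{u_{i}e_{i}}$ is rescaled, and by the single commutation factor $g_{i-1}^{-a_{i,i-1}}=g_{i-1}$. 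Iterating, one arrives at the intermediate form
\[
A(z)=\prod_{i=r}^{1} g_i(z)^{\check\alpha_i}\;\cdot\;\prod_{i=r}^{1}\exp\!\left(\frac{g_{i-1}(z)}{g_i(z)}\,\Lambda_i(z)\,e_i\right),
\]
with the convention $g_0\equiv g_{r+1}\equiv 1$.

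The next step uses $g_i(z)=\zeta_i Q_i^+(qz)/Q_i^+(z)$ together with the mutual commutativity of Cartan elements to split
\[
\prod_i g_i(z)^{\check\alpha_i}=\prod_i Q_i^+(qz)^{\check\alpha_i}\cdot\prod_i \zeta_i^{\check\alpha_i}Q_i^+(z)^{-\check\alpha_i},
\]
and then to push the rightmost Cartan block back through the exponentials via Lemma \ref{Th:BCH}. Each coefficient of $e_i$ is then rescaled by $\prod_j\zeta_j^{a_{ij}}Q_j^+(z)^{-a_{ij}}=\bigl(\zeta_i^{2}/(\zeta_{i-1}\zeta_{i+1})\bigr)\,Q_{i-1}^+(z)Q_{i+1}^+(z)/Q_i^+(z)^{2}$. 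A direct algebraic simplification using the explicit form of $g_i$ shows that the overall coefficient of $e_i$ collapses to exactly $(\zeta_i/\zeta_{i+1})\rho_i(z)$, proving the first form \eqref{new form of A1}.

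For the matrix form \eqref{eq:MiuraqConnection} one passes to the defining representation, which is faithful on $SL(r+1)$. There each Cartan factor becomes diagonal and $e^{c_i e_i}$ becomes $I+c_i E_{i,i+1}$. The crucial observation is that in the decreasing ordering $i=r,r-1,\dots,1$ the adjacent product $E_{i,i+1}E_{i-1,i}$ vanishes because the inner indices $i+1$ and $i-1$ do not match, so \emph{every} higher-order cross term in $\prod_{i=r}^{1}(I+c_i E_{i,i+1})$ vanishes and the product collapses to $I+\sum_{i=1}^{r}c_i E_{i,i+1}$. Multiplying by the diagonal Cartan factors and simplifying produces the bidiagonal matrix with entries $g_j/g_{j-1}$ on the diagonal and $\Lambda_j$ on the superdiagonal, matching the stated expression.

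The main obstacle is the disciplined bookkeeping of commutation factors in the two Cartan-shuffling stages: one must apply Lemma \ref{Th:BCH} with the correct sign of $a_{ji}$ depending on the direction of transport, and verify that the accumulated rescalings assemble into precisely $(\zeta_i/\zeta_{i+1})\rho_i(z)$. A more pedestrian alternative would bypass the Lie-theoretic manipulations and verify both forms directly by matrix multiplication in the defining representation, invoking faithfulness to conclude; we prefer the route above because it adapts essentially verbatim to the $\overline{GL}(\infty)$ setting needed in Section \ref{Sec:MiuraPluckerInf}.
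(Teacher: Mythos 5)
Your argument is correct and is exactly the route the paper intends: the paper states Lemma \ref{Th:BCHAConnection} without proof, remarking only that it follows from Lemma \ref{Th:BCH}, and your two-stage Cartan shuffle (first collecting the $g_i^{\check\alpha_i}$ on the left, picking up the single factor $g_{i-1}$ from $a_{i,i-1}=-1$, then splitting $g_i=\zeta_i Q_i^+(qz)/Q_i^+(z)$ and pushing $\prod_j\zeta_j^{\check\alpha_j}Q_j^+(z)^{-\check\alpha_j}$ back through) is precisely that computation, with the coefficients correctly assembling into $(\zeta_i/\zeta_{i+1})\rho_i(z)$. The matrix form via the vanishing of cross terms in the decreasing-order product $\prod_{i=r}^{1}(I+c_iE_{i,i+1})$ is likewise correct.
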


At this point the above choice of the order of simple roots may seem unsubstantiated, however, it will be justified in later sections, where we will consider $(\overline{GL(\infty)},q)$-opers.

\subsection{Examples}
\subsubsection{Miura $(SL(2),q)$-oper} 
The twist element $Z=\zeta^{\check{\alpha}}=\text{diag}(\zeta,\zeta^{-1})=\text{diag}(\xi_1,\xi_2)$
The $q$-connection \eqref{new form of A1} reads
\begin{equation}
A(z)= Q^+(qz)^{\check{\alpha}} \cdot e^{\zeta\rho(z)e} \cdot \zeta^{\check{\alpha}}Q^+(z)^{-\check{\alpha}}
=
\begin{pmatrix}
g(z)& \Lambda(z)\\
0 & g(z)^{-1}
\end{pmatrix}
\end{equation}
We look for the gauge transformation in the form
\begin{equation}
v(z)=Q^+(z)^{\check{\alpha}}e^{-\phi(z) e}\,,
\end{equation}
where $\phi(z)=\frac{Q^-(z)}{Q^+(z)}$.
The left hand side of \eqref{eq:qGaugeTrSpecial} reads
\begin{align}
v(qz)^{-1}A(z)&=e^{\phi(qz)e} e^{\zeta\rho(z)e} \cdot \zeta^{\check{\alpha}}Q^+(z)^{-\check{\alpha}} 
\end{align}
where $\rho(z)=\frac{\Lambda(z)}{Q^+(z) Q^+(qz)}$. Meanwhile, the right hand side of \eqref{eq:qGaugeTrSpecial} equals
\begin{align}
Z v(z)^{-1}&=\zeta^{\check{\alpha}}e^{-\phi(z) e}Q^+(z)^{-\check{\alpha}}=e^{-\zeta^2\phi(z) e}\zeta^{\check{\alpha}}Q^+(z)^{-\check{\alpha}}\,,
\end{align}
where we used Lemma \ref{Th:LemmaMV05} in the last step.
Comparing the above two expressions yields the desired $QQ$-system equation
\begin{equation}
\zeta \phi(z)- \zeta^{-1} \phi(qz)=\rho(z)\,,
\end{equation}
or equivalently as
\begin{equation}
\xi_1 \phi(z)- \xi_2 \phi(qz)=\rho(z)\,.
\end{equation}

\subsubsection{Miura $(SL(3),q)$-oper}
Consider $Z=\zeta_1^{\check{\alpha}_1}\zeta_2^{\check{\alpha}_2}=\text{diag}(\zeta_1,\frac{\zeta_2}{\zeta_1},\frac{1}{\zeta_2})=\text{diag}(\xi_1,\xi_2,\xi_3)$. 
The q-connection is given by
\begin{align}
A(z)&= Q^+_1(qz)^{\check{\alpha}_1} Q^+_2(qz)^{\check{\alpha}_2} \cdot  e^{\zeta_2\rho_2(z)e_2} e^{\frac{\zeta_1\rho_1(z)}{\zeta_2}e_1}    \cdot \zeta_1^{\check{\alpha}_1}\zeta_2^{\check{\alpha}_2} Q^+_1(z)^{-\check{\alpha}_1} Q^+_2(z)^{-\check{\alpha}_2}\notag\\
&=
\begin{pmatrix}
g_1(z) & \Lambda_1(z) & 0\\
0 & \frac{g_2(z)}{g_1(z)} & \Lambda_2(z) \\
0 & 0& \frac{1}{g_2(z)}
\end{pmatrix}
\end{align}
while the gauge transformation reads
\begin{equation}
v(z)= Q^+_1(z)^{\check{\alpha}_1} Q^+_2(z)^{\check{\alpha}_2}e^{-\phi_1(z)e_1} e^{-\phi_{12}(z)[e_1,e_2]} e^{-\phi_2(z)e_2}
\end{equation}
Thus the left hand side of \eqref{eq:qGaugeTrSpecial} becomes
\begin{align}\label{eq:sl3lhs}
v&(qz)^{-1}A(z)= e^{\phi_2(qz)e_2} e^{\phi_{12}(qz)[e_1,e_2]} e^{\phi_1(qz)e_1}\cdot e^{\zeta_2\rho_2(z)e_2} e^{\frac{\zeta_1\rho_1(z)}{\zeta_2}e_1} \cdot \zeta_1^{\check{\alpha}_1}\zeta_2^{\check{\alpha}_2}Q^+_1(z)^{-\check{\alpha}_1} Q^+_2(z)^{-\check{\alpha}_2}\notag\\
&=e^{(\phi_2(qz)+\zeta_2\rho_2)e_2} e^{(\phi_{12}(qz)+\zeta_2\rho_2(z)\phi_1(qz))[e_1,e_2]} e^{(\phi_1(qz)+\frac{\zeta_1}{\zeta_2}\rho_1(z))e_1}\cdot \zeta_1^{\check{\alpha}_1}\zeta_2^{\check{\alpha}_2}Q^+_1(z)^{-\check{\alpha}_1} Q^+_2(z)^{-\check{\alpha}_2}
\end{align}
Meanwhile the right hand side of \eqref{eq:qGaugeTrSpecial} equals
\begin{equation}
Zv(z)^{-1}= \zeta_1^{\check{\alpha}_1}\zeta_2^{\check{\alpha}_2} \cdot
e^{\phi_2(qz)e_2} e^{\phi_{12}(qz)[e_1,e_2]} e^{\phi_1(qz)e_1} 
Q^+_1(z)^{-\check{\alpha}_1} Q^+_2(z)^{-\check{\alpha}_2}
 \end{equation}
Now we need to move all Cartan elements from the front of the above expression to its rear using Lemma \ref{Th:LemmaMV05}
\begin{align}\label{eq:sl3rhs}
Zv(z)^{-1}=  e^{\frac{\zeta_2^2\phi_2(z)}{\zeta_1}e_2}e^{\zeta_1 \zeta_2\phi_{12}(z)[e_1,e_2]}e^{\frac{\zeta_1^2\phi_1(z)}{\zeta_2}e_1} \cdot \zeta_1^{\check{\alpha}_1}\zeta_2^{\check{\alpha}_2} Q^+_1(z)^{-\check{\alpha}_1} Q^+_2(z)^{-\check{\alpha}_2}
\end{align}
By comparing \eqref{eq:sl3lhs} with \eqref{eq:sl3rhs} we get
\begin{align}
\zeta_1 \phi_1(z)-\frac{\zeta_2}{\zeta_1}  \phi_1(qz)&= \rho_1(z)\,,\notag\\
\frac{\zeta_2}{\zeta_1} \phi_2(z)-\frac{1}{\zeta_2}  \phi_2(qz)&=\rho_2(z)\,,\notag\\
\zeta_1 \phi_{12}(z)-\frac{1}{\zeta_2}  \phi_{12}(qz)&=\rho_2(z)\phi_1(q z)\,,
\end{align}
or equivalently, 
\begin{align}\label{eq:SL3QQ}
\xi_1 \phi_1(z)-\xi_2  \phi_1(qz)&= \rho_1(z)\,,\notag\\
\xi_2 \phi_2(z)-\xi_3  \phi_2(qz)&=\rho_2(z)\,,\notag\\
\xi_1 \phi_{12}(z)-\xi_3  \phi_{12}(qz)&=\rho_2(z)\phi_1(qz)\,.
\end{align}

\subsection{The Extended $QQ$-System and Bethe ansatz}
The first line of \eqref{eq:QQAll} is the $SL(r+1)$ $QQ$-system \eqref{eq:QQgamma}. In the rest of the equations we introduced new functions \eqref{eq:phimdef}. Notice that 
$$
\rho_{i+1}(z)\phi_i(qz) =\Lambda_{i+1} (z)\frac{Q^-_{i}(qz)Q^+_{i+2}(z)}{Q^+_{i+1}(z)Q^+_{i+1}(qz)} =: \rho_{i,i+1}(z)\,,
$$
where $\rho_{i,i+1}(z)$ is $\rho_{i+1}(z)$ with $Q^+_i(z)$ replaced by $Q^-_i(z)$.
In terms of this new notation we can rewrite \eqref{eq:QQAll} as follows
\begin{align}\label{eq:QQAllMutations}
\xi_{i} \,\phi_i(z)-\xi_{i+1}\,  \phi_i(qz)&= \rho_i(z)\,,\qquad \qquad &i=1,\dots,r\notag\\
\xi_{i} \,\phi_{i,i+1}(z)-\xi_{i+2}\,  \phi_{i, i+1}(qz)&=\rho_{i,i+1}(z)\,,\qquad \qquad &i=1,\dots,r-1\notag\\
\dots&\dots\\
\xi_{i} \,\phi_{i,\dots,r-2+i}(z)-\xi_{r+i-1}\,  \phi_{i,\dots,r-2+i}(qz)&=\rho_{i,\dots,r-1+i}(z)\,,\qquad \qquad &i=1,2\notag\\
\xi_{1} \phi_{1,\dots,r}(z)-\xi_{r+1}  \phi_{1,\dots,r}(qz)&= \rho_{1,\dots,r}(z)\,,\qquad \qquad &\notag
\end{align}
For the future reference let us rewrite the above equations in terms of the Q-polynomials:
\begin{align}\label{eq:QQALLFull}
\xi_{i} Q^+_i(q z) Q^-_i(z) - \xi_{i+1} Q^+_i(z) Q^-_i(qz) &=\Lambda_i (z) Q^+_{i-1}(qz)Q^+_{i+1}(z)\,,\notag\\
\xi_{i} Q^+_{i+1}(q z) Q^-_{i,i+1}(z) - \xi_{i+2} Q^+_{i+1}(z) Q^-_{i,i+1}(qz) &= \Lambda_{i+1} (z) Q^-_{i}(qz)Q^+_{i+2}(z)\,,\notag\\
\dots&\dots\\
\xi_i \,Q^+_{r-2+i}(qz)Q^-_{i,\dots,r-2+i}(z)-\xi_{r-1+i}\,  Q^+_{r-2+i}(z)Q^-_{i,\dots,r-2+i}(qz)&=\Lambda_{r-1+i}(z)Q^-_{i,\dots,r-1+i}(qz)Q^+_{r+i}(z)\,,\notag\\
\xi_{1} Q^+_r(qz)Q^-_{1,\dots,r}(z)-\xi_{r+1}  Q^+_r(z)Q^-_{1,\dots,r}(qz)&= \Lambda_r(z) Q^-_{1,\dots,r-1}(qz)\,.\notag
\end{align}
We shall refer to all equations of \eqref{eq:QQALLFull} as the \textit{extended $QQ$-system} for $SL(r+1)$. We call its solution {\it nondegenerate}, if the resulting solution of the original $QQ$-system is nondegenerate.

Let us now show that starting from the solution of the nondegenerate $QQ$-system, we obtain solutions to the extended $QQ$-system as well. To do that we need the result (which is true for other simply laced groups) of \cite{Frenkel:2020}: 

\begin{Thm}
The solutions of the nondegenerate $SL(r+1)$ $QQ$-system are in one-to-one correspondence to the solutions of the Bethe Ansatz equations for $\mathfrak{sl}(r+1)$ XXZ spin chain:
\begin{equation}    \label{eq:bethe}
\frac{Q^+_{i}(qw_k^i)}{Q^+_{i}(q^{-1}w^k_i)} \frac{\xi_i}{\xi_{i+1}}=
- \frac{\Lambda_i(w_k^i) Q^{+}_{i+1}(qw_k^i)Q^{+}_{i-1}(w_k^i)}{\Lambda_i(q^{-1}w_k^i)Q^{+}_{i+1}(w_k^i)Q^{+}_{i-1}(q^{-1}w_k^i)},
\end{equation}
where $i=1,\ldots,r; k=1,\ldots,m_i$.
\end{Thm}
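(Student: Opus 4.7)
The plan is to establish the bijection by evaluating the $QQ$-system at the roots of the polynomials $Q^+_i$. The forward direction is a direct algebraic manipulation; the backward direction requires constructing $Q^-_i$ from the Bethe data, and the Bethe equations will emerge as exactly the compatibility conditions for this reconstruction.

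For the forward direction, given a nondegenerate solution $\{Q^\pm_i\}$ of \eqref{eq:QQAtype}, let $w^i_k$ be a root of $Q^+_i$. Substituting $z = w^i_k$ into \eqref{eq:QQAtype} kills the second term on the LHS, yielding
$$
\xi_i\, Q^+_i(qw^i_k)\, Q^-_i(w^i_k) \;=\; \Lambda_i(w^i_k)\, Q^+_{i-1}(w^i_k)\, Q^+_{i+1}(qw^i_k),
$$
while substituting $z = q^{-1} w^i_k$ kills the first term, yielding
$$
-\xi_{i+1}\, Q^+_i(q^{-1}w^i_k)\, Q^-_i(w^i_k) \;=\; \Lambda_i(q^{-1}w^i_k)\, Q^+_{i-1}(q^{-1}w^i_k)\, Q^+_{i+1}(w^i_k).
$$
By the nondegeneracy hypothesis, the zeros of $Q^-_i$ are $q$-distinct from the zeros $w^i_k$ of $Q^+_i$, so $Q^-_i(w^i_k)\ne 0$. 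Dividing the two displays eliminates $Q^-_i(w^i_k)$ and produces precisely the Bethe equation \eqref{eq:bethe}.

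For the backward direction, I start with polynomials $\{Q^+_i\}$ whose roots $\{w^i_k\}$ satisfy \eqref{eq:bethe} and construct polynomials $Q^-_i$ so that the pair solves \eqref{eq:QQAtype}. Set $R_i(z) := \Lambda_i(z) Q^+_{i-1}(z) Q^+_{i+1}(qz)$ and regard the $QQ$-system as the inhomogeneous first-order $q$-difference equation $\xi_i Q^+_i(qz) Q^-_i(z) - \xi_{i+1} Q^+_i(z) Q^-_i(qz) = R_i(z)$ for the unknown $Q^-_i$. Equivalently, passing to $\phi_i = Q^-_i/Q^+_i$, one seeks a rational function with simple poles only at the $\{w^i_k\}$ satisfying $\xi_i \phi_i(z) - \xi_{i+1}\phi_i(qz) = \rho_i(z)$. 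Matching residues on both sides at $z = w^i_k$ and at $z = q^{-1} w^i_k$ produces two expressions for the residue of $\phi_i$ at $w^i_k$, and their equality is precisely the Bethe equation \eqref{eq:bethe}. The condition $\xi_i/\xi_{i+1} \notin q^{\mathbb Z}$ from \eqref{assume} ensures the homogeneous equation $\xi_i f(z) = \xi_{i+1} f(qz)$ has no nonzero polynomial solutions, which pins down $Q^-_i$ uniquely once its degree is fixed from the $z\to\infty$ asymptotics of both sides.

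The main obstacle lies in verifying that the $Q^-_i$ so constructed is a genuine polynomial of the correct degree and that it satisfies the full nondegeneracy of Definition \ref{nondeg Miura}, rather than merely a rational function solving the difference equation. Polynomiality of $Q^-_i$ follows from the Bethe equations canceling all prospective poles at the $w^i_k$, combined with the irrationality condition \eqref{assume} ruling out homogeneous polynomial solutions; the degree of $Q^-_i$ is then forced by comparing top-degree terms on either side of \eqref{eq:QQAtype}. The remaining $q$-distinctness of the zeros of $Q^-_i$ with respect to those of the neighboring $Q^+_j$ and of $\Lambda_k$ will follow from a careful tracking of where the zeros of $Q^-_i$ can lie, using the $QQ$-system itself together with the nondegeneracy assumed of the input data $\{Q^+_i, \Lambda_i\}$.
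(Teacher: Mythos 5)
Your argument is correct, and it is essentially the argument the paper relies on: the paper does not actually prove this theorem but imports it from \cite{Frenkel:2020}, and the very mechanism you use — evaluating the $QQ$-relation at $z=w^i_k$ and $z=q^{-1}w^i_k$ in the forward direction, and in the backward direction matching the polar and polynomial parts of $\phi_i$ against $\rho_i$ so that the Bethe equation appears as the condition that the two candidate residues at $w^i_k$ agree, with \eqref{assume} killing the homogeneous solutions of $\xi_i f(z)=\xi_{i+1}f(qz)$ — is exactly what the authors deploy a few lines later in their proof of Theorem \ref{Th:BetheQQEquiv} (see the decomposition \eqref{hi}--\eqref{tildephi} and the constraint $b_k/\xi_{i+2}+c_k/\xi_{i}=0$). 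The only soft spot is your closing paragraph, where the $q$-distinctness properties of the reconstructed $Q^-_i$ are asserted rather than checked; that verification is genuinely needed for the stated bijection onto \emph{nondegenerate} solutions, but leaving it at this level of detail matches the source.
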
 

We will extend the statement of this Theorem as follows.

\begin{Thm}\label{Th:BetheQQEquiv}
There is a one-to-one correspondence between the set of nondegenerate solutions of the extended $QQ$-system \eqref{eq:QQALLFull}, the set of nondegenerate solutions of the $QQ$-system \eqref{eq:QQAtype}, and the set of  solutions of Bethe Ansatz equations \eqref{eq:bethe}.
\label{Th:LemmaNondeg}
\end{Thm}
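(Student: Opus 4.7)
The plan is to prove the three-way equivalence by chaining two already-known links with one new construction. The equivalence of nondegenerate solutions of the ordinary $QQ$-system \eqref{eq:QQAtype} with solutions of the Bethe ansatz equations \eqref{eq:bethe} is exactly the preceding theorem cited from \cite{Frenkel:2020}, so it suffices to establish the bijection between nondegenerate solutions of the extended $QQ$-system \eqref{eq:QQALLFull} and nondegenerate solutions of \eqref{eq:QQAtype}. One direction is immediate: the top row of \eqref{eq:QQALLFull} is precisely \eqref{eq:QQAtype}, and the polynomials $Q^\pm_i$ extracted from an extended solution inherit the nondegeneracy conditions by definition. The substantive direction is the construction of all higher polynomials $Q^-_{i,i+1,\dots,j}(z)$ starting from a nondegenerate solution $\{Q^\pm_i(z)\}$ of the basic $QQ$-system.

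I would proceed by induction on the length $\ell = j-i+1$ of the multi-index $\{i,\dots,j\}$. The base case $\ell=1$ is the input datum. For the inductive step, assume we have already constructed all $Q^-_{i',\dots,j'}(z)$ of length strictly less than $\ell$. Fix a multi-index of length $\ell$; the corresponding line of \eqref{eq:QQALLFull} reads
\begin{equation}
\xi_i\, Q^+_j(qz)\, Q^-_{i,\dots,j}(z)-\xi_{j+1}\, Q^+_j(z)\, Q^-_{i,\dots,j}(qz)=\Lambda_j(z)\, Q^-_{i,\dots,j-1}(qz)\, Q^+_{j+1}(z),
\end{equation}
whose right-hand side is entirely determined by objects of shorter length, hence known by the induction hypothesis. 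Dividing by $Q^+_j(z)Q^+_j(qz)$ converts this into a first-order scalar $q$-difference equation $\xi_i\,\phi(z)-\xi_{j+1}\,\phi(qz)=\rho(z)$ for the rational function $\phi=Q^-_{i,\dots,j}/Q^+_j$.

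The key technical ingredient is a solvability lemma for such first-order $q$-difference equations: given polynomials $P$ and $R$ and scalars $a,b$ with $a/b\notin q^{\mathbb{Z}}$, the equation $a\,P(qz)f(z)-b\,P(z)f(qz)=R(z)$ admits a unique monic polynomial solution $f$ of the expected degree, provided the zeros of $P(z)$ are $q$-distinct from those of the relevant factors appearing in $R(z)$. Existence as a rational function follows from a geometric-series expansion at $\infty$ once $a/b\notin q^{\mathbb{Z}}$; polynomiality is extracted by inspecting residues of $\phi$ at zeros of $P(z)$ and $P(qz)$ and using $q$-distinctness to eliminate them; uniqueness comes from the absence of nontrivial polynomial solutions of the homogeneous equation in the generic regime. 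Applied to each inductive step, this yields $Q^-_{i,\dots,j}(z)$. The genericity on $Z$ required, namely $\xi_i/\xi_{j+1}\notin q^{\mathbb{Z}}$ for all pairs $(i,j+1)$ appearing, is a mild strengthening of \eqref{assume} that can be absorbed into the nondegeneracy hypothesis.

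The main obstacle is tracking nondegeneracy through the induction: one must verify at each step that the freshly constructed $Q^-_{i,\dots,j}$ is genuinely a polynomial and that its zeros are $q$-distinct from those of the neighboring polynomials needed to feed later inductive steps. A cleaner alternative, which bypasses the induction entirely, is suggested by formula \eqref{eq:vinverse}: the entries of $v(z)^{-1}$ present each $Q^-_{i,\dots,j}$ as a specific minor of a $q$-Wronskian matrix built from the basic $Q$-polynomials, generalizing the $Z=1$ construction of Mukhin--Varchenko and the $SL(r+1)$ treatment of \cite{KSZ}. Verifying directly that these minors satisfy \eqref{eq:QQALLFull} would give a non-inductive proof, and would also make manifest that the map (QQ solution) $\mapsto$ (extended QQ solution) is the inverse of the obvious restriction map, completing the bijection.
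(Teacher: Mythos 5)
Your overall architecture matches the paper's: reuse the known bijection between nondegenerate solutions of \eqref{eq:QQAtype} and the Bethe equations \eqref{eq:bethe}, note that restriction to the first line of \eqref{eq:QQALLFull} is trivial, and construct the higher polynomials $Q^-_{i,\dots,j}$ recursively by solving, at each step, a first-order $q$-difference equation whose right-hand side is known from shorter multi-indices. The gap is in your ``solvability lemma,'' which is false as stated. The equation $a\,P(qz)f(z)-b\,P(z)f(qz)=R(z)$ is an overdetermined linear system for the coefficients of $f$: if $\deg P=m$ there are exactly $m$ compatibility conditions, one for each root $w$ of $P$, obtained by evaluating at $z=w$ and at $z=q^{-1}w$ and comparing the two resulting determinations of $f(w)$; for $P=z-w$ this is the condition $b\,R(w)=a\,q\,R(q^{-1}w)$, and $q$-distinctness of the zeros of $P$ from those of $R$ does nothing to enforce it. Equivalently, in the partial-fraction language of the paper (formulas \eqref{hi}--\eqref{tildephi}), a polynomial $Q^-_{i,i+1}$ exists if and only if $b_k/\xi_{i+2}+c_k/\xi_{i}=0$ at every root $w_k^{i+1}$ of $Q^+_{i+1}$. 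The entire substance of the paper's proof is the verification that this residue condition is precisely the Bethe ansatz equation \eqref{mutant} at $w_k^{i+1}$, which is then reduced to the standard Bethe equation by evaluating the $i$-th $QQ$-equation at the roots of $Q^+_{i+1}$ to trade $Q^+_i$ for $Q^-_i$. Your inductive step never invokes the lower $QQ$-equations or the Bethe equations at this point, so as written it does not go through: you have correctly located the residue obstruction but attributed its vanishing to the wrong cause.

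Two smaller points. The paper also treats the genuinely degenerate situation in which $Q^-_i$ and $Q^+_{i+1}$ share a root $u$: it shows $u$ is then forced to be a root of $Q^-_{i,i+1}$ and excludes the corresponding index from the residue conditions; your induction would need to handle this case as well when propagating nondegeneracy. Your closing suggestion---realizing the $Q^-_{i,\dots,j}$ as minors of a $q$-Wronskian matrix and verifying \eqref{eq:QQALLFull} by determinant identities---is exactly the alternative proof the paper supplies in Section \ref{Sec:QWronskians} via the Desnanot--Jacobi identity, so that instinct is sound, though there it is a second proof rather than a shortcut around the residue computation.
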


\begin{proof}
Consider the first line of \eqref{eq:QQALLFull} which is also presented in \eqref{eq:QQAtype}. If the $QQ$-system is nondegenerate, then, according to the previous theorem, there is a bijection between its polynomial nondegenerate solutions and Bethe equations.
We will show now recursively that given the nondegenerate solution of the $QQ$-system 
one can construct elements $Q_{i,i+1, \dots, j}$ satisfying the equations of the extended $QQ$-system.   
Let us immediately consider the degenerate case, when $Q^-_i(z)$ and $Q^+_{i+1}(z)$ have common roots: without loss of generality, let us now assume that $Q^-_i(z)$ and $Q^+_{i+1}(z)$ have just one common root $u$.  Now we show that we can construct the solution to the second equation in (\ref{eq:QQALLFull}), namely $Q^+_{i,i+1}$. 
Introducing the notation  
\begin{equation}\label{eq:Qplredef}
Q^+_{i+1}(z)=(z-u){\check Q}^+_{i+1}(z),
\end{equation}
we see from nondegeneracy condition $QQ$-system, namely the fact that $Q_i^+(z)$ and $Q^+_{i+1}(z)$ have q-distinct roots, we have: 
\begin{equation}\label{eq:Qmredef}
Q^-_i(z)=(z-u)\left(q^{-1}z-u\right)\widetilde Q^-_i(z)\,.
\end{equation}

Now consider the following equation from the second line from \eqref{eq:QQAllMutations}
$$
\xi_{i} \,\phi_{i,i+1}(z)-\xi_{i+1}\,  \phi_{i, i+1}(qz)= \Lambda_{i+1} (z)\frac{Q^-_{i}(qz)Q^+_{i+2}(z)}{Q^+_{i+1}(z)Q^+_{i+1}(qz)}\,.
$$
Substituting \eqref{eq:Qplredef} and \eqref{eq:Qmredef} we get
\begin{equation}\label{eq:QQ2}
\xi_{i} \,\frac{Q^-_{i,i+1}(z)}{(z-u){\check Q}^+_{i+1}(z)}-\xi_{i+2}\,  \frac{Q^-_{i,i+1}(qz)}{(qz-u){\check Q}^+_{i+1}(qz)}= \Lambda_{i+1} (z)\frac{{\widetilde Q}^-_{i}(qz)Q^+_{i+2}(z)}{{\check Q}^+_{i+1}(z){\check Q}^+_{i+1}(qz)}\,.
\end{equation}
From the residue decomposition of both sides of the above equation we conclude that
$u$ must be the root of polynomial $Q^-_{i,i+1}(z)$
$$
Q^-_{i,i+1}(z) = (z-u){\check Q}^-_{i,i+1}(z)\,.
$$

Thus, one can represent the resulting system as follows:
\begin{align}    \label{hi}
&\rho_{i,i+1}(z)=h_i(z)+
\sum_{k=1, k\neq s}^{m_{i+1}}\frac{b_k}{z-w^{i+1}_{k}}
+ \sum_{k=1, k\neq s}^{m_{i+1}} \frac{c_k}{qz-w^{i+1}_{k}},\\    \label{tildephi}
&\phi_{i,i+1}(z)=\wt{\phi}_i(z)+\sum_{k=1, k\neq s}^{m_{i+1}} \frac{d_k}{z-w^{i+1}_{k}},
\end{align}
where $w^{i+1}_s=u$, $h_i(z)$ and $\wt{\phi}_i(z)$ are polynomials. By matching the polar and polynomial parts of (\ref{eq:QQ2}) we can readily find coefficients $d_k$ and polynomials $\wt{\phi}_i(z)$ and hence $Q^+_{i,i+1}(z)$.

The only constraint we need to satisfy is the one on $b_k, c_k$, 
namely $\frac{b_k}{\xi_{i+2}}+\frac{c_k}{\xi_{i}}=0$, where $k\neq s$. These equations are 
explicitly given by:
\begin{equation}   \label{mutant}
\frac{Q^+_{i+1}(qw_k^{i+1})}{Q^+_{i+1}(q^{-1}w_k^{i+1})} \frac{\xi_{i}}{\xi_{i+2}}=
- \frac{\Lambda_{i+1}(w_k^{i+1}) Q^{+}_{i+2}(qw_k^{i+1})Q^{-}_{i}(w_k^{i+1})}{\Lambda_{i+1}(q^{-1}w_k^{i+1})Q^{+}_{i+2}(w_k^{i+1})Q^{-}_{i}(q^{-1}w_k^{i+1})},~{\rm  where } ~ k\neq s.
\end{equation}
At the same time the $i$th equation can be rewritten as:
$$
\xi_{i} \frac{Q^+_{i}(qz)}{Q^+_{i}(z)}  - \xi_{i+1}  \frac{Q^-_{i}(qz)}{Q^-_{i}(z)}   =\frac{\Lambda_{i} (z) Q^+_{i-1}(qz)Q^+_{i+1}(z)}{Q^+_{i}(z)Q^-_{i}(z)}
$$
which leads to 
$$
\xi_{i} \frac{Q^+_{i}(qw_k^{i+1})}{Q^+_{i}(w_k^{i+1})}  =\xi_{i+1}  \frac{Q^-_{i}(qw_k^{i+1})}{Q^-_{i}(w_{k}^{i+1})}, 
$$
where $w_k^{i+1}$ are the roots of $Q_{i+1}^+(z)$ for $k\neq s$.

Thus the equations (\ref{mutant}) are equivalent to the Bethe equations emerging from the $QQ$-system:
\begin{equation}   
\frac{Q^+_{i+1}(qw_k^{i+1})}{Q^+_{i+1}(q^{-1}w_k^{i+1})} \frac{\xi_{i+1}}{\xi_{i+2}}=
- \frac{\Lambda_{i+1}(w_k^{i+1}) Q^{+}_{i+2}(qw_k^{i+1})Q^{+}_{i}(w_k^{i+1})}{\Lambda_{i+1}(q^{-1}w_k^{i+1})Q^{+}_{i+2}(w_k^{i+1})Q^{+}_{i}(q^{-1}w_k^{i+1})}
\end{equation}
Therefore, we found that the equation $\eqref{eq:QQ2}$ follows from the XXZ Bethe equations. 

The above step can be iterated if $Q^-_{i,\dots, j}$ polynomials have coincident roots with $Q^{+}_{j}$. Therefore we have shown that any $QQ$-system of the form \eqref{eq:QQAllMutations} with such degeneracies is equivalent to a nondegenerate $QQ$-system. \end{proof}

In the next section we shall present a different proof of the above theorem, exploring the definition of Miura $(SL(r+1,q)$-opers involving flags of subbundles.

\subsection{The extended $QQ$-System, Bethe equations and B\"acklund Transformations}
We would like to understand the representation theoretic meaning of the extended $QQ$-system a bit better. In fact, motivated by Lemma 7.3 of \cite{Frenkel:2020} we can demonstrate that, starting from the original $QQ$-system (the first line of \eqref{eq:QQALLFull} or \eqref{eq:QQAtype}), upon certain assumptions, one can recover all the remaining equations of the entire extended $QQ$-system by \textit{B\"acklund transformations.} 

In \cite{Frenkel:2020} B\"acklund transformations were introduced for Miura q-opers (Proposition 7.1) and
were associated to the $i$-th simple reflection from the Weyl group:
\begin{Prop}    \label{fiter}
  Consider the $q$-gauge transformation of the $q$-connection given by \eqref{form of A1}
  \begin{eqnarray}
A \mapsto A^{(i)}=e^{\mu_i(qz)f_i}A(z)e^{-\mu_i(z)f_i},
\quad \operatorname{where} \quad \mu_i(z)=\frac{Q^+_{i-1}(z)Q^+_{i+1}(z)}{Q^i_{+}(z)Q^i_{-}(z)}\,.
\label{eq:PropDef}
\end{eqnarray}
Then $A^{(i)}(z)$ can be obtained from $A(z)$ by
substituting in formula \eqref{form of A1}
\begin{align}
Q^j_+(z) &\mapsto Q^j_+(z), \qquad j \neq i, \\
Q^i_+(z) &\mapsto Q^i_-(z), \qquad Z\to s_i(Z) \quad \Big(\zeta_i\mapsto \displaystyle\frac{\zeta_{i-1}\zeta_{i+1}}{\zeta_i}\Big)\,\quad \,.
\label{eq:Aconnswapped}
\end{align}
\end{Prop}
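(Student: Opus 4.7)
The plan is to verify the Proposition by a direct computation of the $q$-gauge transformation, starting from the explicit factorized form \eqref{form of A1} of the oper connection $A(z)$ and identifying the result with the form \eqref{form of A1} evaluated on the transformed data $Q_i^+ \mapsto Q_i^-$, $\zeta_i \mapsto \zeta_{i-1}\zeta_{i+1}/\zeta_i$.

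First, I decompose $A(z) = L_i(z) \cdot M_i(z) \cdot R_i(z)$, where $M_i(z) = g_i(z)^{\check\alpha_i} e^{\Lambda_i(z)/g_i(z) \cdot e_i}$ is the $i$-th factor and $L_i$, $R_i$ collect the factors with index $j > i$ and $j < i$ respectively, in the order prescribed by the Coxeter element. The Chevalley relations show that $f_i$ commutes with every exponential $e^{\Lambda_j/g_j \cdot e_j}$ for $j \neq i$. For the Cartan factors, the analogue of Lemma \ref{Th:BCH} gives $g^{\check\alpha_j} f_i g^{-\check\alpha_j} = g^{-a_{ji}} f_i$; since $a_{ji} = 0$ whenever $|j-i|>1$, nothing happens outside $j = i\pm 1$. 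Pushing $e^{\mu_i(qz) f_i}$ rightwards through $L_i$ and $e^{-\mu_i(z) f_i}$ leftwards through $R_i$ therefore only picks up contributions from the single Cartan elements $g_{i+1}^{\check\alpha_{i+1}}(z)$ and $g_{i-1}^{\check\alpha_{i-1}}(z)$, and one obtains
$$A^{(i)}(z) = L_i(z) \cdot \bigl( e^{\tilde\mu(z)\, f_i}\, M_i(z)\, e^{-\tilde\nu(z)\, f_i} \bigr) \cdot R_i(z),$$
where $\tilde\mu, \tilde\nu$ are explicit rational functions built from $\mu_i$ and $g_{i\pm 1}$.

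Second, the inner product now takes place entirely inside the $SL(2)$-subgroup $G_i \subset SL(r+1)$ generated by $\{e_i, f_i, \check\alpha_i\}$. Moving $e^{\tilde\mu f_i}$ past $g_i^{\check\alpha_i}$ picks up a factor of $g_i^{2}$, and the elementary Gauss decomposition
$$e^{a f_i}\, e^{b\, e_i} \;=\; e^{\frac{b}{N}\, e_i}\, N^{-\check\alpha_i}\, e^{\frac{a}{N}\, f_i}, \qquad N := 1 + ab,$$
brings the expression to the form $(g_i/N)^{\check\alpha_i}\, e^{N\Lambda_i/g_i \cdot e_i}\, e^{\epsilon f_i}$ for an explicit residual coefficient $\epsilon$. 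The heart of the argument is the identity $\epsilon = 0$: a direct calculation shows that $\epsilon$ is proportional to
$$\xi_i\, Q_i^+(qz)\, Q_i^-(z) \;-\; \xi_{i+1}\, Q_i^+(z)\, Q_i^-(qz) \;-\; \Lambda_i(z)\, Q^+_{i-1}(\cdot)\, Q^+_{i+1}(\cdot),$$
which vanishes exactly by the $i$-th $QQ$-system equation \eqref{eq:QQAtype}. Once $\epsilon = 0$, a short simplification (using the $QQ$-system a second time) yields
$$\frac{g_i(z)}{N(z)} \;=\; \frac{\zeta_{i-1}\zeta_{i+1}}{\zeta_i}\; \frac{Q_i^-(qz)}{Q_i^-(z)},$$
which is precisely the value of $g_i$ under the substitutions $\zeta_i \mapsto \zeta_{i-1}\zeta_{i+1}/\zeta_i$ and $Q_i^+ \mapsto Q_i^-$. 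Since $L_i$ and $R_i$ involve only $g_j$ with $j\neq i$ and are unchanged by the conjugation, the resulting $A^{(i)}(z)$ is exactly the oper \eqref{form of A1} built from the transformed data, completing the proof.

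The main technical obstacle is the bookkeeping of exponents and $q$-shifts as $f_i$ is pushed through the Cartan factors — in particular the asymmetry arising from the fact that the left factor carries $\mu_i(qz)$ while the right factor carries $\mu_i(z)$, with $g_{i-1}$ and $g_{i+1}$ appearing on different sides according to the Coxeter ordering. Once this bookkeeping is set up correctly, the remainder of the argument is a clean $SL(2)$ calculation in $G_i$ governed by one application of the $QQ$-system equation.
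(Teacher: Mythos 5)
Your proof is correct, and the computations check out: the Gauss-decomposition identity $e^{af_i}e^{be_i}=e^{(b/N)e_i}N^{-\check\alpha_i}e^{(a/N)f_i}$ with $N=1+ab$ is right, and after pushing the conjugating factors through $L_i$ and $R_i$ (picking up $g_{i+1}(z)^{-1}$ on the left and $g_{i-1}(z)$ on the right) the vanishing of the residual $f_i$-coefficient is exactly the $i$-th $QQ$-equation, while $g_i/N=\frac{\zeta_{i-1}\zeta_{i+1}}{\zeta_i}\frac{Q_i^-(qz)}{Q_i^-(z)}$ as claimed. Note that the paper itself gives no proof of this Proposition --- it is imported verbatim from Proposition 7.1 of \cite{Frenkel:2020} --- so your argument is supplying what the paper delegates to a reference; it is the expected direct computation and agrees with the one given there. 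One small caveat on bookkeeping: with the paper's Coxeter ordering $c=w_r\cdots w_1$ the identity $\epsilon=0$ requires the $QQ$-system in the form of the first line of \eqref{eq:QQALLFull}, i.e.\ with $Q^+_{i-1}(qz)Q^+_{i+1}(z)$ on the right-hand side (consistent with $\rho_i$ as defined in Lemma \ref{Th:BCHAConnection}), rather than the variant with shifts placed as in \eqref{eq:QQAtype}; the paper is not entirely consistent between these two displays, so you should state explicitly which convention your computation uses.
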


It is possible that after the transformation the resulting operator gives rise to the nondegenerate $QQ$-system. 
Denoting the the  $QQ$-system after the B\"acklund transformation as $\{\widetilde{Q}^{\pm}_i\}_{i=1,\dots, r}$, we obtain:
\begin{align} \label{qqm2}
  \{ \wt{Q}^+_j \}_{j=1,\ldots,r} &= \{ Q_1^{+}, \dots, 
 Q_{i-1}^+,Q_i^-,Q_{i+1}^+ \dots , Q^r_{+} \}; \\ \notag
  \{ \wt{Q}^-_j \}_{j=1,\ldots,r} &=  \left\{ Q^-_1, \dots,
   {Q^*}_{i-1}^-,-Q_i^+,Q_{i,i+1}^- \dots , Q^-_{r} \right\}\,\\
  \{ \wt{\zeta}_j \}_{j=1,\ldots,r} &= \left\{
  \zeta_1,\dots,\zeta_{i-1},\frac{\zeta_{i-1}\zeta_{i+1}}{\zeta_i}
,\dots,\zeta_r\right\} \notag
\end{align}

The last line can be also rewritten in terms of $\xi$ variables as follows: 
$$
\{ \wt{\xi}_j \}_{j=1,\ldots,r} = \left\{ \xi_1,\dots,\xi_{i-1},\xi_{i+1}, \xi_i, \xi_{i+2}
,\dots,\xi_{r+1}\right\} 
$$

Here we note, that the notation $Q^-_{i,i+1}$ was used for $Q^-_{i+1}$, since the equation this new polynomial satisfies, is the second one from the extended $QQ$-system. At the same time, the new polynomial ${Q^*}^-_{i-1}(z)$ does not belong to what we called the extended $QQ$-system.

As an example, if we apply the 1st B\"acklund transformation 
$$
Q^+_ 1 \mapsto Q^-_1\,,\quad
Q^-_ 1 \mapsto -Q^+_1\,,\quad
\xi_1 \mapsto \xi_{2}\,,\quad
\xi_{2} \mapsto \xi_1\,,\quad
Q^-_{2} \mapsto Q^-_{1,2}\,,
$$
to the $QQ$ system for $SL(3)$: 
\begin{align}
\xi_1 Q^+_1(qz)Q^-_1(z)-\xi_2 Q^+_1(z)Q^-_1(qz)&= \Lambda_1(z)Q^+_2(z)\,,\notag\\
\xi_2 Q^+_2(qz)Q^-_2(z)-\xi_3 Q^+_2(z)Q^-_2(qz)&= \Lambda_2(z)Q^+_1(qz)
\end{align}
the first equation above will not change, however, the second will become
\begin{equation}
\xi_1 Q^+_2(qz)Q^-_{1,2}(z)-\xi_3 Q^+_2(z)Q^-_{1,2}(qz)= \Lambda_2(z)Q^-_1(qz)\,,
\end{equation}
which completes its extended $QQ$-system \eqref{eq:SL3QQ}.

In general, one can talk about the successive B\"acklund transformations 
associated with the Weyl group element $w$. If such transformations are possible, namely if after each of the elementary B\"acklund transformation one arrives to the nondegenerate q-oper (i.e. nondegenerate solution of the $QQ$-system), such oper is called $w$-generic in \cite{Frenkel:2020}. As one can see, the equations of the extended $QQ$-system emerge as a part of the all $QQ$-system equations obtained after every B\"acklund transformation if the Weyl group element is constructed by successive reflections along the order in the Dynkin diagram: $w=s_is_{i+1}\dots s_{j-1}s_j$.

We will refer to the collection of $QQ$-system equations, obtained via B\"acklund transformations for all Weyl group elements $w$ as the {\it full $QQ$-system}. 
    
One of the applications of the B\"acklund transformations which was proven in \cite{Frenkel:2020} is that $Z$-twisted Miura-Pl\"ucker $(G,q)$-oper is $Z$-twisted Miura $(G,q)$-oper if it is $w_0$-generic, where $w_0$ is the longest root.

Here we show that a stronger result holds for $(SL(r+1),q)$-opers. 
Combining  Theorems \ref{th:SLNqMiura} and \ref{Th:BetheQQEquiv}, we obtain the following theorem, which is  the central result of this section.

\begin{Thm}\label{existext}
The nondegenerate Z-twisted Miura-Pl\"ucker $(SL(r+1),q)$-opers are Z-twisted Miura $(SL(r+1),q)$-opers. They are in one-to-one correspondence with the nondegenerate solutions of the QQ system and thus $\mathfrak{sl}(r+1)$ XXZ Bethe equations. 
\end{Thm}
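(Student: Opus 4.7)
The plan is to chain together the three structural results already established earlier in this section. The conceptual picture is that Miura-Pl\"ucker data is equivalent to a nondegenerate solution of the minimal $QQ$-system, that solution extends recursively to a solution of the full extended $QQ$-system \eqref{eq:QQALLFull}, and this extended system supplies precisely the polynomial ingredients needed to build an explicit element $v(z)\in B_+(z)$ diagonalizing the $q$-connection.

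Concretely, I would start from a nondegenerate $Z$-twisted Miura-Pl\"ucker $(SL(r+1),q)$-oper $A(z)$ in the form \eqref{form of A1}. Applying Theorem \ref{inj} extracts a nondegenerate polynomial solution $\{Q_i^+(z),Q_i^-(z)\}_{i=1,\ldots,r}$ of the $QQ$-system \eqref{eq:QQAtype}, with the $Q_i^+(z)$ identified as the Cartan data $y_i(z)$ via \eqref{AH1} and \eqref{giyi}. Next, Theorem \ref{Th:BetheQQEquiv} upgrades this data to a (nondegenerate) solution of the full extended $QQ$-system \eqref{eq:QQALLFull}, constructing the higher polynomials $Q^-_{i,i+1,\ldots,j}(z)$ by induction on $j-i$ and absorbing any potential common roots into the recursion via the Bethe relations \eqref{eq:bethe}.

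With the complete extended $QQ$-data in hand, Theorem \ref{th:SLNqMiura} furnishes the explicit element $v(z)\in B_+(z)$ given by \eqref{eq:qGaugeGen}, whose inverse admits the compact matrix form \eqref{eq:vinverse}, and which satisfies $v(qz)^{-1}A(z)v(z)=Z$. This is exactly the Miura condition \eqref{gaugeA} of Definition \ref{Ztwoper}, so $A(z)$ is in fact $Z$-twisted Miura. Since every $Z$-twisted Miura oper is automatically a $Z$-twisted Miura-Pl\"ucker oper, the two classes coincide for $G=SL(r+1)$, and the resulting bijection with solutions of the $\mathfrak{sl}(r+1)$ XXZ Bethe equations is the one already provided by Theorem \ref{Th:BetheQQEquiv}.

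The subtle point, and the reason the proof is nontrivial even though the pieces are in hand, is that the $w_0$-generic hypothesis used in \cite{Frenkel:2020} for a general simply-connected group is sidestepped here: the Coxeter-ordered matrix form \eqref{eq:MiuraqConnection}, arising from the flag definition \ref{qopflag}, makes it possible to read off $v(z)$ directly from the extended $QQ$-polynomials and to verify by inspection that $v(z)$ actually lies in $B_+(z)$. The main obstacle I expect is the bookkeeping step of matching the $q$-distinctness conditions guaranteed by Theorem \ref{Th:BetheQQEquiv} with the pole-cancellation requirements that keep $v(z)$ upper-triangular with the correct diagonal polynomials $\prod_i Q_i^+(z)^{\check\alpha_i}$; once this alignment is carried out, the three bijections compose without further input.
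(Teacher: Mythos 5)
Your proposal is correct and follows essentially the same route as the paper, which derives the theorem by combining Theorem \ref{th:SLNqMiura} (explicit construction of the diagonalizing element $v(z)\in B_+(z)$ from the extended $QQ$-data) with Theorem \ref{Th:BetheQQEquiv} (the bijection between nondegenerate $QQ$-system solutions, extended $QQ$-system solutions, and Bethe roots), with Theorem \ref{inj} supplying the initial passage from the Miura-Pl\"ucker oper to the $QQ$-system. The paper gives no further argument beyond this composition, so your chain of the three results is exactly its intended proof.
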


\section{$q$-Opers via Quantum Wronskians} \label{Sec:QWronskians}
\subsection{Sections of line bundles and q-Wronskians}

In this section we will make use of an alternative definition of Miura $(SL(r+1),q)$-opers (see Definition \ref{Miuraflag}) to describe Z-twisted Miura q-opers with regular singularities, following \cite{KSZ}.  
Namely, we have a complete flag of subbundles $\cL_\bullet$ such that q-connection $A$ maps $\cL_{i}$ 
  into $\cL_{i-1}^q$ and the induced maps
  $\bar{A}_i:\cL_{i}/\cL_{i+1}\to \cL^q_{i-1}/\cL^q_{i}$ are
  isomorphisms for $i=1,\dots,r$ on  $U \cap M_q^{-1}(U)$, where $U$ is the Zariski open dense subset.
Explicitly, considering the determinants  
\begin{equation}\label{altqW} 
 \left(\Big(\prod_{j=0}^{i-2}(A(q^{i-2-j}z)\Big)s(z)\wedge\dots\wedge A(q^{i-2}z) s(q^{i-2}z)\wedge s(q^{i-1}z)
  \right)\bigg|_{\Lambda^i\cL_{r-i+2}^{q^{i-1}}}
\end{equation}
for $i=1,\dots, r+1$,  
where $s$ is a local section of $\cL_{r+1}$, we claim that 
$(E,A,\cL_\bullet)$ is an $(SL(r+1),q)$-oper if and only if at every point of $U \cap M_q^{-1}(U)$, there 
exists local section for which each  such determinant  is nonzero (see \cite{KSZ}).   
When we encounter the case of regular singularities (see Section \ref{Miurareg}), each  
$\bar{A}_i$ is an isomorphism except at zeroes of $\Lambda_i$ and thus we require the determinants to vanish at zeroes of the following polynomial $W_k(s)$: 
\begin{eqnarray}\label{eq:WPDefs}
W_k(s)=P_1(z) \cdot P_2(q^2z)\cdots P_{k}(q^{k-1}z),  \qquad 
P_i(z)=\Lambda_{r}\Lambda_{r-1}\cdots\Lambda_{r-i+1}(z)\,.
\end{eqnarray}  

Now we discuss the $Z$-twisted Miura condition. Recall from Section \ref{Miurafinsec} that Miura condition implies that there exist a flag $\hcL_\bullet$ which is preserved by the q-connection $A$. The $Z$-twisted condition implies that in the gauge when $A$ is given by fixed semisimple diagonal element $Z\in H$ such flag is formed by the standard basis $e_1, \dots, e_{r+1}$. 

The relative position between two flags is generic on $U \cap M_q^{-1}(U)$. The regular singularity condition implies that {\it quantum Wronskians}, namely  determinants
\begin{equation}\label{qD}
\mathcal{D}_k(s)=e_1\wedge\dots\wedge{e_{r+1-k}}\wedge
Z^{k-1}s(z)\wedge Z^{k-2} s(qz)\wedge\dots\wedge Z s(q^{k-2})\wedge s(q^{k-1}z)\,
\end{equation}
have a subset of zeroes, which coincide with those of
$\cW_k(s)$.  To be more explicit, for
$k=1,\dots,r+1$, we have nonzero constants $\alpha_k$ and polynomials
\begin{equation} \mathcal{V}_k(z) = \prod_{a=1}^{r_k}(z-v_{k,a})\,,
\label{eq:BaxterRho}
\end{equation}
for which 
\begin{equation}
\det\begin{pmatrix} \,     1 & \dots & 0 & \xi_1^{k-1}s_{1}(z) & \cdots & \xi_{1} s_{1}(q^{k-2}z)  &  s_{1}(q^{k-1}z) \\ 
 \vdots & \ddots & \vdots& \vdots & \vdots & \ddots & \vdots \\  
0 & \dots & 1&\xi_{k}^{k-1}s_{r+1-k}(z) &\dots & \xi_{k} s_{r+1-k}(q^{k-2}z) &   s_{k}(q^{k-1}z)  \\  
0 & \dots & 0&\xi^{k-1}_{k+1}s_{r+1-k+1}(z) & \dots & \xi_{r+1-k+1} s_{k+1}(q^{k-2}z)  &  s_{k+1}(q^{k-1}z)  \\
\vdots & \ddots & \vdots&\vdots & \vdots & \ddots & \vdots \\
0 & \dots & 0&\xi_{r+1}^{k-1}s_{r+1}(z) & \dots &\xi_{r+1} s_{r+1}(q^{k-2}z)  & s_{r+1}(q^{k-1}z)  \, \end{pmatrix} =\alpha_{k} W_{k}
\cV_{k} \,; 
\label{eq:MiuraQOperCond}
\end{equation}
Since $\cD_{r+1}(s)=\cW_{r+1}(s)$, we have
$\cV_{r+1}=1$.  We also set $\cV_0=1$; this is consistent with the fact
that \eqref{qD} also makes sense for $k=0$, giving
$\cD_0=e_1\wedge\dots\wedge e_{r+1}$.

We can also rewrite \eqref{eq:MiuraQOperCond} as
\begin{equation}
  \underset{i,j}{\det} \left[\xi_{r+1-k+i}^{k-j} s_{r+1-k+i}(q^{j-1}z)\right] = \alpha_{k} W_{k} \mathcal{V}_{k}\,,
\label{eq:MiuraDetForm}
\end{equation}
where $i,j = 1,\dots,k$. 

Note that the above determinants have slightly different form those of \cite{KSZ} -- twist parameters $\xi_i$ entered in different powers. This is due to a different order of the simple roots in the definition of the q-oper.

\begin{Thm}[\cite{KSZ}]\label{qWthKSZ}
Polynomials $\{\mathcal{V}_k(z)\}_{k=1,\dots, r}$ give the solution to the $QQ$-system \ref{eq:QQAtype} 
so that $Q^+_j(z)=\mathcal{V}_j(z)$ under the nondegeneracy condition that for all $i,j,k$ with $i \neq j$ and $a_{ik} \neq  0, a_{jk} \neq 0$, the zeros of $\mathcal{V}_i(z)$ and $\mathcal{V}_j(z)$ are
$q$-distinct from each other and from the zeros of $\Lambda_k(z)$.
\end{Thm}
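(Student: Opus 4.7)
The strategy is to identify $Q^+_j(z) := \mathcal{V}_j(z)$ and to \emph{produce} the missing polynomials $Q^-_j(z)$ as suitable minors of the $q$-Wronskian matrix, then derive the $QQ$-relation from a Desnanot--Jacobi (Pl\"ucker) identity applied to a one-step larger submatrix. The starting point is the formula \eqref{eq:MiuraDetForm}, which realizes $W_k \mathcal{V}_k$ as the determinant of the $k \times k$ matrix built from the bottom $k$ components of the section $s$ with columns labeled by the shifts $s(q^{j-1}z)$, twisted by powers of $\xi$. Thus $\mathcal{V}_k(z)$ is a particular $k \times k$ minor of an intrinsic infinite matrix whose rows are indexed by $i = 1, \dots, r+1$ and columns by shifts $q^l z$.

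Concretely, I would consider the $(k+1) \times (k+1)$ matrix $M(z)$ whose rows are the bottom $k+1$ components $s_{r+1-k}, \dots, s_{r+1}$ of the section, with columns $s_i(q^{j-1}z)$ for $j = 1, \dots, k+1$, weighted by the $\xi$-powers inherited from \eqref{eq:MiuraDetForm}. Four of its $k \times k$ submatrices are visible from the Desnanot--Jacobi identity: removing the first row and last column gives (up to $W$-factors and $\xi$-rescaling) $\mathcal{V}_k(qz)$; removing the last row and first column gives $\mathcal{V}_k(z)$; the principal $k \times k$ minor removing the top-left corner (and the bottom-right corner) gives $\mathcal{V}_{k-1}(qz)$ and $\mathcal{V}_{k+1}(z)$ respectively up to a factor of $P_k$-type polynomial coming from $W_k$ and $W_{k+1}$. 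Defining
\begin{equation*}
Q^-_k(z) \;:=\; \det\bigl[M_{\bullet}(z)\text{ with the top row replaced by a shifted copy}\bigr],
\end{equation*}
i.e.\ the remaining off-diagonal minor of $M(z)$, gives the candidate $Q^-_k$. The Desnanot--Jacobi identity
\begin{equation*}
\det(M)\cdot \det(M^{1,k+1}_{1,k+1}) \;=\; \det(M^1_1)\det(M^{k+1}_{k+1}) - \det(M^1_{k+1})\det(M^{k+1}_1)
\end{equation*}
then becomes, after tracking all $\xi$-powers, precisely
\begin{equation*}
\xi_k\, Q^+_k(qz) Q^-_k(z) - \xi_{k+1}\, Q^+_k(z) Q^-_k(qz) \;=\; \Lambda_k(z)\, Q^+_{k-1}(z) Q^+_{k+1}(qz),
\end{equation*}
with the $\Lambda_k$ on the right emerging from the ratio $W_{k+1} W_{k-1}/ W_k^2$ and the boundary conventions $\mathcal{V}_0 = \mathcal{V}_{r+1} = 1$ matching $Q^\pm_0 = Q^\pm_{r+1} = 1$.

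The main obstacle is twofold. First, one must verify that the $Q^-_k(z)$ defined this way are genuine \emph{polynomials}, not merely rational functions. The Desnanot--Jacobi identity a priori only gives a rational $QQ$-type equality after dividing by the auxiliary minor $\det(M^{1,k+1}_{1,k+1})$; the nondegeneracy hypothesis -- that for $i \neq j$ with $a_{ik}, a_{jk} \neq 0$ the zeros of $\mathcal{V}_i$, $\mathcal{V}_j$ and $\Lambda_k$ are $q$-distinct from each other -- is exactly what guarantees that these potential denominators cancel against numerator zeros, so that the resulting $Q^-_k$ are polynomial. Second, the bookkeeping of $\xi$-powers is delicate: each entry of the twisted Wronskian carries a power of $\xi$ depending on both row and column, and these must reassemble into precisely $\xi_k$ and $\xi_{k+1}$ (the diagonal entries of $Z$ in positions $k, k+1$) on the left-hand side of the $QQ$-system. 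This is the step most prone to sign or index errors, but it is essentially a bilinear algebra computation on the twisted Vandermonde structure of $M(z)$.
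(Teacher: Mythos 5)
Your overall strategy is the one the paper itself uses (in Lemma \ref{Th:PropQQtilde} and Proposition \ref{eq:qWronskiansShift}): set $Q^+_k=\mathcal{V}_k$, realize $Q^-_k$ as a complementary minor of a $(k+1)\times(k+1)$ twisted $q$-Wronskian, and read off the $QQ$-relation from a Desnanot--Jacobi identity, with the $\Lambda_k$ on the right coming from the ratio of $W$-factors and the $\xi_k,\xi_{k+1}$ from the Vandermonde bookkeeping. However, the specific instance of Desnanot--Jacobi you chose is the wrong one, and with it the argument does not produce the $QQ$-system. You take rows $\{1,k+1\}$ and columns $\{1,k+1\}$ (the ``four corners'' version). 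Note that $\mathcal{V}_k(z)$ and $\mathcal{V}_k(qz)$ are determinants over the \emph{same} set of rows (the bottom $k$ components $s_{r+2-k},\dots,s_{r+1}$), differing only in which block of columns (shifts) is used; so both must arise by deleting the \emph{same} row (the top one) and different columns (last vs.\ first, which control the unshifted vs.\ $q$-shifted argument — incidentally, you have these two swapped in your list of minors). In your corner identity, $M^{k+1}_{k+1}$ and $M^{k+1}_1$ delete the \emph{bottom} row, leaving the top $k$ components $s_{r+1-k},\dots,s_r$; these are not $\mathcal{V}_k$ of anything, and the double minor $M^{1,k+1}_{1,k+1}$ is built from the middle $k-1$ rows, which is not $\mathcal{V}_{k-1}(qz)$ (that one uses rows $r+3-k,\dots,r+1$). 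The identity you write is a true determinant identity, but it relates $\mathcal{V}_{k+1}$ and $\mathcal{V}_k$ to three minors that do not appear in the $QQ$-system, and the recursion does not close.

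The fix is to apply Desnanot--Jacobi with rows $\{1,2\}$ and columns $\{1,k+1\}$ of the $(k+1)\times(k+1)$ matrix $M_k=M_{r-k,\dots,r+1}$ built from the bottom $k+1$ components: deleting row $1$ keeps the bottom $k$ rows and gives $\mathscr{D}^+_k\propto W_k\mathcal{V}_k$ (at $z$ or $qz$ according to whether column $k+1$ or column $1$ is deleted), deleting row $2$ keeps rows $\{r+1-k\}\cup\{r+3-k,\dots,r+1\}$ and \emph{defines} $\mathscr{D}^-_k$, the double minor $M^{1,2}_{1,k+1}$ is $\mathscr{D}^+_{k-1}(qz)$, and $\det M_k$ is $\mathscr{D}^+_{k+1}(z)$; dividing by the corresponding Vandermonde determinants produces the factor $(\xi_k-\xi_{k+1})$ and hence the $\mathscr{D}\mathscr{D}$-relation, which Lemma \ref{Th:PropQQtilde} converts into \eqref{eq:QQAtype}. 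One further remark: your worry that $Q^-_k$ is only a priori rational is misplaced at the level of the identity — $\mathscr{D}^-_k$ is a determinant of polynomials divided by a nonzero constant, so it is automatically polynomial, and no division by the auxiliary minor is ever performed. Where the nondegeneracy hypothesis is genuinely needed is in isolating $\mathcal{V}_k$ from the $W_k$-factor in $\mathcal{D}_k=\alpha_kW_k\mathcal{V}_k$ and in ensuring the resulting solution of \eqref{eq:QQAtype} is nondegenerate in the sense required to match the Bethe equations.
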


\begin{Rem} Technically, we used stronger conditions in \cite{KSZ}, namely that zeroes of $\{\Lambda_i\}_{i=1\dots r}$ and 
$\{\mathcal{V}_j(z)\}_{j=1\dots r}$ have to be q-disjoint to satisfy $QQ$-system equations, but one can relax it easily and even more than it is done in the statement above.
\end{Rem}

In the next subsection we will show that the extended $QQ$-system  can be obtained from various minors in q-Wronskian matrices. This  theorem allows to relate the section $s(z)$, generating the line bundle $\mathcal{L}_{r+1}$ with the elements of the extended $QQ$-system  using the transformation \eqref{eq:qGaugeTrSpecial}. 

\begin{Prop}\label{Th:SectionsQQ}
Let $v(z)$ be the  gauge transformation from \eqref{eq:qGaugeTrSpecial} and $s(z)$ be the section generating $\mathcal{L}_{r+1}$ in the definition of the $(SL(r+1),q)$-oper. Then the components of $s(z)$ in the 
gauge when q-oper connection is equal to $Z$ is given by:
\begin{equation}\label{eq:comptssection}
s_{r+1}(z)=Q^+_{r}(z)\,,\qquad s_r(z)=Q^-_r(z)\,,\qquad s_k(z)=Q^-_{k,\dots,r}(z)\,,
\end{equation}
for $k=1,\dots, r-1$.
\end{Prop}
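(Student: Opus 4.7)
The plan is to identify the generating section of $\mathcal{L}_{r+1}$ in the trivialization in which $A(z)$ has the explicit upper-bidiagonal matrix form \eqref{eq:MiuraqConnection} of Lemma \ref{Th:BCHAConnection}, and then simply apply the gauge transformation of Theorem \ref{th:SLNqMiura} to pass to the frame where the $q$-connection equals $Z$. Once the correct identification is made, the components \eqref{eq:comptssection} are read off as a single column of the explicit matrix \eqref{eq:vinverse} for $v(z)^{-1}$.

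The first step is to check that, in the trivialization where $A(z)$ takes the form \eqref{eq:MiuraqConnection}, the oper flag $\mathcal{L}_\bullet$ coincides with the descending standard flag $\mathcal{L}_i = \Span(e_i,\ldots,e_{r+1})$. This is a direct inspection of the matrix: $A(z)\cdot e_j = \Lambda_{j-1}(z)\, e_{j-1} + (g_j(z)/g_{j-1}(z))\, e_j$ satisfies $A\cdot\mathcal{L}_i\subset \mathcal{L}_{i-1}$ with induced quotient maps that are isomorphisms away from zeros of $\Lambda_i(z)$, which matches Definition \ref{qopflag}. In particular $\mathcal{L}_{r+1}=\Span(e_{r+1})$, and we may choose its local generating section to be the constant section $s_0(z) = e_{r+1}$.

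Next, I would apply the change of trivialization corresponding to $g(z) = v(z)^{-1}$, which by $A(z) = v(qz)Zv(z)^{-1}$ and the gauge rule \eqref{gauge tr} normalizes the $q$-connection to $Z$. A short consistency computation (demanding that the equality $A(z)\sigma(z) = \sigma'(qz)$ for sections $\sigma$, $\sigma' = A\sigma$ persist in every trivialization) shows that sections transform covariantly with the same matrix $g(z)$, so the section in the $Z$-frame is $s(z) = v(z)^{-1} e_{r+1}$. But this is precisely the last column of the explicit matrix \eqref{eq:vinverse}. Reading that column row by row yields $s_k(z) = Q^-_{k,\ldots,r}(z)$ for $k = 1,\ldots,r-1$, $s_r(z) = Q^-_r(z)$, and $s_{r+1}(z) = Q^+_r(z)$, which is exactly \eqref{eq:comptssection}. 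Observe that although the matrix \eqref{eq:vinverse} has rational entries in its interior, its last column consists entirely of polynomials, so polynomiality of every $s_k(z)$ is automatic and consistent with $s(z)$ defining a regular section of $\mathcal{L}_{r+1}$.

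The only conceptual subtlety is in the first step: the oper flag must be identified correctly inside the matrix trivialization, and a corresponding normalization of $s(z)$ fixed so that $s_0(z) = e_{r+1}$ exactly (rather than, say, a nontrivial scalar multiple). Once this normalization is pinned down, the remainder is a direct linear-algebra computation using the explicit formula for $v(z)^{-1}$ from Theorem \ref{th:SLNqMiura}.
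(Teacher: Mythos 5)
Your proof is correct and takes essentially the same route as the paper's: the paper's own proof is precisely the one-line observation that one acts with $v(z)^{-1}$ from \eqref{eq:vinverse} on the basis vector $e_{r+1}=(0,\dots,0,1)$ and reads off the last column. Your preliminary identification of $\mathcal{L}_{r+1}$ with the line spanned by $e_{r+1}$ in the trivialization \eqref{eq:MiuraqConnection}, together with the covariance check for sections under the $q$-gauge transformation, simply makes explicit the details the paper leaves implicit.
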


\begin{proof}
The Proposition follows from the direct application of \eqref{eq:vinverse}
Starting from \eqref{eq:vinverse} the Proposition follows after acting with $v(z)^{-1}$ on the basis vector $e_{r+1}=(0,0,\dots, 0, 1)$. 
\end{proof}

In the next subsection we will show that the extended $QQ$-system  can be obtained from various minors in q-Wronskian matrices. 

\subsection{Wronskians and extended $QQ$-systems}

First, we will rewrite the extended $QQ$-system in a more convenient way to relate it to the minors in the q-Wronskian matrix. Namely, we multiply $Q$-terms by certain polynomials to get rid of the $\Lambda$-polynomials in the right hand side. This is done in the following Lemma.
\begin{Lem}\label{Th:PropQQtilde}
The system of equations \eqref{eq:QQAll} is equivalent to the following set of equations
\begin{align}\label{eq:QQAll1}
\xi_{i} \,\mathscr{D}^+_{i}(q z)\mathscr{D}^-_{i}(z)-\xi_{i+1}\, \mathscr{D}^+_{i} (z)\mathscr{D}^-_{i}( qz)&= (\xi_{i}-\xi_{i+1})\,\mathscr{D}^+_{i-1} (qz)\mathscr{D}^+_{i+1}(z)\,,\notag\\
\xi_{i} \,\mathscr{D}^+_{i+1}(q z)\mathscr{D}^-_{i,i+1}(z)-\xi_{i+2}\, \mathscr{D}^+_{i+1} (z)\mathscr{D}^-_{i,i+1}( qz)&= (\xi_{i}-\xi_{i+2})\,\mathscr{D}^-_{i} (qz)\mathscr{D}^+_{i+2}(z)\,,\notag\\
\dots&\dots\\
\xi_i \,\mathscr{D}^+_{r+i-2}(q z)\mathscr{D}^-_{i,\dots,r-1+i}(z)-\xi_{r+i-1},  \mathscr{D}^+_{r+i-2}(z)\mathscr{D}^-_{i,\dots,r-1+i}(q z)&=(\xi_i-\xi_{r+i-1})\mathscr{D}^-_{i,\dots,r-2+i}(qz)\mathscr{D}^+_{r+i-1}(z)\,,\notag\\
\xi_{1} \mathscr{D}^+_{r}(qz)\mathscr{D}^-_{1,\dots,r}(z)-\xi_{r+1} \mathscr{D}^+_{r}(z)\mathscr{D}^-_{1,\dots,r}(qz)&= (\xi_{1}-\xi_{r+1})\mathscr{D}^-_{1,\dots,r-1}(qz)\,.\notag
\end{align}
where index $i$ ranges between the same values as in the corresponding equations in \eqref{eq:QQAll}, for the polynomials 
\begin{equation}\label{eq:Dkdef}
\mathscr{D}^+_k={Q^+_k}{F_k}\,,\quad \mathscr{D}^-_k= Q^-_k {F_k}\eta_k,\,\qquad \mathscr{D}^-_{l,\dots, k}= Q^-_{l,\dots, k} {F_k}\eta_{l,\dots, k}\,.
\end{equation}
where 
$$
F_i(z)=  W_{r-i}(q^{r-i}z)\,,\qquad \eta_{l,\dots, i} = \prod_{a=0}^{i-l}(\xi_l-\xi_{l+a+1})\,.
$$
\end{Lem}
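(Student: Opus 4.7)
The plan is to verify the equivalence row by row via direct substitution, reducing each row to a single combinatorial identity among the polynomials $F_i$. Each equation of \eqref{eq:QQAll1} has the uniform form
\begin{equation*}
\xi_l\, \mathscr{D}^+_j(qz)\,\mathscr{D}^-_{l,\ldots,j}(z)-\xi_{j+1}\, \mathscr{D}^+_j(z)\,\mathscr{D}^-_{l,\ldots,j}(qz)=(\xi_l-\xi_{j+1})\,\mathscr{D}^-_{l,\ldots,j-1}(qz)\,\mathscr{D}^+_{j+1}(z)
\end{equation*}
for $1\le l\le j\le r$, where I adopt the boundary conventions $\mathscr{D}^-_{l,\ldots,l-1}:=\mathscr{D}^+_{l-1}$ and $\eta_{l,\ldots,l-1}:=1$ so that the original $QQ$-system (the first row of \eqref{eq:QQAll1}) fits uniformly. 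The corresponding $Q$-polynomial equation from \eqref{eq:QQALLFull} reads
\begin{equation*}
\xi_l\, Q^+_j(qz)\,Q^-_{l,\ldots,j}(z)-\xi_{j+1}\, Q^+_j(z)\,Q^-_{l,\ldots,j}(qz)=\Lambda_j(z)\, Q^-_{l,\ldots,j-1}(qz)\,Q^+_{j+1}(z).
\end{equation*}

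First I would substitute $\mathscr{D}^+_k=Q^+_k F_k$ and $\mathscr{D}^-_{l,\ldots,j}=Q^-_{l,\ldots,j}\, F_j\,\eta_{l,\ldots,j}$ into the displayed row of \eqref{eq:QQAll1}. The LHS acquires a common factor $F_j(z)F_j(qz)\eta_{l,\ldots,j}$ and, using the $Q$-polynomial identity above, equals $F_j(z)F_j(qz)\eta_{l,\ldots,j}\Lambda_j(z)Q^-_{l,\ldots,j-1}(qz)Q^+_{j+1}(z)$; the RHS becomes $(\xi_l-\xi_{j+1})\eta_{l,\ldots,j-1}F_{j-1}(qz)F_{j+1}(z)Q^-_{l,\ldots,j-1}(qz)Q^+_{j+1}(z)$. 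The telescoping identity $\eta_{l,\ldots,j}=(\xi_l-\xi_{j+1})\eta_{l,\ldots,j-1}$ is immediate from the definition $\eta_{l,\ldots,i}=\prod_{a=0}^{i-l}(\xi_l-\xi_{l+a+1})$, so the factor $(\xi_l-\xi_{j+1})$ cancels. Crucially, this reduction is independent of $l$, and the required equality therefore collapses to the single $F$-identity
\begin{equation*}
F_j(z)\,F_j(qz)\,\Lambda_j(z)=F_{j-1}(qz)\,F_{j+1}(z),\qquad j=1,\ldots,r,
\end{equation*}
with the convention $F_{r+1}(z):=1$.

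The main obstacle is establishing this $F$-identity, which I will verify from the explicit formula $F_i(z)=W_{r-i}(q^{r-i}z)$ combined with the recursions $W_{k+1}(z)=W_k(z)\,P_{k+1}(q^k z)$ and $P_{k+1}(z)=P_k(z)\,\Lambda_{r-k}(z)$. Expanding $F_j(z)F_j(qz)$ and $F_{j-1}(qz)F_{j+1}(z)$ as products $\prod_m\prod_t \Lambda_m(q^t z)$ over appropriate intervals of $q$-shifts, and then tracking the multiplicity of each $\Lambda_m$ and its shift carefully, shows that the two products differ by exactly the single factor $\Lambda_j(z)$. This bookkeeping is routine but requires attention to the $q$-shift conventions in the definitions of $W_k$ and $F_i$; it is the only non-trivial step in the argument.

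Finally, the reverse implication from \eqref{eq:QQAll1} to \eqref{eq:QQAll} is immediate: one divides through by the common factor $F_j(z)F_j(qz)\eta_{l,\ldots,j}$, which is nonzero since each $F_k$ is a product of the non-constant polynomials $\Lambda_m$ at various $q$-shifts and $\eta_{l,\ldots,j}$ is a nonzero scalar by the regularity assumption \eqref{assume} on $Z$.
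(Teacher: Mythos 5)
Your proof is correct and follows essentially the same route as the paper's: substitute the definitions of $\mathscr{D}^{\pm}$, cancel the $\eta$-factors via the telescoping relation $\eta_{l,\dots,j}=(\xi_l-\xi_{j+1})\eta_{l,\dots,j-1}$, and reduce every row to the single identity $F_{j-1}(qz)F_{j+1}(z)=F_j(z)F_j(qz)\Lambda_j(z)$, which is exactly the relation the paper checks from the definitions of $W_k$ and $P_k$ (the paper treats one representative row and declares the rest analogous, whereas you phrase the generic row uniformly in $l,j$ — a purely organizational difference).
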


For the future we shall refer to \eqref{eq:QQAll1} as the \textit{extended} $\mathscr{D}\mathscr{D}$-system for $SL(r+1)$ and to its first line specifically as merely the $\mathscr{D}\mathscr{D}$-system.

\begin{proof}
The proof is the direct extension of the proof of Lemma 4.2 in \cite{KSZ} to other equations in \eqref{eq:QQALLFull}. Since all equations are treated analogously, let us consider the second set of \eqref{eq:QQAll1}
which we can write as
\begin{equation}\label{eq:2ndeqshifter}
\xi_{i-1} \,\mathscr{D}^+_{i}(q z)\mathscr{D}^-_{i-1,i}(z)-\xi_{i+1}\, \mathscr{D}^+_{i} (z)\mathscr{D}^-_{i-1,i}( qz)= (\xi_{i-1}-\xi_{i+1})\,\mathscr{D}^-_{i-1} (qz)\mathscr{D}^+_{i+1}(z)\,.
\end{equation}
After replacing 
$$
\mathscr{D}^+_i={Q^+_i}{F_i}\,,\quad \mathscr{D}^-_i= Q^-_i {F_i}\,\eta_i,\,\qquad \mathscr{D}^-_{i-1, i}= Q^-_{i-1,i} {F_{i}}\,\eta_{i-1,i}
$$
and assigning 
$$
\eta_{i-1} = \xi_{i-1}-\xi_{i},\qquad \eta_{i-1,i} = (\xi_{i-1}-\xi_{i})(\xi_{i-1}-\xi_{i+1})\,,
$$
we can see that \eqref{eq:2ndeqshifter} is equivalent to the second equation of \eqref{eq:QQALLFull} provided that the following difference equation is satisfied:
$$
\frac{F_{i-1}(qz) F_{i+1}(z)}{F_{i}(qz) F_{i}(z)}\cdot\frac{\eta_{i-1}}{\eta_{i-1,i}}(\xi_{i-1}-\xi_{i+1})=\Lambda_i(z)\,.
$$
The validity of this relation follows from the above formulae and form the definitions \eqref{eq:WPDefs}
$$
\frac{F_{i-1}(qz) F_{i+1}(z)}{ F_{i}(qz) F_{i}(z)} = \frac{W_{r-i+1}(q^{r-i+1}z)}{W_{r-i}(q^{r-i}z)}\frac{W_{r-i-1}(q^{r-i}z)}{W_{r-i}(q^{r-i+1}z)} = \frac{P_{r-i+1}(q^{r-i}z)}{P_{r-i}(q^{r-i}z)}=\Lambda_i(z)\,.
$$

\end{proof}

As we shall see below, one can express the solutions of the $QQ$- and $\mathscr{D}\mathscr{D}$-systems in terms of  section $s(z)$ of subbundle  $\mathcal{L}_{r+1}$.
Following the discussion of \cite{KSZ} (Section 4) we consider the following matrices: 
\begin{equation}
M_{i_1,\dots, i_j}=
\begin{pmatrix} \,  
\xi_{i_1}^{j-1}s_{i_1}(z)  & \cdots  & \xi_{i_1}s_{i_1}(q^{j-2}z)&  s_{i_1}(q^{j-1}z) \\ \vdots & \ddots & \vdots & \vdots \\   \xi_{i_j}^{j-1}s_{i_j}(z)  & \cdots& \xi_{i_j} s_{i_j}(q^{j-2}z)  & s_{i_j}(q^{j-1}z) \, 
\end{pmatrix}\\, \quad V_{i_1,\dots, i_j}=
\begin{pmatrix} 
\,   \xi_{i_1}^{j-1}&\cdots & \xi_{i_1} &  1  \\ \vdots & \ddots & \vdots & \vdots \\  \xi_{i_j}^{j-1} & \cdots & \xi_{i_j}   & 1  \, 
\end{pmatrix}\\,
\label{eq:MiuraQOperCond1q}
\end{equation}
where $s_i$ are polynomials and $V_{i_1,\dots,i_j}$ is the Vandermonde-like matrix whose determinant is
\begin{equation}
\text{det} V_{i_1,\dots, i_j} = \prod_{i<j}(\xi_i-\xi_j)\,.
\end{equation}

In \cite{KSZ} the following Proposition was proven (Lemma 4.4 in slightly different notations), which allows to express solutions of the $QQ$-system in terms of q-Wronskians of \eqref{eq:MiuraQOperCond1q}. Here we provide the sketch of proof for completeness.

\begin{Prop}\label{eq:qWronskiansShift}
Given polynomials $\mathscr{D}^+_i, \mathscr{D}^-_i$ for $i=1,\dots,r$ satisfying the first line of \eqref{eq:QQAll1}, there exist unique polynomials
$s_1,\dots,s_{r+1}$ such that 
\begin{equation}
\mathscr{D}^+_i (z)= \frac{\det\, M_{r+2-i,\dots,r+1}(z)}{\det\,
  V_{r+2-i,\dots,r+1}}\,\qquad\text{ and } \qquad
\mathscr{D}^-_i(z) = \frac{\det\, M_{r+1-i,r+3-i,\dots,r+1}(z)}{\det\, V_{r+1-i,r+3-i,\dots,r+1}}\,,
\label{eq:MM0Inda}
\end{equation}
where matrix $M$ is given in \eqref{eq:MiuraQOperCond1q}.
\end{Prop}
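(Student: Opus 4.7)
The plan is to construct the polynomials $s_1, \ldots, s_{r+1}$ inductively in reverse order, from $s_{r+1}$ down to $s_1$, taking the prescribed formulas \eqref{eq:MM0Inda} as defining equations. Uniqueness will be evident from the step-by-step procedure; the real content lies in existence, i.e. in producing polynomial solutions at each stage and verifying that the remaining formulas hold automatically.

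The base cases are immediate. Specializing \eqref{eq:MM0Inda} at $i=1$, the matrix $M_{r+1}$ is the $1\times 1$ entry $s_{r+1}(z)$ with $V_{r+1}=(1)$, so $s_{r+1}(z)=\mathscr{D}^+_1(z)$; likewise $M_r=(s_r(z))$ forces $s_r(z)=\mathscr{D}^-_1(z)$. Both sections are polynomials by hypothesis.

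For the inductive step, suppose $s_{r+2-i},\dots,s_{r+1}$ have been constructed so that \eqref{eq:MM0Inda} holds for all indices $j<i$. I would then extract $s_{r+1-i}$ from the prescribed formula for $\mathscr{D}^-_i$: expanding $\det M_{r+1-i,\,r+3-i,\,\dots,\,r+1}(z)$ along its first row produces a linear $q$-difference equation of order $i$ in $s_{r+1-i}(z)$, whose coefficients are the minors of the remaining $i-1$ rows and therefore depend only on the already-constructed sections $s_{r+3-i},\dots,s_{r+1}$. The genericity hypothesis \eqref{assume} on the $\xi_j$ guarantees that this difference equation is non-singular and admits a unique rational solution. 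Establishing the polynomiality of that solution is the main technical obstacle: it should be deduced from the polynomiality of $\mathscr{D}^-_i$ together with the nondegeneracy built into the extended $\mathscr{D}\mathscr{D}$-system, closely following the argument of Lemma 4.4 of \cite{KSZ}.

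It remains to verify, as a consistency check, that the $s_j$'s just constructed also satisfy the $\mathscr{D}^+_i$ formula of \eqref{eq:MM0Inda} for every $i\geq 2$. This is where the first line of \eqref{eq:QQAll1} enters: it is precisely a Pl\"ucker (Desnanot--Jacobi/Sylvester) identity relating the $i\times i$ minors $\det M_{r+2-i,\dots,r+1}$ and $\det M_{r+1-i,\,r+3-i,\dots,r+1}$ to the lower-order minors that enter $\mathscr{D}^{\pm}_{i-1}$ and $\mathscr{D}^+_{i+1}$. Since these lower-order minors have already been matched to the corresponding $\mathscr{D}$-data at previous stages of the induction, the $\mathscr{D}^+_i$-formula holds automatically once the $s_j$'s have been pinned down by the $\mathscr{D}^-$-equations, and the induction closes.
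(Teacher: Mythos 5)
Your overall strategy coincides with the paper's (and with Lemma 4.4 of \cite{KSZ}, on which the paper leans): in both cases the engine is the Desnanot--Jacobi identity \eqref{eq:JacobiNew} applied to the nested bottom-right submatrices of \eqref{eq:MiuraQOperCond1q}, together with the Vandermonde factorization that turns that identity into the first line of \eqref{eq:QQAll1}. The difference is organizational. The printed proof only runs the verification direction: it records the relations expressing the relevant minors of $M_{r+1-i,\dots,r+1}$ as $q$-shifts of $\det M_{r+2-i,\dots,r+1}$, $\det M_{r+1-i,r+3-i,\dots,r+1}$ and $\det M_{r+3-i,\dots,r+1}$, substitutes into \eqref{eq:JacobiNew}, and recognizes the $\mathscr{D}\mathscr{D}$-relation, deferring the actual production of the $s_j$ to the cited lemma. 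You instead run the construction explicitly ($s_{r+1}=\mathscr{D}^+_1$, $s_r=\mathscr{D}^-_1$, then $s_{r+1-i}$ extracted from the $\mathscr{D}^-_i$-formula), which is closer in spirit to what the existence-and-uniqueness statement actually demands.

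Two steps need repair. First, in the inductive step the existence of a polynomial --- indeed even a rational --- solution of the inhomogeneous linear $q$-difference equation obtained by expanding $\det M_{r+1-i,r+3-i,\dots,r+1}$ along its first row is not automatic: a $q$-difference operator with polynomial coefficients need not hit a prescribed polynomial right-hand side. This is exactly where the hypothesis that the $\mathscr{D}^{\pm}_j$ solve \eqref{eq:QQAll1} must enter, and you leave it open (as, admittedly, does the paper's sketch). Moreover, uniqueness of the rational solution requires $\xi_j/\xi_k\notin q^{\mathbb Z}$ for the non-adjacent pairs occurring in these minors (already $\xi_{r-1}/\xi_{r+1}$ at the first nontrivial step), which does not follow from \eqref{assume}; full $q$-genericity of $Z$ has to be assumed. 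Second, your consistency check invokes the Desnanot--Jacobi identity relating the two $i\times i$ minors to $\mathscr{D}^+_{i-1}$ \emph{and} $\mathscr{D}^+_{i+1}$; but the $(i+1)\times(i+1)$ determinant realizing $\mathscr{D}^+_{i+1}$ contains the row $s_{r+1-i}$ being constructed at that very step and has not yet been matched to anything, so as written the argument is circular. The repair is to apply \eqref{eq:JacobiNew} one size down, to $M_{r+2-i,\dots,r+1}$: its minors involve only the already-constructed $s_{r+2-i},\dots,s_{r+1}$ and are already identified with $\mathscr{D}^{\pm}_{i-1}$ and $\mathscr{D}^+_{i-2}$, so comparison with the $(i-1)$st equation of \eqref{eq:QQAll1} forces $\det M_{r+2-i,\dots,r+1}/\det V_{r+2-i,\dots,r+1}=\mathscr{D}^+_i$ before $s_{r+1-i}$ ever appears.
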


\begin{proof}
The proof is based on the determinant Desnanot-Jacobi identity which holds for any $l\times l$ matrix $M$. In this proof we shall use this identity in the following form
\begin{equation}\label{eq:JacobiNew}
 M^{1}_1 M^{2}_{l}- M^{1}_{l}M^{2}_{1}= M^{1,2}_{1,l} M\,,
\end{equation}
where $M^a_b$ is the determinant of matrix $M$ with row $a$ and column $b$ removed (respectively $M^{a,c}_{b,d}$ is the determinant of matrix $M$ with rows $a$ and $c$ and column $b$ and $d$ removed). Note that the Desnanot-Jacobi identity holds for any pairs of indices $\{a,c\}$ and $\{b,d\}$ as long as $a\neq c$ and $b\neq d$.

In \cite{KSZ} it was shown, using periodic properties of matrix $M$ that the first line of \eqref{eq:QQAll1} can be identified with \eqref{eq:JacobiNew} if $M_i=M_{r+1-i,\dots,r+1}(z)$
 is the determinant of the bottom-right $i\times i$ submatrix of the $(r+1)\times(r+1)$ matrix $M_{1,\dots,r+1}(z)$.
It is easy to see, for instance, that 
$$
\left(M_i\right)^1_{1}(z)=M_{r+2-i,\dots,r+1}(qz)\,,\quad \left(M_i\right)^{2}_{1}(z)=M_{r+1-i,r+3-i,\dots,r+1}(qz) \,,
$$
\begin{align}
\left(M_i\right)^1_{i}(z)&=\xi_{r+2-i}\left(\prod_{a=r+3-i}^{r+1}\xi_i\right)\cdot M_{r+2-i,\dots,r+1}(z)\,,\notag\\\left(M_i\right)^{2}_{i}(z)&=\xi_{r+1-i}\left(\prod_{a=r+3-i}^{r+1}\xi_i\right)\cdot M_{r+1-i,r+3-i,\dots,r+1}(z) \,,\notag
\end{align}
$$
\left(M_i\right)^{1,2}_{1,i}(z)=\left(\prod_{a=r+3-i}^{r+1}\xi_i\right)\cdot M_{r+3-i,\dots,r+1}(qz)\,.
$$

We can substitute the above five relations into \eqref{eq:JacobiNew} and then divide its both sides by $V_{r+2-i,\dots,r+1}V_{r+1-i,r+3-i,\dots,r+1}$. The first $\mathscr{D}\mathscr{D}$-relation will follow after observing that
$$
V_{r+3-i,\dots,r+1}V_{r+1-i,\dots,r+1}=(\zeta_{r+1-i}-\zeta_{r+2-i})V_{r+2-i,\dots,r+1}V_{r+1-i,r+3-i,\dots,r+1}\,.
$$
\end{proof}

In the above proof we have derived an alternative presentation of $\mathscr{D}^\pm_i$ polynomials  and their $q$-shifted counterparts in terms of minors $M_i$
\begin{equation}
\mathscr{D}^+_i (z)= \frac{(M_i)^1_{i}(z)}{(V_i)^1_{i}}\,,\quad
\mathscr{D}^-_i (z)= \frac{(M_i)^{2}_{i}(z)}{(V_i)^{2}_{i}}\,, \quad
\mathscr{D}^+_i (qz)= \frac{(M_i)^1_1(z)}{(V_i)^1_{1}}\,,\quad
\mathscr{D}^-_i (qz)= \frac{(M_i)^{2}_{1}(z)}{(V_i)^{2}_1}\,, 
\label{eq:MM0IndaMinors}
\end{equation}
where $V_i=V_{r+1-i,\dots,r+1}(z)$ is the determinant of the bottom-right $i\times i$ submatrix of the $(r+1)\times(r+1)$ matrix $V_{1,\dots,r+1}(z)$. This way all polynomials which appear in the $\mathscr{D}\mathscr{D}$-system can be universally presented as ratios of (unshifted) minors of two sets of matrices $\{M_i\}$ and $\{V_i\}$ for $i=1,\dots,r$.

Thus $i$-th equation of the $\mathscr{DD}$-system represents a Densanot-Jacobi determinant identity for matrix $M_i$ of the form \eqref{eq:MiuraQOperCond1q}.
In the following subsection we shall demonstrate that all equations of the extended $\mathscr{DD}$-system can also be thought of as determinant identities for matrices which are obtained from $M_i$s by permutation of rows and columns. The latter is provided by B\"acklund transformations.

\subsection{B\"acklund transformations and the extended $\mathscr{D}\mathscr{D}$-System}
We have shown that the extended $QQ$-system is equivalent to the extended $\mathscr{D}\mathscr{D}$-system in Lemma \ref{Th:PropQQtilde}. Let us focus on $\mathscr{D}\mathscr{D}$-system, namely the 
equations, corresponding to the first line in \eqref{eq:QQAll1}. We already mentioned that all the equations from \eqref{eq:QQALLFull} can be obtained from the $QQ$-system by applying B\"acklund transformations. The same works for the $\mathscr{D}\mathscr{D}$-system. The $i$-th B\"acklund transformation replaces the data
$$
  \{ \mathscr{D}^+_j ,\mathscr{D}^-_j\}_{j=1,\ldots,r}\,,\quad   \{ \xi_j \}_{j=1,\ldots,r+1}
$$
with the following
\begin{align} \label{qqm2DD}
\{\wt{\mathscr{D}}^+_j \}_{j=1,\ldots,r} &= \{ \mathscr{D}_1^{+}, \dots,
  \mathscr{D}_{i-1}^+,\mathscr{D}_i^-,\mathscr{D}_{i+1}^+ \dots , \mathscr{D}_r^{+} \}; \\ \notag
\{\wt{\mathscr{D}}^-_j \}_{j=1,\ldots,r} &=  \left\{ \mathscr{D}^-_1, \dots,\mathscr{D}_{i-1}^{*\,-},\mathscr{D}_i^+,\mathscr{D}_{i-1,i}^-, \mathscr{D}_{i+2}^-\dots , \mathscr{D}^-_{r} \right\}\,\\
\{ \wt{\xi}_j \}_{j=1,\ldots,r} &= \left\{\xi_1,\dots,\xi_{i-1},\xi_{i+1},\xi_{i},\dots,\xi_{r+1}\right\} \notag
\end{align}
Notice that in the $QQ$-system this rule works as $Q^-_i \mapsto -Q^+_i$. In the $\mathscr{D}\mathscr{D}$-system the sign disappears due to the presence of the multiplicative factor $\eta_i$ between $Q^-_i$ and $\mathscr{D}$ functions. 

We also note, that polynomials $\mathscr{D}_{i-1}^{*\,-}$ do not belong to the extended $\mathscr{D}\mathscr{D}$-system, rather they will be a part of the \textit{full} $\mathscr{D}\mathscr{D}$-system. By applying B\"acklund transformations further we can readily find all polynomials from the full $\mathscr{D}\mathscr{D}$-system, which is in one-to-one correspondence with the full $QQ$-system we discussed in section 5.

We can now find a similar presentation for other polynomials $\mathscr{D}^-_{i,\dots,j}$ in terms of ratios of determinants by combining the ideas above and Proposition \ref{Th:SectionsQQ}.
In particular, we need to understand how B\"acklund transformations act on the matrices \eqref{eq:MiuraQOperCond1q}. 

Let us start with the $(r+1)\times(r+1)$-matrices from \eqref{eq:MiuraQOperCond1q}  $M_1 = M_{1,\dots,r+1}$ and $V_1 = V_{1,\dots,r+1}$. During the $i$th B\"acklund transformation \eqref{qqm2} the following functions in the extended $\mathscr{D}\mathscr{D}$-system get interchanged: $\mathscr{D}^+_i(z) \leftrightarrow \mathscr{D}^-_i(z)$, $\mathscr{D}^-_{i+1}(z)\leftrightarrow \mathscr{D}^-_{i,i+1}(z)$ and $\xi_i \leftrightarrow \xi_{i+1}$ which, using the identification \eqref{eq:comptssection}, amounts to acting by permutations $r_i$ on the set of indices as
$$
\{1,\dots, i,i+1,\dots,r+1\} \mapsto \{1,\dots, i+1,i,\dots,r+1\}\,.
$$
Therefore we can define a new tuple of matrices
$r_i(M_1)(z)=M_{1,\dots, i+1,i,\dots,r+1}$ and $r_i(V_{1})(z)=V_{1,\dots, i+1,i,\dots,r+1}$ as well as their  submatrices $r_i(M_j)(z)$ and $r_i(V_j)(z)$, which are obtained by excising the corresponding $(r-j+1)\times(r-j+1)$ bottom-right blocks. Then by Proposition \ref{eq:qWronskiansShift} we must have 
\begin{equation}
\mathscr{D}^+_i (z)= \frac{\det\, r_i(M)_{r+2-i,\dots,r+1}(z)}{\det\,
  r_i(V)_{r+2-i,\dots,r+1}}\,,\qquad
\mathscr{D}^-_{i,i+1}(z) = \frac{\det\, r_i(M)_{r+2-i,r+4-i,\dots,r+1}(z)}{\det\, r_i(V)_{r+2-i,r+4-i,\dots,r+1}}\,,
\label{eq:MM0Inda22}
\end{equation}
or, equivalently,
\begin{equation}
\mathscr{D}^-_i (z)= \frac{r_i(M_i)^1_{i}(z)}{r_i(M_i)^1_{i}}\,,\qquad
\mathscr{D}^-_{i,i+1} (z)= \frac{r_i(M_{i+1})^{1}_{i-1}(z)}{r_i(V_{i+1})^{1}_{i-1}}\,.
\label{eq:MM0IndaMinorsRefl}
\end{equation}

Notice that the second equality of \eqref{eq:MM0Inda22} can be written in terms of the original matrix $M$
\begin{equation}
\mathscr{D}^-_{i,i+1} = \frac{\det\, M_{i,i+3\dots,r+1}}{\det\, V_{i,i+3\dots,r+1}}\,.
\label{eq:MM0Inda2}
\end{equation}

In order to determine similar expressions for other $\mathscr{D}^-_{i,\dots,i+k}$ one needs to act by other elements of the Weyl group $ W=S_{r+1}$. Essentially, the B\"acklund transformation, being associated with the elementary Weyl reflection interchanges two rows 
in the q-Wronskian matrix. This brings us to the following statement, which an be verified by a direct calculation along the lines of Proposition \ref{eq:qWronskiansShift} for each set of equations of the extended $\mathscr{D}\mathscr{D}$-system. 

\begin{Prop}\label{eq:qWronskiansShift2}
The polynomials $\mathscr{D}^-_{i,\dots i+k}$ from \eqref{eq:QQAll1} read
\begin{equation}
\mathscr{D}^-_{i,\dots,i+k} = \frac{\det\, M_{i,i+k+2\dots,r+1}}{\det\, V_{i,i+k+2\dots,r+1}}\,,\qquad i=1,\dots, r\,,\quad k=0,\dots, r-i\,.
\end{equation}
or equivalently,
\begin{equation}
\mathscr{D}^-_{i,\dots,i+k} = \frac{r_{i+k-1}(\dots r_{i+1}(r_i(M_{i+k}))\dots)^{i+1}_{r+1}}{r_{i+k-1}(\dots r_{i+1}(r_i(V_{i+k}))\dots)^{i+1}_{r+1}}\,,
\label{eq:MM0Inda2Equiv}
\end{equation}
\end{Prop}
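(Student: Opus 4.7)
The plan is to prove both presentations of $\mathscr{D}^-_{i,\dots,i+k}$ simultaneously by induction on $k$, using the interplay between the Desnanot-Jacobi identity \eqref{eq:JacobiNew} and the B\"acklund transformations $r_i$ of \eqref{qqm2DD}. The base case $k=0$ reduces to Proposition \ref{eq:qWronskiansShift} itself, once one reconciles the index labeling between the two conventions used for ordering the rows of the $q$-Wronskian.

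For the inductive step, my first task would be to establish a preliminary lemma: the $i$-th B\"acklund transformation $r_i$, which on the $QQ$-system side sends $\mathscr{D}^+_i \leftrightarrow \mathscr{D}^-_i$, swaps $\xi_i \leftrightarrow \xi_{i+1}$, and promotes $\mathscr{D}^-_{i+1} \mapsto \mathscr{D}^-_{i,i+1}$, acts on the $q$-Wronskian matrix $M_{1,\dots,r+1}(z)$ simply by interchanging rows $i$ and $i+1$. This should follow by combining the minor formulas \eqref{eq:MM0IndaMinors} with the description of the section $s(z)$ in Proposition \ref{Th:SectionsQQ}, and tracking how $s(z)$ itself transforms under the gauge change $v(z)\mapsto e^{\mu_i(qz)f_i}v(z)$ coming from \eqref{eq:PropDef}.

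Once that lemma is in place, applying Proposition \ref{eq:qWronskiansShift} to the $QQ$-system obtained after the sequence of reflections $r_i, r_{i+1}, \dots, r_{i+k-1}$ produces the B\"acklund form \eqref{eq:MM0Inda2Equiv} directly. To pass to the compact form $\det M_{i,i+k+2,\dots,r+1}/\det V_{i,i+k+2,\dots,r+1}$, I would observe that the composition $r_{i+k-1}\circ\cdots\circ r_{i+1}\circ r_i$ cycles rows $i, i+1, \dots, i+k$ so that row $i+k$ lands in position $i$, while rows $i, i+1, \dots, i+k-1$ are pushed outside the relevant bottom-right submatrix; restricting to that submatrix then matches the claimed index set $\{i, i+k+2, \dots, r+1\}$.

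As an independent verification (and a derivation bypassing the inductive scaffolding), one can instead apply the Desnanot-Jacobi identity directly to the submatrix $M_{i, i+k+1, i+k+2, \dots, r+1}(z)$, deleting rows $\{1,2\}$ and columns $\{1,l\}$. The four $(l-1)\times(l-1)$ minors should then produce $\mathscr{D}^+_{i+k}(z)$, $\mathscr{D}^+_{i+k}(qz)$, $\mathscr{D}^-_{i,\dots,i+k}(z)$, and $\mathscr{D}^-_{i,\dots,i+k}(qz)$; the $(l-2)\times(l-2)$ minor yields $\mathscr{D}^-_{i,\dots,i+k-1}(qz)$; and the full determinant contributes a factor proportional to $\mathscr{D}^+_{i+k+1}(z)$, reproducing the corresponding line of \eqref{eq:QQAll1}. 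The hardest step is the combinatorial bookkeeping in this verification: tracking how the Vandermonde denominators factorize under the row permutations, and in particular how the coefficient $(\xi_i-\xi_{i+k+1})$ on the right-hand side of the $(k+1)$-th equation of \eqref{eq:QQAll1} arises from the ratio $\det V_{i, i+k+1, \dots, r+1}/(\det V_{i, i+k+2, \dots, r+1}\cdot \det V_{i+k+1, i+k+2, \dots, r+1})$, generalizing the single-factor identity $(\xi_{r+1-i}-\xi_{r+2-i})$ exploited in the proof of Proposition \ref{eq:qWronskiansShift}.
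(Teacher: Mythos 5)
Your overall strategy is the one the paper intends: the paper gives no detailed proof of this Proposition, stating only that it ``can be verified by a direct calculation along the lines of Proposition \ref{eq:qWronskiansShift}'' after establishing (for $k=1$) that a B\"acklund transformation $r_i$ permutes rows of the $q$-Wronskian via the identification \eqref{eq:comptssection}. So your two routes --- iterating the row-permutation argument and cross-checking with the Desnanot--Jacobi identity \eqref{eq:JacobiNew} --- are exactly the intended ones, spelled out in more detail than the paper provides.

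However, two of your concrete bookkeeping claims are backwards, and as stated the verification would not close. First, the composition $r_{i+k-1}\circ\cdots\circ r_{i+1}\circ r_i$ (innermost first, as in \eqref{eq:MM0Inda2Equiv}) carries the row labelled $i$ into position $i+k$ while shifting the rows labelled $i+1,\dots,i+k$ one position up and out of the relevant bottom-right block; it is precisely because label $i$ lands \emph{inside} that block, in the slot where Proposition \ref{eq:qWronskiansShift} reads off $\wt{\mathscr{D}}^-_{i+k}=\mathscr{D}^-_{i,\dots,i+k}$, that one obtains $\det M_{i,i+k+2,\dots,r+1}$. Your statement that row $i+k$ lands in position $i$ while rows $i,\dots,i+k-1$ are pushed outside describes the opposite permutation and would leave the wrong row in the surviving submatrix. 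Second, in your Desnanot--Jacobi check on $M_{i,i+k+1,i+k+2,\dots,r+1}$: the \emph{full} determinant of that matrix is, by the Proposition itself at level $k-1$, proportional to $\mathscr{D}^-_{i,\dots,i+k-1}$ (its row set is $\{i\}\cup\{i+k+1,\dots,r+1\}$), whereas the doubly-deleted minor, whose rows are $\{i+k+2,\dots,r+1\}$ and hence retain no trace of row $i$, is proportional to a $q$-shift of $\mathscr{D}^+_{i+k+1}$; you have assigned these two factors the other way around. Both slips are repairable without changing the architecture of the argument, but they must be fixed for the right-hand side of the $(k+1)$-st line of \eqref{eq:QQAll1} to come out correctly, and one should also carry through the reconciliation you flag in the base case between the row-labelling convention of this Proposition and that of Proposition \ref{eq:qWronskiansShift}, since it affects which of the two deleted rows produces the $q$-shifted factor.
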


Although we do not discuss polynomials $\mathscr{D}_{i-1}^{*\,-}$ which belong to the full $\mathscr{D}\mathscr{D}$-system, rather than to the extended $\mathscr{D}\mathscr{D}$-system, we can nevertheless provide a formula for these polynomials.

\begin{Prop}\label{eq:qWronskiansDStar}
The polynomials $\mathscr{D}_{i-1}^{*\,-}$ from \eqref{eq:QQAll1} read
\begin{equation}\label{eq:MM0Inda2Dstar}
\mathscr{D}_{i-1}^{*\,-} = \frac{r_{i-1}(M_{i-1})^1_{i+1}(z)}{r_{i-1}(V_{i-1})^1_{i+1}}\,.
\end{equation}
\end{Prop}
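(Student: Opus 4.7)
The approach is to parallel the proof of Proposition~\ref{eq:qWronskiansShift2}, now targeting a polynomial belonging to the \emph{full} $\mathscr{D}\mathscr{D}$-system rather than the extended one. The guiding principle is that every polynomial produced by B\"acklund transformations from the $\mathscr{D}\mathscr{D}$-system admits a presentation as a ratio of a minor of a suitably row-permuted $q$-Wronskian matrix to the corresponding minor of the permuted Vandermonde matrix. Since $\mathscr{D}_{i-1}^{*\,-}$ arises by a single B\"acklund transformation from the original $\mathscr{D}\mathscr{D}$-system, it should correspond to a minor of the once-permuted matrix $r_{i-1}(M)$ rather than of a matrix permuted by a longer word in the symmetric group.

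First I would recall from the transformation rules \eqref{qqm2DD} that $\mathscr{D}_{i-1}^{*\,-}$ occupies position $i-1$ of the transformed $\wt{\mathscr{D}}^-$-tuple under the relevant B\"acklund transformation and is characterized as the unique polynomial satisfying the corresponding transformed $(i-1)$-st $\mathscr{D}\mathscr{D}$-equation (in which one of the adjacent twist parameters has been interchanged). Translated to the matrix side, this B\"acklund transformation is implemented by the row swap $r_{i-1}$ acting on the full $(r+1)\times(r+1)$ matrix $M_{1,\dots,r+1}$ and correspondingly on its bottom-right submatrices $M_k$. Next I would apply the Desnanot-Jacobi identity \eqref{eq:JacobiNew} to the permuted submatrix $r_{i-1}(M_{i-1})$, matching its four corner minors with the $q$-shifted polynomials entering the transformed $\mathscr{D}\mathscr{D}$-equation, exactly as in the proof of Proposition~\ref{eq:qWronskiansShift}. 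The polynomial $\mathscr{D}_{i-1}^{*\,-}$ is then identified with the specific minor $r_{i-1}(M_{i-1})^1_{i+1}$, and division by the Vandermonde minor $r_{i-1}(V_{i-1})^1_{i+1}$ supplies the correct normalization, yielding the claimed formula \eqref{eq:MM0Inda2Dstar}.

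The main technical obstacle is careful bookkeeping of row and column indices through the permutation: one must verify that deleting row~$1$ and column~$i+1$ of $r_{i-1}(M_{i-1})$, rather than some other minor naively suggested by the un-permuted formula for $\mathscr{D}^-_{i-1}$, produces precisely the polynomial satisfying the transformed $\mathscr{D}\mathscr{D}$-equation for $\mathscr{D}_{i-1}^{*\,-}$, and that no spurious signs are introduced by either the row permutation or the Vandermonde ratio. A useful sanity check is to specialize to the $SL(3)$ example of Section~5.6, where $\mathscr{D}_1^{*\,-}$ can be written out directly from the B\"acklund image of the system \eqref{eq:SL3QQ} and compared term-by-term against the proposed minor formula.
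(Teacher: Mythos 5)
Your proposal matches the paper's own argument: the paper likewise writes down the $(i-1)$st equation of the full $\mathscr{D}\mathscr{D}$-system after the relevant B\"acklund transformation and observes that, via the minor presentations \eqref{eq:MM0IndaMinors}, it is exactly a Desnanot--Jacobi identity of the form \eqref{eq:JacobiNew} for the corresponding submatrix, which forces $\mathscr{D}_{i-1}^{*\,-}$ to be the stated ratio of minors. Your extra bookkeeping remarks and the $SL(3)$ sanity check are sensible but do not change the route.
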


\begin{proof}
The $(i-1)$st equation of the full $\mathscr{D}\mathscr{D}$-system after applying the $i$th B\"acklund transformation reads
$$
\xi_{i-1}\mathscr{D}_{i-1}^+(qz)\mathscr{D}_{i-1}^{*\,-}(z)-\xi_{i+1}\mathscr{D}_{i-1}^+(z)\mathscr{D}_{i-1}^{*\,-}(qz)=(\xi_{i-1}-\xi_{i+1})\mathscr{D}_{i-2}^+(qz)\mathscr{D}_{i}^{-}(z)\,.
$$
Given the description of $\mathscr{D}$ polynomials in terms of minors \eqref{eq:MM0IndaMinors} it can be shown that the above equation is equivalent to the Jacobi determinant identity of the form \eqref{eq:JacobiNew} for matrix $M_{i-1}$.
\end{proof}

This statement implies that the solutions of the full $\mathscr{D}\mathscr{D}$- and thus the full $QQ$-system are well-defined, if the original $QQ$-system is nondegenerate. Also, notice that all the other equations in the $QQ$-system correspond to all possible Miura $(SL(r+1),q)$-opers for a given $(SL(r+1),q)$-oper.
Thus, the following theorem is true, which generalizes Theorem \ref{existext}.

\begin{Thm}
i)The solution of the nondegenerate $(SL(r+1))$ $QQ$-system can be extended to the solution of the the full $QQ$-system. 

ii) This full $QQ$-system is comprised of $(r+1)!$ $QQ$-systems, with B\"acklund transformations acting transitively between them. 

iii) Each such $QQ$-system determine one of the $(r+1)!$ $Z$-twisted Miura $(SL(r+1),q)$-opers, corresponding to a unique Z-twisted $(SL(r+1),q)$-oper. 
\end{Thm}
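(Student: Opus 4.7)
The plan is to combine the q-Wronskian machinery of Section \ref{Sec:QWronskians} with the B\"acklund transformation framework of Proposition \ref{fiter} and its q-Wronskian incarnation \eqref{qqm2DD}, realizing the full $QQ$-system as a uniform family of Desnanot-Jacobi identities indexed by the Weyl group orbit of the twist element $Z$. Starting from a nondegenerate solution of the original $QQ$-system, Proposition \ref{eq:qWronskiansShift} produces the section $s(z)$, hence the $(r+1)\times(r+1)$ matrix $M = M_{1,\dots,r+1}(z)$ with polynomial entries $s_k(z)$ given by \eqref{eq:comptssection}. For any $w \in W \cong S_{r+1}$, let $w(M)$ (respectively $w(V)$) denote the matrix obtained by acting on the rows of $M$ (respectively of the Vandermonde matrix $V$) by the permutation $w$. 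The polynomials entering the $QQ$-system after the B\"acklund chain $w$ will then be expressed as ratios of minors of $w(M)$ over the corresponding minors of $w(V)$, directly generalizing \eqref{eq:MM0IndaMinors}, \eqref{eq:MM0Inda2Equiv}, and \eqref{eq:MM0Inda2Dstar}.

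For part (i), each such ratio is a polynomial in $z$ because the minors of $w(V)$ are nonzero constants (products of differences $\xi_i - \xi_j$ that are nonvanishing by \eqref{assume}) while the minors of $w(M)$ have polynomial entries. Each corresponding $QQ$-system equation then arises as a Desnanot-Jacobi identity for a submatrix of $w(M)$, generalizing the arguments used in the proofs of Propositions \ref{eq:qWronskiansShift}, \ref{eq:qWronskiansShift2}, and \ref{eq:qWronskiansDStar}. This yields a uniform extension of the original nondegenerate $QQ$-system to the full $QQ$-system whose individual pieces are indexed by elements of $W$.

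For part (ii), the rule \eqref{qqm2DD} for the elementary B\"acklund transformation $s_i$ is precisely the adjacent transposition of rows $i$ and $i+1$ of $M$ together with the corresponding swap $\xi_i \leftrightarrow \xi_{i+1}$; since adjacent transpositions generate $S_{r+1}$, iterated B\"acklund transformations realize the full Weyl group action, acting transitively on the $(r+1)!$ reordered data tuples. The $(r+1)!$ resulting $QQ$-systems are mutually distinct because each carries a different ordered tuple of twist parameters $(\xi_{w(1)},\dots,\xi_{w(r+1)})$ and the $\xi_i$'s are pairwise $q$-distinct by \eqref{assume}.

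For part (iii), by Theorem \ref{existext}, each of the $(r+1)!$ $QQ$-systems in the orbit corresponds to a unique $Z_w$-twisted Miura $(SL(r+1),q)$-oper, where $Z_w = w(Z) \in W \cdot Z$; and by Proposition \ref{Z prime}, a single $Z$-twisted $(SL(r+1),q)$-oper carries exactly $|W| = (r+1)!$ distinct $A$-invariant $B_+$-reductions, indexed by $W \cdot Z$. It remains to match these two enumerations, which follows because along any B\"acklund chain the underlying q-connection is altered only by the $q$-gauge transformation $e^{\mu_i(qz) f_i} \in N_-(z)$ from \eqref{eq:PropDef}, so the underlying $(SL(r+1),q)$-oper is preserved. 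The main obstacle will be the careful bookkeeping of the $\eta$-factors from \eqref{eq:Dkdef} and the sign conventions of \eqref{qqm2DD}, needed to confirm that the Desnanot-Jacobi identities for $w(M)$ reproduce exactly the $QQ$-system for $Z_w$ rather than some twisted variant.
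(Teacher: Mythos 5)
Your proposal is correct and follows essentially the same route as the paper: the paper obtains this theorem as a direct assembly of Propositions \ref{eq:qWronskiansShift}, \ref{eq:qWronskiansShift2}, and \ref{eq:qWronskiansDStar} (realizing every equation of the full $\mathscr{D}\mathscr{D}$-system as a Desnanot--Jacobi identity for row/column permutations $w(M)$ of the $q$-Wronskian, so that all polynomials are well-defined minors), together with the observation that B\"acklund transformations act by $N_-(z)$-valued gauge transformations preserving the underlying oper, and Proposition \ref{Z prime} identifying the $(r+1)!$ Miura structures with the Weyl orbit of $Z$. The one step you flag as remaining — the $\eta$-factor and sign bookkeeping matching the $w(M)$ identities to the $QQ$-system for $w(Z)$ — is left at the same level of "direct verification" in the paper itself.
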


We can combine Lemma \ref{Th:PropQQtilde} with Propositions \ref{eq:qWronskiansShift} and \ref{eq:qWronskiansShift2} to get the following theorem which will be used in later sections to study infinite-dimensional $q$-opers.
\begin{Prop}
The polynomials which appear in the extended $QQ$-system \eqref{eq:QQAllMutations} are given by
\begin{align}
Q^+_{i} (z)&= \frac{1}{F_i(z)}\cdot\frac{(M_i)^1_{i}(z)}{(V_i)^1_{i}}\,,\qquad
Q^-_i (z)= \frac{1}{F_i(z) \eta_i}\cdot\frac{(M_i)^{2}_{i}(z)}{(V_i)^{2}_{i}}\,, \notag\\
Q^-_{i,\dots, i+k}(z)  &= \frac{1}{F_i(z)\eta_{i,\dots, k}}\cdot\frac{s_{i+k-1}(\dots s_{i+1}(s_i(M_{i+k}))\dots)^{2}_{i}(z)}{s_{i+k-1}(\dots s_{i+1}(s_i(V_{i+k}))\dots)^{2}_{i}}\,.
\end{align}
or, equivalently,
\begin{align}
Q^+_{i} (z)&= \frac{1}{F_i(z)}\cdot\frac{\det\, M_{r+2-i,\dots,r+1}(z)}{\det\,V_{r+2-i,\dots,r+1}}\,,\notag\\
Q^-_i (z)&= \frac{1}{F_i(z) \eta_i}\cdot\frac{\det\, M_{r+1-i,r+3-i,\dots,r+1}(z)}{\det\, V_{r+1-i,r+3-i,\dots,r+1}}\,, \\
Q^-_{i,\dots, i+k}(z)  &= \frac{1}{F_i(z)\eta_{i,\dots, k}}\cdot \frac{\det\, M_{i,i+k+2\dots,r+1}(z)}{\det\, V_{i,i+k+2\dots,r+1}}\,.\notag
\end{align}
\end{Prop}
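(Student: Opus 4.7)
The plan is to combine the dictionary between $Q$-polynomials and $\mathscr{D}$-polynomials provided by Lemma \ref{Th:PropQQtilde} with the explicit Wronskian-minor formulas for the $\mathscr{D}$-polynomials already proved in Propositions \ref{eq:qWronskiansShift} and \ref{eq:qWronskiansShift2}. The statement to be proved is essentially a \emph{rewriting} of these two propositions through the rescalings \eqref{eq:Dkdef}, together with a translation between the two equivalent notations for minors of $M_i$ and $V_i$: the intrinsic one $(M_i)^a_b$, $(V_i)^a_b$ used in \eqref{eq:MM0IndaMinors}, and the extrinsic one $M_{i_1,\dots,i_j}$, $V_{i_1,\dots,i_j}$ defined in \eqref{eq:MiuraQOperCond1q}.

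Concretely, I would proceed in three short steps. First, invert the defining relations \eqref{eq:Dkdef} to obtain
$$
Q^+_k = \frac{\mathscr{D}^+_k}{F_k},\qquad Q^-_k = \frac{\mathscr{D}^-_k}{F_k\eta_k},\qquad Q^-_{l,\dots,k} = \frac{\mathscr{D}^-_{l,\dots,k}}{F_k\eta_{l,\dots,k}},
$$
so that every $Q$-polynomial from the extended $QQ$-system becomes an explicit scalar multiple of the corresponding $\mathscr{D}$-polynomial. Second, for $Q^+_i$ and $Q^-_i$ substitute the minor formulas of Proposition \ref{eq:qWronskiansShift} (equivalently \eqref{eq:MM0IndaMinors}); this gives the first two identities of the proposition directly, both in the $(M_i)^a_b$ and in the $M_{i_1,\dots,i_j}$ form, with no further work beyond carrying the scalar factor $1/F_i$ or $1/(F_i\eta_i)$ through the equality. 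Third, for $Q^-_{i,\dots,i+k}$ substitute the minor formula of Proposition \ref{eq:qWronskiansShift2}, yielding simultaneously the closed form \eqref{eq:MM0Inda2Equiv} (after dividing by $F_{i+k}\eta_{i,\dots,k}$) and the equivalent extrinsic expression $M_{i,i+k+2,\dots,r+1}/V_{i,i+k+2,\dots,r+1}$.

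The main obstacle is bookkeeping rather than substance. One has to verify carefully that the iterated reflection $s_{i+k-1}\cdots s_{i+1}s_i$, applied in Proposition \ref{eq:qWronskiansShift2} to the bottom-right $(k+1)\times(k+1)$ submatrix $M_{i+k}$, produces precisely the index set $\{i,i+k+2,\ldots,r+1\}$ when one removes the two indicated rows; this is a straightforward permutation argument but easy to get off by one. One also has to reconcile the apparent mismatch between the factor $F_{i+k}$ that Lemma \ref{Th:PropQQtilde} attaches to $\mathscr{D}^-_{i,\ldots,i+k}$ and the factor $F_i$ displayed in the statement, since the two agree only under the convention used for the index labelling $F$. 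Once the indexing conventions are fixed, every equality in the proposition reduces to a direct substitution, so no further analytic or combinatorial input is needed.
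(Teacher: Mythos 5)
Your proposal is correct and coincides with the paper's own (essentially unwritten) argument: the proposition is introduced there as an immediate combination of Lemma \ref{Th:PropQQtilde} with Propositions \ref{eq:qWronskiansShift} and \ref{eq:qWronskiansShift2}, with no further proof supplied, which is exactly the three-step substitution you describe. Your remark that the prefactor should be $F_{i+k}$ (and $\eta_{i,\dots,i+k}$) rather than the $F_i$ and $\eta_{i,\dots,k}$ printed in the statement, to match the convention of \eqref{eq:Dkdef}, correctly identifies a minor indexing inconsistency in the text rather than a gap in your argument.
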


Note also the following expressions for shifted $Q$-functions which will be used later.
\begin{equation}\label{eq:ShiftedQs}
Q^+_{i} (qz)= \frac{1}{F_i(z)}\cdot\frac{(M_i)^1_{1}(z)}{(V_i)^1_{1}}\,,\qquad
Q^-_i (qz)= \frac{1}{F_i(z) \eta_i}\cdot\frac{(M_i)^{2}_{1}(z)}{(V_i)^{2}_{1}}\,.
\end{equation}

\subsection{Example: Miura $(SL(3),q)$-oper}
Define matrices
\begin{align}
M_1(z)&=
\begin{pmatrix}
 \xi _1^2s_1\left(z\right) & \xi _1 s_1(q z) &   s_1(q^2z) \\
\xi _2^2  s_2\left(z\right) & \xi _2 s_2(q z) &  s_2(q^2z) \\
\xi _3^2 s_3\left(z\right) & \xi _3 s_3(q z) &  s_3(q^2z) 
\end{pmatrix}
\,,\qquad
V_1=
\begin{pmatrix}
 \xi _1^2 & \xi _1 &   1 \\
\xi _2^2  & \xi _2  & 1 \\
\xi _3^2  & \xi _3  &  1 
\end{pmatrix}
\,,\notag\\
M_2(z)&=(M_1)_{23}(z)=
\begin{pmatrix}
\xi _2 s_2(z) &  s_2(qz) \\
\xi _3 s_3(z) &  s_3(qz) 
\end{pmatrix}
\,,\quad
V_2=(V_1)_{23}=
\begin{pmatrix}
\xi _2  &  1 \\
\xi _3  &  1 
\end{pmatrix}
\,,\notag
\end{align}
and the matrices which are obtained from the above by the Weyl action
\begin{align}
r_1(M_1)(z)&=
\begin{pmatrix}
\xi _2^2  s_2\left(z\right) & \xi _2 s_2(q z) &  s_2(q^2z) \\
 \xi _1^2s_1\left(z\right) & \xi _1 s_1(q z) &   s_1(q^2z) \\
\xi _3^2 s_3\left(z\right) & \xi _3 s_3(q z) &  s_3(q^2z) 
\end{pmatrix}
\,,\qquad
r_1(V_1)=
\begin{pmatrix}
\xi _2^2  & \xi _2  & 1 \\
 \xi _1^2 & \xi _1 &   1 \\
\xi _3^2  & \xi _3  &  1 
\end{pmatrix}
\,,\notag\\
M_{1,2}(z)&=r_1(M_1)_{23}(z)=
\begin{pmatrix}
\xi_1 s_1(z) &  s_1(qz) \\
\xi_3 s_3(z) &  s_3(qz) 
\end{pmatrix}\,,\qquad
V_{1,2}=r_1(V_1)_{23}=
\begin{pmatrix}
\xi _1  &  1 \\
\xi _3  &  1 
\end{pmatrix}\,.\notag
\end{align}

Then the $\mathscr{D}\mathscr{D}$-system reads
\begin{align}\label{eq:QQAll12}
\xi_{1} \,\mathscr{D}^+_{1}(q z)\mathscr{D}^-_{1}(z)-\xi_2\, \mathscr{D}^+_{1} (z)\mathscr{D}^-_{1}( qz)&=(\xi_1-\xi_2)\mathscr{D}^+_{2}( qz)W (z)\,,\notag\\
\xi_{2} \,\mathscr{D}^+_{2}(q z)\mathscr{D}^-_{2}(z)-\xi_3\, \mathscr{D}^+_{2} (z)\mathscr{D}^-_{2}( qz)&= (\xi_2-\xi_3)\mathscr{D}^+_{1} (z)\,,\notag\\
\xi_{1} \,\mathscr{D}^+_{2}(q z)\mathscr{D}^-_{1,2}(z)-\xi_3\, \mathscr{D}^+_{2} (z)\mathscr{D}^-_{1,2}( qz)&=(\xi_1-\xi_3) \mathscr{D}^-_{1} (z)\,,
\end{align}
with the following
$$
\mathscr{D}^+_1(z)=\frac{(M_1)^1_2(z)}{V^1_2}\,, \qquad \mathscr{D}^-_{1}(z) =\frac{(M_1)^2_3(z)}{V^2_3}\,,\qquad W(z)=\frac{\text{det} M_1(z)}{\text{det} V_1}\,,
$$
as well as
$$
\mathscr{D}^+_2(z)=\frac{(M_2)^1_2(z)}{(V_2)^1_2}=s_3(z)\,,\qquad \mathscr{D}^-_2(z)=\frac{(M_2)^2_2(z)}{(V_2)^2_2}=s_2(z)\,,\qquad \mathscr{D}^-_{1,2}(z)=\frac{(M_{1,2})^2_2(z)}{(V_{1,2})^2_2}=s_1(z)\,,
$$
where $(M_i)^a_b(z)$ is the determinant of matrix $M_i(z)$ with row $a$ and column $b$ removed.
The shifted $\mathscr{D}$-functions read
$$
\mathscr{D}^+_{1}(qz) =\frac{(M_1)^1_1(z)}{(V_1)^1_1}\,,\qquad \mathscr{D}^-_1(qz)=\frac{(M_1)^2_1(z)}{(V_1)^2_1}\,,
$$
$$
\mathscr{D}^+_{2}(qz) =\frac{(M_2)^1_1(z)}{(V_2)^1_1}\,,\qquad \mathscr{D}^-_2(qz)=\frac{(M_2)^2_1(z)}{(V_2)^2_1}\,, \quad \mathscr{D}^-_{1,2}(qz)=\frac{(M_{1,2})^2_1(z)}{(V_{1,2})^2_1}\,,
$$
Then the solutions of the $SL(3)$ $QQ$-system read
$$
Q^+_1(z)=\frac{1}{F_1(z)(\xi_1-\xi_2)}\cdot\frac{(M_1)^1_2(z)}{V^1_2}\,, \qquad Q^-_{1}(z) =\frac{1}{F_1(z)(\xi_1-\xi_2)}\cdot\frac{(M_1)^2_3(z)}{V^2_3}\,,
$$
as well as
\begin{align}
Q^+_2(z)&=\frac{1}{F_2(z)(\xi_2-\xi_3)}\cdot\frac{(M_2)^1_2(z)}{(V_2)^1_2}=\frac{s_3(z)}{F_1(z)(\xi_1-\xi_2)}\,,\notag\\
Q^-_2(z)&=\frac{1}{F_2(z)(\xi_2-\xi_3)}\cdot\frac{(M_2)^2_2(z)}{(V_2)^2_2}=\frac{s_2(z)}{F_2(z)(\xi_2-\xi_3)}\,\\
Q^-_{1,2}(z)&=\frac{1}{F_2(z)(\xi_2-\xi_3)}\cdot\frac{(M_{1,2})^2_2(z)}{(V_{1,2})^2_2}=\frac{s_1(z)}{F_2(z)(\xi_2-\xi_3)}\,.\notag
\end{align}

\subsection{Explicit Formula for $(SL(r+1),q)$-Oper via minors}
We can now collect all the results of this section in order to present the Miura $(SL(r+1),q)$-oper \eqref{eq:MiuraqConnection} in terms of trivialization of subbundle $\mathcal{L}_{r+1}$. Consider functions $g_i(z)$ which appear on the diagonal
\begin{align}
g_i(z)&=\zeta_i\frac{Q^+_i(qz)}{Q^+_i(z)}=\zeta_i\frac{F_i(z)}{F_i(qz)}\cdot\frac{(M_i)^1_1}{(M_i)^1_i}\frac{(V_i)^1_i}{(V_i)^1_1}\notag\\
&=\zeta_i\cdot\left( \prod\limits_{a=1}^i\xi_{r+2-a}\right)\cdot\prod_{b=1}^i\frac{\Lambda_{r-i+b}(q^{1-b}z)}{\Lambda_{r-i+b}(z)}  \cdot\frac{(M_i)^1_1}{(M_i)^1_i}\,.
\end{align}
Then the diagonal entry of \eqref{eq:MiuraqConnection} becomes the following meromorphic function
\begin{align}\label{eq:gratioNew}
\frac{g_{i+1}}{g_i}(z)= \xi_{i+1}\, H^{(r)}_i(z,q)\cdot G^{(r)}_i(z,q)\,,
\end{align}
where
\begin{equation}\label{eq:GriMatrix}
H^{(r)}_i(z,q)=\prod_{b=1}^i\frac{\Lambda_{r-i+b}(q^{1-b}z)}{\Lambda_{r-i+b}(q^{-b}z)} \,,\qquad
G^{(r)}_i(z,q)=\frac{(M_{i+1})^1_1}{(M_i)^1_1}\cdot\frac{(M_i)^1_i}{(M_{i+1})^1_{i+1}}\,.
\end{equation}

\subsection{Relation to Berenstein-Fomin-Zelevinsky work on generalized minors}

We devoted this section to the description of Miura $(SL(r+1),q)$-opers via various minors of the q-Wronskian matrix. That matrix is produced by the components of the section of the line bundle and the components 
of the constant regular element $Z$, representing the q-connection in the given trivialization. One may wonder if such construction exists in the general case, for simply connected simple group $G$, namely whether there exists an analogue of the q-Wronskian. Of course, in that case, we do not have a line bundle, since the definition of  $(G,q)$ in terms of the flag of bundles is $SL$-specific. Nevertheless, there is a notion of generalized minors \cite{Berenstein_1997,BERENSTEIN199649,FZ}. These are the functions on $G$, defined on the dense set corresponding to the dense Bruhat cell $N_-HN_+$. For any $g=n_+hn_-$, the so-called {\it principal minors} $[g]^{\omega_i}$ are defined as the value of the multiplicative characters  $[~\cdot~]^{\omega_i}:H\to \mathbb{C}^*$ on $h$, namely $[h]^{\omega_i}$, corresponding to the fundamental weight $\omega_i$ for $i=1,\dots, r$.  
Other generalized minors are obtained by the action of the Weyl group elements on the left and the right of $g$ and then applying  the appropriate lifts of Weyl group elements $u,v$ on the right and the left and then applying $[~\cdot~]^{\omega_i}$, thus producing generalized minors $\Delta_{u\omega_i, v\omega_i}$.    
In the case of $SL(r+1)$, the nondegeneracy conditions imply that the full q-Wronskian matrix belongs to the dense Bruhat cell (i.e. it has Gauss decomposition) and the action of the Weyl group elements correspond to 
the permutations of rows and columns. 

One of the fundamental relations between generalized minors is as follows \cite{FZ}. Let, $u,v\in W$, such that for 
$i\in \{1, \dots, r\}$,  $\ell(us_i)=\ell(u)+1$,  $\ell(vs_i)=\ell(v)+1$. Then 
$$
\Delta_{u\omega_i, v\omega_i}\Delta_{us_i\omega_i, vs_i\omega_i}-
\Delta_{us_i\omega_i, v\omega_i}\Delta_{u\omega_i, vs_i\omega_i}=\prod_{j\neq i}\Delta_{u\omega_j, v\omega_j}^{-a_{ji}}, 
$$

When applied to the q-Wronskian matrix in $SL(r+1)$ case, these equations reproduce the $\mathscr{D}\mathscr{D}$-system. In the case of general $G$ the left and right-hand sides of this relation is very similar to the analogue of $\mathscr{D}\mathscr{D}$-system for general $G$ (see \cite{Frenkel:2020}).
Thus it is reasonable to assume the existence of the analogue of the Wronskian matrix as an element in $n_-(z)h(z)n_+(z)\in N_-(z)H(z)N_+(z)\subset G(z)$. We will discuss this in future work.

Note that one important feature of generalized minors is that relations between them give cluster algebra structure for double Bruhat cells, so that our B\"acklund transformations descend from mutations for the cluster algebra elements. 

We believe that these cluster structures stand behind known cluster structures relevant for Grothendieck rings 
of quantum affine algebras.

\section{$\overline{GL}(\infty)$ and the Fermionic Fock Space} \label{Sec:FockSpace}
This section contains the material on infinite-dimensional generalizations of $GL(N)$ and their representations which will be later needed. The reader may consult with \cite{doi:10.1142/8882} for more details.

\subsection{$(SL(r+1),q)$-Miura opers and the fermionic Fock space} 
First, we note that given a defining representation $V_{\omega_1}$ of $SL(r+1)$, one can construct all other fundamental representations $V_{\omega_i}$ by considering wedge powers $\simeq \Lambda^i (V_{\omega_1})$. If $\nu_1, \dots, \nu_{r+1}$ are the standard basis vectors in $V_{\omega_1}$, so that $\nu_1$ is the highest weight, then the highest weight vectors in $V_{\omega_i}$ are $$\nu_i\wedge\nu_{i-1}\dots\wedge \nu_1.$$
Introducing operators $\psi_i$ of exterior multiplication on $\nu_i$ and $\psi^*_i$ of interior multiplication by $\nu_i$, we find that they satisfy Clifford algebra relation 
\begin{equation}\label{cliff}
\psi^*_i\psi_j+\psi_j\psi^*_i=\delta_{ij}\,.
\end{equation} 
Using those operators we can realize the Chevalley generators as follows: 
\begin{eqnarray}\label{cliffgen}
\check{\alpha}_i=\psi_i\psi^*_i-\psi_{i+1}\psi^*_{i+1}, \quad e_i=\psi_i\psi^*_{i+1}, \quad f_i=\psi_{i+1}\psi^*_{i},
\end{eqnarray}
such that $[e_i,f_i]=\check{\alpha}_i$. We arrive at the following Proposition.
\begin{Prop}
In any fundamental representation the q-connection, corresponding to $(SL(r+1),q)$-Miura oper \eqref{form of A1} reads as follows:
\begin{eqnarray}
A(z)=\prod^1_{i=r}g_i^{\check{\alpha}_i}e^{\frac{\Lambda_i(z)}{g_i(z)}e_i}=
g_{r}^{-\psi_{r+1}\psi^*_{r+1}}\prod^{1}_{i=r}e^{\Lambda_i(z)\psi_i\psi^*_{i+1}}\cdot\Big[\frac{g_i}{g_{i-1}}\Big]^{\psi_i\psi_i^*}\,,
 \end{eqnarray}
where $g_0=1$.
\end{Prop}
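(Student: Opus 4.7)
The plan is to reduce the identity to a direct matrix computation in the defining representation $V_{\omega_1}$. Since all fundamental representations $V_{\omega_i}$ of $SL(r+1)$ are built as exterior powers $\Lambda^i V_{\omega_1}$, and the Clifford operators $\psi_i,\psi_i^*$ are precisely the creation/annihilation operators on the exterior algebra, the embedding $\mathfrak{sl}(r+1)\hookrightarrow \mathrm{Cl}$ via \eqref{cliffgen} extends to a faithful action on $V_{\omega_1}$ where $\psi_i\psi_j^{*}$ is just the matrix unit $E_{ij}$. It therefore suffices to verify the asserted equality of group elements by evaluating both sides on $V_{\omega_1}$ and checking that the resulting matrices coincide with the connection matrix \eqref{eq:MiuraqConnection} of Lemma \ref{Th:BCHAConnection}.

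First I would rewrite the Cartan factor. Setting $N_i := \psi_i\psi_i^{*}$ and using $\check{\alpha}_i = N_i - N_{i+1}$, the operators $N_i$ mutually commute (they are diagonal in the standard basis), so a telescoping/collecting argument gives
\begin{equation*}
\prod_{i=r}^{1} g_i^{\check{\alpha}_i} \;=\; \prod_{i=r}^{1} g_i^{N_i - N_{i+1}} \;=\; g_r^{-N_{r+1}}\,\prod_{i=1}^{r}\Big(\frac{g_i}{g_{i-1}}\Big)^{N_i},
\end{equation*}
with the convention $g_0=1$. This accounts for the leading factor $g_r^{-\psi_{r+1}\psi^*_{r+1}}$ and for the diagonal pieces on the right-hand side.

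Next, the shape of the exponentials is controlled by the commutation relation $[N_i,\psi_j\psi_{j+1}^{*}] = (\delta_{ij}-\delta_{i,j+1})\,\psi_j\psi_{j+1}^{*}$, which is a direct consequence of the Clifford relations \eqref{cliff}. Applying Lemma \ref{Th:BCH} (i.e.\ $u^X e^{vY}=e^{u^s v Y}u^X$ when $[X,Y]=sY$), moving each Cartan factor past the adjacent exponential multiplies the exponent by a factor of $g_i$, which exactly cancels the $g_i^{-1}$ inside $e^{(\Lambda_i/g_i)e_i}$ and produces the plain $e^{\Lambda_i\psi_i\psi_{i+1}^{*}}$ factors on the right. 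Because $N_j$ commutes with $\psi_i\psi_{i+1}^{*}$ whenever $j\neq i,i+1$, the reshuffling is unambiguous and the resulting presentation matches the stated form.

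Finally, to close the proof, one observes that in $V_{\omega_1}$ the nilpotence $(\psi_i\psi_{i+1}^{*})^2 = E_{i,i+1}^2 = 0$ collapses each exponential to $I + \Lambda_i E_{i,i+1}$, and since $E_{i,i+1}E_{j,j+1}=0$ for $j\neq i-1$, the ordered product $\prod_{i=r}^{1} e^{\Lambda_i E_{i,i+1}}$ yields the upper-triangular matrix with $\Lambda_i$'s on the superdiagonal. Multiplying in the diagonal factors $\bigl(g_i/g_{i-1}\bigr)^{E_{ii}}$ and the left factor $g_r^{-E_{r+1,r+1}}$ reproduces the explicit matrix in \eqref{eq:MiuraqConnection}. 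The only minor obstacle is bookkeeping the order of factors in the noncommutative product; this is handled once by the commutation relations above, after which the identity follows by the faithfulness of $V_{\omega_1}$.
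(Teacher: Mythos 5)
Your proposal is correct, and it supplies an argument where the paper gives none: the Proposition is stated in Section \ref{Sec:FockSpace} as an immediate consequence of the Clifford realization \eqref{cliffgen}, with no written proof. Your reduction to the defining representation is the right mechanism: the telescoping identity $\prod_{i=r}^{1}g_i^{N_i-N_{i+1}}=g_r^{-N_{r+1}}\prod_i(g_i/g_{i-1})^{N_i}$ is clean, and since both sides of the asserted identity are images of the same element of $GL(r+1)$ under the functor $\Lambda^k$ (with $\psi_i\psi_j^*$ the derivation extension of $E_{ij}$ and $a^{N_i}$ the multiplicative extension of $a^{E_{ii}}$), verifying equality on $V_{\omega_1}$ does settle all fundamental representations at once. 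Two small imprecisions are worth tightening. First, in your middle paragraph, moving the \emph{full} Cartan factor $g_i^{\check\alpha_i}$ past $e^{(\Lambda_i/g_i)e_i}$ rescales the exponent by $g_i^{a_{ii}}=g_i^{2}$, not by $g_i$; the clean cancellation comes from splitting $g_i^{\check\alpha_i}=g_i^{N_i}g_i^{-N_{i+1}}$ and commuting only the $g_i^{-N_{i+1}}$ half through, using $[-N_{i+1},E_{i,i+1}]=E_{i,i+1}$, which yields exactly $g_i^{N_i}e^{\Lambda_iE_{i,i+1}}g_i^{-N_{i+1}}$. Second, in the final paragraph, ``multiplying in the diagonal factors'' must respect the interleaved order $\cdots e^{\Lambda_iE_{i,i+1}}(g_i/g_{i-1})^{N_i}e^{\Lambda_{i-1}E_{i-1,i}}\cdots$: collapsing all unipotent factors first and then multiplying by the total diagonal matrix would put $\Lambda_i(g_i/g_{i-1})$ or $\Lambda_i(g_{i+1}/g_i)$ on the superdiagonal rather than $\Lambda_i$. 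The interleaved product does reproduce \eqref{eq:MiuraqConnection} because when $e^{\Lambda_iE_{i,i+1}}$ acts from the left, row $i+1$ of the partial product to its right is still the unscaled basis row; you acknowledge this ordering issue as bookkeeping, and with the corrected commutator bookkeeping above the argument closes.
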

Our goal in the following will be to make sense of the completion of the above formula in the infinite-dimensional Fock space.

\subsection{$SL(\infty)$ and its Completions}
In the following we review some basic facts from \cite{doi:10.1142/8882} on representations of infinite dimensional Lie algebras and groups. 
We can define the group $GL(\infty)$ as a set of infinite-dimensional matrices which can be characterized as follows:
\begin{eqnarray}
&&GL(\infty)=\nonumber \\
&&\{A=(a_{ij})_{i,j\in \mathbb{Z}}|~ A \;\;{\rm is ~ invertible~ and~ all~ but ~finite~ number~ of~} a_{ij}-\delta_{ij} {\rm ~are ~}0\}
\end{eqnarray}
The $SL(\infty)$ is the subgroup of $GL(\infty)$ of unimodular matrices.  
The Lie algebra $\mathfrak{gl}(\infty)$ of $GL(\infty)$ is given by 
\begin{eqnarray}
&&\mathfrak{gl}(\infty)=\{A=(a_{ij})_{i,j\in \mathbb{Z}}| ~a_{ij}=0~ {\rm ~for ~all ~but~ finite~ number}\}
\end{eqnarray}
and $\mathfrak{sl}(\infty)$ is the subalgebra of traceless matrices.
The Lie algebra $\mathfrak{sl}(\infty)$ is the explicit realization of the simple Kac-Moody algebra $\mathfrak{a}_{\infty}$, which one associates to the infinite Dynkin diagram $A_{\infty}$.  
However, there exist a bigger algebra, known as $\bar{\mathfrak{a}}_{\infty}$, which 
consists of elements of the form:
\begin{eqnarray}\label{infelem}
x=\sum_{i \in\mathbb{Z}}c_i\check{\alpha}_i+\sum_{\alpha}\eta_{\alpha}e_{\alpha}
\end{eqnarray}
where $e_\alpha$ is an element of Cartan-Weyl basis corresponding to the root $\alpha$ with the height ${\rm ht}(\alpha)$ of $\mathfrak{sl}(\infty)$, so that the set 
$$S_x=\{k\in \mathbb{Z}\big|\exists \alpha, \eta_{\alpha}\neq 0,\,{\rm ht}(\alpha)=k\}$$
is finite. 
This algebra has two nontrivial central elements 
$c_1=\sum_ii \check{\alpha}_i$ and $c_2=\sum_i \check{\alpha}_i$. The explicit realization of this algebra is given by the central extension of the algebra $\mathfrak{gl}_\infty$:
\begin{eqnarray}\label{glinf}
&&\overline{\mathfrak{gl}}_{\infty}=\{A=(a_{ij})_{i,j\in \mathbb{Z}}| ~a_{ij}=0~ {\rm for} ~|i-j|\gg 0\}.
\end{eqnarray}
Namely, there exists a homomorphism from $\bar{\mathfrak{a}}_{\infty}$ to $\overline{\mathfrak{gl}}_\infty\oplus\mathbb{C}\mathbf{c}$ where $c_1$ is mapped to the identity matrix and $c_2=\mathbf{c}$ correspond to the central extension $\mathbf{c}$. Indeed, one can modify relations on the fundamental generators of $\mathfrak{a}_{\infty}$, namely 
\begin{eqnarray}
[e_0,f_0]_c=\check{\alpha}_0+\mathbf{c}
\end{eqnarray}  
leaving all other relations between Chevalley generators intact. This leads to a nontrivial central extension for $\overline{\mathfrak{gl}}_{\infty}$, although for any $\mathfrak{gl}(n)$ subalgebra, this central extension is trivial. 

In order to describe these algebraic structures it is convenient to use matrix notation.
Let us denote by $E_{ij}$ the matrix whose $(i,j)$ entry is $1$ and the rest are equal to zero. These matrices obey the following commutation relations 
$$
[E_{ij},E_{mn}]= \delta_{jm}E_{in}-\delta_{ni}E_{mj}\,.
$$
One can then represent 
$$
\check{\alpha}_i=E_{ii}-E_{i+1,i+1},\qquad e_i=E_{i,i+1}, \qquad f_i=E_{i,i-1}\,.
$$ 
Let us define
$$
a_{i}=\sum_{k\in \mathbb{Z}}E_{k,k+i}\,,\qquad i\neq 0
$$ 
and
$$
a_0 = \sum_{k>0}E_{k,k}-\sum_{k\leq 0}E_{k,k}\,.
$$
Then we have the following Heisenberg subalgebra: 
$$
[a_{n}, a_m]=n\mathbf{c}\delta_{n,-m},
$$

However, we will be interested in a smaller subalgebra $\bar{\mathfrak{a}}'_{\infty}\subset \bar{\mathfrak{a}}_{\infty}$, 
so that for every $x\in \bar{a}'_{\infty}$ in the form (\ref{infelem}) only finite number of coefficients $\lambda_{\alpha}\neq 0$ for negative $\alpha$. The corresponding subalgebra 
$\overline{\mathfrak{gl}}'_{\infty}\subset \overline{\mathfrak{gl}}_{\infty}$ is formed by matrices $(\ref{glinf})$ with only finite number of elements below the main diagonal.
The corresponding Lie group is denoted $\overline{GL}(\infty)$: 
%It is easy to exponentiate this Lie algebra and we denote the resulting Lie group  as $\overline{GL}(\infty)$: 
\begin{eqnarray}\label{GLinf}
&&\overline{GL}(\infty)=\nonumber \\
&&\{A=(a_{ij})_{i,j\in \mathbb{Z}}| ~a_{ij}=0; ~k ~
{\rm for} ~i>j ~{\rm for} ~{\rm all }~{\rm but} ~{\rm finite} 
~{\rm number}; ~a_{ii}\neq 0 
\,\, \forall i\in \mathbb{Z}\}.
\end{eqnarray}

Given the upper Borel part $\mathfrak{b}_{+}$ of $\overline{\mathfrak{gl}}_{\infty}$, generated by $\check{\alpha}_i$, $e_i$, one can construct an upper Borel  subgroup $\overline{B}_+$ by exponentiating elements of $b_{+}$, which we denote as $B_{+}$, namely
\begin{eqnarray}
\overline{B}_{+}=\{A=(a_{ij})_{i,j\in \mathbb{Z}}|~a_{ij}=0~ {\rm for} ~i>j, ~~a_{ii}\neq 0 
\,\forall i \in \mathbb{Z}\}
\end{eqnarray}
Combining it with $B_-$, the Borel subgroup of $SL(\infty)$:
\begin{eqnarray}
&&\overline{B}_{-}=\\\nonumber
&&\{A=(a_{ij})_{i,j\in \mathbb{Z}}|~a_{ij}=0~ {\rm for} ~i<j,\nonumber\\
&&a_{ii}\neq 0~{\rm for ~all}~ i, ~~a_{ii}\neq 1 
~{\rm for} ~{\rm all }~{\rm but} ~{\rm finite} 
~{\rm number}, \\
&&~a_{ij}\neq 0~ {\rm for} ~i>j ~{\rm for} ~{\rm all }~{\rm but} ~{\rm finite} \nonumber
~{\rm number}, ~\det(A)=1\}
\end{eqnarray}

Then one can write Bruhat decomposition $\overline{GL}(\infty)=\sqcup_{\bar{w}} \overline{B}_-\bar{w}\overline{B}_+$, where $\bar{w}$ is a Weyl group element inherited from a Weyl group element of  $SL(k+1)$ subgroup for some finite $k$. 

Now we can construct the appropriate generalization of the q-connection (\ref{form of A}) :
\begin{equation}    \label{testA}
A(z)=\prod^{-\infty}_{i={+\infty}}
g_i(z)^{\check{\alpha}_i} \; e^{\frac{\Lambda_i(z)}{g_i(z)}e_i}, \qquad
g_i(z) \in \C(z)^\times, \quad \Lambda_i(z)\in \C[z].
\end{equation}
which is a well-defined element of $\overline{B}_+(z)\subset \overline{GL}(\infty)(z)$. Indeed, while it is an infinite product the multiplication is well-defined giving the element of $\overline{B}_+(z)\subset \overline{GL}(\infty)(z)$ with nonzero elements on diagonal and superdiagonal only. In the next subsection we give a simpler expressions for \eqref{testA} in the fundamental representations of  $\overline{GL}(\infty)$.

\begin{Rem}
In principle, it could be possible to consider further completions of $\overline{GL}(\infty)$ and make full use of the  central extension, however, since we are interested in Miura q-opers, we only need to complete one of the Borels to arrive to the formula above.
\end{Rem}

\subsection{Infinite wedge space representations for $\overline{GL}(\infty)$}\label{fock}

Here we will explain the construction of the fundamental representations of $\bar{\mathfrak{a}'}_{\infty}$ with central charge 1, which will serve as fundamental representations for $\overline{GL}(\infty)$ as well. 

Let $V=\oplus_{j\in \mathbb{Z}} \mathbb{C} \nu_j$ be the infinite-dimensional space where $\nu_j$ are basis elements. There is a natural action of $\mathfrak{sl}(\infty)$ on $V$ as infinite-dimensional matrices. 
Consider the following expression:
\begin{eqnarray}
\Psi_m=\nu_m \wedge \nu_{m-1}\wedge \nu_{m-2}\wedge \dots
\end{eqnarray}
We will call it the highest weight vector in the vector space $F_m$. The other basis vectors in $F_m$ have the form 
\begin{eqnarray}
\Psi=\nu_{i_m} \wedge \nu_{i_{m-1}}\wedge \nu_{i_{m-2}}\wedge \dots, 
\end{eqnarray}
where 
$i_m>i_{m-1}>i_{m-2}>\dots$ and $i_k=k$ for $k\ll 0$.
Action of $\mathfrak{sl}(\infty)$ algebra on $F_m$ is defined in the following way. We identify $e_i$, $f_i, \check{\alpha}_i$ with matrix generators  $E_{i,i+1}$, $E_{i+1,i}$, $E_{ii}-E_{i+1,i+1}$ respectively. Then we define the action of any element $X$ of $\mathfrak{sl}(\infty)$ on $F_m$ is given by the following formula:
\begin{eqnarray}
&&X \Psi=\nonumber \\
&&X\nu_{i_m} \wedge \nu_{i_{m-1}}\wedge \nu_{i_{m-2}}\wedge \dots +\nu_{i_m} \wedge X\nu_{i_{m-1}}\wedge \nu_{i_{m-2}}\wedge \dots + \nu_{i_m} \wedge \nu_{i_{m-1}}\wedge X \nu_{i_{m-2}}\wedge \dots\nonumber
\end{eqnarray}
\begin {Rem}A famous representation of $\bar{\mathfrak{a}}_{\infty}$ with central charge $\mathbf{c}=1$ is achieved in the folowing way. 
One has to modify the action of $\check{\alpha}_0$, via a shift $\check{\alpha}_0\to \check{\alpha}_0-1$, namely:
\begin{eqnarray}
&&\check{\alpha}_0 \Psi=\nonumber \\
&&\check{\alpha}_0 \nu_{i_m} \wedge \nu_{i_{m-1}}\wedge \nu_{i_{m-2}}\wedge \dots +\nu_{i_m} \wedge \check{\alpha}_0 \nu_{i_{m-1}}\wedge \nu_{i_{m-2}}\wedge \dots + \nu_{i_m} \wedge \nu_{i_{m-1}}\wedge \check{\alpha}_0 \nu_{i_{m-2}}\wedge \dots\nonumber-\nonumber \\
&&v_{i_m} \wedge \nu_{i_{m-1}}\wedge \nu_{i_{m-2}}\wedge \dots\nonumber 
\end{eqnarray}
\end{Rem}
Notice that $\bar{\mathfrak{n}}_+\Psi_m=0$ , where $\bar{\mathfrak{n}}_+=[\bar{\mathfrak{b}}_+, \bar{\mathfrak{b}}_+]$ and $\check{\alpha}_k\Psi_m=\delta_{k,m}\Psi_m$. Thus $\{F_m\}$ can be interpreted as fundamental representations of $\bar{\mathfrak{a}'}_{\infty}$ and  fundamental representations of  $\overline{GL}(\infty)$ as well. The group action is given by the formula:
\begin{eqnarray}
g\cdot \Psi=g\nu_{i_m} \wedge g\nu_{i_{m-1}}\wedge g\nu_{i_{m-2}}\wedge \dots 
\end{eqnarray}

Using the formalism of the Clifford algebra (\ref{cliff}), we have again formulas (\ref{cliffgen}) for the generators $\check{\alpha}$, $e_i$, $f_i$, where now $i\in \mathbb{Z}$. This allows us to write the expression for the element of $\overline{B}_+(z)$ from (\ref{testA}) acting on $F_m$ as 
\begin{eqnarray}\label{eq:AExplicitPsi}
A(z)=\prod^{1}_{i=+\infty}e^{\Lambda_i(z)\psi_i\psi^*_{i+1}}\Big[\frac{g_i}{g_{i-1}}\Big]^{\psi_i\psi_i^*}\cdot
\prod_{i=0}^{-\infty}e^{\Lambda_i(z)\psi_i\psi^*_{i+1}}\Big[\frac{g_i}{g_{i-1}}\Big]^{-\psi_i^*\psi_i}\,.
 \end{eqnarray}

\section{$(\overline{GL}(\infty),q)$-opers}\label{Sec:GLinfOpers}

In this and the next section we generalize the definitions and theorems from Sections 2,3,4,5 to the infinite-dimensional case. Namely,
the (Miura) $(\overline{GL}(\infty),q)$-opers, $Z$-twisted and $Z$-twisted Mi\"ura-Pl\"ucker versions, as well as nondegeneracy conditions. Then we relate them to QQ-systems and describe explicitly the trivializing operator for the related $Z$-twisted Mi\"ura q-oper. In the process we have to take into account the generally infinite number of zeroes and poles in the local expression.
The explicit formulas will also change slightly as well. 

A particularly interesting part of the infinite dimensional case is the infinite flag in the associated bundle version of the definition of $q$-oper, which will not involve the ``starting" line subbundle. Thus in the study of  $Z$-twisted Miura $(\overline{GL}(\infty),q)$-opers we have to rely on the QQ-system only, without addressing the q-Wronskian approach.

\subsection{Definitions of $(\overline{GL}(\infty),q)$-opers and the canonical form of $(\overline{GL}(\infty),q)$-Miura opers}

Given a principal $\overline{GL}(\infty)$-bundle $\cF_{\overline{GL}(\infty)}$ over $\P^1$, let $\cF_{\overline{GL}(\infty)}^q$ denote its pullback under the map $M_q: \P^1
\to \P^1$ sending $z\mapsto qz$. A meromorphic $(\overline{GL}(\infty),q)$-{\em
  connection} on a principal $\overline{GL}(\infty)$-bundle $\cF_{\overline{GL}(\infty)}$ on $\P^1$ is a section
$A$ of $\Hom_{\cO_{U}}(\cF_{\overline{GL}(\infty)},\cF_{\overline{GL}(\infty)}^q)$, where $U$ is an open
dense subset of $\P^1$ in the standard topology. Notice, that now the number of zeroes and poles which we have to exclude from 
$\P^1$ could be infinite. We assume that the only two accumulations points possible are $0,\infty$. 
We can always choose $U$ so that the
restriction $\cF_{\overline{GL}(\infty)}|_U$ of $\cF_{\overline{GL}(\infty)}$ to $U$ is isomorphic to the trivial
$\overline{GL}(\infty)$-bundle. The restriction
of $A$ to the Zariski open dense subset $U \cap M_q^{-1}(U)$ can be
written as section of the trivial $\overline{GL}(\infty)$-bundle on $U \cap M_q^{-1}(U)$,
and hence as an element $A(z)$ of $\overline{GL}(\infty)(z)$.

\begin{Def}    \label{infqop}
  A meromorphic $(\overline{GL}(\infty),q)$-{\em oper}  on
  $\mathbb{P}^1$ is a triple $(\cF_{\overline{GL}(\infty)},A,\cF_{\overline{B}_-})$, where $A$ is a
  meromorphic $(\overline{GL}(\infty),q)$-connection on a $\overline{GL}(\infty)$-bundle $\cF_{\overline{GL}(\infty)}$ on
  $\mathbb{P}^1$ and $\mathcal{F}_{\overline{B}_-}$ is the reduction of $\cF_{\overline{GL}(\infty)}$
  to $\overline{B}_-$ satisfying the following condition: there exists an 
  open dense subset $U \subset \P^1$ together with a trivialization
  $\imath_{\overline{B}_-}$ of $\mathcal{F}_{\overline{B}_-}$, such that the restriction of
  the connection $A: \cF_{\overline{GL}(\infty)} \to \cF_{\overline{GL}(\infty)}^q$ to $U \cap M_q^{-1}(U)$,
  written as an element of $\overline{GL}(\infty)(z)$ using the trivializations of
  ${\mathcal F}_{\overline{GL}(\infty)}$ and $\cF_{\overline{GL}(\infty)}^q$ on $U \cap M_q^{-1}(U)$ induced by
  $\imath_{B_-}$,  takes values in the infinte product of Bruhat cells $\prod^{-\infty}_{i=+\infty}\overline{B}_-(\C[U \cap
  M_q^{-1}(U)]) s_i \overline{B}_-(\C[U \cap M_q^{-1}(U)])$, where the ordering in the product follows the infinite version of the one in $SL(r+1)$. 
\end{Def}

Therefore any $q$-oper connection
$A$ can be written in the form
\begin{eqnarray}    \label{infqop1}
A(z)=\prod^{-\infty}_{i=+\infty}\Big[n'_i(z) (\phi_i(z)^{\check{\alpha}_i} \, s_i )n_i(z)\Big]
\end{eqnarray}
where $\phi_i(z) \in\C(z)$ and  $n_i(z), n_i'(z)\in \bar{N}_-=[\bar B_-,\bar B_-](z)$ are such that
their zeros and poles are outside the subset $U \cap M_q^{-1}(U)$ of
$\P^1$. As we stated before, we require that the only accumulation points of zeroes and poles of $\phi_i(z)$, $n_i(z), n_i'(z)$ are $0, \infty$.

We can give an alternative definition of $(\overline{GL}(\infty),q)$-oper connection using associated bundles as well.

\begin{Def}    \label{infqopflag}
  A meromorphic $(\overline{GL}(\infty),q)$-{\em oper}  on
  $\mathbb{P}^1$ is a triple $(A,E, \mathcal{L}_{\bullet})$, where $E$ is an ambient vector bundle with the fiber being infinite-dimensional vector space with countable basis and $\mathcal{L}_{\bullet}$ is the corresponding complete flag of the vector bundles, 
  $$...\subset \mathcal{L}_{i+1}\subset\mathcal{L}_i\subset\mathcal{L}_{i-1}\subset...\subset E,$$ i.e.  with the fibers for $\mathcal{L}_i$ being semi-infinite spaces  
so that 
  $A\in \Hom_{\cO_{U}}(E,E^q)$ 
  satisfies the following conditions:
\begin{enumerate} 
\item[i)] $\mathcal{A}\cdot \mathcal{L}_i\subset \mathcal{L}_{i-1} $
\item[ii)]  There exists an open dense subset $U \subset \P^1$, such that the restriction of
   $\mathcal{A}\in Hom(\mathcal{L}_{\bullet}, \mathcal{L}^q_{\bullet})$ to $U \cap M_q^{-1}(U)$ belongs to $\overline{GL}(\infty)(z)$ and satisfies the condition that the induced maps
  $\bar{\mathcal{A}}_i:\mathcal{L}_{i}/\mathcal{L}_{i+1}\to \mathcal{L}_{i-1}/\mathcal{L}_{i}$ are isomorphisms on $U \cap M_q^{-1}(U)$.\\
\end{enumerate}
\end{Def}

The equivalence between two definitions can be established as in the finite-dimensional case, using the associated bundle for the defining representation and its faithfulness. We also will use the notation $A$ for the associated version of the q-connection $\mathcal{A}$ as well: it will be  clear which one is used based on the context. 

Let us give two equivalent definitions of  the $(\overline{GL}(\infty),q)$-Miura oper, which is the same as in finite-dimensional case.

\begin{Def}    \label{Miurainf}
\begin{enumerate}
  \item[i)] A {\em Miura $(\overline{GL}(\infty),q)$-oper} on $\mathbb{P}^1$ is a quadruple
  $(\cF_{\overline{GL}(\infty)},A,\cF_{\overline{B}_-},\cF_{\overline{B}_+})$, where $(\cF_{\overline{GL}(\infty)},A,\cF_{\overline{B}_-})$ is a
  meromorphic $(\overline{GL}(\infty),q)$-oper on $\P^1$ and $\cF_{\overline{B}_+}$ is a reduction of
  the $\overline{GL}(\infty)$-bundle $\cF_{\overline{GL}(\infty)}$ to $\overline{B}_+$ that is preserved by the
  $q$-connection $A$.

\item[ii)]   A {\em Miura $(\overline{GL}(\infty),q)$-oper} on $\mathbb{P}^1$ is a quadruple
  $(E, A, \mathcal{L}_{\bullet}, \hat{\mathcal{L}}_{\bullet})$, where $(E, A, \mathcal{L}_{\bullet})$ is a
  meromorphic $\overline{GL}(\infty)$-oper on $\P^1$ and $\hat{\mathcal{L}}_{\bullet}=\{\mathcal{L}_i\}$,  is another full flag of subbundles in $E$ that is preserved by the
  $q$-connection $A$.
\end{enumerate}
\end{Def}

As in $SL(r+1)$ case, we can define relative position (see Section \ref{Miurafinsec}) between $\cF_{\overline{B}_+}, \cF_{\overline{B}_-}$ because of the Bruhat decomposition of $G$. We will say that $\cF_{\overline{B}_-}$ and $\cF_{\overline{B}_+}$ have a {\em generic
  relative position} at $x \in X$ if the element of $W_G$ assigned to
them at $x$ is equal to $1$ (this means that the corresponding element
$a^{-1}b$ belongs to the open dense Bruhat cell $\overline{B}_- \cdot \overline{B}_+ \subset
\overline{GL}(\infty)$).

We immediately have the following result, which is a generalization of a finite-dimensional case 

\begin{Thm}    \label{gen rel pos inf}
  For any Miura $(\overline{GL}(\infty),q)$-oper on $\mathbb{P}^1$, there exists an open
  dense subset $V \subset \P^1$ such that the reductions $\cF_{\overline{B}_-}$
  and $\cF_{\overline{B}_+}$ are in generic relative position for all $x \in V$.
\end{Thm}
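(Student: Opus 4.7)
The plan is to mirror the finite-dimensional argument behind Theorem~\ref{gen rel pos}, but to reduce to it via finite-rank truncation of $\overline{GL}(\infty)$ through its action on the Fock spaces $F_m$ of Section~\ref{fock}. The key input is the Bruhat decomposition $\overline{GL}(\infty)(z) = \bigsqcup_{\bar{w}} \overline{B}_+(z)\,\bar{w}\,\overline{N}_-(z)$, whose indexing Weyl elements $\bar{w}$ by construction have finite support, i.e.\ lie in some $SL(k+1) \subset \overline{GL}(\infty)$. The conclusion of the theorem is precisely that the transition element between the two trivializations, evaluated at $x$, lies in the big cell $\bar{w}=1$ for all $x$ in some open dense $V \subset \P^1$.

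First I would set up the local picture exactly as in the finite case. Choose an open dense $U \subset \P^1$ on which both $\cF_{\overline{B}_-}$ and $\cF_{\overline{B}_+}$ trivialize, and over $U \cap M_q^{-1}(U)$ write $A(z)$ in the form \eqref{infqop1} with respect to the trivialization $\imath_{\overline{B}_-}$. The Miura condition means that in the trivialization coming from $\cF_{\overline{B}_+}$ one has $\widetilde{A}(z) \in \overline{B}_+(z)$, so there exists a transition element $g(z) \in \overline{GL}(\infty)(z)$ satisfying
$$
g(qz)\prod_{i=+\infty}^{-\infty}\bigl[n'_i(z)\,\phi_i(z)^{\check{\alpha}_i}\,s_i\,n_i(z)\bigr]\, g(z)^{-1} \;=\; \widetilde{A}(z) \;\in\; \overline{B}_+(z).
$$
The claim is that the locus where $g(x)$ lies in $\overline{B}_+ \,\overline{N}_-$ is open dense in $\P^1$.

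Next I would run the reduction to finite rank. For each integer $m$ and each $k>0$, the pair $(\cF_{\overline{B}_-}, \cF_{\overline{B}_+})$ induces, via the action on the Fock representation $F_m$, a pair of complete flags on the associated bundle, and inside this pair one can extract a rank-$(k{+}1)$ subquotient $E_{m,k}$ with two induced full flags on which $A(z)$ restricts to a Miura $(SL(k+1),q)$-oper (after trivial normalization of the determinant). Applying Theorem~\ref{gen rel pos} to each such truncation produces an open dense $V_{m,k} \subset \P^1$ on which the induced $B_-$- and $B_+$-reductions are in generic relative position. Setting $V = \bigcap_{m,k} V_{m,k}$, genericity of the full pair at $x \in V$ amounts exactly to the statement that the Weyl element assigned to $g(x)$ in the $\overline{GL}(\infty)$ Bruhat decomposition restricts to the identity in every finite $SL(k+1)$ truncation, hence equals $1$ since every $\bar{w}$ has finite support.

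The main obstacle will be to show that the intersection $V = \bigcap_{m,k} V_{m,k}$, though a countable intersection, remains open dense. This is where the hypothesis built into Definition~\ref{infqop} — that the zeros and poles of the rational data $\phi_i(z), n_i(z), n'_i(z)$ accumulate only at $\{0,\infty\}$ — is essential: each complement $V_{m,k}^c$ is a finite subset of $\P^1$, and any $x_0 \in \P^1\setminus\{0,\infty\}$ belongs to only finitely many $V_{m,k}^c$ because only finitely many of the truncations $A_{m,k}$ have poles or singular structure in any fixed compact neighborhood of $x_0$. Therefore $V^c$ is at most a countable set with accumulation points in $\{0,\infty\}$, making $V$ open and dense in $\P^1$ as required. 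A secondary technical point, which I would treat briefly, is to confirm that the finite-rank Bruhat cell of each truncation of $g(x)$ stabilizes as $k \to \infty$ to a well-defined cell in $\overline{GL}(\infty)(z)$; this follows directly from the fact that elements of $\overline{B}_+\,\overline{N}_-$ are characterized by the nonvanishing of certain principal minors, a condition which is manifestly compatible with the inductive system of $SL(k+1)$ subgroups.
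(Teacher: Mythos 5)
Your overall strategy --- reduce to the finite-rank Theorem \ref{gen rel pos} --- is the right one, but the route through infinitely many Fock-space truncations $E_{m,k}$ creates a countable-intersection problem whose resolution, as you state it, has a gap. You justify the density of $V=\bigcap_{m,k}V_{m,k}$ by claiming that near any $x_0\notin\{0,\infty\}$ only finitely many truncations ``have poles or singular structure,'' but the complement $V_{m,k}^c$ is the locus where the induced $B_\pm$-reductions of the truncation fail to be in generic relative position, i.e.\ where certain minors of the transition element $g(x)$ vanish. That locus is not contained in the singular locus of the data $\phi_i, n_i, n_i'$: relative position can degenerate at points where all the rational data are perfectly regular. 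So the hypothesis that zeros and poles of the data accumulate only at $\{0,\infty\}$ does not by itself prevent the sets $V_{m,k}^c$ from accumulating elsewhere, and your density argument does not close.

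The missing idea --- which is exactly what the paper uses, and which makes the whole intersection unnecessary --- is that membership of $A(z)$ in $\overline{GL}(\infty)(z)$ forces all but finitely many of the factors $n_i'(z)\,\phi_i(z)^{\check{\alpha}_i}s_i\,n_i(z)$ in \eqref{infqop1} to lie already in $\overline{B}_+(z)$: only finitely many entries below the diagonal are permitted, so there exist $k,l$ such that $A(z)$ factors as an upper-triangular (semi-infinite) tail, times a genuine Miura $(SL(k+l),q)$-oper connection $A'(z)=n'(z)\prod_j(\phi_j(z)^{\check{\alpha}_j}s_j)n(z)$ supported on the finite block with Cartan generated by $\{\check{\alpha}_j\}_{j=l+1}^{k-1}$, times another upper-triangular tail. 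The relative position of $\cF_{\overline{B}_-}$ and $\cF_{\overline{B}_+}$ is therefore governed entirely by this single finite block, and one application of Theorem \ref{gen rel pos} produces the required open dense $V$ with no intersection at all. If you insist on your truncation scheme, this same observation is what rescues it: all but finitely many of your $V_{m,k}$ are all of $U\cap M_q^{-1}(U)$, because the corresponding truncated connections are already upper triangular and hence trivially in the big cell, so the intersection is in fact finite.
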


\begin{proof}
Notice, that according to the local expression for the q-oper connection (\ref{qop1})  
and the condition that it belongs to $\overline{GL}(\infty)(z)$ means that there is a finite number of elements below the diagonal. This means that for some $k,l$ we have
\begin{eqnarray} 
A(z)=\Bigg[\prod_{i=+\infty}^kg_i^{\check{\alpha}_i}(z)e^{\frac{\phi_i(z)e_i}{g_i(z)}}\Bigg]n'(z)\prod^{l+1}_{j=k-1}(\phi_j(z)^{\check{\alpha}_j} \, s_j )n(z)\Bigg[\prod^{-\infty}_{i=l}g_i^{\check{\alpha}_i}(z)e^{\frac{\phi_i(z)e_i}{g_i(z)}}\Bigg],
\end{eqnarray}
where $n(z), n'(z)\in \overline{N}_-(z)$ belong to the $SL(k+l)$ subgroup with $H$ generated by $\{\check \alpha_j\}^{j=l+1}_{j=k-1}$. The expression in the middle, namely  $A'(z)=n'(z)\prod_j(\phi_j(z)^{\check{\alpha}_j} \, s_j )n(z)\in SL(k+l)(z)$ is the local expression for Miura $SL(k+l)$ q-oper for which generic property follows from the finite-dimensional case (see Theorem \ref{gen rel pos}) and thus we have generic relative position for Miura $(\overline{GL}(\infty),q)$-oper.
\end{proof}

That leads to the following Corollary.

\begin{Cor}    \label{gen rel pos1}
For any Miura $(\overline{GL}(\infty),q)$-oper on $\mathbb{P}^1$, there exists a
trivialization of the underlying $\overline{GL}(\infty)$-bundle $\cF_{\overline{GL}(\infty)}$ on an open
dense subset of $\P^1$ for which the oper $q$-connection has the form
\begin{equation}    \label{gicheckinf}
\prod^{-\infty}_{i=+\infty} g_i^{\check{\alpha}_i}e^{\frac{\lambda_i t_i}{g_i}e_i}, \qquad
g_i \in \mathbb{C}(z)^\times,
\end{equation}
where each $t_i \in \mathbb{C}(z)$ is determined by the lifting $s_i$ and the order in the product is canonical.
\end{Cor}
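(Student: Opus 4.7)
The plan is to bootstrap on Theorem~\ref{gen rel pos inf}, reducing the claim to the finite-dimensional canonical form provided by Theorem~\ref{gen elt}. First I would use Theorem~\ref{gen rel pos inf} to choose an open dense $V \subset \mathbb{P}^1$ on which $\cF_{\overline{B}_-}$ and $\cF_{\overline{B}_+}$ are in generic relative position, and pick a trivialization $\imath_{\overline{B}_+}$ of $\cF_{\overline{B}_+}$ on $V \cap M_q^{-1}(V)$. Since the Miura condition asserts that $A$ preserves the $\overline{B}_+$-reduction, $A(z)$ lies in $\overline{B}_+(z)$ in this trivialization, while the oper condition of Definition~\ref{infqop} (being $q$-gauge invariant) still expresses $A(z)$ as an element of the infinite product of Bruhat cells taken in the canonical order.

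Next I would invoke the splitting already used in the proof of Theorem~\ref{gen rel pos inf}: because $A(z) \in \overline{GL}(\infty)(z)$ has only finitely many nonzero entries below the diagonal, there exist integers $k > l$ such that
\begin{equation*}
A(z) = \Bigl[\prod_{i=+\infty}^{k} g_i^{\check\alpha_i}(z)\, e^{\phi_i(z) e_i / g_i(z)}\Bigr]\, A'(z)\, \Bigl[\prod_{i=l}^{-\infty} g_i^{\check\alpha_i}(z)\, e^{\phi_i(z) e_i / g_i(z)}\Bigr],
\end{equation*}
where $A'(z) = n'(z)\, \prod_{j=k-1}^{l+1} \phi_j(z)^{\check\alpha_j} s_j\, n(z)$ lies in the finite-dimensional subgroup $SL(k-l+1)$ generated by $\{\check\alpha_j, e_j, f_j\}_{j=l+1}^{k-1}$, and the two outer tails are already in the desired canonical form. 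Since both the outer tails and the total $A(z)$ lie in $\overline{B}_+(z)$, the middle factor $A'(z)$ must lie in $B_+(z)$ of this $SL(k-l+1)$-subgroup, placing it in the intersection $N_-(z) \prod_{j=k-1}^{l+1} \phi_j(z)^{\check\alpha_j} s_j\, N_-(z) \cap B_+(z)$.

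Finally I would apply the finite-dimensional Theorem~\ref{gen elt} to $A'(z)$, which rewrites it as $\prod_{j=k-1}^{l+1} g_j^{\check\alpha_j}\, e^{t_j \phi_j e_j / g_j}$ with each $t_j \in \mathbb{C}(z)$ determined by the lift $s_j$; substituting back and concatenating with the outer tails produces the infinite product of \eqref{gicheckinf} in the canonical order inherited from the $A_\infty$ Dynkin diagram. The only delicate step is verifying that this infinite product is a well-defined element of $\overline{B}_+(z)$, which is also the main potential obstacle; but this follows along the lines of Section~\ref{Sec:FockSpace}, since in each fundamental representation $F_m$ on the semi-infinite wedge, any fixed basis vector is moved by only finitely many of the factors, so the action of the product assembles coherently and defines an element of $\overline{GL}(\infty)(z)$. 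As in the finite-dimensional case, the $t_j$'s depend only on the chosen lifts $s_j$ and can be normalized to $t_j = 1$ for all $j$ by an appropriate choice.
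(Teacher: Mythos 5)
Your proposal is correct and follows essentially the same route the paper intends: the corollary is stated as an immediate consequence of Theorem~\ref{gen rel pos inf}, whose proof already splits $A(z)$ into two canonical-form tails and a finite-dimensional middle factor to which the $SL(r+1)$ result (Theorem~\ref{gen elt}) applies. Your added check that the resulting infinite product is a well-defined element of $\overline{B}_+(z)$ is a reasonable elaboration consistent with the paper's discussion around \eqref{testA}.
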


As in the finite-dimensional case,  we fix $t_i\equiv 1$ from now on. 

\subsection{Z-twisted Miura q-opers }
Now we are ready to define the notion of $(\overline{GL}(\infty),q)$-oper and  $(\overline{GL}(\infty),q)$-Miura oper, which are straightforward definitions of their $SL(r+1)$-counterparts. As in $SL(r+1)$ case, let  
$Z$ be the regular element of the maximal torus $\overline{H}=\overline{B}_+/[\overline{B}_+,\overline{B}_+]$. One can express it as follows:
\begin{equation}    \label{Zinf}
Z = \prod_{i={+\infty}}^{-\infty} \zeta_i^{\check\alpha_i}, \qquad \zeta_i \in
\C^\times.
\end{equation}

\begin{Def}    \label{Ztwoperinf}
  A {\em $Z$-twisted $(\overline{GL}(\infty),q)$-oper} on $\mathbb{P}^1$ is a $(\overline{GL}(\infty),q)$-oper
  that is equivalent to the constant element $Z \in \overline{H} \subset \overline{H}(z)$
  under the $q$-gauge action of $\overline{GL}(\infty)(z)$, i.e. if $A(z)$ is the
  meromorphic oper $q$-connection (with respect to a particular
  trivialization of the underlying bundle), there exists $g(z) \in
  \overline{GL}(\infty)(z)$ such that
\begin{eqnarray}    \label{Aginf}
A(z)=g(qz)Z g(z)^{-1}.
\end{eqnarray}
A {\em $Z$-twisted Miura $(\overline{GL}(\infty),q)$-oper} is a Miura $(\overline{GL}(\infty),q)$-oper on
$\mathbb{P}^1$ that is equivalent to the constant element $Z \in \overline{H}
\subset \overline{H}(z)$ under the $q$-gauge action of $\overline{B}_+(z)$, i.e.
\begin{eqnarray}    \label{gaugeAinf}
A(z)=v(qz)Z v(z)^{-1}, \qquad v(z) \in \overline{B}_+(z).
\end{eqnarray}
\end{Def}

Naturally, we have the proposition addressing characterization of $Z$-twisted Miura q-opers 
associated to $Z$-twisted q-opers.

\begin{Prop}    \label{Z prime inf}
  Let $Z \in \overline{H}$ be regular. For any $Z$-twisted $(\overline{GL}(\infty),q)$-oper $(\cF_{\overline{GL}(\infty)},A,\cF_{\overline{B}_-})$
  and any choice of $\overline{B}_+$-reduction $\cF_{\overline{B}_+}$ of $\cF_{\overline{GL}(\infty)}$ preserved
  by the oper $q$-connection $A$, the resulting Miura $(\overline{GL}(\infty),q)$-oper is
  $Z'$-twisted for a particular $Z' \in S_{\infty} \cdot Z$. The set of 
  $A$-invariant $\overline{B}_+$-reductions $\cF_{\overline{B}_+}$ on the 
  $(\overline{GL}(\infty),q)$-oper is in one-to-one correspondence with the elements of $W=S_{\infty}$.
\end{Prop}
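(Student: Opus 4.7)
The plan is to adapt the proof of the finite-dimensional Proposition~\ref{Z prime} to the infinite-rank setting, using that every Weyl group element in $W = S_\infty$ is supported on a finite set of vertices. By the $Z$-twistedness hypothesis, I first choose a trivialization of $\cF_{\overline{GL}(\infty)}$ in which the oper connection becomes the constant element $A(z) = Z \in \overline{H}$. In this trivialization, any $\overline{B}_+$-reduction $\cF_{\overline{B}_+}$ of $\cF_{\overline{GL}(\infty)}$ is represented by a meromorphic section $z \mapsto h(z)\overline{B}_+ \in \overline{GL}(\infty)/\overline{B}_+$ on some open dense $U \subset \mathbb{P}^1$, and the condition that $A$ preserves $\cF_{\overline{B}_+}$ translates into the difference-theoretic requirement
\begin{equation}\label{eq:invpropZprimeinf}
h(qz)^{-1}\, Z\, h(z) \in \overline{B}_+ \qquad \text{on } U \cap M_q^{-1}(U).
\end{equation}

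Next, I invoke the Bruhat decomposition $\overline{GL}(\infty) = \bigsqcup_{w \in S_\infty} \overline{B}_+\, w\, \overline{B}_+$, writing $h(z)\overline{B}_+ = u(z)\, w\, \overline{B}_+$ with $u(z)$ in the unipotent cross-section $U_w := \overline{N}_+ \cap w\, \overline{N}_-\, w^{-1}$, which is finite-dimensional because $w$ has finite support. Constant sections $u(z) \equiv e$ automatically satisfy \eqref{eq:invpropZprimeinf}, since $w^{-1} Z w \in \overline{H} \subset \overline{B}_+$, producing one $A$-invariant reduction for each $w \in S_\infty$. For uniqueness inside each Bruhat cell, I expand $u(z) = \exp\bigl(\sum_\alpha c_\alpha(z)\, e_\alpha\bigr)$ in exponential coordinates indexed by the positive roots $\alpha$ of $U_w$; the invariance condition then decomposes into the scalar system $c_\alpha(qz) = Z^\alpha c_\alpha(z)$, and the regularity of $Z$ together with the assumption that $q$ is not a root of unity forces $c_\alpha \equiv 0$, exactly as in the finite-dimensional case. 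Hence $h(z) = w$ is the only $A$-invariant section in the cell indexed by $w$.

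Conjugating the trivialization in which $A = Z$ by the constant element $w$ produces a new trivialization in which the underlying $q$-connection takes the form $w^{-1} Z w$, still a regular element of the maximal torus $\overline{H}$ and lying in the Weyl orbit $S_\infty \cdot Z$. The associated Miura $(\overline{GL}(\infty),q)$-oper is therefore $Z'$-twisted with $Z' = w^{-1} Z w$, and the assignment $w \mapsto \cF_{\overline{B}_+}^{(w)}$ yields the claimed bijection between $A$-invariant $\overline{B}_+$-reductions and $W = S_\infty$. The only genuinely infinite-dimensional point is whether the Bruhat-cell analysis of \eqref{eq:invpropZprimeinf} still delivers uniqueness; this is resolved by the observation that $U_w$ sits inside a finite $SL$-subblock for each $w$, so the argument reduces cell-by-cell to the finite-rank situation handled in Proposition~\ref{Z prime}.
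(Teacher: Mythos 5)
The paper does not actually prove this proposition --- it is stated as the ``natural'' infinite-rank analogue of Proposition~\ref{Z prime}, which is itself quoted from \cite{Frenkel:2020} without proof --- so your argument can only be measured against the standard one. Your strategy is exactly that standard argument, and the genuinely infinite-dimensional points are handled in the right way: each $w\in S_\infty$ has finitely many inversions, so $U_w=\overline{N}_+\cap w\overline{N}_-w^{-1}$ sits inside a finite $SL$-block and the cell-by-cell analysis reduces to finite rank, in the same spirit as the paper's proof of Theorem~\ref{gen rel pos inf}. The reduction of the invariance condition to $u(qz)=Zu(z)Z^{-1}$ inside $U_w$ (using $w\overline{B}_+w^{-1}\cap U_w=\{e\}$), the identification of the residual twist as $w^{-1}Zw\in S_\infty\cdot Z$, and the enumeration of constant solutions by $w\in S_\infty$ are all correct. (One small point you gloss over: to conclude that the resulting Miura oper is $Z'$-twisted in the sense of Definition~\ref{Ztwoperinf}, i.e.\ that the conjugating element can be taken in $\overline{B}_+(z)$ relative to the oper trivialization, one needs the generic relative position statement, Theorem~\ref{gen rel pos inf}.)

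There is, however, one step that over-claims, and it is the crux of the uniqueness half of the bijection. The scalar equations $c_\alpha(qz)=\alpha(Z)\,c_\alpha(z)$ are \emph{not} killed by regularity of $Z$ together with $q$ not being a root of unity. For a nonzero rational function $c$ satisfying $c(qz)=\lambda c(z)$, the divisor of $c$ on $\mathbb{C}^\times$ is a finite set stable under multiplication by $q$, hence empty; therefore $c(z)=az^{k}$ and $\lambda=q^{k}$. Regularity of $Z$ only excludes $k=0$. If $\alpha(Z)=q^{k}$ for some $k\neq 0$ --- which regularity does not forbid --- the cell of $w$ contains a positive-dimensional family of nonconstant $A$-invariant sections, and the claimed bijection with $S_\infty$ fails. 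The hypothesis actually needed is $\alpha(Z)\notin q^{\mathbb{Z}}$ for every root $\alpha$, i.e.\ that the diagonal entries $\xi_i$ of $Z$ are pairwise $q$-distinct; this is the kind of condition the paper imposes elsewhere (cf.\ \eqref{assume} and the assumption at the start of Section~\ref{Sec:MiuraPluckerInf}, which however only constrain adjacent $\xi_i$). The caveat is inherited from the finite-rank Proposition~\ref{Z prime} and is not specific to your argument, but since you explicitly assert that regularity alone forces $c_\alpha\equiv 0$, the step as written does not hold.
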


Given a Miura $(\overline{GL}(\infty),q)$-oper. By Corollary
\ref{Miura form}, the underlying $(G,q)$-connection can be written in the
form \eqref{form of A}. As in $SL(r+1)$ case we obtain an $\bar{H}$-bundle
$\cF_{\overline{B}_+}/\bar{N}_+$, where $\bar{N}_+=[\overline{B}_+, \overline{B}_+]$. The corresponding $(\bar{H},q)$-connection $A^{\bar H}(z)$ according to \eqref{gicheckinf} is
given by:
\begin{equation}    \label{AHinf}
A^H(z)=\prod_ig_i(z)^{\check{\alpha}_i}.
\end{equation}
We call $A^{\bar H}(z)$ the \emph{associated Cartan $q$--connection} of the
Miura $q$-oper $A(z)$.

The same can be done in the infinite-dimensional case. If our Miura $q$-oper is $Z$-twisted (see Definition
\ref{Ztwoperinf}), then we also have $A(z)=v(qz)Z v(z)^{-1}$, where
$v(z)\in B_+(z)$.  Notice, that  $v(z)$ can be written as
\begin{equation}    \label{vzinf}
v(z)=
\prod_i y_i(z)^{\check{\alpha}_i} n(z), \qquad n(z)\in \bar{N}_+(z), \quad
y_i(z) \in \C(z)^\times,
\end{equation}
We refer to the associated to Cartan $q$-connection $A^{\bar H}(z)$ as $Z$-{\emph twisted}, so that 
the explicit realization is given by the following formula:
\begin{equation}    \label{AHZinf}
A^H(z)=\prod_i
\Bigg[\zeta_i\frac{y_i(qz)}{y_i(z)}\Bigg]^{\check{\alpha}_i} 
\end{equation}
and we note, that $A^H(z)$ determines the $y_i(z)$'s uniquely
up to a scalar.

\section{$Z$-twisted Miura $(\overline{GL}(\infty),q)$-opers and $QQ$-systems}  \label{Sec:MiuraPluckerInf}

\subsection{Definition and explicit realization.}
Let $\{ \Lambda_i(z) \}_{i\in \mathbb{Z}}$ be a collection of
non-constant polynomials with accumulation points of roots at $0$ or $\infty$ only.

\begin{Def}    \label{d:regsinginf}

i)  An $(\overline{GL}(\infty),q)$-{\em oper with regular singularities determined by $\{
    \Lambda_i(z) \}_{i\in\mathbb{Z}}$} is a $q$-oper on $\P^1$ whose
  $q$-connection may be written in the form
\begin{equation}    \label{Lambdainf}
A(z)= \Bigg[\prod_{i=+\infty}^kg_i^{\check{\alpha}_i}(z)e^{\frac{\Lambda_i(z)t_ie_i}{g_i(z)}}\Bigg]n'(z)\prod^{l+1}_{j=k-1}(\phi_j(z)^{\check{\alpha}_j} \, s_j )n(z)\Bigg[\prod^{-\infty}_{i=l}g_i^{\check{\alpha}_i}(z)e^{\frac{\Lambda_i(z)t_ie_i}{g_i(z)}}\Bigg], \qquad
\end{equation}
for some $k, l\in \mathbb{Z}$, where $n(z), n'(z)\in N_-(z)$ and belong to  $SL(k+l)$ subgroup with $H$ generated by $\{\check \alpha_j\}^{j=l+1}_{j=k-1}$.

ii)  {\em A Miura $(\overline{GL}(\infty),q)$-oper with regular singularities determined by
polynomials $\{ \Lambda_i(z) \}_{i=1,\ldots,r}$} is a Miura
  $(\overline{GL}(\infty),q)$-oper such that the underlying $q$-oper has
regular singularities determined by $\{ \Lambda_i(z)
\}_{i=1,\ldots,r}$.
\end{Def}

As in the $SL(r+1)$ case, from now on we set $t_i(z)=1$, $i\in \mathbb{Z}$. 
Then we have an analogue of the Theorem \ref{Miura form}.

\begin{Cor}    \label{Miura form inf}
For every Miura $(\overline{GL}(\infty),q)$-oper with regular singularities determined by
the polynomials $\{ \Lambda_i(z) \}_{i\in\mathbb{Z}}$, the underlying
$q$-connection can be written in the form 
\begin{equation}    \label{form of A inf}
A(z)=\prod^{-\infty}_{i={+\infty}}
g_i(z)^{\check{\alpha}_i} \; e^{\frac{\Lambda_i(z)}{g_i(z)}e_i}, \qquad
g_i(z) \in \C(z)^\times.
\end{equation}
\end{Cor}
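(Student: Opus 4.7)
The strategy is to reduce the claim to its finite-rank counterpart Theorem \ref{Miura form}, using the fact that by Definition \ref{d:regsinginf}(i) the Weyl-reflection factors $s_j$ appearing in the local form \eqref{Lambdainf} are confined to a finite range of indices, while the infinite tails on either side already live in the completed upper Borel.  Set the finite range to be $j\in [k-1,l+1]$ and denote by $G'\subset \overline{GL}(\infty)$ the $SL(k+l)$-subgroup whose simple roots are $\{\alpha_j\}_{j=k-1}^{l+1}$.  The middle factor
$$
A'(z)=n'(z)\prod_{j=k-1}^{l+1}\bigl(\Lambda_j(z)^{\check\alpha_j}\,s_j\bigr)n(z)\in G'(z)
$$
is then the local form of a $(G',q)$-oper with regular singularities determined by $\{\Lambda_j(z)\}_{j=k-1}^{l+1}$, while the left and right tails are already of the desired form \eqref{form of A inf} (they are products of Cartan factors and root exponentials for $e_i$ with $i$ outside $[k-1,l+1]$).

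First I would invoke Theorem \ref{gen rel pos inf} to conclude that the Miura datum $(\cF_{\overline B_-},\cF_{\overline B_+})$ is in generic relative position on an open dense subset, so on a possibly smaller trivializing neighborhood one may perform a $q$-gauge transformation by an element $g(z)\in \overline B_+(z)$ bringing $A(z)$ into $\overline B_+(z)$; moreover, since the only Bruhat obstruction is concentrated in the middle block, $g(z)$ can be chosen to lie in the $\overline B_+\cap G'$-part of $\overline B_+$, hence to commute with all tail factors up to the Cartan-matrix commutation rules from Lemma \ref{Th:LemmaMV05}.  After this transformation the middle factor $A'(z)$ becomes an element of $B_+^{G'}(z)$, i.e.\ it is a Miura $(SL(k+l),q)$-oper with regular singularities. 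By Theorem \ref{Miura form} it then admits the canonical form
$$
A'(z)=\prod_{j=k-1}^{l+1} g_j(z)^{\check\alpha_j}\;e^{\frac{\Lambda_j(z)}{g_j(z)}e_j},
$$
for suitable $g_j(z)\in\mathbb C(z)^\times$.

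Finally I would splice this canonical middle block back between the two tails, in the order prescribed by the completed Coxeter element (decreasing $i$), to obtain the infinite product \eqref{form of A inf}.  The result is a well-defined element of $\overline B_+(z)\subset \overline{GL}(\infty)(z)$: each factor is upper triangular, and the indices of the nontrivial $e_i$-exponentials are distinct, so the multiplication collapses unambiguously to a matrix all of whose nonzero entries lie on and above the diagonal with only finitely many below-diagonal entries (here zero).  The assumption that zeros and poles of the $g_i(z),\,\Lambda_i(z)$ only accumulate at $0$ or $\infty$ is preserved at every step, so convergence of the infinite product is not an issue.

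The main obstacle I anticipate is step two: verifying that the $\overline B_+(z)$-gauge element bringing $A(z)$ into $\overline B_+(z)$ can indeed be chosen inside the finite-rank subgroup $G'$.  This amounts to an inductive argument mirroring the one behind Theorem \ref{gen elt}, namely that the Bruhat genericity at the level of each associated $(GL(2),q)$-subquotient forces the gauge transformation to act nontrivially only on the indices where the $s_j$'s appear; once this is in place the remainder of the proof is essentially book-keeping, reducing to Theorem \ref{Miura form}.
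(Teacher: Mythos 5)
Your argument is correct and matches the route the paper takes implicitly: the corollary is stated as an immediate analogue of Theorem \ref{Miura form}, obtained exactly as you do by isolating the finite middle $SL(k+l)$ block in the local form \eqref{Lambdainf}, applying the finite-rank canonical-form result there (via Theorem \ref{gen rel pos inf} and Corollary \ref{gen rel pos1}), and observing that the two infinite tails are already of the form \eqref{form of A inf}. The "obstacle" you flag at the end is already handled by the paper's Theorem \ref{gen elt} together with the proof of Theorem \ref{gen rel pos inf}, so nothing further is needed.
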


\subsection{Fermionic realization}

Let $F_i$ be the irreducible representation of $\overline{GL}(\infty)$ with the highest weight $\omega_i$ which we discussed in Section 7.  Notice, that the one-dimensional and two-dimensional subspaces $L_i$ and $W_i$ of
$F_i$ spanned by the weight vectors $\Psi_i$, and $f_i \cdot\Psi_i$ are a $\overline{B}_+$-invariant subspaces of $V_i$.

Now let $(\cF_{\overline{GL}(\infty)},A,\cF_{\overline{B}_-},\cF_{\overline{B}_+})$ be a Miura $(\overline{GL}(\infty),q)$-oper
with regular singularities determined by polynomials $\{ \Lambda_i(z)
\}_{i\in{\mathbb{Z}}}$ (see Definition \ref{d:regsing}). Recall that
$\cF_{\overline{B}_+}$ is a $\overline{B}_+$-reduction of a $i\in\mathbb{Z}$-bundle $\cF_{(\overline{GL}(\infty)}$ on $\P^1$, 
preserved by the $(\overline{GL}(\infty),q)$-connection $A$. Therefore for each
$i\in\mathbb{Z}$, the vector bundle
$$
\mathscr{F}_i = \cF_{\overline{B}_+} \underset{\overline{B}_+}\times V_i = \cF_{G} \underset{G}\times
F_i.
$$

Thus we have the following Proposition.

\begin{Prop}
For every Miura $(\overline{GL}(\infty),q)$-oper with regular singularities determined by
the polynomials $\{ \Lambda_j(z) \}_{j\in\mathbb{Z}}$, the underlying
$q$-connection $\phi_i(A)$ in the associated bundle $\mathscr{F}_i$ for any $i\in \mathbb{Z}$ can be written in the form:
 \begin{equation}
\phi_i(A)(z)=
\prod^{1}_{j=+\infty}e^{\Lambda_j(z)\psi_{j}\psi^*_{j+1}} \Big[\frac{g_j(z)}{g_{j-1}(z)}\Big]^{\psi_j\psi^*_j}\cdot
\prod_{j=0}^{-\infty}e^{\Lambda_j(z)\psi_{j}\psi^*_{j+1}} \Big[\frac{g_j(z)}{g_{j-1}(z)}\Big]^{-\psi^*_j\psi_j}
\,.
\end{equation}
\end{Prop}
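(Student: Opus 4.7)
The plan is to start from the canonical form for the $q$-connection provided by Corollary~\ref{Miura form inf} and simply translate it into fermionic language using the realization \eqref{cliffgen} of the Chevalley generators on the Fock space $F_i$; the substance of the argument is the careful rearrangement and normal-ordering needed to make the infinite product manifestly convergent on each weight space.

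First, by Corollary~\ref{Miura form inf} we may write
\begin{equation*}
A(z)=\prod_{j=+\infty}^{-\infty} g_j(z)^{\check{\alpha}_j}\,e^{\frac{\Lambda_j(z)}{g_j(z)}e_j}.
\end{equation*}
Restricting to the associated bundle $\mathscr{F}_i$ means evaluating this element of $\overline{B}_+(z)$ in the fundamental representation $F_i$, where $\check{\alpha}_j=\psi_j\psi_j^*-\psi_{j+1}\psi_{j+1}^*$ and $e_j=\psi_j\psi_{j+1}^*$. Each individual factor $g_j^{\check{\alpha}_j}e^{(\Lambda_j/g_j)e_j}$ already lies in the $\mathfrak{sl}(2)$-subgroup generated by $\{e_j,f_j,\check{\alpha}_j\}$, so its image is a well-defined operator on $F_i$.

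Next I would rearrange each local factor using Lemma~\ref{Th:BCH}. Splitting $g_j^{\check{\alpha}_j}=g_j^{\psi_j\psi_j^*}\cdot g_j^{-\psi_{j+1}\psi_{j+1}^*}$ into two commuting pieces and using the commutator relations $[\psi_j\psi_j^*,e_j]=e_j$ and $[\psi_{j+1}\psi_{j+1}^*,e_j]=-e_j$, one checks exactly as in the $SL(2)$ verification of Lemma~\ref{Th:BCHAConnection} that
\begin{equation*}
g_j^{\psi_j\psi_j^*}\,g_j^{-\psi_{j+1}\psi_{j+1}^*}\,e^{\frac{\Lambda_j}{g_j}\psi_j\psi_{j+1}^*}
= g_j^{-\psi_{j+1}\psi_{j+1}^*}\,e^{\Lambda_j\psi_j\psi_{j+1}^*}\,g_j^{\psi_j\psi_j^*}.
\end{equation*}
Performing this swap in every factor of the canonical product and then collecting the resulting Cartan terms by the telescoping identity $\prod_j g_j^{\psi_j\psi_j^*}g_j^{-\psi_{j+1}\psi_{j+1}^*}=\prod_k (g_k/g_{k-1})^{\psi_k\psi_k^*}$ (valid factor by factor because these operators commute and are diagonal in the fermionic basis) produces the desired formal expression with $e^{\Lambda_j\psi_j\psi_{j+1}^*}$ in front of each $(g_j/g_{j-1})^{\psi_j\psi_j^*}$.

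The main obstacle is the infinite product: the telescoped form $\prod_{k\in\mathbb{Z}}(g_k/g_{k-1})^{\psi_k\psi_k^*}$ does not converge on $F_i$ because for $k\ll 0$ the operator $\psi_k\psi_k^*$ acts as the identity on vectors close to the highest weight $\Psi_i$. To cure this I would pass to normal order at the split level used in Section~\ref{fock}: using the Clifford relation $\psi_k\psi_k^*+\psi_k^*\psi_k=1$, rewrite
\begin{equation*}
\Bigl[\tfrac{g_k}{g_{k-1}}\Bigr]^{\psi_k\psi_k^*}=\tfrac{g_k}{g_{k-1}}\,\Bigl[\tfrac{g_k}{g_{k-1}}\Bigr]^{-\psi_k^*\psi_k} \qquad (k\le 0),
\end{equation*}
and absorb the scalar prefactors into the overall central normalization of the $F_i$-action recalled in the Remark following Section~\ref{fock}. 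After this replacement, for every weight vector in $F_i$ only finitely many of the factors in each of the two half-products act nontrivially, so both infinite products converge and yield well-defined operators on $F_i$. This produces precisely the asserted formula; the uniqueness (independence of the arbitrary split point at $j=0/1$) follows from the same scalar computation, which shows that shifting the split only rescales by a central element that is absorbed into $\Lambda_j,g_j$.
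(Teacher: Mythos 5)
Your computation is correct and is exactly the argument the paper intends: the paper itself states this Proposition (and its finite-rank analogue, as well as formula \eqref{eq:AExplicitPsi}) without proof, asserting only that it follows from the Clifford realization \eqref{cliffgen}, and your derivation supplies the missing steps — the factor-by-factor identity $g_j^{\check\alpha_j}e^{(\Lambda_j/g_j)e_j}=g_j^{-\psi_{j+1}\psi_{j+1}^*}e^{\Lambda_j\psi_j\psi_{j+1}^*}g_j^{\psi_j\psi_j^*}$ via Lemma \ref{Th:BCH}, the telescoping of the adjacent diagonal factors into $(g_j/g_{j-1})^{\psi_j\psi_j^*}$, and the replacement $\psi_k\psi_k^*\mapsto -\psi_k^*\psi_k$ for $k\le 0$ that makes the product locally finite on $F_i$. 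The one point to state more carefully is the last one: the discarded scalar $\prod_{k\le 0}(g_k/g_{k-1})$ is a genuinely divergent telescoping product, so "absorbing it into the central normalization" really means that the normal-ordered right-hand side is the \emph{definition} of the (projective) action of $A(z)$ on $\mathscr{F}_i$, consistent with the shift of $\check\alpha_0$ in the central-charge-one module described in the Remark of Section \ref{fock}; this is the same level of rigor as the paper, but it is worth saying explicitly that the identity holds as operators on $F_i$ only after this regularization is adopted.
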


Below we will discuss the Miura-Pl\"ucker condition. As we noted in the beginning of Section 8, the key difference between finite-dimensional case and infinite-dimensional one, that we do not have a luxury of having a line bundle, such that the q-Wronskian matrix of the corresponding section will produce minors describing the complete $QQ$-system. We must rely exclusively on the properties of the infinite $QQ$-system, which will allow us to construct the trivializing group element $v(z)$, so that $A(z)=v(qz)Zv(z)^{-1}$ for any $Z$-twisted Miura-Pl\"ucker $(\overline{GL}(\infty),q)$-oper (see Subsection 9.5). 

\subsection{Miura-Pl\"ucker $(\overline{GL}(\infty),q)$-opers} 
For all $i\in \mathbb{Z}$, the infinite rank bundle $\mathscr{F}_i$ contains a rank two
subbundle
$$
\cW_i = \cF_{B_+} \underset{B_+}\times W_i
$$
associated to $W_i \subset F_i$, and $\cW_i$ in
turn contains a line subbundle
$$
\hat{\mathscr L}_i = \cF_{B_+} \underset{B_+}\times L_i
$$
associated to $L_i \subset W_i$.  

Note, that $\phi_i(A)$ preserves subbundles $\cL_i$ and $\cW_i$ of $\mathscr{F}_i$ and thus produces $(GL(2),q)$-oper on $\cW_i$. 
We denote such q-oper by $A_i$ as in subsection \ref{gl2op}.

Notice that $\cW_i$ decomposes into direct sum of two subbundles, $\hat{\mathscr{L}}_i$, preserved by $B_+$ and $\mathscr{L}_i$ with respect to which it satisfies the $(GL(2),q)$-oper condition. We can unify all that in the following Proposition.

\begin{Prop}    \label{2flagthminf}
The quadruple $(A_i, \cW_i, \mathscr{L}_i, \hat{\mathscr{L}}_i)$ for any $i\in\mathbb{Z}$ forms a $(GL(2),q)$ Miura oper, so that explicitly: 
\begin{equation}    \label{2flagformulainf}
A_i(z)=\begin{pmatrix}
  g_i(z) &  &\Lambda_i(z) g_{i-1}(z)\\
&&\\  
  0 & &g^{-1}_i(z) g_{i+1}(z)g_{i-1}(z)
 \end{pmatrix},\quad i\neq 0\,,\qquad
 A_0(z)=\begin{pmatrix}
  1 &  &\Lambda_0(z) g_{-1}(z)\\
&&\\  
  0 & &g^{-2}_0(z) g_{1}(z)g_{-1}(z)
 \end{pmatrix},
\end{equation}
where we use the ordering of the simple roots determined by the
Coxeter element $c$.
\end{Prop}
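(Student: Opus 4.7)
The plan is to extend the proof of Proposition \ref{2flagthm} to the infinite-rank setting, using the fermionic realization of fundamental representations from Section \ref{Sec:FockSpace}. By Corollary \ref{Miura form inf} the Miura $q$-connection has the canonical form
\[
A(z) = \prod_{j=+\infty}^{-\infty} g_j(z)^{\check\alpha_j} \, e^{\frac{\Lambda_j(z)}{g_j(z)} e_j},
\]
with the factor at $j = -\infty$ acting first. Since $L_i \subset W_i \subset F_i$ are $\overline{B}_+$-invariant, the subbundles $\hat{\mathscr L}_i \subset \cW_i$ are preserved by $A$, so $A_i(z)$ is upper-triangular in the basis $\{\Psi_i, f_i\Psi_i\}$ with $\hat{\mathscr L}_i$ as the $A_i$-invariant line. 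It remains to compute the three nonzero entries.

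For the $(1,1)$-entry, I will show that $\Psi_i$ is a joint null vector of every $e_j$. In the Clifford realization $e_j = \psi_j\psi_{j+1}^*$: if $j + 1 > i$, then $\psi_{j+1}^*$ annihilates $\Psi_i = \nu_i \wedge \nu_{i-1} \wedge \cdots$; if $j + 1 \leq i$, then $\psi_j$ attempts to create a mode already present in the Dirac sea. Hence $e^{(\Lambda_j/g_j)e_j}\Psi_i = \Psi_i$ for every $j$, and only the Cartan at $j = i$ contributes nontrivially (via $\check\alpha_j \Psi_i = \delta_{ji}\Psi_i$), yielding $A(z)\Psi_i = g_i(z)\Psi_i$.

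The remaining entries come from the action on $f_i\Psi_i$, which has weight $\omega_i - \alpha_i$. The commutation $[e_j, f_i] = \delta_{ji}\check\alpha_i$ and $e_j\Psi_i = 0$ give $e_j(f_i\Psi_i) = \delta_{ji}\Psi_i$, so the sole off-diagonal contribution is from $j = i$. Only the three indices $j \in \{i-1, i, i+1\}$ act nontrivially on $W_i$, since for other $j$ both $e_j$ kills $W_i$ and the $\check\alpha_j$-weights of $\Psi_i$ and $f_i\Psi_i$ vanish. Tracking the order: at $j = i-1$ the Cartan scales $f_i\Psi_i$ by $g_{i-1}$ (via $\langle \omega_i - \alpha_i, \check\alpha_{i-1}\rangle = 1$); at $j = i$ the exponential produces a $\Psi_i$-component of coefficient $g_{i-1}\Lambda_i/g_i$, which $g_i^{\check\alpha_i}$ rescales by $g_i$ (giving $(1,2) = g_{i-1}\Lambda_i$), while the $f_i\Psi_i$-component is scaled by $g_i^{-1}$; at $j = i+1$ the Cartan contributes the final $g_{i+1}$ to the $(2,2)$-entry. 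This yields the formula for $i \neq 0$. For $i = 0$ the same argument applies with the centrally-extended action $\check\alpha_0 \mapsto \check\alpha_0 - 1$ on $F_0$ (per the remark in Section \ref{fock}), which shifts every $\check\alpha_0$-weight in $F_0$ by $-1$ and produces the modified entries $(1,1) = 1$ and $(2,2) = g_0^{-2}g_{-1}g_1$.

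The main obstacle is handling the formally infinite product, which is resolved by the observation that only the three factors indexed by $j \in \{i-1, i, i+1\}$ act nontrivially on $W_i$; all other factors act as the identity on this two-dimensional subspace, and the product is well-defined in $\overline{B}_+(\C(z))$ by the construction of this group in Section \ref{Sec:FockSpace}. The resulting $A_i$ is upper-triangular with nonvanishing diagonal entries and a nontrivial off-diagonal term on a Zariski dense subset of $\P^1$, confirming that $(A_i, \cW_i, \mathscr L_i, \hat{\mathscr L}_i)$ is a Miura $(GL(2), q)$-oper.
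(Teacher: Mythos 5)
Your argument is correct and is exactly the computation the paper intends: the paper states this proposition without proof, deferring to the finite-rank analogue (Proposition \ref{2flagthm}), and your restriction of the canonical form \eqref{form of A inf} to $W_i=\mathrm{span}\{\Psi_i,f_i\Psi_i\}$, with the observation that only the factors $j\in\{i-1,i,i+1\}$ act nontrivially and that the ordering $\prod_{j=+\infty}^{-\infty}$ puts $X_{i-1}$ first, reproduces \eqref{2flagformulainf} for $i\neq 0$. One caveat: for $i=0$ you state only the diagonal entries, and if you carry your own argument through, the shifted weight $\check\alpha_0\Psi_0=0$ gives the $(1,2)$ entry as $\Lambda_0(z)g_{-1}(z)/g_0(z)$ rather than the $\Lambda_0(z)g_{-1}(z)$ displayed in \eqref{2flagformulainf} --- your value is the one consistent with the paper's subsequent remark that multiplying $A_0$ by $\mathrm{diag}(g_0,g_0)$ restores the generic form, so you should state it explicitly (and note the discrepancy with the displayed formula) rather than leave that entry implicit.
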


We can see that the expression for $A_0(z)$ looks slightly differently than the rest of $A_i(z)$ in \eqref{2flagformulainf}. However, if we multiply $A_0(z)$ by the diagonal matrix proportional to the identity $\text{diag}(g_0(z),\, g_0(z))$ then it will be of the same form as the rest of the matrices. This is due to the central extension in $\bar{\mathfrak{a}}_{\infty}$ algebra and shift of Chevalley generator $\check{\alpha}_0$.

Now we impose the $Z$-twisted condition on the corresponding $A^H$ connection, namely $g_i=\zeta_i\frac{y_i(qz)}{y_i(z)}.$

Let $G_i\cong \SL(2)$ be the subgroup of $\overline{GL}(\infty)$ corresponding to the
 $\mathfrak{sl}(2)$-triple spanned by $\{e_i, f_i,
 \check{\alpha}_i\}$, which preserves $W_i$, using a diagonal gauge transformation as in subsection \ref{gl2op}, we associate to $A_i$ connection a $(G_i,q)$-oper with the explicit form:
\begin{eqnarray}
{\mc A}_i(z)=g_i^{\check \alpha_i}(z) 
e^{\frac{\beta_i(z)}{g_i(z)}e_i}, 
\quad {\rm where} \quad  \beta_i(z)=\Lambda_i(z) \zeta_{i-1} y_{i+1}(z)y_{i-1}(qz).
\end{eqnarray}
Note that the diagonal transformation for ${\mc A}_0(z)$ looks a bit different than for other ${\mc A}_i(z)$ because of the aforementioned shift.

Now we are ready to define Miura-Pl\"ucker  $(\overline{GL}(\infty),q)$-opers.
\begin{Def}    \label{ZtwMPinf}
  A $Z$-{\em twisted Miura-Pl\"ucker $(\overline{GL}(\infty),q)$-oper} is a meromorphic
  Miura $(\overline{GL}(\infty),q)$-oper on $\P^1$ with the underlying $q$-connection
  $A(z)$, such that there exists $v(z) \in \overline{B}_+(z)$ such that for all
  $i\in\mathbb{Z}$, the Miura $(\GL(2),q)$-opers $A_i(z)$ associated to
  $A(z)$ by formula \eqref{2flagformulainf} can be written in the form:
\begin{equation}    \label{gaugeA3inf}
A_i(z) = v(zq) Z v(z)^{-1}|_{W_i} = v_i(zq)Z_iv_i(z)^{-1}
\end{equation}
where $v_i(z) = v(z)|_{W_i}$ and $Z_i = Z|_{W_i}$.
\end{Def}

\subsection{Nondegeneracy conditions}
In this subsection  we will generalize two nondegeneracy conditions we had in subsection \ref{nondegsl}  for $Z$-twisted
Miura-Pl\"ucker $(\overline{GL}(\infty),q)$-opers. 

The first nondegeneracy condition deals with the associated $H$ connection.

\begin{Def}    \label{nondeg Cartaninf}
  A Miura $(\overline{GL}(\infty),q)$-oper $A(z)$ of the form \eqref{form of A} is called
  $H$-\emph{nondegene\-rate} if the corresponding $(\bar{H},q)$-connection
  $A^{\bar H}(z)$ can be written in the form \eqref{AHZinf}, where zeroes and poles $y_i(z)$ and $y_{i\pm 1}(z)$ are $q$-distinct from each other and from the zeros of $\Lambda_k(z)$.
\end{Def}

The second nondegeneracy condition addresses the associated $(G_i,q)$-opers.

\begin{Def}    \label{nondeg Miura inf}
 A $Z$-twisted
  Miura-Pl\"ucker $(\overline{GL}(\infty),q)$-oper $A(z)$ is called \emph{nondegenerate} if
  its associated Cartan $q$-connection $A^H(z)$ is nondegenerate and
  each associated $Z_i$-twisted Miura $(\SL(2),q)$-oper $\mathcal{A}_i(z)$ is
  nondegenerate.
\end{Def}

This we arrive to the analogue of the Proposition \ref{nondeg1}, which is proven in exactly the same way.

\begin{Prop}    \label{nondeg1inf}
  Let $A(z)$ be a 
  $Z$-twisted Miura-Pl\"ucker $(G,q)$-oper.
  The following statements are equivalent:
\begin{enumerate}
\item\label{nondegen1inf} $A(z)$ is nondegenerate.
  \item\label{nondegen2inf} The Cartan $q$-connection $A^H(z)$ is
    nondegenerate, and each
    $A_i(z)$ has regular singularities, i.e. $\rho_i(z)$ given
    by formula \eqref{ri} is in $\C[z]$.
    \item\label{nondegen3inf} Each $y_i(z)$ from formula \eqref{AH1} is a
      polynomial, and for all $i\in \mathbb{Z}$  the zeros of $y_i(z)$ and $y_{i\pm 1}(z)$ are
  $q$-distinct from each other and from the zeros of 
  $\Lambda_k(z)$.
\end{enumerate}
\end{Prop}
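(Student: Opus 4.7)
The plan is to follow the same three-arrow cycle $(1)\Rightarrow(2)\Rightarrow(3)\Rightarrow(1)$ used in the proof of Proposition \ref{nondeg1}, exploiting the fact that each of the three conditions is \emph{local at the index $i$}: it only involves $y_{i-1}, y_i, y_{i+1}$ and $\Lambda_{i-1},\Lambda_i,\Lambda_{i+1}$. Indeed, in the Cartan matrix of $A_\infty$ the only nonzero off-diagonal entries $a_{ij}$ with $i\neq j$ occur at $|i-j|=1$, so the formula \eqref{ri} for $\rho_i(z)$ reduces to a product involving only $\Lambda_i(z)$, $y_{i-1}(z)$ and $y_{i+1}(qz)$ (up to a scalar built from one $\zeta$). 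Hence the local calculations carried out in the proof of Proposition \ref{nondeg1} can be performed verbatim at each $i\in\mathbb Z$.

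The implication $(1)\Rightarrow(2)$ is obtained by unpacking Definition \ref{nondeg Miura inf}: nondegeneracy of $A(z)$ in particular forces each associated $(SL(2),q)$-oper $\mathcal A_i(z)$ to be nondegenerate, and by Lemma \ref{nondegsl2} this requires the off-diagonal entry of $\mathcal A_i(z)$, i.e.\ $\rho_i(z)$, to be polynomial; that is exactly the regular-singularity condition on $A_i(z)$, while $H$-nondegeneracy of $A^H(z)$ is built in by definition.

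For $(2)\Rightarrow(3)$ I would argue by contradiction. Suppose that $y_i(z)$ fails to be a polynomial for some $i\in\mathbb Z$. Since $a_{i\pm 1,i}=-1$, the rational function $y_i(z)$ (or its $q$-shift $y_i(qz)$) appears with a strictly positive power in the numerator of the expression for $\rho_{i\pm 1}(z)/\Lambda_{i\pm 1}(z)$ as given by \eqref{ri}; equivalently, its poles appear as poles of $\rho_{i\pm 1}(z)$. The $H$-nondegeneracy hypothesis in (2) guarantees that the poles of $y_i(z)$ are $q$-distinct from the zeros of $\Lambda_{i\pm 1}(z)$ and of $y_{i\pm 2}(z)$, so no cancellation can occur; this contradicts the regular-singularity condition on $A_{i\pm 1}(z)$. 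For $(3)\Rightarrow(1)$, assume each $y_i(z)$ is a polynomial with the stated $q$-distinctness. Then $A^H(z)$ is $H$-nondegenerate by Definition \ref{nondeg Cartaninf}, and each $\rho_i(z)$ is a product of polynomials whose roots are $q$-distinct from the roots of $y_i(z)$; consequently the Cartan $q$-connection of $\mathcal A_i(z)$ is $H$-nondegenerate and its $y$-function $y_i(z)$ is polynomial, fulfilling Definition \ref{ngsl2}. Hence each $\mathcal A_i(z)$ is a nondegenerate $(SL(2),q)$-oper, so $A(z)$ is nondegenerate.

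The main point requiring care is the bookkeeping at the vertex $i=0$, where the shift of $\check\alpha_0$ caused $A_0(z)$ in Proposition \ref{2flagthminf} to take a slightly modified form. As remarked immediately after that proposition, a scalar diagonal gauge transformation brings $A_0(z)$ into the uniform shape of the other $A_i(z)$, so the local analysis above applies at $i=0$ with no change. No issue arises from the infinitude of the index set either, since both the implications and the contradictions are drawn one index at a time; thus the proof proceeds verbatim along the lines of the finite-rank case.
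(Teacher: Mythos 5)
Your proposal is correct and matches the paper's approach: the paper simply states that Proposition \ref{nondeg1inf} "is proven in exactly the same way" as Proposition \ref{nondeg1}, whose cycle $(2)\Rightarrow(3)\Rightarrow(1)\Rightarrow(2)$ you reproduce faithfully, using the locality of the $A_\infty$ Cartan matrix exactly as the finite-rank proof uses $a_{ij}\le 0$ and $q$-distinctness to rule out cancellations. Your added remarks on the $i=0$ shift and on the index set being infinite are sensible bookkeeping that the paper leaves implicit.
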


\subsection{Z-twisted Miura-Pl\"ucker $(\overline{GL}(\infty),q)$-oper is Z-twisted}

From the previous Section we see that the q-connection of the nondegenerate Miura-Pl\"ucker $(\overline{GL}(\infty),q)$-oper with regular singularities defined by polynomials $\{\Lambda_i(z)\}_{i=1,\dots, r} $ reads as follows:
\begin{equation}\label{form of Ainf}
A(z)=\prod^{-\infty}_{i=+\infty}
g_i(z)^{\check{\alpha}_i} \; e^{\frac{\Lambda_i(z)}{g_i(z)}e_i}, \qquad
g_i(z)=\zeta_i\frac{Q_i^+(qz)}{Q_i^+(z)}\,. 
\end{equation}
Let us assume as in the $SL(r+1)$ case (see \eqref{assume}) that  $\xi_i$ is q-distinct from $\xi_{i+1}$. In particular this means $Z$ is regular semisimple.

First we define the \textit{$QQ$-system} for $\overline{GL}(\infty)$ as infinite generalization of \eqref{eq:QQAtype}:
\begin{equation}\label{eq:QQAtypeInf}
\xi_{i+1} \,Q^+_i(qz)Q^-_i(z)-\xi_i\,  Q^+_i(z)Q^-_i(qz)= \Lambda_i(z)Q^+_{i-1}(qz)Q^+_{i+1}(z)\,, \quad i\in\mathbb{Z}.
\end{equation}
 We say that a polynomial solution $\{ Q_i^+(z),Q_i^-(z)\}$ of \eqref{eq:QQAtypeInf} is nondegenerate if for $i\neq j$ the zeros of $Q^+_i(z)$ and $Q^-_{j}(z)$ are $q$-distinct from each other and from the zeros of $\Lambda_{k}(z)$ for $|i-k|=1,\,|j-k|=1$.

The following Theorem is the the direct analogue of Theorem \eqref{inj}:

\begin{Thm}   
  There is a one-to-one correspondence between the set of
  nondegenerate $Z$-twisted Miura-Pl\"ucker $(\overline{GL}(\infty),q)$-opers and the set
  of nondegenerate polynomial solutions of the $\overline{GL}(\infty)$ $QQ$-system \eqref{eq:QQAtypeInf}. 
\end{Thm}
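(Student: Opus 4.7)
The plan is to mirror the proof of the finite-rank analogue, Theorem \ref{inj}, using the local-to-global nature of the Miura-Pl\"ucker condition. Starting from a nondegenerate $Z$-twisted Miura-Pl\"ucker $(\overline{GL}(\infty),q)$-oper, Proposition \ref{nondeg1inf} immediately supplies the first half of the data: each rational function $y_i(z)$ in the normal form \eqref{AHZinf} of the associated Cartan $q$-connection is already a polynomial, and the zeros of $y_i(z)$ are $q$-distinct from the zeros of $y_{i\pm 1}(z)$ and of $\Lambda_k(z)$. I would set $Q^+_i(z) := y_i(z)$ for every $i \in \mathbb{Z}$, so that $g_i(z) = \zeta_i Q^+_i(qz)/Q^+_i(z)$.

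Next, I would construct $Q^-_i(z)$ one index at a time using the associated Miura $(GL(2),q)$-opers $A_i(z)$ from Proposition \ref{2flagthminf}. By Definition \ref{ZtwMPinf}, each $A_i(z)$ is $Z_i$-twisted under some $v_i(z) \in B_+(z)|_{W_i}$, and the analysis is identical to the $(SL(2),q)$ case treated in Section 5.4: writing $v_i(z) = \mathrm{diag}(Q^+_i(z),\dots)\,\exp(-\phi_i(z)e_i)$, the equation $A_i(z) = v_i(qz)Z_i v_i(z)^{-1}$ forces $\phi_i(z) = Q^-_i(z)/Q^+_i(z)$ and yields exactly the $i$-th $QQ$-system equation \eqref{eq:QQAtypeInf}. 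The $H$-nondegeneracy of the Cartan connection, combined with Lemma \ref{nondegsl2} applied fiberwise, guarantees that each $Q^-_i(z)$ is a polynomial. Consistency of the collection $\{Q^+_i, Q^-_i\}_{i\in\mathbb{Z}}$ as a nondegenerate solution then follows from the $q$-distinctness assertions already built into Proposition \ref{nondeg1inf}.

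For the converse direction, given a nondegenerate polynomial solution $\{Q^\pm_i(z)\}_{i\in\mathbb{Z}}$ of \eqref{eq:QQAtypeInf}, I would set $g_i(z) := \zeta_i Q^+_i(qz)/Q^+_i(z)$ and assemble the $q$-connection $A(z)$ by the canonical product formula \eqref{form of A inf}. Since at each fixed $z$ only finitely many factors deviate from the identity in the lower-triangular direction while the upper-triangular direction is allowed to be infinite, this product is a well-defined element of $\overline{B}_+(z)$, and it has regular singularities determined by $\{\Lambda_i(z)\}$. To verify the $Z$-twisted Miura-Pl\"ucker condition, for each $i \in \mathbb{Z}$ I would restrict to the two-dimensional subspace $W_i$ and exhibit the gauge element $v_i(z)$ exactly as in the $(SL(2),q)$ computation of Section 5.4, using the $i$-th equation of the $QQ$-system as the integrability condition. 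The resulting $v_i(z)$'s patch together into a single $v(z)\in\overline{B}_+(z)$ by the formula \eqref{eq:qGaugeGen} applied window-by-window to finite $SL(r+1)$-subsystems, with the nondegeneracy conditions ensuring that the finite-window constructions are compatible under enlargement.

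The main obstacle is the patching step in the converse direction: one must ensure that the locally defined $v_i(z)$'s on the $W_i$'s extend to a single section $v(z) \in \overline{B}_+(z)$, rather than producing an element in some strictly larger completion. The reason this works is precisely the asymmetry in the definition \eqref{GLinf} of $\overline{GL}(\infty)$: the Miura-Pl\"ucker condition only constrains the action on two-dimensional $B_+$-invariant subspaces $W_i$, which propagates the gauge element in the upper-triangular direction, and the completed upper Borel $\overline{B}_+$ was defined to accommodate precisely this kind of infinite upper-triangular data. Once this patching is carried out, the two-way map $\{Q^\pm_i\}\leftrightarrow A(z)$ constructed above is mutually inverse by construction, which finishes the proof.
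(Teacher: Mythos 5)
Your proposal is correct and follows essentially the same route the paper takes, namely reducing to the finite-rank argument of Theorem \ref{inj} via the associated $(GL(2),q)$-opers of Proposition \ref{2flagthminf} together with the nondegeneracy dictionary of Proposition \ref{nondeg1inf}. One small simplification: in the converse direction the Miura-Pl\"ucker condition only sees the Cartan part and the $e_i$-coefficients of $v(z)$, so you may take $v(z)=\prod_i Q^+_i(z)^{\check{\alpha}_i}\prod_i e^{-\phi_i(z)e_i}$ directly and avoid the window-by-window patching through \eqref{eq:qGaugeGen}, which would require the extended $QQ$-system polynomials that are not part of the given data.
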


It can proved the same way as in \cite{Frenkel:2020} with the use of the Proposition \ref{2flagthminf}.  The following theorem serves as an infinite-dimensional generalization of Theorem \ref{th:SLNqMiura}.
\begin{Thm}\label{th:GLinfqMiuraInf}
Let $A(z)$ be as in \eqref{form of Ainf} and $Z=\prod_i\zeta_i^{\check{\alpha}_i}$. Then the $q$-gauge transformation $v(z)$ which diagonalizes q-connection
$$
A(z)=v(qz) Z v(z)^{-1}
$$
reads
\begin{equation}\label{eq:qGaugeGenGLing}
v(z)= \prod\limits_{i=-\infty}^{+\infty} Q^+_i(z)^{\check{\alpha}_i} \cdot \prod\limits_{i=-{\infty}}^{+\infty} V_i(z)\,,
\end{equation}
where 
\begin{equation}
V_i(z)= \exp\left(-\sum_{j> i}\phi_{i,\dots,j}(z)\, e_{i,\dots,j}\right)\,,
\end{equation}
in which $e_{i,\dots,j}=[\dots[[e_i,e_{i+1}],e_{i+2}]\dots e_j]$ 
and functions $\phi_{i,\dots,j}(z)$ satisfy the following relations 
\begin{align}\label{eq:QQinf}
\xi_{i+1} \,\phi_i(z)-\xi_i\,  \phi_i(qz)&= \rho_i(z)\,,\notag\\
\xi_{i+2} \,\phi_{i,i+1}(z)-\xi_{i}\,  \phi_{i, i+1}(qz)&=\rho_{i+1}(z)\phi_i(z)\,,\notag\\
\dots&\dots\\
\xi_{i+j+1}\,\phi_{i,\dots, i+j}(z)-\xi_i\,  \phi_{i,\dots, i+j}(qz)&=\rho_{i+j}(z)\phi_{i,\dots, i+j-1}(z)\,,\notag\\
\dots&\dots\nonumber\\\notag
\end{align}
where $i\in \mathbb{Z},\,j\in\mathbb{Z}_+$ and we use the same notations as in Section \ref{Sec:MiuraSL}.
\end{Thm}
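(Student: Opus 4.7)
The plan is to mimic the proof of Theorem \ref{th:SLNqMiura} (whose proof is carried out explicitly in the low-rank examples and for the extended $QQ$-system), but working inside the completed Borel $\overline{B}_+(z)$. First I would verify that $v(z)$ as written is a well-defined element of $\overline{B}_+(z)$. The Cartan factor $\prod_{i\in\Z}Q_i^+(z)^{\check\alpha_i}$ acts as a diagonal matrix in any fundamental representation $F_m$ (Section \ref{Sec:FockSpace}), and each exponential $\exp(-\phi_{i,\dots,j}(z)\,e_{i,\dots,j})$ lies in the pro-unipotent completion of $\overline{N}_+(z)$ because $e_{i,\dots,j}$ is a nested commutator of Chevalley generators $e_i,\dots,e_j$ with strictly positive height. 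The outer infinite product $\prod_i V_i(z)$ is then taken in the canonical order inherited from the Coxeter element, and its restriction to each $F_m$ involves only finitely many non-trivial contributions at each matrix entry, so the product converges in $\overline{GL}(\infty)(z)$.

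Next I would rewrite $A(z)$ in the ``ordered'' form analogous to Lemma \ref{Th:BCHAConnection}: using Lemma \ref{Th:BCH} (the $sl_2$-type commutation rules $u^{\check\alpha_i} e^{v e_j} = e^{u^{a_{ji}} v\,e_j} u^{\check\alpha_i}$) one may push all Cartan factors to the right of all root-vector exponentials, obtaining
\[
A(z)=\prod_{i\in\Z}\!{}^\leftarrow Q_i^+(qz)^{\check\alpha_i}\cdot \prod_{i\in\Z}\!{}^\leftarrow \exp\!\Big(\tfrac{\zeta_i}{\zeta_{i+1}}\rho_i(z)e_i\Big)\cdot Z\cdot \prod_{i\in\Z}\!{}^\rightarrow Q_i^+(z)^{-\check\alpha_i},
\]
with the products taken in the correct Coxeter order. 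One then computes $v(qz)^{-1}A(z)$ and $Z\,v(z)^{-1}$ separately: on the right one uses Lemma \ref{Th:BCH} repeatedly to drag $Z$ through each factor $\exp(-\phi_{i,\dots,j}(z)e_{i,\dots,j})$, picking up a scalar $\xi_i^{-1}\xi_{j+1}$ (since $[\check\alpha_k,e_{i,\dots,j}] = (a_{ki}+\cdots+a_{kj})e_{i,\dots,j}$, which telescopes to $\delta_{ki}-\delta_{k,j+1}$); on the left one uses the Baker–Campbell–Hausdorff formula to combine $\exp(\phi_{i,\dots,j}(qz)e_{i,\dots,j})$ with the exponentials of $e_i,\dots,e_j$ appearing in $A(z)$, producing precisely the nested-commutator root vectors $e_{i,\dots,j}$ with polynomial coefficients.

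Matching coefficients of each root vector $e_{i,\dots,j}$ (for $i\in\Z$ and $j\ge i$) on the two sides then yields exactly the infinite chain of relations \eqref{eq:QQinf}. The base case $j=i$ reproduces the $\overline{GL}(\infty)$ $QQ$-system \eqref{eq:QQAtypeInf}, and each subsequent level reproduces one line of the extended $QQ$-system, identically to the finite case established in the examples following Theorem \ref{th:SLNqMiura}. For generators $e_{i,\dots,j}$ not of this nested-interval type, the identity is automatic because all such root vectors appearing on either side come from commuting exponentials whose arguments involve only $e_i,\dots,e_j$ with consecutive indices; any non-consecutive commutator vanishes by the Serre relations.

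The main obstacle will be bookkeeping: one must verify that reordering and combining the infinitely many exponential factors is legitimate in $\overline{B}_+(z)$, and that the BCH-expansions truncate in each matrix coefficient. The key observation is that in any fundamental representation $F_m$ only finitely many of the $\psi_k\psi_{k+1}^*$-operators contribute to a given matrix entry (since the semi-infinite wedge vectors stabilize for $k\ll 0$), so each individual matrix element of $v(qz)^{-1}A(z) - Z\,v(z)^{-1}$ is a finite expression in the $\phi_{i,\dots,j}$'s and $\Lambda_k$'s. Once this local finiteness is secured, matching coefficients inside each $F_m$ reduces the problem to the finite-rank calculation already handled in Section \ref{Sec:MiuraSL}, and the correspondence with the extended $QQ$-system \eqref{eq:QQinf} follows.
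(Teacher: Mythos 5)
Your proposal follows essentially the same route as the paper's proof: rewrite $A(z)$ in the ordered form of Lemma \ref{Th:BCHAConnection}, compute $v(qz)^{-1}A(z)$ and $Z\,v(z)^{-1}$ separately using Lemma \ref{Th:BCH} and the Serre relations, and match coefficients of the nested root vectors $e_{i,\dots,j}$, with convergence handled by local finiteness in each fundamental representation $F_m$. The only correction needed is in your telescoping of $[\check\alpha_k,e_{i,\dots,j}]$: the eigenvalue is $\delta_{ki}+\delta_{kj}-\delta_{k,i-1}-\delta_{k,j+1}$ rather than $\delta_{ki}-\delta_{k,j+1}$ (so the scalar picked up when dragging $Z$ through is $\xi_i\xi_{j+1}^{-1}$, not $\xi_i^{-1}\xi_{j+1}$), which does not affect the structure of the argument.
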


The set of equations \eqref{eq:QQinf} is called \textit{extended $QQ$-system} for $\overline{GL}(\infty)$ which can also be presented as
\begin{align}\label{eq:QQinfQform}
\xi_{i+1} \,Q^+_i(qz)Q^-_i(z)-\xi_i\,  Q^+_i(z)Q^-_i(qz)&= \Lambda_i(z)Q^+_{i-1}(qz)Q^+_{i+1}(z)\,,\notag\\
\xi_{i+2} \,Q^+_{i+1}(qz)Q^-_{i,i+1}(z)-\xi_i\,  Q^+_{i+1}(z)Q^-_{i,i+1}(qz)&= \Lambda_{i+1}(z)Q^-_{i}(qz)Q^+_{i+1}(z)\,,\notag\\
\dots&\dots\\
\xi_{i+j+1} \,Q^+_{i+j}(qz)Q^-_{i,\dots, i+j}(z)-\xi_i\,  Q^+_{i+j}(z)Q^-_{i,\dots, i+j}(qz)&= \Lambda_{i+j}(z)Q^-_{i,\dots, i+j-1}(qz)Q^+_{i+j+1}(z)\,,\notag\\
\dots&\dots\nonumber\\\notag
\end{align}

\begin{proof}
Let us first rewrite the diagonalization condition as
\begin{equation}\label{eq:OperTrivInf}
v(qz)^{-1}A(z)=  Zv(z)^{-1}
\end{equation}
as it will be easier to compute the left and right hand sides of the above equation and then compare them. We can make a statement similar to Lemma \ref{Th:BCHAConnection} and write the $(\overline{GL}(\infty),q)$-oper as 
\begin{equation}\label{new form of A1 inf}
A(z)=\prod_{i=+\infty}^{-\infty} Q^+_i(qz)^{\check{\alpha}_i} \cdot \prod\limits_{i=+\infty}^{-\infty} e^{\frac{\zeta_{i}}{\zeta_{i+1}}\rho_i(z)e_i} \cdot \prod_{i=+\infty}^{-\infty} \zeta_i^{\check{\alpha}}Q^+_i(z)^{-\check{\alpha}_i}\,.
\end{equation}

Then the left hand side of \eqref{eq:OperTrivInf} reads
\begin{equation}\label{eq:Azvzgeninf}
v(qz)^{-1}A(z)=\prod_{i=+\infty}^{-\infty} \exp\left(\sum_{j> i}\phi_{i,\dots,j}(qz)\, e_{i,\dots,j}\right)\cdot \prod^{-\infty}_{i=+\infty} e^{\frac{\zeta_{i}}{\zeta_{i+1}}\rho_i(z)e_i} \cdot  \prod^{-\infty}_{i=+\infty}\zeta_i^{\check{\alpha}_i} Q^+_i(z)^{-\check{\alpha}_i}\,.
\end{equation}
We now need to move $i$th element from the middle product above to the left until they combine with the corresponding $e^{-\phi_i(qz)e_i}$ terms. This way $e^{\frac{\zeta_{i}}{\zeta_{i+1}}\rho_i(z)e_i}$ will need to be carried over to $V_{i+1}(qz)^{-1}$ inside the first product.
Each term $e^{\frac{\zeta_{i}}{\zeta_{i+1}}\rho_i(z)e_i}$ will have nontrivial commutators with exponentials containing $e_{i+1}$ -- the others will vanish due to Serre relations. Thus \eqref{eq:Azvzgeninf} reads 
\begin{align}\label{eq:Azvzgen2}
&v(qz)^{-1}A(z)=\qquad \qquad \dots\dots \notag\\
&\cdot 
\exp\left(\frac{\zeta_{1}}{\zeta_{2}}\rho_1(z)+\phi_1(qz)\right)e_1\cdot \dots\cdot \exp\left(\frac{\zeta_{i}}{\zeta_{i+1}}\rho_r(z)\phi_{1,\dots, i-1}(qz)+\phi_{1,\dots,i}(qz)\right)e_{1,\dots,i}\cdot\dots\\
&\cdot \exp\left(\frac{\zeta_{2}}{\zeta_{3}}\rho_2(z)+\phi_2(qz)\right)e_2\cdot\dots\cdot \exp\left(\frac{\zeta_j}{\zeta_{j+1}}\rho_j(z)\phi_{2,\dots, j-1}(qz)+\phi_{2,\dots, j}(qz)\right)e_{2,\dots,j}\cdot\dots\notag\\
&\cdot \qquad \qquad \dots\dots \notag\\
&\cdot  \prod^{-\infty}_{i=+\infty}\zeta_i^{\check{\alpha}_i} Q^+_i(z)^{-\check{\alpha}_i}\notag\,.
\end{align}

This expression needs to be compared against the right hand side of \eqref{eq:OperTrivInf} which is given by
\begin{equation}
Zv(z)^{-1}=\prod_{i} \zeta^{\check{\alpha}_i}\cdot \prod_{i=+\infty}^{-\infty} \exp\left(\sum_{j> i}\phi_{i,\dots,j}(qz)\, e_{i,\dots,j}\right)\cdot\prod^{-\infty}_{i=+\infty} Q^+_i(z)^{-\check{\alpha}_i}
\end{equation}
In order to make the comparison manifest one needs to move the Cartan terms from the end to the front using the second equation from Lemma \ref{Th:LemmaMV05}
\begin{align}\label{eq:vqzgen2}
Zv(z)^{-1}&= \quad\qquad\qquad\qquad \qquad \dots\dots\notag\\
&\cdot 
\exp\left(\frac{\zeta_1^2}{\zeta_0\zeta_2}\phi_1(z)e_1\right)\cdot \dots\cdot \exp\left(\frac{\zeta_1\zeta_j}{\zeta_0}\phi_{1,\dots,j}(z)e_{1,\dots,j}\right)\dots\\
&\cdot \exp\left(\frac{\zeta_2^2}{\zeta_1\zeta_3}\phi_2(z)e_2\right)\cdot\dots\cdot \exp\left(\frac{\zeta_2\zeta_l}{\zeta_1}\phi_{2,\dots, l}(z)e_{2,\dots,l}\right)\cdot\dots\notag\\
&\cdot \quad\qquad\qquad\qquad \qquad \dots\dots \notag\\
&\cdot\prod^{-\infty}_{i=+\infty}\zeta_i^{\check{\alpha}_i} Q^+_i(z)^{-\check{\alpha}_i}\notag\,.
\end{align}

Here we used the following fact about nested commutators in Chevalley basis: 
\begin{equation}
[\check{\alpha}_a, e_{i,\dots,j}]=
\begin{cases}
e_{i,\dots, j}\,,\quad a=i,\,\, \text{or} \,\,a=j\,,\\
-e_{i,\dots, j}\,,\quad a= i-1,\,\, \text{or} \,\,a=j+1\,,\\
0\,, \quad \text{otherwise}\,.
\end{cases}
\end{equation}
Comparing \eqref{eq:Azvzgen2} with \eqref{eq:vqzgen2} leads to \eqref{eq:QQinf}.
\end{proof}

\begin{Cor}
Theorem \ref{th:SLNqMiura} follows.
\end{Cor}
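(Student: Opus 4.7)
The plan is to deduce Theorem~\ref{th:SLNqMiura} as the finite-block specialization of Theorem~\ref{th:GLinfqMiuraInf}. Given an $(SL(r+1),q)$-Miura-oper described by the data $\{\zeta_i, Q_i^{\pm}(z), \Lambda_i(z)\}_{i=1}^{r}$, I would embed it into $\overline{GL}(\infty)$ via the standard block inclusion placing the $(r+1)\times(r+1)$ matrix block at positions $1,\ldots,r+1$, extending the data by setting $\zeta_i=1$, $Q_i^{\pm}(z)\equiv 1$, and $\Lambda_i(z)\equiv 0$ for every $i\notin\{1,\ldots,r\}$. Under this extension the canonical form \eqref{new form of A1 inf} of the extended $q$-connection coincides with \eqref{new form of A1} on the central block and is the identity elsewhere, while the boundary conventions $Q_0^{\pm}=Q_{r+1}^{\pm}=1$ and $\xi_0=\xi_{r+2}=1$ built into the statement of Theorem~\ref{th:SLNqMiura} are automatically realized.

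Next I would invoke Theorem~\ref{th:GLinfqMiuraInf} to obtain $v(z)\in\overline{B}_+(z)$ satisfying $A(z)=v(qz)Zv(z)^{-1}$. The a priori infinite product $v(z)=\prod_{i\in\mathbb{Z}}Q_i^{+}(z)^{\check{\alpha}_i}\cdot\prod_{i\in\mathbb{Z}}V_i(z)$ then collapses in two steps: the Cartan factor truncates at once since $Q_i^{+}(z)^{\check{\alpha}_i}=1$ whenever $i\notin\{1,\ldots,r\}$, and for the unipotent factors $V_i(z)=\exp\bigl(-\sum_{j>i}\phi_{i,\ldots,j}(z)\,e_{i,\ldots,j}\bigr)$ one verifies that the natural polynomial solution of the extended $QQ$-system \eqref{eq:QQinfQform} has $\phi_{i,\ldots,j}(z)\equiv 0$ whenever the interval $[i,j]$ is not contained in $\{1,\ldots,r\}$. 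This last claim I would prove by induction on $j-i$: the base case $\phi_i\equiv 0$ for $i\notin\{1,\ldots,r\}$ is the polynomial solution of the homogeneous $q$-difference equation obtained from the first line of \eqref{eq:QQinf} with $\Lambda_i\equiv 0$, while the inductive step follows because the right-hand sides $\Lambda_j(z)\,Q^{-}_{i,\ldots,j-1}(qz)\,Q_{j+1}^{+}(z)$ of the remaining lines of \eqref{eq:QQinfQform} vanish as soon as $j\geq r+1$ or $i\leq 0$, forcing $Q^{-}_{i,\ldots,j}\equiv 0$. For $i\in\{1,\ldots,r\}$ the same induction caps the sum defining $V_i(z)$ at $j=r$, reproducing formula \eqref{eq:qGaugeGen} exactly.

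The main obstacle I anticipate is notational. On one hand, Definition~\ref{d:regsinginf} demands that each $\Lambda_i$ be a non-constant polynomial, so the assignment $\Lambda_i\equiv 0$ outside the central range strictly falls outside the class of $(\overline{GL}(\infty),q)$-opers with regular singularities; this is harmless because the proof of Theorem~\ref{th:GLinfqMiuraInf} uses only the commutation calculus of Lemma~\ref{Th:LemmaMV05} together with the canonical form \eqref{new form of A1 inf}, and both remain valid when an arbitrary subset of the $\Lambda_i$ is set to zero. On the other hand, the sign and $q$-shift conventions between \eqref{eq:QQAtype} and \eqref{eq:QQAtypeInf} differ slightly due to opposite orderings in the finite and infinite Coxeter products; reconciling them amounts to a direct comparison of \eqref{new form of A1} with \eqref{new form of A1 inf}. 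Once these bookkeeping matters are handled, Theorem~\ref{th:SLNqMiura} drops out as a clean specialization.
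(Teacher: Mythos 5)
Your argument is correct in substance, and it arrives at the same computation the paper uses, but by the inverse route: the paper simply truncates the infinite products in the proof of Theorem \ref{th:GLinfqMiuraInf} to the range $1,\dots,r$ and sets $\zeta_0=\zeta_{r+1}=1$, whereas you extend the finite data trivially and then show the infinite answer collapses. The one thing to be careful about is that your embedding cannot be used as a literal black-box reduction to Theorem \ref{th:GLinfqMiuraInf}: setting $\Lambda_i\equiv 0$ outside the central range violates the oper condition (the induced maps $\bar{\mathcal{A}}_i$ are zero there, not isomorphisms) as well as Definition \ref{d:regsinginf}, and setting $\zeta_i=1$ outside the range makes $\xi_i=\xi_{i+1}$ far from the block, so $Z$ is no longer regular and the nondegeneracy hypotheses fail. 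You acknowledge this and retreat to the observation that the proof of Theorem \ref{th:GLinfqMiuraInf} is a purely formal commutation calculus (Lemma \ref{Th:LemmaMV05} applied to the canonical form \eqref{new form of A1 inf}) that is insensitive to these hypotheses --- which is true, but at that point your argument \emph{is} the paper's argument, and the embedding layer is packaging rather than a logical reduction. Two smaller points: in your base case, the homogeneous equation $\xi_{i+1}Q_i^-(z)=\xi_i Q_i^-(qz)$ with $\xi_i=\xi_{i+1}$ admits nonzero constant solutions, so you must explicitly \emph{choose} $Q_i^-\equiv 0$ (and likewise $Q^-_{i,\dots,j}\equiv 0$ at each inductive step) rather than assert it is forced; and the discrepancy in $\xi$-indexing and $q$-shifts between \eqref{eq:QQAtype} and \eqref{eq:QQAtypeInf} that you flag does need the explicit comparison of \eqref{new form of A1} with \eqref{new form of A1 inf}, exactly as you propose. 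With those caveats handled, the specialization to \eqref{eq:qGaugeGen} is clean.
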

\begin{proof}
In the proof of Theorem \ref{th:GLinfqMiuraInf} one needs to replace all infinite products with products ranging between $1$ and $r$ and put $\zeta_0=\zeta_{r+1}=1$.
\end{proof}

Analogously to Theorem \ref{Th:BetheQQEquiv} we can make the following statement.

\begin{Thm}\label{Th:BetheQQEquivinf}
\begin{enumerate}
\item Every nondegenerate solution of the $QQ$-system for $GL(\infty)$ \eqref{eq:QQAtypeInf} is also a nondegenerate solution of the extended $QQ$-system for $\overline{GL}(\infty)$ \eqref{eq:QQinfQform}.

\item
There is a one-to-one correspondence between the set of nondegenerate solutions of the 
the $QQ$-system for $GL(\infty)$
the set of solutions of Bethe Ansatz equations for $GL(\infty)$:
\begin{equation}    \label{eq:betheinf}
\frac{Q^+_{i}(qw_k^i)}{Q^+_{i}(q^{-1}w^k_i)} \frac{\xi_i}{\xi_{i+1}}=
- \frac{\Lambda_i(w_k^i) Q^{+}_{i+1}(qw_k^i)Q^{+}_{i-1}(w_k^i)}{\Lambda_i(q^{-1}w_k^i)Q^{+}_{i+1}(w_k^i)Q^{+}_{i-1}(q^{-1}w_k^i)},
\end{equation}
where $i\in\mathbb{Z}; k=1,\ldots,m_i$.
\end{enumerate}
\end{Thm}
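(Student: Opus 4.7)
The plan is to reduce both parts of Theorem~\ref{Th:BetheQQEquivinf} to their $SL(r+1)$ counterparts from Theorem~\ref{Th:BetheQQEquiv}, exploiting the fact that each equation in \eqref{eq:QQAtypeInf}, \eqref{eq:QQinfQform}, and \eqref{eq:betheinf} involves only finitely many neighboring indices, so the $\mathbb{Z}$-indexing introduces no essential new difficulty once convergence of infinite products on the oper side (already addressed for $\overline{GL}(\infty)$ in Section~\ref{Sec:GLinfOpers}) is not in question at the level of polynomial data.

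For part~(2), I would repeat almost verbatim the argument underlying Theorem~\ref{Th:BetheQQEquiv}. Given a nondegenerate polynomial solution $\{Q^{\pm}_i(z)\}_{i\in\mathbb{Z}}$ of \eqref{eq:QQAtypeInf}, evaluating the $i$-th equation at a root $w^i_k$ of $Q^+_i(z)$ and then at $q^{-1}w^i_k$ (where $Q^+_i(qz)$ vanishes), and taking the ratio of the two, one obtains precisely \eqref{eq:betheinf} after the $Q^-_i$ terms cancel — this uses only indices $i-1,i,i+1$ and the assumed $q$-distinctness of zeros from Definition~\ref{nondeg Cartaninf}. Conversely, starting from a solution of \eqref{eq:betheinf}, the partial fraction decomposition of $\Lambda_i(z)Q^+_{i-1}(qz)Q^+_{i+1}(z)/(Q^+_i(z)Q^+_i(qz))$ produces a candidate rational $\phi_i(z)=Q^-_i(z)/Q^+_i(z)$; the Bethe equations at each root $w^i_k$ are exactly the consistency conditions ensuring that the polar part of $\xi_{i+1}\phi_i(z)-\xi_i\phi_i(qz)$ matches, so that $Q^-_i(z)$ is a polynomial. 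Since this argument is local in $i$, it extends unchanged to $\mathbb{Z}$.

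For part~(1), I would proceed by induction on the chain length $j-i\geq 0$ of the polynomial $Q^-_{i,\dots,j}$ appearing in \eqref{eq:QQinfQform}. The base case $j=i$ is the $QQ$-system itself, which is assumed. For the inductive step, assuming all $Q^-_{i,\dots,j}$ have been constructed, I would build $Q^-_{i,\dots,j+1}$ from the equation
\begin{equation*}
\xi_{j+2}Q^+_{j+1}(qz)Q^-_{i,\dots,j+1}(z)-\xi_iQ^+_{j+1}(z)Q^-_{i,\dots,j+1}(qz)=\Lambda_{j+1}(z)Q^-_{i,\dots,j}(qz)Q^+_{j+2}(z),
\end{equation*}
exactly as in the finite case. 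In the generic sub-case (no common roots between the right-hand side and $Q^+_{j+1}(z)Q^+_{j+1}(qz)$) the polynomial $Q^-_{i,\dots,j+1}$ is determined by partial fractions. In the degenerate sub-case the argument of Theorem~\ref{Th:BetheQQEquiv} applies: one factors common roots $u$ out of $Q^+_{j+1}$ and of $Q^-_{i,\dots,j}(qz)$, and the residual constraints at the remaining roots $w^{j+1}_k$ reduce, via the relation
\begin{equation*}
\xi_{j+1}\frac{Q^+_{j+1}(qw^{j+1}_k)}{Q^+_{j+1}(q^{-1}w^{j+1}_k)}=\xi_{j+2}\frac{Q^-_{j+1}(qw^{j+1}_k)}{Q^-_{j+1}(q^{-1}w^{j+1}_k)},
\end{equation*}
to the Bethe equations \eqref{eq:betheinf} at index $j+1$, which hold by part~(2). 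Thus polynomial solutions of the extended equations exist at every step.

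The main obstacle will be the careful bookkeeping of nondegeneracy as $j$ grows: one must verify that the constructed $Q^-_{i,\dots,j+1}$ has zeros $q$-distinct from those of $\Lambda_k$ and the neighboring $Q^{\pm}_m$ whose equations it enters. This follows from the explicit partial-fraction construction together with Proposition~\ref{nondeg1inf} applied to the original $QQ$-system, but requires verifying that no new $q$-coincidences are introduced by the recursion. As in the $SL(r+1)$ case, each extended equation ultimately involves only the finite window of indices $i-1,\ldots,j+2$, so the bookkeeping, while tedious, is entirely local and does not feel the infinite rank.
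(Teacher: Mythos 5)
Your proposal is correct and follows essentially the same route as the paper, which states this theorem without a separate proof, asserting it holds ``analogously to Theorem~\ref{Th:BetheQQEquiv}'': the intended argument is precisely the one you give, namely that each equation of \eqref{eq:QQAtypeInf}, \eqref{eq:QQinfQform}, and \eqref{eq:betheinf} involves only a finite window of indices, so the recursive partial-fraction construction of the $Q^-_{i,\dots,j}$ and the root-evaluation derivation of the Bethe equations from the proof of Theorem~\ref{Th:BetheQQEquiv} carry over verbatim to $i\in\mathbb{Z}$. Your closing remark about tracking nondegeneracy along the recursion is the same bookkeeping already implicit in the finite-rank proof, so nothing genuinely new is needed.
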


\section{Toroidal q-opers}\label{Sec:QuantumToroidal}
\subsection{  Quantum toroidal algebras, Bethe equations and $QQ$-System of $\widehat{A_0}$ type}
The quantum toroidal algebra $U_{t_1,t_2}\big (\widehat{\widehat{\mathfrak{gl}}}(1)\big)$\footnote{Sometimes in the literature different notation $U_{q_1,q_2,q_3}(\widehat{\widehat{\mathfrak{gl}}}(1))$ is used, where $q_1=(t_1t_2)^{-1}$, $q_2=t_2$, $q_1q_2q_3=1$.} attracted a lot of attention in the recent years. On one hand it has explicit geometric realization: there is a natural action of this algebra on equivariant K-theory of ADHM instanton spaces \cite{Schiffmann_2013},\cite{Negut:thesis},\cite{Okounkov:2016sya} which corresponds to simplest framed quiver varieties with one loop and one vertex. 

One can describe such moduli spaces $\mathcal{M}_N$ of rank-$N$ torsion-free sheaves $F$ on $\mathbb{P}^2$ with framing at infinity (also known as the moduli space of $U(N)$ instantons on $\mathbb{R}^4$). The framing condition forces the first Chern class to vanish, however, the second Chern class can range over the non-positive integers $c_2(F) = k$. The moduli space can be represented as a disjoint sum $\mathcal{M}_N = \sqcup_k \mathcal{M}_{k,N}$. Each $ \mathcal{M}_{k,N}$ can be described as the moduli spaces of stable representations of the ADHM quiver below, where $\mathscr{W}$ is a trivial bundle of rank $N$ and $\mathscr{V}$ is a bundle of rank $k$. For $N = 1$ this quiver variety describes Hilbert scheme of $k$ points on $\mathbb{C}^2$. We refer for the details and the equivalence of various descriptions of ADHM moduli spaces to \cite{Nakajima: 1999hilb}.

\begin{center}
\begin{tikzpicture}[xscale=1.5, yscale=1.5]
\fill [ultra thick] (1-0.1,1) rectangle (1.1,1.2);
\draw [fill] (1,0) circle [radius=0.1];
\draw [-, ultra thick] (1,1) -- (1,0.1);
\draw[-, ultra thick]
(1,0) arc [start angle=90,end angle=450,radius=.3];
\node (1) at (1.4,1.15) {$\mathscr{W}$};
\node (2) at (1.4,0.2) {$\mathscr{V}$};
\end{tikzpicture}
\end{center}

Let us denote $G=A\times (\mathbb{C}^{\times})^2$, where $A$ be the {\it framing} torus, 
i.e.  maximal torus of $GL(N)$ and the second factor $(\mathbb{C}^{\times})^2$ is the torus acting on $\mathbb{C}^2\subset \mathbb{P}^2$. We denote equivariant parameters corresponding to $A$ and $G/A$ as  $\mathrm{a}_1,\dots, \mathrm{a}_N$ and $t_1,t_2$ correspondingly.

The $G$-equivariant K-theory of $\mathcal{M}_{k,N}$ is generated by the equivariant vector 
bundle $\mathscr{V}$ of rank 
$k$ over $\mathcal{M}_{k,N}$ as in the case of the cotangent bundles to Grassmannians discussed in the introduction. 
The space of the localized $K_{G}(\mathcal{M}_{k,N})$ is a module for 
spanned by the fixed points of $U_{t_1,t_2}(\widehat{\widehat{\mathfrak{gl}}}(1))$.  

This module has the structure of the analogue of XXZ-module for the toroidal algebra. Namely, the physical space 
$$\mathcal{H}=\mathscr{F}(\mathrm{a}_1)\otimes \dots  \otimes \mathscr{F}(\mathrm{a}_N)$$ is the product of Fock space representations of the toroidal algebra $\{\mathscr{F}(\mathrm{a}_i)\}$, where the parameters $\{\mathrm{a}_i\}$,which have the meaning of evaluation parameters, correspond to the zero mode value of the infinite-dimensional Heisenberg algebra. We refer to \cite{Okounkov:2016sya} for more details. 

As we described in the Introduction, the quantum equivariant K-theory based on quasimaps is described 
by difference equations, which coincide with quantum Knizhnik-Zamolodchikov equations and the related dynamical equations. The solutions to these difference equations can be computed as certain Euler characteristics on the moduli spaces of quasimaps. It is given by certain integral formula with the asymptotics given by the Yang-Yang function $Y$, which can be described as follows.

Let $\ell(x)$ be a multi-valued function, which can be written in terms of dilogarithm (see  \cite{Gaiotto:2013bwa}), such that 
$$
\exp 2\pi \frac{\partial \ell(x)}{\partial x} = 2\sinh \pi x\,.
$$
The Yang-Yang function for ADHM space $\mathcal{M}_{k,N}$ is given by \cite{Aganagic:2017be}:
\begin{align}
Y_{ADHM} (\boldsymbol{\sigma},\boldsymbol{\alpha},\epsilon_1, \epsilon_2) &= \sum_{a=1}^k \sum_{m=1}^N \ell (\sigma_a - \alpha_m)+\ell (-\sigma_a + \alpha_m-\epsilon_1-\epsilon_2)\notag\\
&+\sum_{a\neq b}^k\ell (\sigma_a - \sigma_b+\epsilon_1)+ \ell (\sigma_a - \sigma_b+\epsilon_2)+\ell (\sigma_a - \sigma_b-\epsilon_1-\epsilon_2)\notag\\
&-\upkappa \sum_{a=1}^k \sigma_a\,.
\end{align}
where
$$
s_b = e^{2\pi \sigma_b}\,,\quad \mathrm{a}_b = e^{2\pi \alpha_b}\,,\quad t_1= e^{2\pi \epsilon_1}\,,\quad  t_2= e^{2\pi \epsilon_2}\,,\quad \kappa = e^{2\pi \upkappa}\,,\quad \kappa = (t_1 t_2)^{-\frac{N}{2}}\mathfrak{z},
$$
so that $\mathfrak{z}$ is a K\"ahler parameter of $\cM_{k,N}$.

Then Bethe equations in the case of $\mathcal{M}_{k,N}$ can be computed as critical points for $Y_{ADHM}$:
\begin{Lem}
The equations  
\begin{equation}
\exp 2\pi\frac{\partial Y_{ADHM}}{\partial \sigma_a} = 1\,,\qquad a = 1,\dots, k\,.
\end{equation}
are equivalent to the following Bethe equations
\begin{equation}
\prod_{l=1}^N\frac{s_a-\mathrm{a}_l}{t_1t_2s_a-\mathrm{a}_l}\cdot\prod_{\substack{b=1 \\ b\neq a}}^k\frac{s_a-t_1 s_b}{t_1s_a-s_b}\frac{s_a-t_2 s_b}{t_2s_a-s_b}\frac{s_a-t_1 t_2 s_b}{t_1 t_2 s_a- s_b}=\mathfrak{z}\,,\quad a=1,\dots, k\,.
\label{eq:BetheADHM}
\end{equation}
\end{Lem}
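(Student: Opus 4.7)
The statement is a direct computation with the Yang--Yang function, so the plan is to differentiate $Y_{ADHM}$ term by term, exponentiate using the defining property $\exp 2\pi\,\partial_x \ell(x) = 2\sinh\pi x$, and then convert each $\sinh$ into a rational expression in the multiplicative variables $s_a,\mathrm{a}_m,t_1,t_2,\kappa$.

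First, I would compute $\partial Y_{ADHM}/\partial\sigma_a$ piece by piece. The first line contributes $\ell'(\sigma_a-\alpha_m)-\ell'(-\sigma_a+\alpha_m-\epsilon_1-\epsilon_2)$ for each $m=1,\dots,N$. The double sum $\sum_{a\neq b}$ splits into contributions from pairs $(a,b)$ and $(b,a)$; the former gives $+\ell'(\sigma_a-\sigma_b+\epsilon_{1,2}, -\epsilon_1-\epsilon_2)$ and the latter gives $-\ell'(\sigma_b-\sigma_a+\epsilon_{1,2}, -\epsilon_1-\epsilon_2)$. Finally, the linear term contributes $-\upkappa$. Applying $\exp 2\pi(\cdot)$ and the given identity replaces each $\ell'$ by a factor $2\sinh\pi(\cdot)$ in the numerator or denominator, so the critical point equation becomes
\begin{equation*}
\prod_{m=1}^{N}\frac{2\sinh\pi(\sigma_a-\alpha_m)}{2\sinh\pi(-\sigma_a+\alpha_m-\epsilon_1-\epsilon_2)}\cdot\prod_{b\neq a}\prod_{\epsilon\in\{\epsilon_1,\epsilon_2,-\epsilon_1-\epsilon_2\}}\frac{2\sinh\pi(\sigma_a-\sigma_b+\epsilon)}{2\sinh\pi(\sigma_b-\sigma_a+\epsilon)}=\kappa\,.
\end{equation*}

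Second, I would convert the $\sinh$-factors to the multiplicative variables via the identity $2\sinh\pi x = X^{-1/2}(X-1)$ with $X=e^{2\pi x}$. In every ratio above the two prefactors $X^{-1/2}$ combine into overall powers of $(t_1t_2)^{1/2}$, and the brackets become the polynomial factors appearing in \eqref{eq:BetheADHM}. Explicitly, one gets
\begin{equation*}
\frac{2\sinh\pi(\sigma_a-\alpha_m)}{2\sinh\pi(-\sigma_a+\alpha_m-\epsilon_1-\epsilon_2)}\;\propto\;\frac{s_a-\mathrm{a}_m}{t_1t_2\,s_a-\mathrm{a}_m}\,,
\end{equation*}
and similarly each of the three $b$-ratios is proportional to one of the three factors $\tfrac{s_a-t_1s_b}{t_1s_a-s_b}$, $\tfrac{s_a-t_2s_b}{t_2s_a-s_b}$, $\tfrac{s_a-t_1t_2s_b}{t_1t_2s_a-s_b}$. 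Collecting everything, the left-hand side equals the Bethe product of \eqref{eq:BetheADHM} up to a sign and a power of $(t_1t_2)$; matching this overall constant to $\kappa=(t_1t_2)^{-N/2}\mathfrak{z}$ on the right-hand side produces $\mathfrak{z}$ precisely, giving the desired Bethe equations.

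The step that requires the most care is the third one: the bookkeeping of the half-integer prefactors $X^{-1/2}$ and of the minus signs produced by the antisymmetry $2\sinh\pi(-x)=-2\sinh\pi x$ in each $b$-ratio. The main obstacle is therefore not conceptual but combinatorial; one must check that the $(t_1t_2)^{N/2}$ prefactor from the $m$-product together with the $(-1)^{?}$ contributions from the $k-1$ antisymmetric $b$-ratios assemble into exactly the factor $(t_1t_2)^{-N/2}$ hidden in the definition of $\kappa$, so that all spurious constants absorb and the critical point equations reduce to \eqref{eq:BetheADHM}.
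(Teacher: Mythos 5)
The paper states this Lemma without any proof, so the only benchmark is the implicit intent: read the Bethe equations off the critical points of $Y_{ADHM}$, which is exactly your plan. Your first two steps are correct: differentiating the double sum over ordered pairs gives $\ell'(\sigma_a-\sigma_b+\epsilon)-\ell'(\sigma_b-\sigma_a+\epsilon)$ for each $\epsilon\in\{\epsilon_1,\epsilon_2,-\epsilon_1-\epsilon_2\}$, the linear term gives $-\upkappa$, and exponentiation via $\exp 2\pi\,\ell'(x)=2\sinh\pi x$ yields precisely the $\sinh$-form you display.

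The step you defer as ``combinatorial bookkeeping'' is, however, exactly where the computation with the formulas as printed does not close, and your claim that each of the three $b$-ratios is proportional (by a constant) to one of the three Bethe factors fails for two of them. Writing $2\sinh\pi x=X^{-1/2}(X-1)$ with $X=e^{2\pi x}$ one finds
\[
\frac{2\sinh\pi(\sigma_a-\sigma_b+\epsilon_1)}{2\sinh\pi(\sigma_b-\sigma_a+\epsilon_1)}=\frac{t_1 s_a-s_b}{t_1 s_b-s_a}=-\left(\frac{s_a-t_1 s_b}{t_1 s_a-s_b}\right)^{-1},
\]
i.e.\ minus the \emph{reciprocal} of the corresponding factor in \eqref{eq:BetheADHM}, and likewise for $\epsilon_2$; only the $-\epsilon_1-\epsilon_2$ ratio reproduces its Bethe factor (up to sign), precisely because that argument carries a minus sign in $Y_{ADHM}$. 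To obtain the proportionality you assert, the arguments in the Yang--Yang function must read $\sigma_a-\sigma_b-\epsilon_1$ and $\sigma_a-\sigma_b-\epsilon_2$ (equivalently one replaces $\epsilon_i\to-\epsilon_i$ in those terms), which is presumably the intended convention. The constants also do not absorb as smoothly as you suggest: the $m$-product contributes $(-1)^N(t_1t_2)^{N/2}$, so against $\kappa=(t_1t_2)^{-N/2}\mathfrak{z}$ a residual power $(t_1t_2)^{-N}$ survives together with an overall sign $(-1)^{N+3(k-1)}$, rather than everything cancelling into $\mathfrak{z}$. None of this is a conceptual obstruction --- your route is the right (and only available) one, and the discrepancies are sign and normalization conventions inherited from the cited references --- but a complete proof must either correct the signs in $Y_{ADHM}$ and in $\kappa$ or record the resulting inversions and constants explicitly; as literally written, your third step would not reproduce \eqref{eq:BetheADHM}.
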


Recall that equations \eqref{eq:BetheADHM} describe relations in quantum equivariant K-theory of $\cM_{k,N}$.
Generalizing the results of \cite{Pushkar:2016qvw} with the help of \cite{2016arXiv161201048S}, one can prove the following.
\begin{Prop}\label{eq:QuantMultConj}
The eigenvalues of quantum multiplication operators by bundles $\Lambda^l\mathscr{V}\,,1\leq l\leq k$ in localized quantum $G$-equivariant K-theory of $\mathcal{M}_{N,k}$ are given by elementary symmetric polynomials $e_l(s_1,\dots s_k)$ of Bethe roots which satisfy the following Bethe equations 
\eqref{eq:BetheADHM}.
\end{Prop}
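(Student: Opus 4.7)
The plan is to adapt the enumerative-geometric strategy of \cite{Pushkar:2016qvw} (proven there for cotangent bundles to Grassmannians) to the ADHM setting, with the necessary input on stable envelopes and capped vertex functions for $\mathcal{M}_{k,N}$ supplied by \cite{2016arXiv161201048S}. First, I would introduce the quantum tautological classes $\widehat{\Lambda^l\mathscr{V}}\in K_G(\mathcal{M}_{k,N})[[\mathfrak{z}]]$ defined as the operators of quantum multiplication by exterior powers of the tautological bundle $\mathscr{V}$ in quantum equivariant K-theory, and observe (classical limit $\mathfrak{z}\to 0$) that they deform the classical classes $\Lambda^l\mathscr{V}$ which, together with the framing classes, generate $K_G(\mathcal{M}_{k,N})$. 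Since these classes are $W=S_k$-invariant in the Chern roots of $\mathscr{V}$, it suffices to show that their joint eigenvalues on a common eigenbasis of $K^{\text{loc}}_G(\mathcal{M}_{k,N})$ are the elementary symmetric polynomials $e_l(s_1,\ldots,s_k)$ evaluated at the solutions of \eqref{eq:BetheADHM}.

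The next step is to invoke the quasimap vertex functions $V(z)$ for $\mathcal{M}_{k,N}$ constructed in \cite{Okounkov:2015aa,2016arXiv161201048S}. These are $K_G(\mathcal{M}_{k,N})$-valued holomorphic functions of the K\"ahler parameter $\mathfrak{z}$ and satisfy a system of difference equations (the quantum difference equations), which on the ADHM side coincide with qKZ/dynamical equations for the toroidal algebra $U_{t_1,t_2}\big(\widehat{\widehat{\mathfrak{gl}}}(1)\big)$ acting in the Fock-module tensor product $\mathscr{F}(\mathrm{a}_1)\otimes\cdots\otimes\mathscr{F}(\mathrm{a}_N)$. As in \cite{Pushkar:2016qvw}, the operators of quantum multiplication by $\widehat{\Lambda^l\mathscr{V}}$ arise as the $q\to 1$ (i.e.\ $\hbar\to 1$) limit of the shift operators acting on the vertex functions by shifting equivariant parameters $\mathrm{a}_m\mapsto q\mathrm{a}_m$.

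The analytic heart of the argument is the stationary-phase analysis of the integral representation of $V(z)$. The integrand of the vertex function for $\mathcal{M}_{k,N}$ has asymptotics of the form $\exp\bigl(Y_{ADHM}(\boldsymbol{\sigma},\boldsymbol{\alpha},\epsilon_1,\epsilon_2)/\eta\bigr)(\phi_0+O(\eta))$ with $\eta=\log q$ and the Yang-Yang function exactly as written before the proposition. Saddle-point equations $\exp 2\pi\,\partial Y_{ADHM}/\partial\sigma_a=1$ are, by the preceding Lemma, precisely the Bethe equations \eqref{eq:BetheADHM}; at each saddle $\{s_a\}$ one reads off the eigenvalue of the shift operator as $\prod_a s_a^{\star}$-type symmetric combinations, and in particular the eigenvalue of $\widehat{\Lambda^l\mathscr{V}}$ is $e_l(s_1,\ldots,s_k)$. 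Nondegeneracy of the Hessian of $Y_{ADHM}$ at the Bethe roots (generically in $\mathrm{a}_m,t_1,t_2,\mathfrak{z}$) guarantees that the collection of Bethe roots parametrizes a basis of $\cH$, matching the fixed-point basis of $K^{\text{loc}}_G(\mathcal{M}_{k,N})$ under Okounkov's stable envelope isomorphism.

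The main obstacle is the last identification. Unlike the $T^*\mathbf{Gr}_{k,n}$ case treated in \cite{Pushkar:2016qvw}, the ADHM quiver has a loop at its single vertex, so one must use Smirnov's construction of stable envelopes and the corresponding $R$-matrix formalism for toroidal $\widehat{\widehat{\mathfrak{gl}}}(1)$ to commute the quantum multiplication with the shift operators and to control the transition matrix between the vertex-function basis and the fixed-point basis. Once this diagonalization is established, the eigenvalues of $\widehat{\Lambda^l\mathscr{V}}$ are forced to be $e_l$ evaluated at Bethe roots, completing the proof.
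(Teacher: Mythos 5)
Your proposal follows essentially the same route the paper intends: the paper states this Proposition without a written proof, saying only that it follows by ``generalizing the results of \cite{Pushkar:2016qvw} with the help of \cite{2016arXiv161201048S},'' and your sketch is a faithful expansion of exactly that strategy (quantum tautological classes, quasimap vertex functions and their qKZ/quantum difference equations, the $q\to 1$ saddle-point analysis producing $Y_{ADHM}$ and hence the Bethe equations \eqref{eq:BetheADHM}, and Smirnov's stable envelopes to handle the loop in the ADHM quiver). The only caveat is that you attribute the eigenvalues of $\widehat{\Lambda^l\mathscr{V}}$ to the shift operators in the equivariant parameters, whereas in \cite{Pushkar:2016qvw} the shift operators give the nonlocal Hamiltonians and the tautological eigenvalues $e_l(s_1,\dots,s_k)$ come from evaluating the descendant insertion at the saddle point; this is a minor misattribution within an otherwise correct outline.
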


Thus quantum equivariant K-theory ring of $\mathcal{M}_{N,k}$ can be described by the symmetric 
functions of variables $s_1, \dots, s_k$ subject to Bethe equations. We refer to that as Bethe algebra of the XXZ model for quantum toroidal algebra. 

On the other hand, Feigin, Jimbo, Miwa and Mukhin \cite{Feigin_2016,Feigin_2017} studied such XXZ model explicitly and derived such Bethe equations for the corresponding transfer matrices. Another important issue, featured in \cite{Feigin_2017} is the explicit construction  of the $Q$-operator. We remind that this is the operator in the Bethe algebra  whose eigenvalues  form a generating function of the elementary symmetric functions of Bethe roots, i.e. it is a generating function of operators from Proposition \eqref{eq:QuantMultConj}.

 Recently Frenkel and Hernandez \cite{Frenkel:2016}  wrote down the $QQ$-system leading to Bethe ansatz equations for quantum toroidal $\mathfrak{gl}_1$ algebra. It reads as follows: 
\begin{equation}\label{eq:QQFH}
\xi \mathcal{Q}^+\left((t_1t_2)^{-1}z\right) \mathcal{Q}^-(z)- \mathcal{Q}^+(z) \mathcal{Q}^-\left((t_1t_2)^{-1}z\right)=\mathcal{L}(z)\mathcal{Q}^+(t_1^{-1}z) \mathcal{Q}^+(t_2^{-1}z)\,,
\end{equation}
where we altered the authors' notation slightly and introduced `framing polynomial' $\mathcal{L}(z)$.
We will refer to this functional equation as $\hat{A}_0$ $QQ$-system. 
The above $QQ$-system equations can also be rewritten as
\begin{equation}\label{eq:QQADHM}
\xi \boldsymbol{\phi}(z)- \boldsymbol{\phi}((t_1t_2)^{-1}z) = \boldsymbol{\rho}(z)\,,
\end{equation}
where
\begin{equation}\label{eq:DefQQADHM}
\boldsymbol{\phi}(z)=\frac{\mathcal{Q}^-(z)}{\mathcal{Q}^+((t_1t_2)^{-1}z)}\,,\qquad \boldsymbol{\rho}(z)=\mathcal{L}(z)\frac{\mathcal{Q}^+(t_1^{-1}z) \mathcal{Q}^+(t_1^{-2}z)}{\mathcal{Q}^+(z)\mathcal{Q}^+((t_1t_2)^{-1}z)}\,.
\end{equation}
We call the solutions for such system {\it nondegnerate} if $\mathcal{Q}^+(z) $, $\mathcal{Q}^-(z) $, $\mathcal{L}$ are $t_1, t_2$-distinct and $\xi\neq 1$. 

That leads to the following Lemma.

\begin{Lem}
There is a one-to-one correspondence between the set of nondegenerate solutions of \eqref{eq:QQFH} and \eqref{eq:BetheADHM}.
\end{Lem}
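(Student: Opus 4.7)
The plan is to follow the standard argument relating $QQ$-systems to Bethe equations (as in Theorem 5.7 and the treatment of the $SL(r+1)$ case in Section 5), adapted to the functional equation \eqref{eq:QQFH}. Assume we are given a nondegenerate solution $(\mathcal{Q}^+,\mathcal{Q}^-,\mathcal{L})$, write $\mathcal{Q}^+(z)=\prod_{a=1}^{k}(z-s_a)$ with all $s_a$ pairwise distinct, and identify $\mathcal{L}(z)=\prod_{l=1}^{N}(z-\mathrm{a}_l)$. The first step is to evaluate \eqref{eq:QQFH} at the two natural specializations that kill the two $\mathcal{Q}^+$ factors on the left-hand side. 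Setting $z=s_a$ annihilates $\mathcal{Q}^+(z)$, giving
\begin{equation}
\xi\,\mathcal{Q}^{+}\!\bigl((t_1t_2)^{-1}s_a\bigr)\,\mathcal{Q}^{-}(s_a)
=\mathcal{L}(s_a)\,\mathcal{Q}^{+}(t_1^{-1}s_a)\,\mathcal{Q}^{+}(t_2^{-1}s_a),
\end{equation}
while setting $z=t_1t_2\,s_a$ annihilates $\mathcal{Q}^{+}((t_1t_2)^{-1}z)$, giving
\begin{equation}
-\mathcal{Q}^{+}(t_1t_2\,s_a)\,\mathcal{Q}^{-}(t_1t_2\,s_a)
=\mathcal{L}(t_1t_2\,s_a)\,\mathcal{Q}^{+}(t_1\,s_a)\,\mathcal{Q}^{+}(t_2\,s_a).
\end{equation}

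The second step is to take the ratio of these two identities. The nondegeneracy of the solution (the $t_1,t_2$-distinctness of the zeros of $\mathcal{Q}^\pm$ and of $\mathcal{L}$) ensures that no factor on either side vanishes and that $\mathcal{Q}^{-}(s_a),\mathcal{Q}^{-}(t_1t_2 s_a)\neq 0$. Using the first line of \eqref{eq:QQFH} evaluated at $z=(t_1t_2)^{-1}s_a$ and $z=s_a$ one expresses $\mathcal{Q}^{-}(t_1t_2 s_a)/\mathcal{Q}^{-}(s_a)$ in closed form in terms of the $\mathcal{Q}^{+}$ and $\mathcal{L}$ factors at shifted arguments; substituting in and simplifying the resulting products $\prod_b(s_a-t_{1,2}^{\pm1}s_b)/(t_{1,2}^{\pm1}s_a-s_b)$ and $\prod_l(s_a-\mathrm{a}_l)/(t_1t_2 s_a-\mathrm{a}_l)$ reproduces exactly the ADHM Bethe equations \eqref{eq:BetheADHM}, with the K\"ahler parameter $\mathfrak{z}$ identified (up to sign and an overall normalization absorbed into $\xi$) with $\xi^{-1}$. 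This establishes the forward map.

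For the converse, suppose $\{s_a\}_{a=1}^{k}$ is a solution of \eqref{eq:BetheADHM}, and define $\mathcal{Q}^{+}(z):=\prod_a(z-s_a)$. The plan is to construct $\mathcal{Q}^{-}(z)$ by the partial-fraction method, working with the rewritten equation \eqref{eq:QQADHM}. Namely, consider
\begin{equation}
\boldsymbol{\phi}(z)=\frac{\mathcal{Q}^{-}(z)}{\mathcal{Q}^{+}((t_1t_2)^{-1}z)}
\end{equation}
as an unknown rational function whose required expansion is
\begin{equation}
\xi\,\boldsymbol{\phi}(z)-\boldsymbol{\phi}((t_1t_2)^{-1}z)=\boldsymbol{\rho}(z),
\end{equation}
with $\boldsymbol{\rho}(z)$ given by \eqref{eq:DefQQADHM}. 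The simple poles of $\boldsymbol{\rho}(z)$ sit at $z=t_1t_2\,s_a$ and $z=s_a$; for $\boldsymbol{\phi}(z)$ to be holomorphic away from $z=t_1t_2 s_a$ (so that $\mathcal{Q}^{-}(z)$ is polynomial), the residues of $\boldsymbol{\rho}(z)$ at $z=s_a$ paired against the $q$-shift structure $\xi\,\boldsymbol{\phi}(z)-\boldsymbol{\phi}((t_1t_2)^{-1}z)$ must satisfy a compatibility relation. A direct residue computation shows that this compatibility is \emph{precisely} the Bethe equation \eqref{eq:BetheADHM} at the root $s_a$. Once compatibility is granted, $\boldsymbol{\phi}(z)$ is determined uniquely up to addition of a constant (and $\mathcal{Q}^{-}$ up to scalar), and the resulting $\mathcal{Q}^{-}(z)$ is a polynomial $t_1,t_2$-distinct from $\mathcal{Q}^{+}$ and $\mathcal{L}$.

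Finally, one verifies that the two constructions are mutually inverse: starting from $(\mathcal{Q}^{+},\mathcal{Q}^{-})$, reading off the Bethe roots from zeros of $\mathcal{Q}^{+}$ and reconstructing via partial fractions returns the same pair up to the allowed scalar. The main technical obstacle is the book-keeping in the second step: the ADHM Bethe equations contain both self-interaction factors with shifts by $t_1$, $t_2$, and $t_1t_2$, and one must verify that the single residue-cancellation condition arising from $QQ$ really assembles all three families of $\mathcal{Q}^{+}/\mathcal{Q}^{+}$ ratios together with the framing factor $\mathcal{L}(s_a)/\mathcal{L}(t_1t_2 s_a)$ into the exact form \eqref{eq:BetheADHM}. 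Everything else is a direct analogue of the $SL(r+1)$ computation already carried out in Section \ref{Sec:MiuraSL}.
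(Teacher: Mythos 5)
Your overall strategy coincides with the paper's: the forward direction is obtained by specializing \eqref{eq:QQFH} at $z=s_a$ and at $z=t_1t_2\,s_a$ and taking the ratio, and the converse is the residue/partial-fraction argument modeled on Theorem \ref{Th:BetheQQEquiv}, which is exactly what the paper invokes (and leaves to the reader). However, there is a computational slip in your second specialization that creates a spurious obstacle. The term of \eqref{eq:QQFH} that survives at $z=t_1t_2\,s_a$ is
\begin{equation*}
-\,\mathcal{Q}^+(z)\,\mathcal{Q}^-\bigl((t_1t_2)^{-1}z\bigr)\big|_{z=t_1t_2 s_a}=-\,\mathcal{Q}^+(t_1t_2\,s_a)\,\mathcal{Q}^-(s_a)\,,
\end{equation*}
not $-\mathcal{Q}^+(t_1t_2\,s_a)\,\mathcal{Q}^-(t_1t_2\,s_a)$ as you wrote. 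The whole point of choosing these two specializations is that both surviving terms carry the \emph{same} factor $\mathcal{Q}^-(s_a)$, which is nonzero by nondegeneracy and cancels in the ratio, yielding directly
\begin{equation*}
-\,\xi=\frac{\mathcal{L}(s_a)}{\mathcal{L}(t_1t_2\,s_a)}\cdot\frac{\mathcal{Q}^+(t_1^{-1}s_a)}{\mathcal{Q}^+(t_1\,s_a)}\,\frac{\mathcal{Q}^+(t_2^{-1}s_a)}{\mathcal{Q}^+(t_2\,s_a)}\,\frac{\mathcal{Q}^+(t_1t_2\,s_a)}{\mathcal{Q}^+\bigl((t_1t_2)^{-1}s_a\bigr)}\,,
\end{equation*}
which is \eqref{eq:BetheADHM} after clearing the $b=a$ factors and powers of $t_1,t_2$.

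Because of the slip you are forced to introduce an extra step, ``expressing $\mathcal{Q}^-(t_1t_2\,s_a)/\mathcal{Q}^-(s_a)$ in closed form'' from further evaluations of the $QQ$-relation. That step would not go through as described: evaluating \eqref{eq:QQFH} at $z=(t_1t_2)^{-1}s_a$ brings in $\mathcal{Q}^-\bigl((t_1t_2)^{-2}s_a\bigr)$ rather than the ratio you need, so the chain of substitutions never closes. Fortunately it is also unnecessary once the specialization is done correctly. With that correction your forward argument is the paper's verbatim, and your plan for the converse (residue cancellation of $\boldsymbol{\rho}(z)$ at $z=s_a$ forcing the Bethe equation, then reconstruction of a polynomial $\mathcal{Q}^-$ unique up to scalar) is precisely the analogue of Theorem \ref{Th:BetheQQEquiv} that the paper gestures at without writing out.
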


\begin{proof}
Since
$$
\mathcal{Q}(u)=\prod_{a=1}^k(z-s_a)\,,\qquad \mathcal{L}(u)=\prod_{i=1}^N(z-{\rm a}_i)
$$
we can first evaluate \eqref{eq:QQglN} at $u=s_a$, then shift variable $u$ by $t_1 t_2$ and evaluate this equation again at $z=s_a$. This leads us to the following
$$
\frac{\cL(s_a)}{\cL(t_1 t_2 s_a)}\cdot \frac{\mathcal{Q}(t_1^{-1}s_a)}{\mathcal{Q}(t_1 s_a)}\frac{\mathcal{Q}(t_2^{-1} s_a)}{\mathcal{Q}(t_2 s_a)}\frac{\mathcal{Q}(t_1 t_2 s_a)}{\mathcal{Q}((t_1 t_2)^{-1} s_a)}  = -\xi\,.
$$
This shows the implication in one direction -- from the $QQ$-system to Bethe equations. The opposite statement can be proved analogously to 
Theorem \ref{Th:BetheQQEquiv}. We leave it to the reader.
\end{proof}

There exists a generalization of this construction to higher rank quantum toroidal algebra $U_{t_1,t_2}(\widehat{\widehat{\mathfrak{gl}}}(N))$ for cyclic quiver varieties with $N$ vertices. It is easy to write the Yang-Yang function in this case as well as Bethe equations (see \cite{Aganagic:2017be} for universal treatment). It is also easy to present the analogue of the $QQ$-system (see below). 
However, the representation-theoretic approach along the lines of \cite{Frenkel:2016,Feigin_2017} has not yet been developed, i.e. construction of the $Q$-operator as a transfer matrix for special auxiliary representation of $U_{t_1,t_2}(\widehat{\widehat{\mathfrak{gl}}}(N))$.

\subsection{Miura  1-toroidal  q-opers}
We can now define toroidal opers. Let us consider the automorphism of the Dynkin diagram of $\mathfrak{a}_{\infty}$, which correspond to the shift by one vertex. This automorphism can be realized by the 
transformation corresponding to the conjugation via the infinite Coxeter element $\prod^{-\infty}_{i=+\infty} s_i$. In the matrix notation such infinite Coxeter element can be realized via $\mathcal{V}_1=\sum_{i\in\mathbb{Z}} E_{i,i-1}$.

\begin{Def}
Let $p,\xi \in\mathbb{C}^\times$.
We refer to a Z-twisted Miura $(\overline{GL}(\infty),q)$-oper \eqref{Lambda} satisfying
\begin{equation}\label{eq:V1GaugeA}
\mathcal{V}_1 A(z) \mathcal{V}_1^{-1} = \xi A(pz)\,,
\end{equation}
as the {\it $Z$-twisted 1-toroidal Miura q-oper}. We call it {\it nondegnerate} if it is nondegenerate as Z-twisted Miura $(\overline{GL}(\infty),q)$-oper.
\end{Def}

The above definition \eqref{eq:V1GaugeA} translates to the following conditions on polynomials which appear in the QQ-system 
\begin{equation}
\frac{g_{i+1}(z)}{g_{i}(z)}=\xi \frac{g_{i}(pz)}{g_{i-1}(pz)}\,, \qquad \Lambda_{i+1}(z)=\xi \Lambda_i ( p z)\,,
\end{equation}
The first equation above becomes (recall that $\xi_i=\frac{\zeta_{i}}{\zeta_{i-1}}$)
\begin{equation}
\xi_{i+1}\frac{Q^+_{i+1}(qz)Q^+_{i}(z)}{Q^+_{i+1}(z)Q^+_{i}(qz)}=\xi\, \xi_{i}\,\frac{Q^+_{i}(qz)Q^+_{i-1}(pz)}{Q^+_{i}(z)Q^+_{i-1}(pqz)}\,,
\end{equation}
which can be satisfied provided that for
\begin{equation}\label{eq:xicqQ}
\xi_i = \xi^i\,,\qquad Q^+_i(z)=\mathcal{Q}^+(p^iz)\,,\qquad \Lambda_i(z) = \xi^i \Lambda(p^i z)\,.
\end{equation}

Let us now study how the above periodic conditions affect the $QQ$-system for $GL(\infty)$.
The $QQ$-equations \eqref{eq:QQAtypeInf} can be rewritten in the following way
\begin{equation}
\xi^{i+1} \boldsymbol{\phi}(p^i z)- \xi^i \boldsymbol{\phi}(p^iqz) = \xi^i\Lambda (p^iz)\frac{\mathcal{Q}^+(p^{i-1}z)\mathcal{Q}^+(p^{i+1}qz)}{\mathcal{Q}^+(p^iz)\mathcal{Q}^+(p^iqz)} \,, 
\end{equation}
where we defined $\phi_i(z)$ is replaced by $\boldsymbol{\phi}(p^i z)$ and thusly 
\begin{equation}\label{eq:cQm}
Q^-_i(z)=\mathcal{Q}^-(p^iz)\,.
\end{equation}
By shifting the variable $z\mapsto p^{-i} z$, we get
\begin{equation}
\xi\boldsymbol{\phi}(z)-  \boldsymbol{\phi}(qz) = \Lambda (z)\frac{\mathcal{Q}^+(p^{-1}z)\mathcal{Q}^+(pqz)}{\mathcal{Q}^+(z)\mathcal{Q}^+(qz)} \,, 
\end{equation}
Equivalently we can impose $\rho_i(z)=\boldsymbol{\rho}(p^iz)$.

Notice that the above equations coincide with \eqref{eq:QQADHM} and \eqref{eq:DefQQADHM}; therefore we recover the $\widehat{A}_0$ $QQ$-system provided that
\begin{equation}
\mathcal{L}(z)= \Lambda (z)\,, \qquad p = t_1,\qquad q = (t_1 t_2)^{-1}\,.
\end{equation}

This brings us to the following
\begin{Thm} \label{tort1}
The space of nondegenerate $Z$-twisted Miura 1-toroidal q-opers with regular singularities at $a_1,\dots,a_N$ is isomorphic to the space of solutions of the nondegenerate $\widehat{A}_0$ $QQ$-system \eqref{eq:DefQQADHM} or equivalently to
\begin{equation}\label{eq:QQgl1}
\xi\mathcal{Q}^+(q z)\mathcal{Q}^-( z)- \mathcal{Q}^+(z)\mathcal{Q}^-( qz) = \Lambda(z)\mathcal{Q}^+(p^{-1}z)\mathcal{Q}^+(pq z)\,.
\end{equation}
\end{Thm}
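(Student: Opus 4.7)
The strategy is to reduce the statement to Theorem \ref{th:GLinfqMiuraInf} of the previous section, which already establishes a one-to-one correspondence between nondegenerate $Z$-twisted Miura-Pl\"ucker $(\overline{GL}(\infty),q)$-opers and nondegenerate solutions of the infinite $QQ$-system \eqref{eq:QQAtypeInf}. The additional content of the present theorem is solely the compatibility between the toroidal periodicity condition \eqref{eq:V1GaugeA} and the periodic ansatz \eqref{eq:xicqQ}, \eqref{eq:cQm}.

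For the forward direction, I would start with a nondegenerate $Z$-twisted Miura 1-toroidal q-oper, regarded first as a nondegenerate $Z$-twisted Miura $(\overline{GL}(\infty),q)$-oper. By the cited theorem this produces a nondegenerate solution $\{Q_i^+(z),Q_i^-(z),\Lambda_i(z),\xi_i\}_{i\in\mathbb{Z}}$ of the infinite $QQ$-system \eqref{eq:QQAtypeInf}. Conjugating the canonical form \eqref{form of A inf} by $\mathcal{V}_1=\sum_{i\in\mathbb{Z}}E_{i,i-1}$ shifts the Chevalley generators as $\check{\alpha}_i\mapsto \check{\alpha}_{i-1}$, $e_i\mapsto e_{i-1}$, so the periodicity \eqref{eq:V1GaugeA} forces
\begin{equation*}
g_{i+1}(z)=\xi\, g_i(pz), \qquad \Lambda_{i+1}(z)=\xi\,\Lambda_i(pz),
\end{equation*}
as recorded in the paragraph preceding the theorem. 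The uniqueness-up-to-scalar clause for the Cartan data (analogous to Section \ref{Miurareg}) then shows these relations on $g_i$ can only hold if $\xi_i=\xi^i$ (up to a global rescaling of $Z$) and $Q_i^+(z)=\mathcal{Q}^+(p^i z)$ for some polynomial $\mathcal{Q}^+$. Substituting these relations into the $QQ$-equation at index $i$ and shifting $z\mapsto p^{-i}z$ collapses the entire infinite system onto a single equation, which is precisely \eqref{eq:QQgl1} after the identifications $\mathcal{L}(z)=\Lambda(z)$, $p=t_1$, $q=(t_1t_2)^{-1}$; this also forces $Q_i^-(z)=\mathcal{Q}^-(p^iz)$.

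For the backward direction, given a nondegenerate solution $(\mathcal{Q}^+,\mathcal{Q}^-)$ of \eqref{eq:QQgl1} I would define $Q_i^\pm(z):=\mathcal{Q}^\pm(p^iz)$, $\Lambda_i(z):=\xi^i\Lambda(p^iz)$, $\xi_i:=\xi^i$, and verify by direct substitution that these solve the infinite $QQ$-system \eqref{eq:QQAtypeInf}. Nondegeneracy of $(\mathcal{Q}^+,\mathcal{Q}^-,\Lambda)$ in the sense that their zeros are $t_1,t_2$-distinct implies the $q$-distinctness hypothesis of Definition \ref{nondeg Cartaninf} for the $Q_i^\pm$. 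Theorem \ref{th:GLinfqMiuraInf} then produces a unique nondegenerate $Z$-twisted Miura $(\overline{GL}(\infty),q)$-oper $A(z)$ with this data, and the explicit form \eqref{form of Ainf} together with the periodicity of the $Q_i^+,\Lambda_i,\xi_i$ immediately verifies \eqref{eq:V1GaugeA}.

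The main subtle point I expect is the compatibility between nondegeneracy of the infinite $QQ$-system (requiring $q$-distinctness of zeros of neighboring $Q_i^\pm$ and of $\Lambda_k$) and nondegeneracy for the single-equation $\widehat{A}_0$ system (phrased as $t_1,t_2$-distinctness of $\mathcal{Q}^+,\mathcal{Q}^-,\mathcal{L}$ and $\xi\neq 1$). Under the identifications $Q_i^\pm(z)=\mathcal{Q}^\pm(p^iz)$ and $p=t_1$, $q=(t_1t_2)^{-1}$, the $q$-distinctness of $Q_i^+$ and $Q_{i\pm1}^-$ at different $i$ translates exactly to $t_1,t_2$-distinctness of $\mathcal{Q}^+$ and $\mathcal{Q}^-$, and analogously for $\Lambda$. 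Verifying this bookkeeping carefully, together with the argument that $\xi_i=\xi^i$ is forced (not merely sufficient) by the periodicity—essentially a uniqueness argument exploiting that $q$ is not a root of unity—is the only step which requires more than routine substitution.
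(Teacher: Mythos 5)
Your proposal follows essentially the same route as the paper: the paper's proof of Theorem \ref{tort1} is exactly the computation preceding its statement, in which the periodicity constraint \eqref{eq:V1GaugeA} is translated into recursions on the $g_i$ and $\Lambda_i$, solved by the periodic ansatz \eqref{eq:xicqQ}, and the infinite $QQ$-system \eqref{eq:QQAtypeInf} is then collapsed to the single equation \eqref{eq:QQgl1} by the shift $z\mapsto p^{-i}z$. One small correction: the relation the paper derives is $g_{i+1}(z)/g_i(z)=\xi\,g_i(pz)/g_{i-1}(pz)$ rather than $g_{i+1}(z)=\xi\,g_i(pz)$ (the latter would force $\xi_i\equiv\xi$ instead of $\xi_i=\xi^i$), but your subsequent derivation of $\xi_i=\xi^i$, $Q_i^+(z)=\mathcal{Q}^+(p^iz)$, $\Lambda_i(z)=\xi^i\Lambda(p^iz)$ agrees with the paper's, and your added attention to the necessity of the ansatz and to reconciling the two nondegeneracy conventions goes slightly beyond what the paper spells out.
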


The full set of equations for the extended $QQ$-system for $\overline{GL}(\infty)$ \eqref{eq:QQinfQform} reads
\begin{align}\label{eq:QQAllADHM}
\cQ^+(q z) \cQ^-(z) - \xi \cQ^+(z) \cQ^-(qz) &=\Lambda (z) \cQ^+(p^{-1}qz)\cQ^+(pz)\,,\notag\\
\cQ^+(q z) \cQ^-_{(1)}(z) - \xi^2 \cQ^+(z) \cQ^-_{(1)}(qz) &= \xi\Lambda (z) \cQ^-(p^{-1}qz)\cQ^+(pz)\,,\notag\\
\dots&\dots\\
\cQ^+(q z) \cQ^-_{(j)}(z) - \xi^{j+1} \cQ^+(z) \cQ^-_{(1)}(qz) &= \xi^{j}\Lambda (z) \cQ^-_{(j-1)}(p^{-1}qz)\cQ^+(p z)\,,\notag\\
\dots&\dots\notag
\end{align}
where 
\begin{equation}\label{eq:cQmm}
\cQ^-_{(j)}(p^i z)=Q^-_{i-j+1,i-j+2,\dots, i}(z).
\end{equation}

The gauge transformation which brings toroidal q-oper to the diagonal form can be directly generalized from \eqref{eq:vinverse} using \eqref{eq:xicqQ}, \eqref{eq:cQm} and \eqref{eq:cQmm}:
\begin{equation}\label{eq:vinverseInf}
v(z)^{-1}=\displaystyle\begin{pmatrix}
\ddots & \vdots & \vdots & \dots & \dots  &\vdots  & \ddots \\
0 & \displaystyle\frac{\cQ^+(p^{i-1}z)}{\cQ^+(p^iz)}  & \displaystyle\frac{\cQ^-(p^{i-1}z)}{\cQ^+(p^{i+1}z)} & \dots& \dots & \displaystyle\frac{\cQ^-_{(j)}(p^{i-1}z)}{\cQ^+(p^{i+j}z)}  & \vdots\\
0 & 0 & \displaystyle\frac{\cQ^+(p^{i}z)}{\cQ^+(p^{i+1}z)}  & \dots & \dots & \displaystyle\frac{\cQ^-_{(j-1)}(p^iz)}{\cQ^+(p^{i+j}z)} & \vdots\\
\vdots & \vdots & \vdots & \ddots & \vdots &\vdots  & \vdots\\
0 & \dots & \dots& \dots &\displaystyle\frac{\cQ^+(p^{i+j-2}z)}{\cQ^+(p^{i+j-1}z)} &  \displaystyle\frac{\cQ^-(p^{i+j-2}z)}{\cQ^+(p^{i+j}z)} & \vdots \\
0 & \dots & \dots& \dots & \dots &  \displaystyle\frac{\cQ^+(p^{i+j-1}z)}{\cQ^+(p^{i+j}z)} & \vdots \\
0 & \dots & \dots& \dots & \dots &  0 & \ddots
\end{pmatrix}\,.
\end{equation}

\vskip.1in
We would like to mention that a similar folding procedure on the level of TQ-systems for $SL(\infty)$ was performed by  Hernandez in \cite{Hernndez2010TheA}. For further developments and applications of the QQ-system \eqref{eq:QQgl1} see \cite{Koroteev:2021}.

\subsection{Miura $\mathscr{N}$-toroidal q-Opers}
Here we briefly show, how the above construction can be immediately generalized to higher rank. Namely, one has to generalize the periodicity conditions we 
\begin{Def}
Let $p,\xi\in\mathbb{C}^\times$.
The $Z$-twisted N-toroidal Miura q-oper is a $(GL(\infty),q)$-oper \eqref{Lambda} satisfying
\begin{equation}\label{eq:VNGaugeA}
\mathcal{V}_{\mathscr{N}} A(z) \mathcal{V}_{\mathscr{N}}^{-1} = \xi^{\mathscr{N}}A(p^{\mathscr{N}}z)\,,
\end{equation}
where $\mathcal{V}_{N}=\mathcal{V}^{\mathscr{N}}_1$. We call it {\it nondegnerate} if it is nondegenerate as Z-twisted Miura $(\overline{GL}(\infty),q)$-oper.
\end{Def}

If we impose \eqref{eq:VNGaugeA} on the q-connection we get the following family of equations for $i\geq j$:
\begin{equation}
\frac{g_{i+\mathscr{N}}(z)}{g_{j+\mathscr{N}}(z)}=\xi^{\mathscr{N}}\frac{g_i(p^{\mathscr{N}}z)}{g_j(p^{\mathscr{N}}z)}\,,\qquad \Lambda_{i+\mathscr{N}}(z)= \xi^{\mathscr{N}} \Lambda_i(p^{\mathscr{N}} z)\,,
\end{equation}
which imposes $\mathscr{N}$-periodicity on all functions
$$
Q^\pm_{i+\mathscr{N}}(z)=Q^\pm_{i}(p^\mathscr{N} z)\,,\qquad \xi_{i+\mathscr{N}} = \xi^{\mathscr{N}}\xi_i\,.
$$
for all $i$.

Thus we arrive to the generalization of the Theorem \ref{tort1}:
\begin{Thm}
The nondegenerate $Z$-twisted Miura N-toroidal $q$-opers with regular singularities given by $\Lambda_i(u)=\prod_{j=1}^{\mathscr{N}}(z-a_j^{(i)})$  is in one-to-one correspondence with the nondegenerate solutions of the following  $\widehat{A}_{\mathscr{N}-1}$ $QQ$-system:
\begin{align}\label{eq:QQglN}
\xi_{1} Q^+_1(q z) Q^-_1(z) - \xi_{2} Q^+_1(z) Q^-_1(qz) &=\Lambda_1 (z) Q^+_{\mathscr{N}}(qz)Q^+_{2}(z)\,,\\
\xi_{i} Q^+_i(q z) Q^-_i(z) - \xi_{i+1} Q^+_i(z) Q^-_i(qz) &=\Lambda_i (z) Q^+_{i-1}(qz)Q^+_{i+1}(z)\,,\qquad i = 2,\dots, \mathscr{N}-1\notag\\
\xi_{N} Q^+_N(q z) Q^-_N(z) - \xi_{1} Q^+_{\mathscr{N}}(z) Q^-_{\mathscr{N}}(qz) &=\Lambda_{\mathscr{N}} (z) Q^+_{\mathscr{N}-1}(qz)Q^+_{1}(z)\,.\notag
\end{align}
with the nondegeneracy conditions induced from original $\overline{GL}(\infty)$ $QQ$-system.
\end{Thm}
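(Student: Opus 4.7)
The plan is to prove this theorem as a direct generalization of Theorem \ref{tort1}, following the same strategy as in the 1-toroidal case but now folding the $\overline{GL}(\infty)$ $QQ$-system under $\mathscr{N}$-periodicity rather than $1$-periodicity. My starting point is the correspondence between nondegenerate $Z$-twisted Miura $(\overline{GL}(\infty),q)$-opers and nondegenerate polynomial solutions of the $\overline{GL}(\infty)$ $QQ$-system \eqref{eq:QQAtypeInf}, established in Section \ref{Sec:MiuraPluckerInf}. Given that correspondence, it is enough to show that imposing the $\mathscr{N}$-toroidal condition \eqref{eq:VNGaugeA} on a $Z$-twisted Miura $(\overline{GL}(\infty),q)$-oper is equivalent, on the $QQ$-side, to restricting to solutions satisfying the periodicities $Q^\pm_{i+\mathscr{N}}(z)=Q^\pm_i(p^{\mathscr{N}}z)$ and $\xi_{i+\mathscr{N}}=\xi^{\mathscr{N}}\xi_i$.

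First I would conjugate the canonical form \eqref{form of Ainf} of the underlying $q$-connection by $\mathcal{V}_{\mathscr{N}}=\mathcal{V}_1^{\mathscr{N}}$, noting that conjugation by $\mathcal{V}_1$ shifts Chevalley indices by one. Matching the result against $\xi^{\mathscr{N}} A(p^{\mathscr{N}}z)$ factor by factor in the canonical ordered product yields the diagonal relations
\begin{equation*}
\frac{g_{i+\mathscr{N}}(z)}{g_{i+\mathscr{N}-1}(z)}=\xi^{\mathscr{N}}\,\frac{g_i(p^{\mathscr{N}}z)}{g_{i-1}(p^{\mathscr{N}}z)},\qquad \Lambda_{i+\mathscr{N}}(z)=\xi^{\mathscr{N}}\Lambda_i(p^{\mathscr{N}}z),
\end{equation*}
which, combined with $g_i=\zeta_i Q^+_i(qz)/Q^+_i(z)$ and the definition $\xi_i=\zeta_i/\zeta_{i-1}$, force the periodicities $Q^+_{i+\mathscr{N}}(z)=Q^+_i(p^{\mathscr{N}}z)$ and $\xi_{i+\mathscr{N}}=\xi^{\mathscr{N}}\xi_i$. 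Substituting these periodicities into each equation of the $\overline{GL}(\infty)$ $QQ$-system \eqref{eq:QQAtypeInf} and using nondegeneracy to conclude $Q^-_{i+\mathscr{N}}(z)=Q^-_i(p^{\mathscr{N}}z)$ as well, one sees that the infinite collection of equations collapses onto the fundamental domain $i=1,\dots,\mathscr{N}$, producing precisely the system \eqref{eq:QQglN}: the bulk equations $i=2,\dots,\mathscr{N}-1$ come from the unmodified $QQ$-equations, while the boundary equations for $i=1$ and $i=\mathscr{N}$ arise from writing $Q^+_0(qz)=Q^+_{\mathscr{N}}(p^{-\mathscr{N}}qz)$ and $Q^+_{\mathscr{N}+1}(z)=Q^+_1(p^{\mathscr{N}}z)$ and then absorbing the $p^{\pm\mathscr{N}}$ shifts into $\Lambda_1,\Lambda_{\mathscr{N}}$ (which are also renormalized by the $\xi$-factors via $\Lambda_{i+\mathscr{N}}=\xi^{\mathscr{N}}\Lambda_i(p^{\mathscr{N}}z)$).

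For the converse direction, given a nondegenerate polynomial solution $\{Q^\pm_i(z),\xi_i\}_{i=1}^{\mathscr{N}}$ of \eqref{eq:QQglN}, I would extend it to an infinite family by declaring $Q^\pm_{i+k\mathscr{N}}(z):=Q^\pm_i(p^{k\mathscr{N}}z)$, $\xi_{i+k\mathscr{N}}:=\xi^{k\mathscr{N}}\xi_i$, and $\Lambda_{i+k\mathscr{N}}(z):=\xi^{k\mathscr{N}}\Lambda_i(p^{k\mathscr{N}}z)$ for all $k\in\mathbb{Z}$. A direct check, equation by equation, shows this extended data satisfies the full $\overline{GL}(\infty)$ $QQ$-system. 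By the $\overline{GL}(\infty)$ correspondence we then obtain a nondegenerate $Z$-twisted Miura $(\overline{GL}(\infty),q)$-oper, and the $\mathscr{N}$-periodicity built into the data translates via the canonical form back into \eqref{eq:VNGaugeA}, showing this oper is $\mathscr{N}$-toroidal.

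The step I expect to require the most care is checking that nondegeneracy is preserved under folding/unfolding: the $q$-distinctness conditions in the $\overline{GL}(\infty)$ setting must be shown equivalent to the stated finite nondegeneracy conditions for \eqref{eq:QQglN}, especially at the boundary indices $i=1,\mathscr{N}$ where the periodicity introduces the shifts by $p^{\mathscr{N}}$. One has to verify that no unintended coincidences of roots are created between $Q^+_1$ and $Q^+_{\mathscr{N}}(p^{\mathscr{N}}\,\cdot)$, etc.; this amounts to an explicit translation of the $q$-distinctness conditions across the periodicity and, given $p$ and $q$ are generic, does not produce new constraints beyond those carried over to \eqref{eq:QQglN}. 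Once this bookkeeping is handled, the equivalence follows cleanly.
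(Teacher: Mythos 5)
Your proposal is correct and follows essentially the same route as the paper: impose the conjugation condition \eqref{eq:VNGaugeA} on the canonical form of the $q$-connection, deduce the periodicities $Q^\pm_{i+\mathscr{N}}(z)=Q^\pm_i(p^{\mathscr{N}}z)$, $\xi_{i+\mathscr{N}}=\xi^{\mathscr{N}}\xi_i$, $\Lambda_{i+\mathscr{N}}(z)=\xi^{\mathscr{N}}\Lambda_i(p^{\mathscr{N}}z)$, and fold the $\overline{GL}(\infty)$ $QQ$-system onto a fundamental domain, with the converse obtained by unfolding. If anything, you are more careful than the paper about the boundary terms (the $p^{\pm\mathscr{N}}$ shifts in $Q^+_{\mathscr{N}}$ and $Q^+_1$, which the 1-toroidal equation \eqref{eq:QQgl1} retains but the displayed $\widehat{A}_{\mathscr{N}-1}$ system suppresses) and about propagating nondegeneracy through the folding.
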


\bibliography{cpn1}
\end{document}